\newcommand{\bP}{{\mathbb P}}
\newcommand{\bQ}{{\mathbb Q}}
\newcommand{\bZ}{{\mathbb Z}}
\newcommand{\cM}{{\mathscr M}}
\newcommand{\cP}{{\mathscr P}}
\newcommand{\cW}{{\mathscr W}}
\newcommand{\dC}{{\mathcal C}}
\newcommand{\dD}{{\mathcal D}}
\newcommand{\dK}{{\mathcal K}}
\renewcommand{\phi}{\varphi}
\DeclareMathOperator{\iso}{\cong}
\DeclareMathOperator{\surj}{\twoheadrightarrow}
\DeclareMathOperator{\too}{\longrightarrow}
\DeclareMathOperator{\Mon}{Mon}
\DeclareMathOperator{\MonHdg}{Mon^2_\text{Hdg}}
\renewcommand{\div}{{\rm div}}
\DeclareMathOperator{\Aut}{Aut}
\DeclareMathOperator{\half}{\frac{1}{2}}
\newcommand{\TODO}[1]{}
\newtheorem{thm}{Theorem}[section]
\newtheorem{defi}[thm]{Definition}
\newtheorem{prop}[thm]{Proposition}
\newtheorem{lemme}[thm]{Lemma}
\newtheorem{cor}[thm]{Corollary}
\theoremstyle{remark}
\newtheorem{remark}[thm]{Remark}
\newtheorem{nota}[thm]{Notation}
\newtheorem{rmk}[thm]{Remark}
\newtheorem{ex}[thm]{Example}
\DeclareMathOperator{\Z}{\mathbb{Z}}
\DeclareMathOperator{\R}{\mathbb{R}}
\DeclareMathOperator{\N}{\mathbb{N}}
\DeclareMathOperator{\codim}{Codim}
\DeclareMathOperator{\Pic}{Pic}
\DeclareMathOperator{\rk}{rk}
\DeclareMathOperator{\Ext}{Ext}
\DeclareMathOperator{\Ima}{Im}
\DeclareMathOperator{\Rea}{Re}
\DeclareMathOperator{\Fix}{Fix}
\DeclareMathOperator{\Supp}{Supp}
\DeclareMathOperator{\Vect}{Vect}
\DeclareMathOperator{\id}{id}
\DeclareMathOperator{\Sing}{Sing}
\DeclareMathOperator{\Q}{\mathbb{Q}}
\DeclareMathOperator{\C}{\mathbb{C}}
\DeclareMathOperator{\Pj}{\mathbb{P}}
\DeclareMathOperator{\BK}{\mathcal{B}\mathcal{K}}
\DeclareMathOperator{\coloneqq}{:=}
\DeclareMathOperator{\nutild}{\widetilde{\nu}}
\DeclareMathOperator{\rhotild}{\widetilde{\rho}}
\DeclareMathOperator{\hdel}{\hat{\delta}}
\DeclareMathOperator{\hSig}{\hat{\Sigma}}
\DeclareMathOperator{\reg}{reg}
\DeclareMathOperator{\Hdg}{Hdg}
\DeclareMathOperator{\Bl}{Bl}
\newcommand{\eq}[1][r]
{\ar@<-3pt>@{-}[#1]
\ar@<-1pt>@{}[#1]|<{}="gauche"
\ar@<+0pt>@{}[#1]|-{}="milieu"
\ar@<+1pt>@{}[#1]|>{}="droite"
\ar@/^2pt/@{-}"gauche";"milieu"
\ar@/_2pt/@{-}"milieu";"droite"}
\newcommand{\incl}[1][r]
  {\ar@<-0.2pc>@{^(-}[#1] \ar@<+0.2pc>@{-}[#1]}
\begin{document}
\title{\bf Wall divisors on irreducible symplectic orbifolds of Nikulin-type}

\author{Gr\'egoire \textsc{Menet}; Ulrike \textsc{Riess}} 

\maketitle
\begin{abstract}
  We determine the wall divisors on irreducible symplectic orbifolds which are deformation
  equivalent to a special type of examples, called Nikulin orbifolds. The Nikulin orbifolds are obtained as partial resolutions in codimension 2 of a quotient by a symplectic involution of a Hilbert scheme of 2 points on a K3 surface. 
  This builds on the previous article \cite{Menet-Riess-20} in which the theory of wall divisors was
  generalized to orbifold singularities.
\end{abstract}

\section{Introduction}
\subsection{Motivations and main results}
During the last years, many efforts have been made to extend the theory of smooth compact varieties with trivial first
Chern class to a framework of varieties admitting some singularities.
 Notably, let us cite, the generalization of the Bogomolov decomposition theorem
\cite{Bakker}.
One of the motivations for such generalizations is given by the minimal model program
in which certain singular varieties appear naturally.

More specifically, in the theory of irreducible symplectic varieties, many generalizations can be mentioned. 
One of the most important concerns the global Torelli theorem which allows to obtain geometrical information on the variety from its period (\cite{Bakker-Lehn-GlobalTorelli}, \cite{Menet-2020} and \cite{Menet-Riess-20}).

In this paper, we are considering a specific kind of singularities: quotient singularities. A complex analytic space with only quotient singularities is call an \emph{orbifold}. An orbifold $X$ is called
\emph{irreducible holomorphically symplectic} if $X\smallsetminus \Sing X$ is simply connected, admits a
unique (up to a scalar multiple), non-degenerate holomorphic 2-form and $\codim \Sing X\geq 4$ (Definition
\ref{def}). The framework of irreducible symplectic orbifolds appears to be very favorable. In particular,
general results about the Kähler cone have been generalized for the first time in this context (see
\cite{Menet-Riess-20}). This is particularly  important, since knowledge on the Kähler cone is needed to be
able to apply the global Torelli theorem (see Theorem \ref{mainHTTO}) effectively. The key tool used to study
the Kähler cone of irreducible symplectic orbifolds are wall divisors (originally introduced for the smooth
case in \cite{Mongardi13}). 
\begin{defi}[{\cite[Definition 4.5]{Menet-Riess-20}}]
Let $X$ be an irreducible symplectic orbifold and let $D\in\Pic(X)$. Then $D$ is called a \emph{wall divisor}
if $q(D)<0$ and $g(D^{\bot})\cap \BK_X =\emptyset$, for all $g\in \Mon^2_{\Hdg}(X)$, where, $q$ denotes the
famous Beauville--Bogomolov form on $H^2(X,\bZ)$. 

\end{defi}

In particular, we recall that the Kähler classes can be characterized by their intersections with the wall
divisors (see Corollary \ref{cor:desrK}). The definitions of the birational Kähler cone $\BK_X$ and the Hodge monodromy group $\Mon^2_{\Hdg}(X)$ are recalled in Section \ref{Kählersection} and Definition \ref{transp} respectively.

A very practical feature of wall divisors is their deformation invariance. More precisely, let $\Lambda$ be a lattice of signature $(3,\rk\Lambda-3)$ and $(X,\varphi)$ a marked irreducible symplectic orbifold with $\varphi:H^2(X,\Z)\simeq \Lambda$. Then there exists a set $\mathscr{W}_{\Lambda}\subset \Lambda$ such that for all $(Y,\psi)$ deformation equivalent to $X$, the set $\psi^{-1}(\mathscr{W}_{\Lambda})\cap H^{1,1}(Y,\Z)$ is the set of wall divisors of $Y$ (see Theorem \ref{wall}). We call the set $\mathscr{W}_{\Lambda}$ the \emph{set of wall divisors of the deformation class of $X$}. 

In this paper, we are going to provide the first description of the wall divisors of a deformation class of singular irreducible symplectic varieties. 
The most "popular" singular irreducible symplectic variety, in the literature (see \cite[Section 13, table 1, I2]{Fujiki-1983}, \cite{Marku-Tikho}, \cite{Menet-2014}, \cite{Menet-2015}, \cite[Section 3.2 and 3.3]{Menet-Riess-20}, \cite{Camere}), is denoted by $M'$ and recently named Nikulin orbifold in \cite{Camere};
it is obtained as follows. Let $X$ be an irreducible symplectic manifold of $K3^{[2]}$-type and $\iota$ a symplectic involution on $X$.
By \cite[Theorem 4.1]{Mongardi-2012}, $\iota$ has 28 fixed points and a fixed K3 surface $\Sigma$. 
We obtain $M'$ as the blow-up of $X/\iota$ in the image of $\Sigma$ (see Example \ref{exem}); we denote by $\Sigma'$ the exceptional divisor.
The orbifolds deformation equivalent to this variety will be called \emph{orbifolds of Nikulin-type}. 
We also recall that the Beauville--Bogomolov lattice of the orbifolds of Nikulin-type is $U(2)^3\oplus
E_8(-1)\oplus(-2)^2$ (see Theorem \ref{BBform}).
\begin{thm}\label{main}
Let $\Lambda:=U(2)^3\oplus E_8(-1)\oplus(-2)^2$.

The set $\mathscr{W}_{M'}$ of wall divisors of Nikulin-type orbifolds is given by:
$$\mathscr{W}_{M'}=\left\{D\in\Lambda\left|
    \begin{array}{lll}
				D^2=-2, & \div(D)=1, & \\
        D^2=-4, & \div(D)=2, &\\ 
				D^2=-6, & \div(D)=2, & \text{and}\\
				D^2=-12, & \div(D)=2 & \text{if\ }D_{U(2)^3}\text{\ is divisible by\ }2
    \end{array}
    \right.\right\},$$
    where $D_{U(2)^3}$ is the projection of $D$ to the $U(2)^3$-part of the lattice.
\end{thm}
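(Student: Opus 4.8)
The plan is to prove the two inclusions separately, using the deformation invariance of wall divisors (Theorem \ref{wall}) to work on a convenient model of a Nikulin orbifold, namely the blow-up $M'$ of $X/\iota$ along the fixed K3 surface $\Sigma$, where $X$ is of $K3^{[2]}$-type. The main structural input is the explicit description of $H^2(M',\bZ)$ and of its Beauville--Bogomolov form in terms of $H^2(X,\bZ)^{\iota}$ together with the class $\Sigma'$ of the exceptional divisor (Theorem \ref{BBform}), and the compatibility of this description with the pullback $\pi^* : H^2(X,\bZ)^{\iota} \to H^2(M',\bZ)$ and pushforward from the resolution. Recall from \cite{Menet-Riess-20} the characterisation of Kähler classes via wall divisors (Corollary \ref{cor:desrK}) and the fact that the orbifold Kähler cone is cut out, inside a connected component of the positive cone, by the orthogonal complements of the wall divisors.

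For the inclusion ``$\supseteq$'', I would show that each of the four types of classes listed is actually realised, on some deformation of $M'$, by an effective divisor whose orthogonal complement bounds a wall of the Kähler cone. The $(-2)$, $\div = 1$ classes should come from the ``K3-type'' part of the lattice: they are (images of) the usual wall divisors of $K3^{[2]}$-type (the $(-2)$-classes and the $(-10)$-divisibility-$2$ classes of \cite{Mongardi13}) surviving the involution, and also the new $(-2)$-class can be produced by a prime exceptional divisor on a suitable birational model. The $(-4)$ and $(-6)$ classes with $\div = 2$ should be produced by the exceptional divisor $\Sigma'$ and by combinations of $\Sigma'$ with a $(-2)$-curve class; here one computes $q(\Sigma')$ and $q(\Sigma' + \pi^*\delta)$ directly from Theorem \ref{BBform}, where $\delta$ is the half-diagonal class on $X$ of square $-2$. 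The $(-12)$, $\div = 2$ class with the divisibility-by-$2$ condition on the $U(2)^3$-component should be realised by $2\cdot(\text{a }(-3)\text{-type class})$ coming from the Hilbert–Chow-type contraction on $X$, pushed through the quotient; the parity condition is exactly what records that this class lies in the relevant index-$2$ overlattice. For each realised class $D$ one must exhibit a monodromy operator (or a birational model) placing $D^{\bot}$ against $\birKbar_{M'}$, i.e. verify $g(D^{\bot}) \cap \BK_X = \emptyset$ for all $g \in \Mon^2_{\Hdg}$; this uses the description of $\Mon^2_{\Hdg}(M')$ recalled in the paper together with a case analysis on the divisibility and the square.

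For the inclusion ``$\subseteq$'', I would argue by contradiction: suppose $D \in \Lambda$ is a wall divisor not of one of the four listed types. Using the action of $\Mon^2_{\Hdg}(\Lambda)$ — whose orbits on primitive vectors of fixed square and divisibility are classified by Eichler-type lattice theory for $U(2)^3 \oplus E_8(-1) \oplus (-2)^2$ — one reduces to finitely many normal forms for $D$ of each square and divisibility. For each such normal form one exhibits a Kähler class $\omega$ on a generic deformation of $M'$ with $q(D,\omega) = 0$, contradicting that $D$ is a wall divisor: concretely, one must show that the hyperplane $D^{\bot}$ meets the closure of the Kähler cone, i.e. that no genuine wall sits there. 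This amounts to producing, for the reduced model, an explicit positive class orthogonal to $D$ and checking (via the already-established ``$\supseteq$'' list and the wall-and-chamber structure) that it is Kähler.

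I expect the main obstacle to be the ``$\subseteq$'' direction, and within it the control of the $\Mon^2_{\Hdg}$-orbits: one needs a clean statement that primitive vectors in $U(2)^3 \oplus E_8(-1) \oplus (-2)^2$ of given square and divisibility (and, in the $\div = 2$ case, of given parity of the $U(2)^3$-component) form a single monodromy orbit, so that checking one representative suffices. Establishing this orbit classification — disentangling the interaction between the $(-2)^2$ summand, the non-unimodular $U(2)^3$ part, and the monodromy group (which is strictly smaller than the full orthogonal group and whose generators must be pinned down from \cite{Menet-Riess-20}) — is the delicate lattice-theoretic heart of the argument, and the exclusion of the hypothetical extra wall divisors of square $-4, -6, -12$ with the ``wrong'' divisibility or parity will require the most care.
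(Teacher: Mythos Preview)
Your overall strategy---reduce to finitely many monodromy orbits and then check one representative per orbit---is exactly the paper's approach, and you correctly flag the orbit classification as the technical heart. However, the proposal is vague precisely where the paper does the real work, and several of your guesses about where the individual wall divisors come from are off.

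The paper does not prove the two inclusions separately. After classifying monodromy orbits (nine cases, Theorem \ref{thm:9monorb-M'}), it chooses for each orbit a specific K3 surface $(S,\iota)$ so that the representative lands in $\Pic(M')$, and then determines \emph{all} wall divisors of that particular $M'$ at once. The key tool is not ``exhibiting an effective divisor whose orthogonal bounds a wall'' but Proposition \ref{extremalray}: the dual of an extremal ray of the Mori cone is a wall divisor. The paper computes the entire Mori cone of $M'$ by explicitly listing all irreducible curves---first on $S^{(2)}$, then on $S^{[2]}$, then on the blow-up $N_1$ along $\Sigma$, then pushing down to $M'$---in four carefully chosen situations: $S$ with no curves (Section \ref{genericM'}), $S$ with one $\iota$-fixed $(-2)$-curve (Section \ref{onecurve}), $S$ with two $(-2)$-curves swapped by $\iota$ (Section \ref{sec:twocurves}), and a specific elliptic $S$ (Section \ref{sec:elliptic}). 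Knowing all extremal rays gives both inclusions simultaneously: their duals are wall divisors, and Corollary \ref{cor:desrK} then pins down $\dK_{M'}$, so any other candidate class is visibly orthogonal to a K\"ahler class and hence not a wall divisor.

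Your specific attributions do not match. The $(-12)$, $\div=2$ class is not ``$2\cdot(\text{a }(-3)\text{-type class})$ from the Hilbert--Chow contraction''; it arises as $2D_C'-\delta'$ in the two-swapped-curves example (Proposition \ref{prop:twocurves}). The $(-6)$, $\div=2$ class is $D_C'-\tfrac{1}{2}(\delta'+\Sigma')$ in the one-curve example (Proposition \ref{walldiv1}), not $\Sigma'+\pi^*\delta$. The most delicate case is actually one you do not single out: the square $-2$, $\div=1$ class in the $e_2$-orbit (Case 5 of Theorem \ref{thm:9monorb-M'}) is handled by a bootstrapping argument (Lemma \ref{mainelliptic}) that \emph{assumes} the rest of the theorem is already proved in order to constrain an a priori unknown extremal ray.

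So the gap is not in the architecture but in the mechanism: your proposal never engages with the explicit curve enumeration on $N_1$ and $M'$ (the Hirzebruch-surface computations, the strict-transform bookkeeping under $r_1$ and $\pi_1$) that actually produces the extremal rays and simultaneously excludes all other candidates. Without that, neither inclusion can be completed as written.
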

The divisibility $\div$ is defined in Section \ref{notation} below.

\begin{remark}
  Note that if one chooses an automorphism $\phi$ of the lattice $\Lambda$, the conditions that $D_{U(2)^3}$
  and $\phi(D)_{U(2)^3}$ are divisible by 2 are equivalent for elements with $D^2=-12$ and $\div(D)=2$.
	\TODO{Do you want to keep this: "(in this case it admits to distinguis between Cases 1 and 7 in Theorem \ref{thm:9monorb-M'})."}
\end{remark}

Combined with the global Torelli theorem (Theorem \ref{mainHTTO}), the previous theorem can be used for studying automorphisms on orbifolds of Nikulin-type. 
As an example of application, we construct a symplectic involution on orbifolds of Nikulin-type which is not induced by a symplectic involution on a Hilbert scheme of 2 points on a K3 surface (\emph{non-standard involution}) (see Section \ref{Application}).
\begin{prop}\label{main2}
Let $X$ be an irreducible symplectic orbifold of Nikulin-type such that there exists $D\in\Pic (X)$ with $q(D)=-2$ and $\div(D)=2$. Then, there exists an irreducible symplectic orbifold $Y$ bimeromophic to $X$ and a symplectic involution $\iota$ on $Y$ such that:
$$H^2(Y,\Z)^{\iota}\simeq U(2)^3\oplus E_8(-1)\oplus (-2)\ \text{and}\ H^2(Y,\Z)^{\iota\bot}\simeq (-2).$$
\end{prop}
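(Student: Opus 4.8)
The plan is to realise $\iota$ as the automorphism induced, through the global Torelli theorem, by the reflection of $H^{2}$ in the class $D$.

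\emph{Step 1 (the reflection).} Since $q(D)=-2$ and $\div(D)=2$, the class $D$ is automatically primitive (if $D=2D_{0}$ then $q(D_{0})=-\tfrac12\notin\bZ$). For any orbifold $Y$ in the deformation class consider
$$R_{D}\colon H^{2}(Y,\bZ)\longrightarrow H^{2}(Y,\bZ),\qquad R_{D}(x)=x+q(x,D)\,D,$$
which is a well-defined isometry of the lattice $\Lambda$ because $q(D)=-2$; one has $R_{D}^{2}=\id$, $R_{D}(D)=-D$ and $R_{D}|_{D^{\perp}}=\id$. As $D$ is of type $(1,1)$, $R_{D}$ is a Hodge isometry, and since the symplectic class lies in $D^{\perp}\otimes\bC$ it is fixed by $R_{D}$, so $R_{D}$ is symplectic. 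Moreover $R_{D}$ fixes a maximal positive subspace pointwise, hence preserves the orientation of the positive cone; and for $x\in\Lambda^{\vee}$ one has $q(x,D)\,D\in\Lambda$, so $R_{D}$ acts trivially on $\Lambda^{\vee}/\Lambda$. By the description of the monodromy group that enters the proof of Theorem~\ref{main}, $R_{D}$ is therefore a monodromy operator, i.e. $R_{D}\in\Mon^{2}_{\Hdg}(X)$.

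\emph{Step 2 (the model $Y$).} By Theorem~\ref{main} a class with $q=-2$ and $\div=2$ is not a wall divisor, so by the very definition of wall divisor there is a $g\in\Mon^{2}_{\Hdg}(X)$ with $g(D^{\perp})\cap\BK_{X}\neq\emptyset$. Replacing $D$ by $g(D)$ — which still lies in $\Pic(X)$ and still satisfies $q=-2$, $\div=2$, so that it suffices to prove the statement for this new class — we may assume $D^{\perp}\cap\BK_{X}\neq\emptyset$. The hyperplane $D^{\perp}$ is not one of the (countably many) walls that subdivide $\BK_{X}$ into the K\"ahler cones of the bimeromorphic models (Corollary~\ref{cor:desrK}, together with the results of \cite{Menet-Riess-20}), so a very general class $\kappa\in D^{\perp}\cap\BK_{X}$ lies in the interior of $\mathcal{K}_{Y}$ for some irreducible symplectic orbifold $Y$ bimeromorphic to $X$. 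Since $R_{D}\in\Mon^{2}_{\Hdg}(X)$ is orientation preserving it permutes these chambers, so $R_{D}(\mathcal{K}_{Y})$ is again the K\"ahler cone of a bimeromorphic model; as it contains $R_{D}(\kappa)=\kappa\in\mathcal{K}_{Y}$ and distinct chambers have disjoint interiors, $R_{D}(\mathcal{K}_{Y})=\mathcal{K}_{Y}$. By the global Torelli theorem (Theorem~\ref{mainHTTO}) there is then a unique $\iota\in\Aut(Y)$ with $\iota^{*}=R_{D}$; uniqueness applied to $\iota^{2}$ (which, like the identity, induces $R_{D}^{2}=\id$) gives $\iota^{2}=\id$, so $\iota$ is an involution, non-trivial since $R_{D}\neq\id$, and symplectic by Step~1.

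\emph{Step 3 (the invariant lattices).} By construction $H^{2}(Y,\bZ)^{\iota}=\ker(R_{D}-\id)=D^{\perp}$ and $H^{2}(Y,\bZ)^{\iota\perp}=\ker(R_{D}+\id)=\bZ D\cong(-2)$. Because $D$ is primitive with $\div(D)=2=|q(D)|$, the orthogonal projection $\Lambda\to\bQ D$ has image $\bZ D\subset\Lambda$ and restricts to the identity on $\bZ D$; hence it is a retraction with kernel $D^{\perp}$, so $\Lambda=\bZ D\oplus D^{\perp}$. Consequently the discriminant form of $D^{\perp}$ is obtained from that of $\Lambda=U(2)^{3}\oplus E_{8}(-1)\oplus(-2)^{2}$ by deleting one copy of the discriminant form of $(-2)$, so it equals the discriminant form of $U(2)^{3}\oplus E_{8}(-1)\oplus(-2)$; the two lattices also share the signature $(3,12)$, and since this lattice is indefinite with rank $15$, much larger than the number of generators of its discriminant group, it is unique in its genus. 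Therefore $H^{2}(Y,\bZ)^{\iota}=D^{\perp}\cong U(2)^{3}\oplus E_{8}(-1)\oplus(-2)$, as asserted.

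\emph{On the difficulty.} The only genuinely delicate point is Step~1, namely verifying that $R_{D}$ is an actual monodromy operator of the Nikulin-type orbifold; this relies on the precise control of $\Mon^{2}_{\Hdg}$ obtained for Theorem~\ref{main}. Everything else is the orbifold version — available in \cite{Menet-Riess-20} — of the standard interplay between the chamber decomposition of the birational K\"ahler cone, the global Torelli theorem and the reflection $R_{D}$, together with a routine lattice computation.
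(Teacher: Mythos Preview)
Your overall strategy coincides with the paper's: use Theorem~\ref{main} to see that $D$ is not a wall divisor, pass to a bimeromorphic model carrying a K\"ahler class in $D^{\perp}$, invoke that $R_D\in\Mon^2$ together with the global Torelli theorem to produce an automorphism, show it is an involution, and compute the invariant lattices.

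There are, however, two places where your write-up is not yet complete.

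\textbf{Step 1.} Your argument that $R_D$ is a monodromy operator because it preserves orientation and acts trivially on the discriminant group is not justified by anything in the paper: nowhere is it shown that $\Mon^2(X)$ coincides with the full group $\widetilde{O}^+(\Lambda)$. What the paper actually proves is precisely the statement you need, namely Corollary~\ref{Lastmonodromy} (equivalently Proposition~\ref{MonoIntroduction}~(ii)): for any class with $q=-2$ and $\div=2$ the reflection lies in $\Mon^2$. The proof goes through the monodromy--orbit classification (Case~4 of Theorem~\ref{thm:9monorb-M'}), which places every such class in the orbit of $\tfrac{\delta'-\Sigma'}{2}$, combined with the explicit non-natural involution $\kappa'$ of Proposition~\ref{involution}. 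You are right to flag this as the delicate step, but the sentence ``$R_D$ is therefore a monodromy operator'' should cite this corollary rather than the discriminant computation.

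\textbf{Step 2.} You write that Theorem~\ref{mainHTTO} yields a \emph{unique} $\iota\in\Aut(Y)$ with $\iota^*=R_D$, and then deduce $\iota^2=\id$ from this uniqueness. But Theorem~\ref{mainHTTO}~(ii) asserts only existence. The uniqueness you need is the injectivity of $\Aut(Y)\to O(H^2(Y,\bZ))$, which for irreducible symplectic \emph{orbifolds} is not automatic. The paper devotes an entire subsection to establishing it for Nikulin-type orbifolds (Proposition~\ref{AutM'}), via a degeneration argument to an explicit $M'$ and a lifting through the unramified cover of Section~\ref{inv0M'}. Without this input your conclusion $\iota^2=\id$ is a genuine gap.

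Your Step~3 via discriminant forms and genus uniqueness is a legitimate alternative to the paper's route; the paper instead reads off $D^{\perp}\simeq U(2)^3\oplus E_8(-1)\oplus(-2)$ directly, since the orbit computation already identifies $D$ with $\tfrac{\delta'-\Sigma'}{2}$ up to isometry.
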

The proof of this Proposition is given in Section \ref{Application}.

For the proof of Theorem \ref{main} we need to show that the following two operators are monodromy operators.
The reflections $R_D$ on the second cohomology group are defined in Section \ref{notation} below.
\begin{prop}[{Compare Corollaries \ref{Sigma'}, \ref{cor:Chiara}, and \ref{Lastmonodromy}}]\label{MonoIntroduction}
\ 
  \begin{itemize}
 \item[(i)]
 The reflection $R_{\Sigma'}$ is a monodromy operator of $M'$.
\item[(ii)]
More generally,
let $X$ be an orbifold of Nikulin-type and $\alpha\in H^2(X,\Z)$ which verifies one of the two numerical conditions:
\vspace{-0.2cm}
$$\left\{\begin{array}{ll}
q(\alpha)=-2 & \text{and}\ \div(\alpha)=2,\ \text{or}\\
q(\alpha)=-4 & \text{and}\ \div(\alpha)=2.
\end{array}\right.$$
Then $R_{\alpha}$ is a monodromy operator.
\end{itemize}
\end{prop}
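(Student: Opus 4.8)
My plan is to prove the three statements by exhibiting explicit geometric monodromy operators and then transporting them along deformations. For part (i), the natural strategy is to go back to the construction $M' = \Bl_{\Sigma}(X/\iota)$, where $X$ is of $K3^{[2]}$-type, $\iota$ a symplectic involution, and $\Sigma'$ the exceptional divisor over the fixed K3 surface. One knows that the reflection in the class of the exceptional divisor is a monodromy operator in the smooth case (Markman), and that this is preserved under the quotient construction. Concretely, I would argue that the local model near $\Sigma$ — the quotient of a neighbourhood of the fixed locus by $\iota$, blown up — carries a natural monodromy (one can degenerate to a situation where the fixed K3 $\Sigma$ itself varies, or use the explicit lattice-theoretic description of $H^2(M',\bZ)$ in terms of $H^2(X,\bZ)^{\iota}$ and $\bZ\Sigma'$ from Theorem \ref{BBform}). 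The reflection $R_{\Sigma'}$ acts as $-1$ on $\bZ\Sigma'$ and trivially on its orthogonal complement, and this is realised by an (anti-)involution of the family; the point is to find a family over a disk in which $\Sigma'$ undergoes monodromy. I expect that the cleanest route is to invoke the result of \cite{Menet-Riess-20} (or its smooth precursor) identifying $R_{\Sigma'}$ with the monodromy induced by a degeneration/parallel transport that is already available in that paper's setup.

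For part (ii), the plan is to reduce to part (i) together with a deformation argument. The key observation is that, by the deformation-invariance of wall divisors and more generally of the monodromy group (Theorem \ref{wall}), it suffices to prove the statement for a single orbifold $Y$ of Nikulin-type in whose Picard group $\alpha$ — or rather a class with the prescribed $(q,\div)$ pair — is realised, provided the orbit of $\alpha$ under $\Mon^2_{\Hdg}$ is controlled. The case $q(\alpha)=-2$, $\div(\alpha)=2$: here I would show that, after a monodromy/birational transformation (using Proposition \ref{main2}, which produces exactly such a class as a summand), $\alpha$ can be matched up with $\Sigma'$ on a suitable deformation, so $R_{\alpha}$ is conjugate to $R_{\Sigma'}$ inside $\Mon^2_{\Hdg}$ and hence itself a monodromy operator. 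The case $q(\alpha)=-4$, $\div(\alpha)=2$: this is the class that appears from the $28$ isolated fixed points (the class whose half is a half-sum of exceptional classes over the fixed points in the $K3^{[2]}$-picture); I would identify a geometric model of a Nikulin orbifold in which such an $\alpha$ is effective and is the exceptional divisor of a divisorial contraction or, more directly, invoke \cite{cor:Chiara} — i.e. the corresponding reflection is induced by an automorphism or a Mukai-type flop and is therefore monodromy.

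The main technical obstacle I anticipate is establishing, for part (ii), that every class with the given numerical invariants $(q,\div)$ lies in a single $\Mon^2_{\Hdg}$-orbit (equivalently, a single $O(\Lambda)$-orbit after passing to the abstract lattice $\Lambda = U(2)^3\oplus E_8(-1)\oplus(-2)^2$), so that the reduction to one representative is legitimate. This is a lattice-theoretic statement about $\Lambda$ — one needs an Eichler-type criterion adapted to the $2$-divisible and non-unimodular factors $U(2)$ and $(-2)^2$ — and it is precisely where the extra divisibility hypothesis in the $q=-12$ case of Theorem \ref{main} comes from, so care is needed to check that for $q=-2$ and $q=-4$ no such extra condition is required and the orbit really is unique. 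Once that orbit statement is in hand, together with the identification of one geometric representative as (a conjugate of) $R_{\Sigma'}$ or an automorphism-induced reflection, the deformation invariance of the monodromy group closes the argument.
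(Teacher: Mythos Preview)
Your plan has the right general shape (reduce to representatives of monodromy orbits, then identify each representative geometrically), but there are several concrete gaps.

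\textbf{Part (i).} Your approach via a degeneration of the exceptional divisor is too vague to be a proof. The paper's argument is quite specific and does not proceed this way: first one shows $R_{\delta'}\in\Mon^2(M')$ by pushing down the monodromy operator $R_\delta\in\Mon^2(S^{[2]})$ through the quotient--blow-up construction (Proposition~\ref{MonoM'}, Corollary~\ref{Rdelta}); then one constructs an explicit symplectic involution $\kappa'$ on a very general $M'$ induced by $(x,y)\mapsto(x,\iota(y))$ on $S\times S$, which exchanges $\delta'$ and $\Sigma'$ (Proposition~\ref{involution}); finally $R_{\Sigma'}=\kappa'^*\circ R_{\delta'}\circ\kappa'^*$. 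The existence of this $\kappa'$ is the key geometric input and is not visible from your outline.

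\textbf{Part (ii), case $q(\alpha)=-2$, $\div(\alpha)=2$.} Two problems. First, you propose to match $\alpha$ with $\Sigma'$, but $q(\Sigma')=-4$, not $-2$; the correct representative is $\frac{\delta'-\Sigma'}{2}$, and the reflection in this class is exactly $\kappa'^*$ from the previous paragraph (Remark~\ref{RdeltaSigma}). Second, you propose to invoke Proposition~\ref{main2} to produce the class, but this is circular: the proof of Proposition~\ref{main2} \emph{uses} Corollary~\ref{Lastmonodromy}, i.e.\ precisely the statement you are trying to prove.

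\textbf{Part (ii), case $q(\alpha)=-4$, $\div(\alpha)=2$.} Your guess that this class comes from the $28$ isolated fixed points is incorrect; the relevant classes are $\delta'$, $\Sigma'$, and classes coming from the invariant lattice of the K3. More seriously, your anticipated obstacle is real but your guess about its resolution is wrong: there is \emph{not} a single $\Mon^2$-orbit of such classes---there are three (Cases 1, 2, 3 of Theorem~\ref{thm:9monorb-M'}). The paper handles them uniformly by observing that each of the three representatives in $\Lambda_{M'}$ corresponds to a $(-2)$-class in the $\iota^{[2]}$-invariant part of $\Lambda_{K3^{[2]}}$, whose reflection is a monodromy operator of $S^{[2]}$ commuting with $\iota^{[2]*}$, and then descending via Proposition~\ref{MonoM'}. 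So the orbit classification (Section~\ref{sec:monodromy-orbits}) is a genuine prerequisite and cannot be replaced by a single-orbit Eichler argument.
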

\begin{rmk}
 Note that Proposition \ref{MonoIntroduction} (i) can also be obtained from the recent result of Lehn--Mongardi--Pacienza \cite[Theorem 3.10]{Lehn2}.
\end{rmk}

\subsection{Organization of the paper and sketch of the proof}
The paper is organized as follows. In Section \ref{reminders}, we provide some reminders related to irreducible symplectic orbifolds, especially from 
\cite{Menet-Riess-20} where the theory of the Kähler cone have been developed. In Section \ref{M'section0}, we provide some reminders about the orbifold $M'$
especially from \cite{Menet-2015}; moreover, we investigate the monodromy operators of $M'$ inherited from the ones on the Hilbert schemes of two points on K3 surfaces.
In Section \ref{genericM'0}, we determined the wall divisors of an orbifold $M'$ obtained from a very general K3 surfaces endowed with a symplectic involution $(S,i)$.
As a corollary, we can prove Proposition \ref{MonoIntroduction} (i). Our main tool to determine wall divisors is Proposition \ref{extremalray} which says that the dual divisor of an extremal ray of the cone of classes of effective curves (the \emph{Mori cone}) is a wall divisor.
The proof of Theorem \ref{main} is then divided in two parts. The first part (Section \ref{extremalcurves}) consists in finding enough examples of extremal rays of Mori cones in several different $M'$-orbifolds; the second part (Section \ref{sec:monodromy-orbits}) consists in using our knowledge on the monodromy group of $M'$ to show that we have find all possible wall divisors.
Finally, Section \ref{Application} is devoted to the proof of Proposition \ref{main2}.

\subsection{Notation and convention}\label{notation}
\begin{itemize}
\item
Let $\Lambda$ be a lattice of signature $(3,\rk \Lambda -3)$. Let $x\in \Lambda$ such that $x^2< 0$. We define the \emph{reflection} $R_x$ associated to $x$ by:
$$R_x(\lambda)=\lambda-\frac{2\lambda\cdot x}{x^2}x,$$
for all $\lambda\in\Lambda$.
\item
In $\Lambda$, we define the \emph{divisibility} of an element $x\in\Lambda$  as the integer $a\in \N^*$ such
that $x\cdot \Lambda =a\Z$. We denote by $\div(x)$ the divisibility of $x$.
	\item
	Let $X$ be a manifold and $C\subset X$ a curve. We denote by $\left[C\right]_{X}$ the class in $X$ of the curve $C$.
\end{itemize}
~\\
\textbf{Acknowledgements}: 
We are very grateful to the Second Japanese-European Symposium on
Symplectic Varieties and Moduli Spaces where our collaboration was initiated. The first author has been
financed by the ERC-ALKAGE grant No. 670846 and by the PRCI SMAGP (ANR-20-CE40-0026-01).
The second author is a member of the Institute for Theoretical Studies at ETH Zürich
(supported by Dr.~Max R\"ossler, the Walter Haefner Foundation and the ETH Z\"urich
Foundation).
\section{Reminders on irreducible symplectic orbifolds}\label{reminders}
\subsection{Definition}\label{basicdef}
In this paper an \emph{orbifold} is a complex space with only quotient singularities.
\begin{defi}\label{def}
An irreducible symplectic orbifold (or hyperkähler orbifold) is a compact Kähler orbifold $X$ such that:
\begin{itemize}
\item[(i)]
$\codim\Sing X\geq4$,
\item[(ii)]
$H^{2,0}(X)=\C \sigma$ with $\sigma$ non-degenerated on $X_{\reg}:=X\smallsetminus \Sing X$,
\item[(iii)]
$\pi(X_{\reg})=0$.
\end{itemize}
\end{defi}
We refer to \cite[Section 6]{Campana-2004}, \cite[Section 3.1]{Menet-2020}, \cite[Section 3.1]{Fu-Menet} and \cite[Section 2.1]{Menet-Riess-20} for discussions about this definition.
\begin{ex}[{\cite[Section 3.2]{Menet-2020}}]\label{exem}
Let $X$ be a hyperkähler manifold deformation equivalent to a Hilbert scheme of 2 points on a K3 surfaces and $\iota$ a symplectic involution on $X$.
By \cite[Theorem 4.1]{Mongardi-2012}, $\iota$ has 28 fixed points and a fixed K3 surface $\Sigma$. 
We denote by $M'$ the blow-up of $X/\iota$ in the image of $\Sigma$. The orbifold $M'$ is irreducible
symplectic (see \cite[Proposition 3.8]{Menet-2020}).
\end{ex}
\begin{defi}\label{Nikulin}
An orbifold $M'$ constructed as before is called a \emph{Nikulin orbifold}. An irreducible symplectic orbifold deformation equivalent to a Nikulin orbifold is called an orbifold of \emph{Nikulin-type}. 
\end{defi}
\subsection{Moduli space of marked irreducible symplectic orbifolds}\label{per}
Let $X$ be an irreducible symplectic orbifold.
As explained in \cite[Section 3.4]{Menet-2020}, $H^2(X,\Z)$ is endowed with a quadratic form of signature $(3,b_2(X)-3)$ called the Beauville--Bogomolov form and denoted by $q_{X}$ (the bilinear associated form is denoted by $(\cdot,\cdot)_{q_X}$ or $(\cdot,\cdot)_{q}$ when there is no ambiguity). 
Let $\Lambda$ be a lattice of signature $(3,\rk \Lambda-3)$. We denote $\Lambda_{\mathbb{K}}:=\Lambda\otimes \mathbb{K}$ for $\mathbb{K}$ a field.
A \emph{marking} of $X$ is an isometry $\varphi: H^{2}(X,\Z)\rightarrow \Lambda$.
Let $\mathcal{M}_{\Lambda}$ be the set of isomorphism classes of marked irreducible symplectic orbifolds $(X,\varphi)$ with $\varphi:H^2(X,\Z)\rightarrow\Lambda$. As explained in \cite[Section 3.5]{Menet-2020}, this set can be endowed with a non-separated complex structure such that the \emph{period map}:
$$\xymatrix@R0cm@C0.5cm{\ \ \ \ \ \ \ \ \mathscr{P}:& \mathcal{M}_{\Lambda}\ar[r]& \mathcal{D}_{\Lambda}\\
&(X,\varphi)\ar[r]&\varphi(\sigma_X)}$$
is a local isomorphism with $\mathcal{D}_{\Lambda}:=\left\{\left.\alpha\in \mathbb{P}(\Lambda_{\C})\ \right|\ \alpha^2=0,\ \alpha\cdot\overline{\alpha}>0\right\}$. The complex manifold $\mathcal{M}_{\Lambda}$ is called \emph{the moduli space of marked irreducible symplectic orbifolds of Beauville--Bogomolov lattice $\Lambda$}.

Moreover there exists a \emph{Hausdorff reduction} of $\mathcal{M}_{\Lambda}$.
\begin{prop}[\cite{Menet-2020}, Corollary 3.25]
There exists a Hausdorff reduction $\overline{\mathcal{M}_{\Lambda}}$ of $\mathcal{M}_{\Lambda}$ such that the period map $\mathscr{P}$ factorizes through $\overline{\mathcal{M}_{\Lambda}}$:
$$\xymatrix{\mathcal{M}_{\Lambda}\ar@/^1pc/[rr]^{\mathscr{P}}\ar@{->>}[r]& \overline{\mathcal{M}_{\Lambda}}\ar[r]& \mathcal{D}_{\Lambda}.}$$
Moreover, two points in $\mathcal{M}_{\Lambda}$ map to the same point in $\overline{\mathcal{M}_{\Lambda}}$ if and only if they are non-separated in $\mathcal{M}_{\Lambda}$.
\end{prop}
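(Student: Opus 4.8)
The plan is to treat this as an instance of the general Hausdorff‐reduction phenomenon for moduli of marked objects, carried out by identifying the non‐separatedness relation with the manifestly transitive relation ``bimeromorphic with compatible markings''. Write $R\subseteq\mathcal{M}_{\Lambda}\times\mathcal{M}_{\Lambda}$ for the set of pairs of points that cannot be separated by disjoint open subsets. The first, purely formal, remark is that $R=\overline{\Delta}$, the closure of the diagonal: a point $(x,y)$ lies in $\overline{\Delta}$ iff every product neighbourhood $U\times V$ meets $\Delta$, i.e.\ iff $U\cap V\neq\emptyset$ for all neighbourhoods $U\ni x$, $V\ni y$. Hence $R$ is closed, reflexive and symmetric, and transitivity is the only thing at stake. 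Moreover, since $\mathscr{P}$ is continuous and $\mathcal{D}_{\Lambda}$ is Hausdorff, $(\mathscr{P}\times\mathscr{P})(R)\subseteq\overline{\Delta_{\mathcal{D}_{\Lambda}}}=\Delta_{\mathcal{D}_{\Lambda}}$, so $\mathscr{P}$ takes the same value on the two ends of any finite chain of $R$‐related points and therefore factors through the quotient $\overline{\mathcal{M}_{\Lambda}}:=\mathcal{M}_{\Lambda}/\langle R\rangle$ by the equivalence relation generated by $R$, with the quotient topology. The proposition thus reduces to two claims: (a) $\langle R\rangle=R$, and (b) $\overline{\mathcal{M}_{\Lambda}}$ is Hausdorff.

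The substantive input---which I would take from the earlier development in \cite{Menet-2020} rather than reprove, as it is the orbifold analogue of Huybrechts' theorem on inseparable hyperk\"ahler manifolds---is the dichotomy: $(X,\varphi)\mathrel{R}(X',\varphi')$ if and only if there is a bimeromorphic map $f\colon X\dashrightarrow X'$ with $\varphi\circ f^{*}=\varphi'$, where $f^{*}\colon H^{2}(X',\bZ)\to H^{2}(X,\bZ)$ is the Hodge isometry induced by $f$ (its indeterminacy locus has codimension $\ge 2$, so $f$ is an isomorphism in codimension one; this already forces $\mathscr{P}(X,\varphi)=\mathscr{P}(X',\varphi')$). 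For ``$\Leftarrow$'' one deforms $f$: since $\mathscr{P}$ is a local isomorphism, over a small polydisc $W\subset\mathcal{D}_{\Lambda}$ there are two ``parallel'' marked families through $(X,\varphi)$ and $(X',\varphi')$ whose period maps are the identity on $W$; $f$ spreads out to a bimeromorphic map between them, which for very general $w\in W$ (where the Picard rank drops) specialises to a marking‐compatible isomorphism of the fibres, thereby producing points of $\mathcal{M}_{\Lambda}$ accumulating simultaneously to $(X,\varphi)$ and to $(X',\varphi')$. For ``$\Rightarrow$'' one extracts from non‐separatedness a sequence of points converging to both, views it as a one‐parameter degeneration, and applies the orbifold twistor‐line and period‐surjectivity machinery to conclude that $X$ and $X'$ are bimeromorphic with matching markings. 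Granting this dichotomy, claim (a) is immediate, because ``bimeromorphic with compatible markings'' is visibly an equivalence relation: reflexive via $\mathrm{id}$, symmetric via $f^{-1}$, and transitive via $g\circ f$ together with $(g\circ f)^{*}=f^{*}\circ g^{*}$.

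For claim (b), note that $R$ is now a closed equivalence relation and that the deformation argument above also shows the quotient map $\pi\colon\mathcal{M}_{\Lambda}\to\overline{\mathcal{M}_{\Lambda}}$ to be \emph{open}: spreading $f$ out over $W$ exhibits, fibrewise over $W$, an $R$‐equivalence between a neighbourhood of $(X,\varphi)$ and a neighbourhood of $(X',\varphi')$, so the $R$‐saturation of an open set is open. An open quotient by a closed equivalence relation has Hausdorff quotient: $\pi\times\pi$ is then open and surjective, $(\pi\times\pi)^{-1}(\Delta_{\overline{\mathcal{M}_{\Lambda}}})=R$ is closed, hence its complement is open and maps onto the open set $(\overline{\mathcal{M}_{\Lambda}})^{2}\smallsetminus\Delta_{\overline{\mathcal{M}_{\Lambda}}}$, so the diagonal in $(\overline{\mathcal{M}_{\Lambda}})^{2}$ is closed. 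Finally, the ``if and only if'' asserted in the proposition is exactly the equality $\langle R\rangle=R$ from (a). The clear main obstacle is the bimeromorphy dichotomy of the middle paragraph: showing that inseparable marked irreducible symplectic orbifolds are bimeromorphic with matching markings is the real geometric content, and it is where the hypothesis $\codim\Sing X\ge 4$ enters---through the good behaviour of the Hodge structure on $H^{2}$, the existence of twistor deformations, and surjectivity of the period map; everything else is point‐set topology and the functoriality of $f\mapsto f^{*}$.
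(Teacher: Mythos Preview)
The paper does not give its own proof of this proposition: it is quoted verbatim as \cite[Corollary 3.25]{Menet-2020} in Section~\ref{per} and serves only as background for the later developments. There is therefore no proof in the present paper to compare your attempt against.

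That said, your outline is the standard route to such a statement and is essentially sound. You correctly isolate the one substantive geometric input---the characterisation of non-separated marked pairs as bimeromorphic with compatible markings---and defer it to the source paper, which is appropriate since the proposition you are proving is literally a corollary there. Once that dichotomy is granted, transitivity of $R$ is automatic from composition of bimeromorphic maps, and your Hausdorffness argument (closed equivalence relation plus open quotient map) is the clean way to finish. Your justification of openness of $\pi$ via spreading the bimeromorphic map over a local section of the period map is the right mechanism; note that you are implicitly using the dichotomy a second time here, in that the spread-out $f$ gives a fibrewise bimeromorphic map, hence fibrewise non-separation, over all of $W$---this is consistent with what you wrote but worth making explicit, since in the $\Leftarrow$ direction of the dichotomy you only needed isomorphism over a dense subset.
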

\subsection{Global Torelli theorems}
\begin{thm}[\cite{Menet-2020}, Theorem 1.1]\label{mainGTTO}
Let $\Lambda$ be a lattice of signature $(3,b-3)$, with $b\geq3$. Assume that $\mathcal{M}_{\Lambda}\neq\emptyset$ and let $\mathcal{M}_{\Lambda}^{o}$ be a connected component of $\mathcal{M}_{\Lambda}$. Then the period map:
$$\mathscr{P}: \overline{\mathcal{M}_{\Lambda}}^{o}\rightarrow \mathcal{D}_{\Lambda}$$
is an isomorphism. 
\end{thm}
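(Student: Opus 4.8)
The plan is to follow Verbitsky's proof of the Hodge-theoretic global Torelli theorem for irreducible symplectic manifolds (see also the work of Huybrechts and Markman), adapting each step to the orbifold setting. Since the period map $\mathscr{P}\colon\mathcal{M}_\Lambda\to\mathcal{D}_\Lambda$ is a local isomorphism (local Torelli, recalled above), it suffices to show that the induced map $\bar{\mathscr{P}}\colon\overline{\mathcal{M}_\Lambda}^{o}\to\mathcal{D}_\Lambda$ on a connected component is \emph{bijective}: a bijective holomorphic local isomorphism is automatically a biholomorphism. So I would establish three ingredients: (a) $\mathscr{P}$ is surjective onto $\mathcal{D}_\Lambda$; (b) $\bar{\mathscr{P}}|_{\overline{\mathcal{M}_\Lambda}^{o}}$ is a covering map; (c) $\mathcal{D}_\Lambda$ is simply connected.

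For (a) I would use twistor lines. Every irreducible symplectic orbifold carries an orbifold Ricci-flat Kähler metric in each Kähler class (an orbifold version of the Calabi--Yau theorem, cf.\ \cite{Campana-2004}), and each such metric yields a twistor family over $\mathbb{P}^{1}$ whose periods trace out a rational curve in $\mathcal{D}_\Lambda$ of the shape $\mathbb{P}(W_\mathbb{C})\cap\mathcal{D}_\Lambda$ for a positive-definite real $3$-plane $W\subset\Lambda_\mathbb{R}$ (a \emph{twistor line}). For a very general Kähler class this twistor line is \emph{generic}, meeting no hyperplane $\delta^{\perp}$ with $q(\delta)<0$, and the generic twistor lines sweep out all of $\mathcal{D}_\Lambda$. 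Since each of these twistor lines genuinely lifts to a twistor deformation inside $\mathcal{M}_\Lambda$, and since the image of $\mathscr{P}$ is open, a connectivity argument — joining any two points of $\mathcal{D}_\Lambda$ by a chain of generic twistor lines — shows that the image of $\mathscr{P}|_{\mathcal{M}_\Lambda^{o}}$ is all of $\mathcal{D}_\Lambda$; here one also invokes the orbifold Fujiki relation and the structure of the positive cone from \cite{Menet-2020}.

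For (b) and (c) I would invoke that a holomorphic local isomorphism $\overline{\mathcal{M}_\Lambda}^{o}\to\mathcal{D}_\Lambda$ which lifts every member of a sweeping family of generic twistor lines, each lift being proper over its line, is necessarily a covering map (Verbitsky's ``Hausdorff path-lifting'' argument). Properness of the lift of a generic twistor line to $\overline{\mathcal{M}_\Lambda}^{o}$ is precisely where passing to the Hausdorff reduction is indispensable: the fibre of $\mathscr{P}$ over a generic period is a set of mutually non-separated points, which become a single point of $\overline{\mathcal{M}_\Lambda}$ and which — as in the manifold case — represent bimeromorphic orbifolds carrying the same Hodge structure. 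Finally $\mathcal{D}_\Lambda$ is a bounded symmetric domain of type IV, hence connected and simply connected; a covering of a simply connected space by a connected space is trivial, so $\bar{\mathscr{P}}|_{\overline{\mathcal{M}_\Lambda}^{o}}$ is the sought isomorphism.

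The heart of the matter — and the main obstacle — is step (b): one must show that the period map does not fold two \emph{genuinely} separated points of $\mathcal{M}_\Lambda^{o}$ onto one period, i.e.\ that the monodromy of twistor-line lifts to the Hausdorff reduction is trivial. In the smooth case this rests on Verbitsky's control of the non-Hausdorff locus via the Kähler and birational Kähler cones; in the orbifold setting it demands the corresponding toolbox — unobstructedness of deformations of symplectic orbifolds (whence smoothness of $\mathcal{M}_\Lambda$ and local Torelli), the orbifold Fujiki relation and Beauville--Bogomolov form \cite{Menet-2020}, orbifold Ricci-flat metrics \cite{Campana-2004}, and a description of the orbifold Kähler cone and its wall structure in the spirit of \cite{Menet-Riess-20}.
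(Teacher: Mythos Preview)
The paper does not give a proof of this theorem; it is quoted as a background result from \cite{Menet-2020}. Your sketch is a fair outline of the strategy actually used there, which transplants Verbitsky's argument (local Torelli, twistor lines, covering property, simple connectedness) to the orbifold setting.

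One genuine error: in step~(c) you assert that $\mathcal{D}_\Lambda$ is ``a bounded symmetric domain of type~IV''. It is not. Type~IV domains arise from quadratic forms of signature $(2,n)$; here the signature is $(3,b-3)$, and the period domain $\mathcal{D}_\Lambda$ is \emph{not} Hermitian symmetric. What is true (and what you need) is that $\mathcal{D}_\Lambda$ is simply connected: identifying $\mathcal{D}_\Lambda$ with the space of oriented positive $2$-planes in $\Lambda_\bR$, one has $\mathcal{D}_\Lambda \cong SO_0(3,b-3)/\bigl(SO(2)\times SO_0(1,b-3)\bigr)$, and the long exact sequence of the fibration shows $\pi_1(\mathcal{D}_\Lambda)=0$. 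So your conclusion survives, but the justification must be replaced.

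A second, softer point: in practice (both in Verbitsky's and in Menet's arguments) the degree-one statement is often obtained not solely from simple connectedness but from the observation that the fibre of $\mathscr{P}$ over a very general period point already consists of pairwise non-separated (hence bimeromorphic) marked orbifolds, so it collapses to a single point in the Hausdorff reduction. This is precisely the content you allude to in step~(b), and it is the substantive input---your last paragraph correctly identifies it as the heart of the matter.
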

There also exists a Hodge version of this theorem, which we state in the following.
\begin{defi}\label{transp}
Let $X_1$ and $X_2$ be two irreducible symplectic orbifolds. An isometry $f:H^{2}(X_{1},\Z)\rightarrow H^{2}(X_{2},\Z)$ is called a \emph{parallel transport operator} if there exists a deformation $s:\mathcal{X}\rightarrow B$, two points $b_{i}\in B$, two isomorphisms $\psi_{i}:X_{i}\rightarrow \mathcal{X}_{b_{i}}$, $i=1,2$ and a continuous path $\gamma:\left[0,1\right]\rightarrow B$ with $\gamma(0)=b_{1}$, $\gamma(1)=b_{2}$ and such that the parallel transport in the local system $Rs_{*}\Z$ along $\gamma$ induces the morphism $\psi_{2*}\circ f\circ\psi_{1}^{*}: H^{2}(\mathcal{X}_{b_{1}},\Z)\rightarrow H^{2}(\mathcal{X}_{b_{2}},\Z)$.

Let $X$ be an irreducible symplectic orbifold.
If $f:H^{2}(X,\Z)\rightarrow H^{2}(X,\Z)$ is a parallel transport operator from $X$ to $X$ itself, $f$ is called a \emph{monodromy operator}. If moreover $f$ sends a holomorphic 2-form to a holomorphic 2-form, $f$ is called a \emph{Hodge monodromy operator}. We denote by $\Mon^2(X)$ the group of monodromy operators and by $\Mon^2_{\Hdg}(X)$ the group of Hodge monodromy operators.
\end{defi}
\begin{rmk}
If $(X,\varphi)$ and $(X',\varphi')$ are in the same connected component $\mathcal{M}_{\Lambda}^{o}$ of $\mathcal{M}_{\Lambda}$, then $\varphi^{-1}\circ\varphi'$ is a parallel transport operator.
\end{rmk}
\begin{thm}[\cite{Menet-Riess-20}, Theorem 1.1]\label{mainHTTO}
Let $X$ and $X'$ be two irreducible symplectic orbifolds.
\begin{itemize}
\item[(i)]
The orbifolds $X$ and $X'$ are bimeromorphic if and only if there exists a parallel transport operator $f:H^2(X,\Z)\rightarrow H^2(X',\Z)$ which is an isometry of integral Hodge structures.
\item[(ii)]
Let $f:H^2(X,\Z)\rightarrow H^2(X',\Z)$ be a parallel transport operator, which is an isometry of integral Hodge structures. There exists an isomorphism $\widetilde{f}:X\rightarrow X'$ such that $f=\widetilde{f}_*$ if and only if $f$ maps some K\"ahler class on $X$ to a K\"ahler class on $X'$.
\end{itemize}
\end{thm}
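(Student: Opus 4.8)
The plan is to deduce both parts from the global Torelli theorem for the period map (Theorem \ref{mainGTTO}), together with two structural facts about the non-separated moduli space $\mathcal{M}_\Lambda$: an orbifold version of Huybrechts' comparison between non-separated marked points and bimeromorphic maps, and a description of the connected components of a period fibre inside one component $\mathcal{M}_\Lambda^o$ by their K\"ahler cones. Fix a component $\mathcal{M}_\Lambda^o$. By the remark after Definition \ref{transp}, $\varphi^{-1}\circ\varphi'$ (and hence its inverse $\varphi'^{-1}\circ\varphi$) is a parallel transport operator whenever $(X,\varphi),(X',\varphi')\in\mathcal{M}_\Lambda^o$; conversely, given a parallel transport operator $f\colon H^2(X,\Z)\to H^2(X',\Z)$ and a marking $\varphi$ of $X$, transporting $\varphi$ along a path witnessing $f$ produces a marking $\varphi'$ of $X'$ with $(X,\varphi),(X',\varphi')\in\mathcal{M}_\Lambda^o$ and $\varphi'=\varphi\circ f^{-1}$.

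The main step, and the one I expect to be the real obstacle, is the orbifold analogue of Huybrechts' theorem: two marked irreducible symplectic orbifolds lying in the same fibre of $\mathscr{P}$ over a point of $\overline{\mathcal{M}_\Lambda}^o$ (equivalently, non-separated points of $\mathcal{M}_\Lambda^o$, by \cite[Corollary 3.25]{Menet-2020}) are bimeromorphic; and conversely, a bimeromorphic map $\phi\colon X\dashrightarrow X'$ induces a Hodge isometry $\phi_*\colon H^2(X,\Z)\to H^2(X',\Z)$, and one may choose markings $\varphi,\varphi'$ lying in the same $\mathcal{M}_\Lambda^o$ with $\varphi'^{-1}\circ\varphi=\phi_*$ (then $(X,\varphi)$ and $(X',\varphi')$ have the same period, since $\phi_*$ carries $\C\sigma_X$ to $\C\sigma_{X'}$, hence the same image in $\overline{\mathcal{M}_\Lambda}^o$). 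The inputs are that $H^2$ with the Beauville--Bogomolov form is a bimeromorphic invariant of these orbifolds --- indeterminacy loci having codimension at least $2$ --- together with the deformation and twistor theory for orbifolds of \cite{Menet-2020}, which lets one realise non-separated points as limits of a single family and then run Huybrechts' analytic-continuation argument.

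\emph{Part (i).} If $f$ is a parallel transport operator that is a Hodge isometry, take markings with $\varphi'=\varphi\circ f^{-1}$; since $f$ carries $H^{2,0}(X)$ onto $H^{2,0}(X')$ we get $\mathscr{P}(X',\varphi')=\varphi(f^{-1}(\C\sigma_{X'}))=\varphi(\C\sigma_X)=\mathscr{P}(X,\varphi)$, so by Theorem \ref{mainGTTO} the two points have the same image in $\overline{\mathcal{M}_\Lambda}^o$, and the step above gives that $X$ and $X'$ are bimeromorphic. Conversely, if $\phi\colon X\dashrightarrow X'$ is bimeromorphic, choose markings (using the step above) with $(X,\varphi),(X',\varphi')\in\mathcal{M}_\Lambda^o$ and $\varphi'^{-1}\circ\varphi=\phi_*$; then $f:=\varphi'^{-1}\circ\varphi$ is a parallel transport operator that equals $\phi_*$, hence is a Hodge isometry.

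\emph{Part (ii).} The last ingredient is that the connected components of a fibre of $\mathscr{P}$ inside $\mathcal{M}_\Lambda^o$ are separated by their marked K\"ahler cones: if $(X,\varphi),(X',\varphi')$ lie in the same fibre and $\varphi(\mathcal{K}_X)\cap\varphi'(\mathcal{K}_{X'})\neq\emptyset$, then $(X,\varphi)=(X',\varphi')$ in $\mathcal{M}_\Lambda$ --- the twistor deformation attached to a common K\"ahler class joins the two points by a path that is separated near them, using openness of the K\"ahler cone (\cite{Menet-Riess-20}) and the orbifold twistor theory. Granting this, the forward direction of (ii) is immediate: if $f=\widetilde{f}_*$ for an isomorphism $\widetilde{f}\colon X\to X'$, then $\widetilde{f}^{*}=f^{-1}$ carries $\mathcal{K}_{X'}$ onto $\mathcal{K}_X$, so $f$ maps K\"ahler classes to K\"ahler classes. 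For the converse, let $f$ be a parallel transport Hodge isometry with $f(\kappa)\in\mathcal{K}_{X'}$ for some $\kappa\in\mathcal{K}_X$, and take markings with $\varphi'=\varphi\circ f^{-1}$. By Part (i) the points $(X,\varphi)$ and $(X',\varphi')$ lie in a common fibre of $\mathscr{P}$, and $\varphi(\kappa)=\varphi'(f(\kappa))$ lies in $\varphi(\mathcal{K}_X)\cap\varphi'(\mathcal{K}_{X'})$, which is therefore nonempty. Hence $(X,\varphi)=(X',\varphi')$ in $\mathcal{M}_\Lambda$, i.e. there is an isomorphism $\widetilde{f}\colon X\to X'$ with $\varphi'\circ\widetilde{f}_*=\varphi$, so $\widetilde{f}_*=\varphi'^{-1}\circ\varphi=f$.
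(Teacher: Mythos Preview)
This theorem is not proved in the present paper: it is quoted verbatim from \cite{Menet-Riess-20} (their Theorem~1.1) and used as a black box throughout. There is therefore no ``paper's own proof'' to compare your attempt against.

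That said, your outline follows the expected route (the one taken in \cite{Menet-Riess-20}, which in turn adapts Huybrechts' and Markman's arguments to the orbifold setting): reduce to the injectivity of $\overline{\mathcal{M}_\Lambda}^o\to\mathcal{D}_\Lambda$ from Theorem~\ref{mainGTTO}, identify non-separated points with bimeromorphic pairs, and separate points in a fibre by their K\"ahler cones. You correctly flag the orbifold Huybrechts step as the substantive input.

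One point deserves more care. In the converse of (i) you assert that a bimeromorphic map $\phi$ yields markings in the \emph{same} component $\mathcal{M}_\Lambda^o$ with $\varphi'^{-1}\circ\varphi=\phi_*$; equivalently, that $\phi_*$ is a parallel transport operator. Knowing that $\phi_*$ is a Hodge isometry (indeterminacy in codimension $\geq 2$) is not enough for this --- one must actually produce a family connecting $X$ to $X'$ along which $\phi_*$ is realised by parallel transport. In the smooth case this is Huybrechts' theorem that the graph of a birational map degenerates to the graph of an isomorphism plus effective cycles; the orbifold version is established in \cite{Menet-Riess-20} and is what you are implicitly invoking. Your sketch is fine as a roadmap, but this direction is not a formal consequence of the ingredients you list; it is itself one of the theorems proved in \cite{Menet-Riess-20}.
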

\subsection{Twistor space}\label{Twist}
Let $\Lambda$ be a lattice of signature $(3,\rk-3)$. We denote by "$\cdot$" its bilinear form.
A \emph{positive three-space} is a subspace $W\subset \Lambda\otimes\R$ such that $\cdot_{|W}$ is positive
definite. For any positive three-space, we define the associated \emph{twistor line} $T_W\subset \mathcal{D}_{\Lambda}$ by:
$$T_W:=\mathcal{D}_{\Lambda}\cap \mathbb{P}(W\otimes\C).$$
A twistor line is called \emph{generic} if $W^{\bot}\cap \Lambda=0$. A point of $\alpha\in \mathcal{D}_{\Lambda}$ is called \emph{very general} if $\alpha^{\bot}\cap \Lambda=0$.
\begin{thm}[\cite{Menet-2020}, Theorem 5.4]\label{Twistor}
  Let $(X,\varphi)$ be a marked irreducible symplectic orbifold with $\varphi:H^2(X,\Z)\rightarrow \Lambda$. Let $\alpha$ be a Kähler class on $X$, and $W_\alpha\coloneqq\Vect_{\R}(\varphi(\alpha),$ $\varphi(\Rea \sigma_X),\varphi(\Ima \sigma_X))$. 
Then:
\begin{itemize}
\item[(i)]
There exists a metric $g$ and three complex structures (see \cite[Section 5.1]{Menet-2020} for the definition) $I$, $J$ and $K$ in quaternionic relation on $X$ such that:
$$\alpha= \left[g(\cdot,I\cdot)\right]\ \text{and}\ g(\cdot,J\cdot)+ig(\cdot,K\cdot)\in H^{0,2}(X).$$
\item[(ii)]
There exists a deformation of $X$: 
$$\mathscr{X}\rightarrow T(\alpha)\simeq\mathbb{P}^1,$$ such that the period map
$\mathscr{P}:T(\alpha)\rightarrow T_{W_\alpha}$ provides an isomorphism. Moreover, for each $s=(a,b,c)\in \mathbb{P}^1$, the associated fiber $\mathscr{X}_s$ is an orbifold diffeomorphic to $X$ endowed with the complex structure $aI+bJ+cK$.
\end{itemize}
\end{thm}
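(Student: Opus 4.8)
The plan is to carry out, in the orbifold category, the classical twistor-space construction for compact hyperk\"ahler manifolds (Beauville, Fujiki), reducing each step to a statement that is either local on $X$ --- so that it can be checked $G$-equivariantly on a uniformising chart $\widetilde{U}\to\widetilde{U}/G=U$ --- or purely formal. First I would produce the Ricci-flat metric: since $\sigma_X$ is holomorphic and everywhere nondegenerate on $X_{\reg}$, its top power $\sigma_X^{\wedge n}$ (with $\dim_{\C}X=2n$) trivialises $K_{X_{\reg}}$, and because $X$ has only quotient singularities this section lifts $G$-equivariantly to each uniformiser, so the orbifold canonical bundle of $X$ is trivial and $c_1^{\mathrm{orb}}(X,\R)=0$. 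Yau's a priori estimates are local, hence apply to compact K\"ahler orbifolds, and yield a unique Ricci-flat K\"ahler orbifold metric $g$ with $[\omega_g]=\alpha$.

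The crux is the holonomy reduction. On $(X,g)$ the Bochner principle holds, so the holomorphic form $\sigma_X$ is $g$-parallel; combined with $\pi_1(X_{\reg})=0$ and $\dim H^{2,0}(X)=1$ this forces the restricted holonomy of $g$ to be contained in $\mathrm{Sp}(n)$, producing a quaternionic triple of orbifold complex structures $I,J,K$ with $IJ=K$, all $g$-K\"ahler, such that $g(\cdot,I\cdot)$ represents $\alpha$ and $g(\cdot,J\cdot)+i\,g(\cdot,K\cdot)$ represents $\sigma_X$; this is assertion (i). I expect this step to be the main obstacle: one cannot apply the de Rham splitting and Berger's holonomy classification naively to the incomplete, noncompact manifold $X_{\reg}$, so the entire holonomy formalism --- orbifold frame bundle with its Levi-Civita connection, orbifold de Rham decomposition, Bochner vanishing --- has to be developed directly in the orbifold setting.

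Granting (i), I would equip the $C^{\infty}$ orbifold $X\times\mathbb{P}^1$ with the almost complex structure equal, at $(x,(a,b,c))$ with $a^2+b^2+c^2=1$, to $(aI+bJ+cK)\oplus J_{\mathbb{P}^1}$. Its integrability is the standard Salamon--Hitchin computation, which uses only the hyperk\"ahler identities of Step (i) and, being local and $G$-equivariant, descends to a complex orbifold structure on $\mathscr{X}$; the second projection $\mathscr{X}\to\mathbb{P}^1$ is then a proper holomorphic submersion of orbifolds, i.e.\ a deformation $\mathscr{X}\to T(\alpha)\simeq\mathbb{P}^1$ whose fibre over $s=(a,b,c)$ is $X$ with the complex structure $aI+bJ+cK$, hence an orbifold diffeomorphic to $X$.

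Finally I would identify the periods. Writing $\omega_I=g(\cdot,I\cdot)$ and $\omega_J,\omega_K$ likewise, one has $[\omega_I]=\varphi(\alpha)$, $[\omega_J]=\varphi(\Rea\sigma_X)$, $[\omega_K]=\varphi(\Ima\sigma_X)$, and the holomorphic $2$-form on the fibre $\mathscr{X}_s$ has class $[w_1+i w_2]$, where $(v,w_1,w_2)$ with $v=(a,b,c)$ is an oriented orthonormal frame of $W_\alpha=\Vect_{\R}([\omega_I],[\omega_J],[\omega_K])$; in particular $\mathscr{P}(T(\alpha))\subseteq\mathbb{P}(W_\alpha\otimes\C)$ and hence $\subseteq T_{W_\alpha}$. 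Since a K\"ahler class is $q$-orthogonal to $\sigma_X$ and $\overline{\sigma_X}$ (which span a $q$-positive plane), $W_\alpha$ is positive definite, so $T_{W_\alpha}=\mathcal{D}_\Lambda\cap\mathbb{P}(W_\alpha\otimes\C)$ is a smooth conic, and the assignment $v\mapsto[w_1+i w_2]$ is a biholomorphic parametrisation of it by $S^2\cong\mathbb{P}^1$ (from $[w_1+i w_2]$ one recovers the oriented plane $v^{\perp}$, hence $v$). Therefore $\mathscr{P}\colon T(\alpha)\to T_{W_\alpha}$ is an isomorphism, which is (ii), and the proof is complete.
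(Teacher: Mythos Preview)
The paper does not prove this theorem; it is quoted verbatim from \cite[Theorem 5.4]{Menet-2020} as a background result in the reminders section, so there is no in-paper argument to compare against. Your sketch is exactly the expected strategy for the cited result: orbifold Calabi--Yau (Ricci-flat metric in the class $\alpha$), orbifold Bochner/holonomy reduction to $\mathrm{Sp}(n)$ using $\pi_1(X_{\reg})=0$ and $h^{2,0}=1$, the Hitchin--Salamon integrability computation on $X\times\mathbb{P}^1$ done $G$-equivariantly on uniformising charts, and the standard identification of the twistor line with the conic $T_{W_\alpha}$. You are also right that the genuine work lies in setting up the holonomy formalism (frame bundle, de Rham decomposition, Berger list) for orbifolds rather than for the incomplete manifold $X_{\reg}$; this is precisely what \cite[Section 5]{Menet-2020} develops, and your outline matches that approach.
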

\begin{rmk}
Note that if the irreducible symplectic orbifold $X$ of the previous theorem is endowed with a marking then
all the fibers of $\mathscr{X}\rightarrow T(\alpha)$ are naturally endowed with a marking. Therefore, the period map $\mathscr{P}:T(\alpha)\rightarrow T_{W_\alpha}$ is well defined.
%
\end{rmk}
\begin{rmk}\label{twistorinvo}
Let $X$ be an irreducible symplectic orbifold endowed with a finite symplectic automorphisms group $G$ (i.e. $G$ fixes the holomorphic 2-form of $X$). Let $\alpha$ be a Kähler class of $X$ fixed by $G$ and $\mathscr{X}\rightarrow T(\alpha)$ the associated twistor space. Then $G$ extends to an automorphism group on $\mathscr{X}$ and restricts on each fiber to a symplectic automorphism group. Indeed, since $G$ is symplectic $G$ fixes all the the complex structures $I$, $J$, $K$.
\end{rmk}

We provide the following lemma which will be used several times in this paper. It is a generalization of \cite[Lemma 2.17]{Menet-Riess-20}.
\begin{lemme}\label{lem:connected+}
  Let $\Lambda'\subseteq \Lambda$ be a sublattice of rank $b'$, which also has signature $(3,b'-3)$.
  Consider the inclusion of period domains
$    \dD_{\Lambda'} \subseteq\dD_\Lambda$.
  Suppose that a very general points of $(\widetilde{X},\widetilde{\phi})\in \cM_\Lambda \cap
  \cP^{-1}\dD_{\Lambda'}$
  (i.e.~$(\widetilde{X},\widetilde{\phi})$ with $\widetilde{\phi}(\Pic(\widetilde{X}))^\perp=\Lambda'$) satisfies that
  $\dK_{\widetilde{X}}=\dC_{\widetilde{X}}$.
  Let $(X,\phi)$ and  $(Y,\psi)\in \mathcal{M}_{\Lambda}^{\circ}$ be any two marked irreducible symplectic
  orbifolds which satisfy $\cP(X,\phi)\in \dD_{\Lambda'}$ and $\cP(Y,\psi)\in \dD_{\Lambda'}$ and for which $\phi(\dK_X)\cap
  \Lambda' \neq \emptyset$ and $\psi(\dK_Y)\cap \Lambda'\neq \emptyset$. Then $(X,\phi)$ and $(Y,\psi)$ can
  be connected by a sequence of generic twistor spaces whose image under the period domain is contained in $\dD_{\Lambda'}$.
	That is: there exists a sequence of generic twistor spaces $f_i:\mathscr{X}_i\rightarrow \mathbb{P}^1\simeq T(\alpha_i)$ with $(x_i,x_{i+1})\in\mathbb{P}^1\times\mathbb{P}^1$, $i\in \left\{0,...,k\right\}$, $k\in \mathbb{N}$ such that:
	\begin{itemize}
	\item
$f^{-1}_0(x_0)\simeq (X,\varphi),\ f^{-1}_i(x_{i+1})\simeq f^{-1}_{i+1}(x_{i+1})\ \text{and}\ f^{-1}_{k}(x_{k+1})\simeq (Y,\psi),$
for all $0\leq i\leq k-1$. 
\item
$\mathscr{P}(T(\alpha_i))\subset\dD_{\Lambda'}$ for all $0\leq i\leq k+1$.
\end{itemize}
\end{lemme}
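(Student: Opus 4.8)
The plan is to adapt the twistor--path argument behind \cite[Lemma~2.17]{Menet-Riess-20}, carried out inside the sub-period-domain $\dD_{\Lambda'}$; throughout we remain in the connected component $\cM_\Lambda^\circ$. The elementary point is the following: if a marked orbifold $(Z,\chi)\in\cM_\Lambda^\circ$ has $\cP(Z,\chi)\in\dD_{\Lambda'}$ (so that $\chi(\sigma_Z)\in\Lambda'_\C$) and $\beta\in\dK_Z$ satisfies $\chi(\beta)\in\Lambda'_\R$, then the positive three-space $W_\beta=\Vect_\R(\chi(\beta),\chi(\Rea\sigma_Z),\chi(\Ima\sigma_Z))$ is contained in $\Lambda'_\R$, whence by Theorem~\ref{Twistor} the whole twistor line $\cP(T(\beta))=T_{W_\beta}$ lies in $\dD_{\Lambda'}$. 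Perturbing $\beta$ inside the open cone $\dK_Z\cap\chi^{-1}(\Lambda'_\R)$ --- nonempty precisely because $\chi(\dK_Z)\cap\Lambda'\neq\emptyset$ --- one may moreover arrange $W_\beta^\perp\cap\Lambda'=0$, i.e.\ that $T_{W_\beta}$ is a \emph{generic} twistor line inside $\dD_{\Lambda'}$; then the very general points of $\dD_{\Lambda'}$ (those $p$ with $p^\perp\cap\Lambda'=0$) form a dense subset of $T_{W_\beta}$, so $(Z,\chi)$ is joined, by this single generic twistor space with period image in $\dD_{\Lambda'}$, to the marked fibre over any such very general point.

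The second ingredient is that every marked orbifold of $\cM_\Lambda^\circ$ over a very general point of $\dD_{\Lambda'}$ has Picard lattice $N:=(\Lambda')^{\perp}\cap\Lambda$; since any wall divisor contained in the Picard lattice cuts the positive cone (Corollary~\ref{cor:desrK}), such an orbifold satisfies $\dK=\dC$ exactly when $\mathscr{W}_\Lambda\cap N=\emptyset$, a condition depending only on the deformation class by Theorem~\ref{wall}. Hence the hypothesis placed on one very general $(\widetilde X,\widetilde\phi)$ actually holds for every marked orbifold of $\cM_\Lambda^\circ$ over a very general point of $\dD_{\Lambda'}$, and, by Theorem~\ref{mainHTTO}(ii), there is a unique such marked orbifold over each very general point. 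Applying the construction of the first paragraph to $(X,\phi)$ --- with a K\"ahler class $\beta\in\dK_X$ satisfying $\phi(\beta)\in\Lambda'$, which exists by hypothesis --- joins it, by one generic twistor space with period image in $\dD_{\Lambda'}$, to a marked orbifold $(\widetilde X_1,\chi_1)$ over a very general point of $\dD_{\Lambda'}$, for which $\dK_{\widetilde X_1}=\dC_{\widetilde X_1}$; likewise $(Y,\psi)$ is joined to such a $(\widetilde X_2,\chi_2)$.

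It remains to connect $(\widetilde X_1,\chi_1)$ and $(\widetilde X_2,\chi_2)$ by a chain of generic twistor spaces with period image in $\dD_{\Lambda'}$. Here I would run the open--and--closed twistor-connectedness argument of \cite{Menet-2020}, now relative to $\dD_{\Lambda'}$: let $U\subseteq\dD_{\Lambda'}$ be the set of periods of marked orbifolds of $\cM_\Lambda^\circ$ reachable from $\cP(\widetilde X_1)$ by chains of generic twistor spaces with period image in $\dD_{\Lambda'}$ and with all junction fibres very general. If $p\in U$ is very general, the marked orbifold $(Z_p,\chi_p)$ over it has $\dK_{Z_p}=\dC_{Z_p}$, so every element of $\chi_p(\dC_{Z_p})\cap\Lambda'_\R$ is a K\"ahler class; the associated generic twistor lines all lie in $\dD_{\Lambda'}$, pass through $p$, and --- by the local twistor-sweeping computation of \cite{Menet-2020}, which is intrinsic to the period domain $\dD_{\Lambda'}$ --- their union is a neighbourhood of $p$ in $\dD_{\Lambda'}$; thus $U$ is open, and the analogous computation at a boundary point shows $U$ is closed. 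Since $\dD_{\Lambda'}$ is connected and $U\neq\emptyset$, $U=\dD_{\Lambda'}\ni\cP(\widetilde X_2)$, so $(\widetilde X_1,\chi_1)$ and $(\widetilde X_2,\chi_2)$ are connected as required. Concatenating the chain from $(X,\phi)$ to $(\widetilde X_1,\chi_1)$, the chain from $(\widetilde X_1,\chi_1)$ to $(\widetilde X_2,\chi_2)$, and the reverse of the chain from $(Y,\psi)$ to $(\widetilde X_2,\chi_2)$ yields the desired sequence from $(X,\phi)$ to $(Y,\psi)$.

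The main obstacle is the local twistor-sweeping step in the last paragraph: one has to verify that, despite the constraint $\chi(\beta)\in\Lambda'_\R$ on the moving K\"ahler class, generic twistor lines through a very general period still fill out a full neighbourhood inside $\dD_{\Lambda'}$ --- which is precisely the sweeping statement of \cite{Menet-2020} read with $\Lambda'$ in place of $\Lambda$. The hypothesis $\dK_{\widetilde X}=\dC_{\widetilde X}$ for very general $\widetilde X$ is exactly what makes a K\"ahler class in $\Lambda'_\R$ available at each very general junction period, and density of the very general points on every twistor line lets one keep all junction fibres very general.
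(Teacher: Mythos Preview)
Your proposal is correct and follows essentially the same strategy as the paper: reduce to the case where both endpoints are very general in $\cP^{-1}\dD_{\Lambda'}$ by a single twistor space through a generic K\"ahler class in $\Lambda'_\R$, then connect very general points by a chain of generic twistor lines inside $\dD_{\Lambda'}$, using $\dK=\dC$ to lift and the global Torelli theorem for uniqueness at the junctions.

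The only notable difference is in the middle step. The paper simply invokes \cite[Proposition~3.7]{Huybrechts12} (and the observation in its proof that the junction points can be taken very general) to get connectedness of $\dD_{\Lambda'}$ by generic twistor lines, whereas you sketch the underlying open--closed sweeping argument from \cite{Menet-2020}. Both amount to the same thing; citing Huybrechts is shorter and avoids the slight imprecision in your formulation of $U$ (you check openness only at very general points but define $U$ as a subset of all of $\dD_{\Lambda'}$). Your extra paragraph explaining why the hypothesis on one very general fibre propagates to all of them via $\mathscr{W}_\Lambda\cap(\Lambda')^\perp=\emptyset$ is a nice clarification, though the paper reads the hypothesis as already applying to all very general points.
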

\begin{proof}
We split the proof in two steps.
  
\textbf{First case}: We assume that $(X,\varphi)$ and $(Y,\psi)$ are very general in $\cM_\Lambda\cap
  \cP^{-1}\dD_{\Lambda'}$ (hence $\dC_{\widetilde{X}}=\dK_{\widetilde{X}}$ and $\dC_{\widetilde{Y}}=\dK_{\widetilde{Y}}$). 
By \cite[Proposition 3.7]{Huybrechts12}  the period domain $\dD_{\Lambda'}$
is connected by generic twistor lines. Note that the proof of \cite[Proposition 3.7]{Huybrechts12} in fact shows
that the twistor lines can be chosen in a such a way that they intersect in very general points of $\dD_{\Lambda'}$.
In particular, we can connect  $\mathscr{P}(Y,\psi )$ and $\mathscr{P}(X,\phi)$ by
such generic twistor lines in $\dD_{\Lambda'}$.
  Since for a very general element $(\widetilde{X},\widetilde{\phi})$ of $\cM_\Lambda\cap
  \cP^{-1}\dD_{\Lambda'}$ we know
  $\dK_{\widetilde{X}}=\dC_{\widetilde{X}}$, Theorem \ref{Twistor} shows that all these twistor line can be lifted to
twistor spaces. Moreover, by Theorem \ref{mainHTTO} (ii) the period map $\cP$ is injective on the set of points $(\widetilde{X},\widetilde{\phi})\in\cM_\Lambda$ such that $\dK_{\widetilde{X}}=\dC_{\widetilde{X}}$. Therefore, all these twistor spaces intersect and connect $(X,\varphi)$ to $(Y,\psi)$.

\textbf{Second case}: If $(X,\varphi)$ is not very general, we consider 
a very general Kähler class $\alpha\in\dK_X \cap \Lambda'_{\R}\neq\emptyset$. Then the associated twistor space $\mathscr{X}\rightarrow T(\alpha)$
have a fiber which is a very general marked irreducible symplectic orbifold in $\cM_\Lambda\cap
  \cP^{-1}\dD_{\Lambda'}$. Hence we are back to the first case.

\end{proof}
\subsection{Kähler cone} \label{Kählersection}
Let $X$ be an irreducible symplectic orbifold of dimension $n$. We denote by $\mathcal{K}_X$ the Kähler cone of $X$. We denote by $\mathcal{C}_X$ the connected component of $\left\{\left.\alpha\in H^{1,1}(X,\R)\right|\ q_X(\alpha)>0\right\}$ which contains $\mathcal{K}_X$; it is called the \emph{positive cone}.
Let $\BK_X$ be the \emph{birational Kähler cone} which is the union $\cup f^{*}\mathcal{K}_{X'}$ for $f$ running through all birational maps between $X$ and any irreducible symplectic orbifold $X'$. In \cite[Definition 4.5]{Menet-Riess-20}, we define the wall divisors in the same way as Mongardi in \cite[Definition 1.2]{Mongardi13}.
\begin{defi}
Let $X$ be an irreducible symplectic orbifold and let $D\in\Pic(X)$. Then $D$ is called a \emph{wall divisor} if $q(D)<0$ and $g(D^{\bot})\cap \BK_X =\emptyset$, for all $g\in \Mon^2_{\Hdg}(X)$.
\end{defi}
We denote by $\mathscr{W}_X$ the set of primitive wall divisors of $X$ (non divisible in $\Pic X$).
By \cite[Corollary 4.8]{Menet-Riess-20}, we have the following theorem. 
\begin{thm}\label{wall}
Let $\Lambda$ be a lattice of signature $(3,\rk \Lambda -3)$ and $\mathcal{M}_{\Lambda}^{o}$ a connected component of the associated moduli space of marked irreducible symplectic orbifolds. Then there exists a set $\mathscr{W}_\Lambda\subset \Lambda$ such that for all $(X,\varphi)\in \mathcal{M}_{\Lambda}^{o}$:
 $$\mathscr{W}_X=\varphi^{-1}(\mathscr{W}_\Lambda)\cap H^{1,1}(X,\Z).$$
\end{thm}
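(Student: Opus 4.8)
The plan is to reduce the theorem to the following statement: for a fixed primitive $\lambda\in\Lambda$ with $q(\lambda)<0$, the property ``$\varphi^{-1}(\lambda)$ is a wall divisor of $X$'' is the same for \emph{every} marked pair $(X,\varphi)\in\mathcal{M}_\Lambda^o$ for which $\varphi^{-1}(\lambda)\in H^{1,1}(X,\Z)$. Such pairs exist: by Theorem \ref{mainGTTO} the period map is surjective on the component, and $\mathcal{D}_{\lambda^\perp}:=\mathcal{D}_\Lambda\cap\mathbb{P}(\lambda^\perp\otimes\C)$ is nonempty because $\lambda^\perp$ still has signature $(3,\rk\Lambda-4)$. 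Granting the reduced statement, one defines $\mathscr{W}_\Lambda$ to be the set of primitive $\lambda$ with $q(\lambda)<0$ for which the common value of this property is ``yes''. Since a marking is a lattice isometry, it carries primitive classes to primitive classes and $H^{1,1}(X,\Z)$ onto the $(1,1)$-classes, so for $D\in H^{1,1}(X,\Z)$ primitive with $q(D)<0$ the pair $(X,\varphi)$ itself witnesses that $D$ is a wall divisor if and only if $\varphi(D)\in\mathscr{W}_\Lambda$; this is exactly $\mathscr{W}_X=\varphi^{-1}(\mathscr{W}_\Lambda)\cap H^{1,1}(X,\Z)$.

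The core step is invariance under parallel transport operators that are Hodge isometries. Let $f\colon H^2(X,\Z)\to H^2(X',\Z)$ be such an operator and $D\in\Pic(X)$. First, $q(f(D))=q(D)$, so $q$-negativity is preserved. Next, parallel transport operators are closed under composition and inversion, so conjugation by $f$ identifies $\Mon^2_{\Hdg}(X)$ with $\Mon^2_{\Hdg}(X')$. Finally, by Theorem \ref{mainHTTO}(i) the existence of $f$ forces $X$ and $X'$ to be bimeromorphic, say by $h\colon X\dashrightarrow X'$; then $h^*(\BK_{X'})=\BK_X$, since both cones are the union of the pullbacks of the Kähler cones over the \emph{same} family of bimeromorphic models, and $g:=h^*\circ f\in\Mon^2_{\Hdg}(X)$, whence $f^{-1}(\BK_{X'})=g^{-1}(\BK_X)$ lies in the $\Mon^2_{\Hdg}(X)$-orbit of $\BK_X$. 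Now unwind the defining condition for $f(D)$ on $X'$: writing each $g'\in\Mon^2_{\Hdg}(X')$ as $f\gamma f^{-1}$ with $\gamma\in\Mon^2_{\Hdg}(X)$, one finds $g'(f(D))^\perp\cap\BK_{X'}=\emptyset$ is equivalent to $(g\gamma)(D)^\perp\cap\BK_X=\emptyset$; as $\gamma$ ranges over $\Mon^2_{\Hdg}(X)$ so does $g\gamma$, so this is precisely the wall-divisor condition for $D$ on $X$. Hence $D\in\mathscr{W}_X$ if and only if $f(D)\in\mathscr{W}_{X'}$. In particular, if $(X,\varphi)$ and $(X',\varphi')$ in $\mathcal{M}_\Lambda^o$ lie over the same period, then $(\varphi')^{-1}\circ\varphi$ is a parallel transport operator (same component) and a Hodge isometry (same period), so it preserves the wall-divisor property for $\lambda$; this also settles all non-separated pairs over a common period.

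It remains to compare pairs over \emph{different} periods $p,p'\in\mathcal{D}_{\lambda^\perp}$ inside the component. The relevant component of $\mathcal{D}_{\lambda^\perp}$ is itself a period domain of signature $(3,\rk\Lambda-4)$, hence connected by generic twistor lines meeting in very general points, by \cite[Proposition 3.7]{Huybrechts12} (precisely as recalled in the proof of Lemma \ref{lem:connected+}). Using Theorem \ref{Twistor} to lift suitable such twistor lines to twistor spaces, as in Lemma \ref{lem:connected+}, and using that between fibres of a twistor family sharing a common period the parallel transport operators are Hodge isometries, one propagates the wall-divisor status of $\lambda$ from $p$ to $p'$; together with the previous paragraph this yields the claimed independence.

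The main obstacle is this last propagation: unlike in Lemma \ref{lem:connected+}, one cannot assume $\mathcal{K}=\mathcal{C}$ at the very general fibre, so the twistor lines in $\mathcal{D}_{\lambda^\perp}$ need not lift through a prescribed fibre, and one must instead argue that the wall-divisor condition — an intersection of open conditions (``some monodromy translate of $\lambda^\perp$ meets a birational Kähler cone'') with a closed one ($q(\lambda)\le 0$) — is locally constant along the chosen twistor path. This is the delicate point already present in Mongardi's treatment of the smooth case and carried out in the orbifold setting in \cite[Corollary 4.8]{Menet-Riess-20}.
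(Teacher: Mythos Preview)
The paper does not give its own proof of this theorem: it is stated as a direct consequence of \cite[Corollary 4.8]{Menet-Riess-20}, with no further argument. Your proposal therefore cannot really be compared to ``the paper's proof'', but rather to the proof in the cited reference, of which you give a reasonable outline. Your reduction to a fixed primitive $\lambda$ and your treatment of pairs over the same period are correct; note only that when you write $g:=h^*\circ f\in\Mon^2_{\Hdg}(X)$ you are using, without saying so, that a bimeromorphism $h$ between irreducible symplectic orbifolds induces a parallel transport operator $h^*$ --- this is the orbifold analogue of Huybrechts' theorem and is established in \cite{Menet-Riess-20}, but it is not a consequence of Theorem~\ref{mainHTTO}(i) alone, which only produces \emph{some} bimeromorphism from \emph{some} parallel transport operator.

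Where your proposal stops being a proof is exactly where you say it does: the propagation across $\mathcal{D}_{\lambda^\perp}$. You invoke Lemma~\ref{lem:connected+}, but its hypothesis that very general points satisfy $\dK=\dC$ is precisely what is not available here (if it were, $\lambda$ would trivially not be a wall divisor). You then defer to \cite[Corollary 4.8]{Menet-Riess-20} for this step --- which is the entire content of the theorem. So your proposal is an honest expository sketch of the strategy, with the decisive deformation-theoretic argument left as the same black box the paper cites; it is not an independent proof.
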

\begin{defi}
The set $\mathscr{W}_\Lambda$ will be called the \emph{set of wall divisor} of the deformation class of $X$.
\end{defi}
\begin{ex}[\cite{Mongardi13}, Proposition 2.12 and \cite{HuybrechtsK3}, Theorem 5.2 Chapter 8]\label{examplewall}
If $\mathcal{M}_{\Lambda}^{o}$ is a connected component of the moduli space of marked K3 surface, then:
$$\mathscr{W}_\Lambda=\left\{\left.D\in \Lambda\ \right|\ D^2=-2\right\}.$$
If $\mathcal{M}_{\Lambda}^{o}$ is a connected component of the moduli space of marked irreducible symplectic manifolds equivalent by deformation to a Hilbert scheme of 2 points on a K3 surface, then: $$\mathscr{W}_\Lambda=\left\{\left.D\in \Lambda\ \right|\ D^2=-2\right\}\cup\left\{\left.D\in \Lambda\ \right|\ D^2=-10\ \text{and}\ D\cdot\Lambda\subset2\Z \right\}.$$
\end{ex}
\begin{rmk}\label{dualclass}
Let $\beta\in H^{2n-1,2n-1}(X,\Q)$.
We can associate to $\beta$ its \emph{dual class} $\beta^{\vee}\in H^{1,1}(X,\Q)$ defined as follows. By \cite[Corollary 2.7]{Menet-2020} and since the Beauville--Bogomolov form is integral and non-degenerated (see \cite[Theorem 3.17]{Menet-2020}), we can find $\beta^{\vee}\in H^{2}(X,\Q)$ such that for all $\alpha\in H^2(X,\C)$:
 $$(\alpha,\beta^{\vee})_q=\alpha\cdot \beta,$$ 
where the dot on the right hand side is the cup product. Since $(\beta^{\vee},\sigma_X)_q=\beta\cdot \sigma_X=0$, we have $\beta^{\vee}\in H^{1,1}(X,\Q)$.
\end{rmk}

We also define the \emph{Mori cone} as the cone of classes of effective curves in  $H^{2n-1,2n-1}(X,\Z)$.
\begin{prop}[\cite{Menet-Riess-20}, Proposition 4.12]\label{extremalray}
Let $X$ be an irreducible symplectic orbifold. Let $R$ be an extremal ray of the Mori cone of $X$ of negative self intersection. Then any class $D\in \Q R^{\vee}$ is a wall divisor.
\end{prop}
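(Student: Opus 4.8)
The plan is to verify the two conditions in the definition of a wall divisor. Since $D\in\Q R^\vee$ is a nonzero scalar multiple of $R^\vee$, we have $D^\perp=(R^\vee)^\perp$ and $q(D)=\lambda^2 q(R^\vee)$ for some $\lambda\neq 0$, so the hypothesis that $R$ has negative self-intersection (i.e.\ $q(R^\vee)<0$) gives $q(D)<0$ immediately. Hence we may assume $D=R^\vee$, and the whole content is the assertion $g(D^\perp)\cap\BK_X=\emptyset$ for every $g\in\Mon^2_{\Hdg}(X)$.

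The case $g=\id$ with $\BK_X$ replaced by $\mathcal{K}_X$ is elementary: choosing an effective curve $C$ with $[C]\in R$, the dual class $[C]^\vee$ is a positive multiple of $R^\vee=D$, so for any Kähler class $\kappa$ the number $(\kappa,D)_q$ is a positive multiple of $\kappa\cdot C>0$, whence $\kappa\notin D^\perp$. The work is to upgrade this, on one hand from $\mathcal{K}_X$ to the full birational Kähler cone $\BK_X$, and on the other hand to allow an arbitrary Hodge monodromy operator $g$; the tool for this is the deformation invariance of wall divisors (Theorem \ref{wall}).

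Concretely I would argue as follows. First, an extremal ray of the Mori cone of negative self-intersection is generated by a \emph{rational} curve $C$. Since $q(D)<0$ we have $D^\perp\cap\mathcal{C}_X\neq\emptyset$, so $D$ remains of type $(1,1)$ along a deformation of $X$; using the twistor theory of Theorem \ref{Twistor} and a connectedness-by-twistor-spaces argument in the spirit of Lemma \ref{lem:connected+} (applied to the sublattice cut out by keeping $D$ of type $(1,1)$) one connects $(X,\varphi)$ to a very general marked model $(Y,\psi)$ in which $D$ generates the Picard lattice. Along such a chain the rational curve $C$ deforms, its class staying a Hodge class throughout, so on $Y$ the divisor $D$ is still the dual class of the (unique) extremal ray of the Mori cone. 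On $Y$, since $\Pic(Y)=\Z D$, every $g\in\Mon^2_{\Hdg}(Y)$ satisfies $g(D)=\pm D$, hence $g(D^\perp)=D^\perp$; and since $\rk\Pic(Y)=1$ there are no flops, so $\BK_Y=\mathcal{K}_Y$. The elementary computation above then shows that $D$ is a wall divisor of $Y$, and since $D$ is of type $(1,1)$ also on $X$, Theorem \ref{wall} transfers this back to give that $D$ is a wall divisor of $X$.

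The main obstacle is the persistence step: controlling the extremal rational curve along the chain of twistor deformations so that its class stays the generator of an extremal ray of the Mori cone all the way to $Y$, and handling the fact that the very general $Y$ is typically non-projective (so that ``Mori cone'' must be replaced by the weaker but sufficient statement that $D^\perp$ misses $\mathcal{K}_Y=\BK_Y$, witnessed by the surviving effective curve class). An alternative that avoids passing through a very general model is a direct contradiction: if $\beta\in g(D^\perp)\cap\BK_X$, write $\beta=f^*\kappa$ with $f\colon X\dashrightarrow X'$ bimeromorphic and $\kappa$ Kähler on $X'$; then $\Psi:=(f^{-1})^*\circ g$ is a parallel transport Hodge isometry $H^2(X,\Z)\to H^2(X',\Z)$ with $\kappa\perp\Psi(D)$, and $\Psi(D)$ is a positive multiple of the dual of the parallel transport of $[C]$ to $X'$; one is then reduced to showing that this transported class is effective (or anti-effective) on $X'$ — again the persistence of rational curves under deformation — contradicting the positivity of $\kappa$ on effective curves.
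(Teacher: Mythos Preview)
The paper does not prove this proposition; it is quoted verbatim from \cite[Proposition 4.12]{Menet-Riess-20}, so there is no in-paper argument to compare against.

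Assessing your attempt on its own: the outline is reasonable in spirit, but it is not a proof. You correctly isolate the two critical ingredients yourself --- (a) that an extremal ray of negative square is generated by a \emph{rational} curve, and (b) that this rational curve persists along a chain of twistor deformations keeping its class of type $(1,1)$ --- and then stop short of establishing either. Point (a) is already nontrivial: in the smooth projective case it comes from the cone theorem together with bend-and-break or Kawamata--Mori type results, but for possibly non-projective irreducible symplectic orbifolds it is not a statement you can simply invoke. Point (b), the deformation-theoretic persistence of the curve, is precisely the technical heart of such arguments and requires a genuine analysis (unobstructedness of the map from $\bP^1$, control of the Hilbert scheme or moduli of stable maps along the family) that you do not supply. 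Your alternative route via a bimeromorphic map $f\colon X\dashrightarrow X'$ lands in the same place: you need the parallel-transported curve class to be effective on $X'$, which is again the unresolved persistence issue.

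There is also a smaller slip: even on the very general model $Y$ with $\Pic(Y)=\Z D$, the assertion $\BK_Y=\dK_Y$ is not automatic merely from $\rk\Pic(Y)=1$; one has to argue that no nontrivial bimeromorphic model exists, which in the orbifold setting needs justification.

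In summary, what you have written is a plausible strategy with the obstacles accurately flagged but not overcome; as it stands it is a plan rather than a proof.
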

It induces a criterion for Kähler classes.
\begin{defi}Given an irreducible symplectic orbifold $X$ endowed with
  a Kähler class $\omega$.
    Define $\cW_X^+\coloneqq \{D\in \cW_X\,|\, (D,\omega)_q>0\}$, i.e.~for every wall divisor, we choose the
  primitive representative in its line, which pairs positively with the Kähler cone.
\end{defi}
\begin{cor}[\cite{Menet-Riess-20}, Corollary 4.14]\label{criterionwall}\label{cor:desrK}
  Let $X$ be an irreducible symplectic orbifold such that either $X$ is projective or $b_2(X)\geq5$.
  Then
  $$
  \dK_X=\{\alpha\in \dC_X \,|\, (\alpha, D)_q>0\ \forall D\in \cW_X^+\}.$$
\end{cor}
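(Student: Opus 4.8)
The plan is to prove the two inclusions separately: the inclusion of $\dK_X$ in the right-hand side is formal, while the reverse one rests on a wall-and-chamber description of the Kähler cone, which in turn comes from Proposition \ref{extremalray} together with a (projective) Nakai--Moishezon-type input and a deformation argument. For the easy inclusion, fix $D\in\cW_X$ and apply the defining property of a wall divisor to $g=\id\in\Mon^2_{\Hdg}(X)$; this gives $D^{\perp}\cap\BK_X=\emptyset$, and since $\dK_X\subseteq\BK_X$ we get $(\alpha,D)_q\neq 0$ for every Kähler class $\alpha$. Hence $\alpha\mapsto\sign(\alpha,D)_q$ is locally constant, so constant, on the connected cone $\dK_X$; evaluating at the reference class $\omega$ and using the definition of $\cW_X^{+}$, this constant sign is positive for every $D\in\cW_X^{+}$. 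Thus $\dK_X\subseteq\dK_X^{\mathrm{num}}\coloneqq\{\alpha\in\dC_X\mid(\alpha,D)_q>0\ \forall D\in\cW_X^{+}\}$, and a fortiori $\dK_X\subseteq\dC_X\setminus\bigcup_{D\in\cW_X}D^{\perp}$.

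For the reverse inclusion, the plan is to deduce it from the \emph{boundary estimate} $\partial\dK_X\cap\dC_X\subseteq\bigcup_{D\in\cW_X}D^{\perp}$. Granting this, the remaining argument is purely topological: since $H^{1,1}(X,\R)$ has signature $(1,h^{1,1}-1)$, the positive cone $\dC_X$ is an open convex cone, and hence so is $\dK_X^{\mathrm{num}}$. Moreover $\dK_X$ is open in $\dC_X$, disjoint from $\bigcup_{D\in\cW_X}D^{\perp}$ by the first paragraph, and its boundary within $\dC_X$ lies in that union by the estimate; therefore $\dK_X$ is open and closed in $\dC_X\setminus\bigcup_{D\in\cW_X}D^{\perp}$. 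Since $\dK_X^{\mathrm{num}}$ is connected, contained in $\dC_X\setminus\bigcup_{D\in\cW_X}D^{\perp}$, and meets $\dK_X$ (it contains $\omega$), it must be contained in $\dK_X$, which is exactly the reverse inclusion.

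It remains to establish the boundary estimate, and this is where the substance lies. First I would treat $X$ projective: a class $\beta\in\partial\dK_X\cap\dC_X$ is nef, and big since $q_X(\beta)>0$, but not ample; by an orbifold Nakai--Moishezon criterion its null locus is a nonempty proper closed subvariety, and for an irreducible component $Z$ of minimal dimension the standard theory of null loci of nef and big classes produces a covering family of curves $C\subset Z$ with $\beta\cdot C=0$, so that the face $\NEbar(X)\cap\beta^{\perp}$ contains an effective curve class. Picking $C$ with $\R_{\geq 0}[C]_X$ an extremal ray of $\NEbar(X)$, the dual class $[C]_X^{\vee}$ satisfies $([C]_X^{\vee},\beta)_q=\beta\cdot C=0$, hence lies in $\beta^{\perp}$, which is negative definite; as $[C]_X^{\vee}\neq 0$ we get $q_X([C]_X^{\vee})<0$, so Proposition \ref{extremalray} makes $[C]_X^{\vee}$ a wall divisor orthogonal to $\beta$. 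For $X$ merely Kähler with $b_2(X)\geq 5$, I would reduce to the projective case by deformation: such a $\beta$ persists of type $(1,1)$ under small deformations, and using the twistor families of Theorem \ref{Twistor} together with the density of periods of projective orbifolds one transports $\beta$ to a boundary class of the Kähler cone of a projective orbifold deformation equivalent to $X$; the projective case and the deformation invariance of wall divisors (Theorem \ref{wall}) then yield the required wall divisor on $X$.

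The main obstacle is precisely the boundary estimate: one needs, in the orbifold category, both a Nakai--Moishezon-type criterion detecting non-ample nef classes and the fact that the corresponding face of the Mori cone is spanned by an effective curve class whose dual has negative self-intersection, and then the deformation step passing from the projective to the general Kähler setting, where the hypothesis ``$X$ projective or $b_2(X)\geq 5$'' enters. Once the boundary estimate is available, the two inclusions are formal as above.
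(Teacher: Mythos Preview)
The paper does not prove this statement: it is merely recalled from \cite[Corollary 4.14]{Menet-Riess-20} as background, with no argument given here. So there is no proof in the present paper to compare your proposal against.

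That said, your outline is the natural strategy and is essentially the one used in \cite{Menet-Riess-20}. The easy inclusion is exactly as you say. For the hard inclusion, the actual argument in \cite{Menet-Riess-20} runs through their Theorem 4.1 (the numerical K\"ahler-class criterion via pairing with extremal rays of the Mori cone), which plays the role of your ``boundary estimate'': it identifies the walls of $\dK_X$ inside $\dC_X$ with hyperplanes orthogonal to duals of extremal rays, and Proposition \ref{extremalray} then upgrades these to wall divisors. Your sketch of the projective case is correct in spirit; the point where you should be a bit careful is the choice of $C$: one does not first produce an arbitrary curve with $\beta\cdot C=0$ and then ``pick $C$ extremal'', but rather uses the cone theorem (or the orbifold analogue proved in \cite{Menet-Riess-20}) to get an extremal ray in the face $\overline{NE}(X)\cap\beta^{\perp}$ directly. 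Your deformation step for the non-projective case is where the hypothesis $b_2(X)\geq 5$ genuinely enters, and this reduction is carried out in \cite{Menet-Riess-20} via twistor families; your description of it is a fair summary, though the details (keeping track of the boundary class under deformation) are nontrivial and are the content of that reference rather than something one can fill in on the spot.
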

Finally, we recall the following proposition about the birational Kähler cone.
\begin{prop}[\cite{Menet-Riess-20}, Corollary 4.17]\label{caca}
Let $X$ be an irreducible symplectic orbifold. Then $\alpha\in H^{1,1}(X,\R)$ is in the closure $\overline{\BK}_X$ of the birational Kähler cone $\BK_X$ if and only if $\alpha\in\overline{\mathcal{C}}_X$ and $(\alpha,[D])_q\geq 0$ for all uniruled divisors $D\subset X$.
\end{prop}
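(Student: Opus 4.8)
This is the orbifold counterpart of the Huybrechts--Boucksom description of the movable (birational K\"ahler) cone of a compact hyperk\"ahler manifold, and the plan is to transpose their argument, the two orbifold-specific ingredients being the orbifold Fujiki relations and the fact that a bimeromorphic map between irreducible symplectic orbifolds is an isomorphism in codimension one (a consequence of the triviality of the canonical classes and the negativity lemma). I would prove the two inclusions separately.

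For ``$\subseteq$'': since $\BK_X\subseteq\mathcal{C}_X$ by definition, $\overline{\BK}_X\subseteq\overline{\mathcal{C}}_X$ is immediate, so it suffices to check $(\alpha,[D])_q\geq 0$ for every uniruled divisor $D$ and every $\alpha\in\BK_X$ (the boundary case then following by continuity). Write $\alpha=f^*\omega$ with $f:X\dashrightarrow X'$ bimeromorphic, $X'$ irreducible symplectic and $\omega\in\mathcal{K}_{X'}$. Since $f$ is an isomorphism in codimension one it induces a $q$-isometry between $H^2(X)$ and $H^2(X')$ under which $[D]$ corresponds to the class of the strict transform $D'$ of $D$, and $D'$ is again uniruled because the indeterminacy locus of $f$ has codimension $\geq 2$, so that a general rational curve sweeping out $D$ is not contracted. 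Hence $(\alpha,[D])_q=(\omega,[D'])_q$. As $\omega$ is K\"ahler and $D'$ is a nonzero effective divisor, $\int_{D'}\omega^{N-1}>0$ with $N=\dim_{\bC}X$; by the orbifold Fujiki relation this integral equals $c\,q(\omega)^{N/2-1}(\omega,[D'])_q$ with $c>0$, and $q(\omega)>0$, whence $(\omega,[D'])_q>0$.

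For ``$\supseteq$'': let $\alpha\in\overline{\mathcal{C}}_X$ satisfy $(\alpha,[D])_q\geq 0$ for every uniruled divisor $D$; I would deduce $\alpha\in\overline{\BK}_X$. The geometric heart is that the prime divisors which can be contracted by a bimeromorphic morphism out of an irreducible symplectic orbifold --- equivalently, the prime divisors that may appear in the negative part of a divisorial Zariski decomposition --- are exactly the uniruled ones. Granting this, together with the orbifold divisorial Zariski decomposition, the hypothesis on $\alpha$ forces its negative part to vanish, i.e.\ $\alpha$ is modified nef; being in $\overline{\mathcal{C}}_X$, it then lies in $\overline{\BK}_X$. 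Equivalently, in the framework of \cite{Menet-Riess-20}: $\BK_X$ carries a wall-and-chamber decomposition into the pullbacks $f^*\mathcal{K}_{X'}$, whose walls lie on hyperplanes $D^{\bot}$ for $D$ a uniruled prime divisor of negative square; joining $\alpha$ to a K\"ahler class by a segment inside $\overline{\mathcal{C}}_X$, the numerical hypothesis prevents that segment from strictly crossing such a wall, so it remains in $\overline{\BK}_X$.

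The main obstacle is the inclusion ``$\supseteq$'', and inside it the identification of the walls of $\overline{\BK}_X$ with hyperplanes orthogonal to uniruled divisors. This needs orbifold analogues of Boucksom's positivity theory --- existence and functoriality of the divisorial Zariski decomposition for compact K\"ahler orbifolds, and the statement that a prime divisor occurring with positive coefficient in a negative part is covered by rational curves --- together with the description of $\BK_X$ obtained in \cite{Menet-Riess-20}. By contrast ``$\subseteq$'' is formal once the orbifold Fujiki relation and the codimension-one invariance of $q$ are in hand.
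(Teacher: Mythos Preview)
The paper does not give a proof of this proposition at all: it is quoted verbatim as \cite[Corollary 4.17]{Menet-Riess-20} in the ``Reminders'' section, with no argument attached. So there is no ``paper's own proof'' to compare against; you are reconstructing what the cited reference does.

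That said, your sketch is the right shape and matches the strategy of the cited work. A couple of remarks on the details:

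For ``$\subseteq$'': your argument is correct, but note that you do not actually need $D'$ to be uniruled---only that it is a nonzero effective divisor, which already follows from $f$ being an isomorphism in codimension one. The Fujiki relation then gives $(\omega,[D'])_q>0$ from $\int_{D'}\omega^{2n-1}>0$, as you say.

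For ``$\supseteq$'': you correctly identify this as the substantive direction and the place where the work lies. Your two descriptions (via divisorial Zariski decomposition, and via the wall-and-chamber structure of $\BK_X$) are both accurate summaries of how \cite{Menet-Riess-20} proceeds, but neither is a proof on its own: you are assuming as black boxes (i) an orbifold divisorial Zariski decomposition, (ii) the uniruledness of the prime components of the negative part, and (iii) the identification of the walls of $\BK_X$ with hyperplanes orthogonal to uniruled divisors of negative square. These are precisely the results developed in \cite{Menet-Riess-20} leading up to Corollary 4.17, so citing them is appropriate---but you should be aware that the content of the proposition is essentially contained in those ingredients, not in the short deduction you give from them.
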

%
%
%
\section{The Nikulin orbifolds}\label{M'section0}
\subsection{Construction and description of Nikulin orbifolds}\label{M'section}
In order to enhance the readability, we recall the construction of the Nikulin orbifold from Example \ref{exem} and Definition \ref{Nikulin}.
Let $X$ be a (smooth) irreducible symplectic 4-fold deformation equivalent to the Hilbert scheme of two points
on a K3 surface (called \emph{manifold of $K3^{[2]}$-type}). 
Suppose that $X$ admits a symplectic involution $\tilde{\iota}$. 
By \cite[Theorem 4.1]{Mongardi-2012}, $\tilde{\iota}$ has 28 fixed points and a fixed K3 surface $\Sigma$. We
define $M:=X/\tilde{\iota}$ the quotient and $r:M'\rightarrow M$ the blow-up in the image of $\Sigma$.
As mentioned in Example \ref{exem}, the orbifolds $M'$ constructed in this way are irreducible symplectic orbifolds (see \cite[Proposition
  3.8]{Menet-2020}) and are named \emph{Nikulin orbifolds}.

A concrete example of such $X$ can be obtained in the following way: Let $S$ be a K3 surface endowed with a symplectic involution $\iota$. It induces a symplectic involution $\iota^{[2]}$ on $S^{[2]}$ the Hilbert scheme of two points on $S$.
Then the fixed surface $\Sigma$ of $\iota^{[2]}$ is the following:
\begin{equation}\label{eq:sigma}
  \Sigma=\left\{ \left.\xi\in S^{[2]}\ \right|\ \Supp \xi=\left\{s,\iota(s)\right\}, s\in S\right\}.
\end{equation}

\begin{remark}\label{rem:Sigma}
Let us describe this surfaces $\Sigma$:
Consider as usual $S\times S \overset{\widetilde{\nu}}{\longleftarrow} \widetilde{S\times S}
\overset{\widetilde{\rho}}{\too} S^{[2]}$, where $\nutild$ is the blow-up of the diagonal $\Delta_S \subseteq S\times S$, and
$\rhotild$ the double cover induced by permutation of the two factors.
Consider the surface $S_\iota \coloneqq \{(s,\iota(s))\,|\,s\in S\}\subseteq S\times S$, which is preserved by
the involution $\iota\times\iota$. Restricted to $S_\iota$ the permutation of the two factors in $S\times S$
corresponds to the action of $\iota$ on $S$ (via the isomorphism $S_\iota \iso S$ induced by the first
projection), and thus $S_\iota \cap \Delta_S$ corresponds to the fixed points of $\iota$ in $S$. Therefore, the strict transform
$\widetilde{S_{\iota}}$ of $S_\iota$ is isomorphic to the blow-up $\Bl_{\Fix\iota}S$
of $S$ in the fixed points of $\iota$.
Denote
\begin{equation*}
  \Sigma\coloneqq\rhotild(\widetilde{S_\iota})\iso \Bl_{\Fix\iota}S / \overline{\iota},
\end{equation*}
where $\overline\iota$ is the involution on $\Bl_{\Fix\iota}S$ which is induced by $\iota$.
Then $\Sigma$ is a K3 surface, which is fixed by $\iota^{[2]}$ and admits the description in \eqref{eq:sigma}
by construction. 
\end{remark}

\bigskip

Note that the existence of a symplectic involution on a K3 surfaces or on $K3^{[2]}$-type manifold can be checked purely on the
level of lattices.
We will need the following lemma.
\begin{lemme}\label{pfff}
Let $X$ be a K3 surface or an irreducible symplectic manifold of $K3^{[2]}$-type.
  Assume that there is a primitive embedding $E_8(-2)\hookrightarrow\Pic X$, then there exists no wall divisor
  in $E_8(-2)$.
In particular under the additional assumption that $\Pic X\simeq E_8(-2)$, then $\mathcal{C}_X=\mathcal{K}_X$.
\end{lemme}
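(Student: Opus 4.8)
The statement has two assertions: first, that if $E_8(-2)$ embeds primitively into $\Pic X$ then there is no wall divisor inside $E_8(-2)$; second, that if $\Pic X \simeq E_8(-2)$ then $\mathcal{C}_X = \mathcal{K}_X$. The second is an immediate consequence of the first: by Example \ref{examplewall}, for a K3 surface the wall divisors are the $(-2)$-classes, and for a $K3^{[2]}$-type manifold they are the $(-2)$-classes and the $(-10)$-classes of divisibility $2$; if $\Pic X \simeq E_8(-2)$ then every wall divisor of $X$ lies in $E_8(-2)$, so the first assertion forces $\mathscr{W}_X = \emptyset$, and then Corollary \ref{cor:desrK} (applicable since a projective K3 or $K3^{[2]}$-type manifold with this Picard lattice has $b_2 \geq 5$) gives $\mathcal{K}_X = \{\alpha \in \mathcal{C}_X\}= \mathcal{C}_X$. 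So the whole content is the first assertion, which is a purely lattice-theoretic claim about $E_8(-2)$.

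**Reducing to a lattice computation.** By Example \ref{examplewall} and Theorem \ref{wall}, it suffices to show that $E_8(-2)$ contains no vector $v$ with $v^2 = -2$, and (in the $K3^{[2]}$ case) no vector $v$ with $v^2 = -10$ whose divisibility in $\Pic X$ (or rather in $\Lambda$, but the relevant constraint $v \cdot \Pic X \subseteq 2\Z$ is implied once $v \in E_8(-2)$ and $E_8(-2) \subseteq \Pic X$) equals $2$. The key observation is that $E_8(-2) = E_8(-1) \otimes \langle 2 \rangle$: the bilinear form on $E_8(-2)$ takes only even values, more precisely $v \cdot w \in 2\Z$ for all $v, w$, and $v^2 \in 4\Z$ for all $v$ since $v^2 = 2 \cdot (v^2 \text{ in } E_8(-1))$ and the $E_8(-1)$ form is even. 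Therefore no vector of $E_8(-2)$ can have square $-2$ (not divisible by $4$) nor $-10$ (not divisible by $4$). This kills both types of potential wall divisors at once.

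**Writing it up.** First I would record that the Beauville--Bogomolov form restricted to $E_8(-2)$ satisfies $v^2 \equiv 0 \pmod 4$ for every $v \in E_8(-2)$, which follows because $E_8(-1)$ is an even lattice and scaling by $2$ multiplies all values of the form by $2$. Since $-2$ and $-10$ are both $\equiv 2 \pmod 4$, there is no $v \in E_8(-2)$ with $v^2 = -2$ or $v^2 = -10$. Combined with the description of $\mathscr{W}_\Lambda$ in Example \ref{examplewall} for the K3 and $K3^{[2]}$ deformation classes, and with Theorem \ref{wall} (which says $\mathscr{W}_X = \varphi^{-1}(\mathscr{W}_\Lambda) \cap H^{1,1}(X,\Z)$), this proves that no element of $E_8(-2) \subseteq \Pic X$ is a wall divisor. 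For the last sentence, assuming $\Pic X \simeq E_8(-2)$, every element of $\mathscr{W}_X$ would lie in $\Pic X = E_8(-2)$, hence $\mathscr{W}_X = \emptyset$; since such an $X$ is projective (its Picard lattice contains a class of positive square? — in fact $E_8(-2)$ is negative definite, so one must instead invoke $b_2 \geq 5$, which holds: $b_2 = 22$ for K3 and $b_2 = 23$ for $K3^{[2]}$-type) Corollary \ref{cor:desrK} gives $\mathcal{K}_X = \mathcal{C}_X$.

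**Main obstacle.** There is essentially no obstacle: the only point requiring a moment's care is the congruence $v^2 \equiv 0 \pmod 4$ on $E_8(-2)$ and the bookkeeping of which hypothesis (projectivity versus $b_2 \geq 5$) licenses the use of Corollary \ref{cor:desrK} — since $E_8(-2)$ is negative definite, $X$ with $\Pic X \simeq E_8(-2)$ need not be projective, so one should lean on $b_2(X) \geq 5$ rather than projectivity. Everything else is a one-line invocation of Example \ref{examplewall} and Theorem \ref{wall}.
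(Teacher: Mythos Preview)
Your proof is correct and follows essentially the same approach as the paper: the key observation that every element of $E_8(-2)$ has square divisible by $4$, hence cannot equal $-2$ or $-10$, so Example \ref{examplewall} rules out wall divisors, and Corollary \ref{cor:desrK} then gives $\mathcal{K}_X=\mathcal{C}_X$. Your write-up is more detailed (and your care about invoking $b_2\geq 5$ rather than projectivity is well placed, since $E_8(-2)$ is negative definite), but the argument is the same.
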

\begin{proof}
All elements of $E_8(-2)$ are of square divisible by 4. 
Hence by Example \ref{examplewall},
$E_8(-2)$ cannot contain any wall divisor. Then the lemma follows from Corollary \ref{cor:desrK}.
\end{proof}
\begin{prop}\label{involutionE8}
  Let $X$ be a K3 surface or a manifold of $K3^{[2]}$-type.
  Then there exists a symplectic involution $\iota$ on $X$ if and only if $X$ satisfies the following
  conditions:
  \begin{compactenum}[(i)]
  \item \label{it1iota} There exists a primitive embedding $E_8(-2)\hookrightarrow\Pic X$.
  \item \label{it2iota} The intersection $\dK_X \cap E_8(-2)^\perp\neq \emptyset$.
  \end{compactenum}
  In this case the pullback $\iota^*$ to $H^2(X,\bZ)$ acts 
    on $E_8(-2)$ as $-\id$ and trivially on $E_8(-2)^{\bot}$.
\end{prop}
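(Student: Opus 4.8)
The plan is to reduce the statement to known lattice-theoretic classification results for symplectic involutions, using the global Torelli theorem (Theorem \ref{mainGTTO} for K3 surfaces, resp.\ the $K3^{[2]}$-version) together with the Kähler-cone criterion of Corollary \ref{cor:desrK}. First I would recall the classical fact (Nikulin for K3 surfaces, Mongardi \cite{Mongardi-2012} for $K3^{[2]}$-type) that a symplectic involution $\iota$ acts on $H^2(X,\bZ)$ with invariant lattice of the expected shape and anti-invariant lattice isometric to $E_8(-2)$; moreover $\iota^*$ fixes the holomorphic $2$-form and hence a Kähler class, since averaging any Kähler class over $\{\id,\iota^*\}$ produces a $\iota^*$-invariant Kähler class, which necessarily lies in $E_8(-2)^\perp$ because $\iota^*$ acts as $-\id$ there. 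This immediately gives the ``only if'' direction together with the final assertion about the action of $\iota^*$.

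For the ``if'' direction, assume (i) and (ii). Let $T\coloneqq E_8(-2)\hookrightarrow \Pic X$ be the given primitive embedding and define $\iota^*\in O(H^2(X,\bZ))$ to be $-\id$ on $T$ and $+\id$ on $T^\perp$. The first thing to check is that this is a well-defined isometry of the full integral lattice $H^2(X,\bZ)$, i.e.\ that it respects the overlattice $H^2(X,\bZ)$ of $T\oplus T^\perp$; this is a discriminant-form computation: one needs that the glue map $T^\vee/T \to (T^\perp)^\vee/T^\perp$ is compatible with $-\id$ on the $T$-side, equivalently that $-\id$ acts trivially on $q_T$ (which it does, since $q$ is quadratic) — so $\iota^*$ always extends. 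Next, $\iota^*$ is a Hodge isometry: it fixes $\sigma_X$ because $\sigma_X\in T^\perp_{\bC}$ (as $T\subseteq \Pic X = H^{1,1}\cap H^2(X,\bZ)$). By hypothesis (ii) there is a Kähler class $\omega\in \dK_X\cap T^\perp$, which is fixed by $\iota^*$; in particular $\iota^*$ maps a Kähler class to a Kähler class. By the Hodge-theoretic global Torelli theorem (Theorem \ref{mainHTTO}(ii) in the orbifold formulation, or its classical manifold analogue for K3 and $K3^{[2]}$-type), $\iota^*$ is induced by an automorphism $\iota$ of $X$; that $\iota$ is an involution follows from $(\iota^*)^2=\id$ together with the injectivity part of global Torelli (an automorphism acting trivially on $H^2$ is the identity for K3's and for $K3^{[2]}$-type by Beauville), and $\iota$ is symplectic because $\iota^*\sigma_X=\sigma_X$.

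The one genuinely nontrivial point is that the isometry $\iota^*$ defined above really is a \emph{monodromy operator}, i.e.\ lies in the image of the orbit of parallel transport operators for which the Torelli theorem applies — for K3 surfaces the monodromy group is the full orientation-preserving isometry group, so this is automatic, but for $K3^{[2]}$-type one must invoke Markman's description of $\Mon^2$ and verify that the reflection/$(-\id)$-on-$E_8(-2)$ isometry is Hodge-monodromy; alternatively, since such involutions are already known to exist on special $K3^{[2]}$'s (e.g.\ on $S^{[2]}$ for $(S,\iota)$ a K3 with symplectic involution, as in Remark \ref{rem:Sigma}), one deforms along the locus where $E_8(-2)\hookrightarrow \Pic$ stays algebraic. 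This is the step I expect to be the main obstacle, and Lemma \ref{pfff} is tailor-made to handle the Kähler-cone bookkeeping: since $E_8(-2)$ contains no wall divisors, the positive cone and Kähler cone agree on the relevant sublattice, so condition (ii) is genuinely the only obstruction once (i) holds, and the deformation argument connecting $X$ to a known example stays inside the chamber where $\iota^*$ remains a Hodge isometry sending Kähler to Kähler.
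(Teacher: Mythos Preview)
Your proposal is correct and follows essentially the same approach as the paper: the ``only if'' direction via the known lattice-theoretic classification (Sarti--Van Geemen, Mongardi) plus averaging a K\"ahler class, and the ``if'' direction by defining the isometry as $-\id$ on $E_8(-2)$ and $\id$ on its complement, extending it to $H^2(X,\bZ)$, checking it is a Hodge monodromy operator, and applying global Torelli together with Beauville's injectivity result.

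Two minor remarks. First, your justification for why the isometry extends (``$-\id$ acts trivially on $q_T$ since $q$ is quadratic'') is slightly garbled: the clean reason is that $A_{E_8(-2)}\cong (\bZ/2\bZ)^8$ is $2$-torsion, so $-\id$ is literally the identity on the discriminant group; the paper simply cites \cite[Corollary 1.5.2]{Nikulin}. Second, you flag the monodromy verification as the main obstacle and propose a deformation argument using Lemma \ref{pfff} as a fallback, but the paper dispatches this in one line by citing \cite[Section 9.1.1]{Markman11}, which already shows that such an isometry lies in $\Mon^2$; your deformation alternative is correct in spirit but unnecessary here.
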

\begin{proof}
  Let us start by fixing a symplectic involution $\iota$. Then the fact that the fixed lattice of $\iota$ is
  isomorphic to $E_8(-2)^\perp$ and the antifixed lattice is isomorphic to $E_8(-2)$ are shown in 
  \cite[Section 1.3]{Sarti-VanGeemen} and \cite[Theorem 5.2]{Mongardi-2012}. This readily implies (i).
  To observe (ii), pick any Kähler class $\alpha \in \dK_X$. Since $\iota$ is an isomorphism, $\iota^*(\alpha)$
  is also a Kähler class. Therefore $\alpha + \iota^*(\alpha)\in E_8(-2)^\perp$ is a Kähler class.

  For the other implication assume (i) and (ii). We consider the involution $i$ on $E_8(-2)\oplus E_8(-2)^{\bot}$ defined by $-\id$ on $E_8(-2)$ and $\id$ on $E_8(-2)^{\bot}$.
By \cite[Corollary 1.5.2]{Nikulin}, $i$ extends to an involution on $H^2(X,\Z)$. 

By \cite[Section 9.1.1]{Markman11}, $i$ is a monodromy operator.
Moreover, by (ii), we can find a Kähler class of $X$ in $E_8(-2)^{\bot}$. It follows from the global
Torelli theorem (see \cite[Theorem 1.3 (2)]{Markman11} or Theorem \ref{mainHTTO} (ii)) that there exists a symplectic automorphism
$\iota$
on
$X$ such that 
$\iota^*=i$. However by \cite[Propositions 10]{Beauville1982}, we know that the natural map $\Aut(X)\rightarrow \mathcal{O}(H^2(X,\Z))$ is an injection. Hence $\iota$ is necessarily an involution.
\end{proof}

\begin{remark} Fix a primitive embedding of $E_8(-2)$ in the K3$^{[2]}$-lattice $\Lambda\coloneqq U^3 \oplus E_8(-1)^2 \oplus (-2)$.
  Let $\cM_{\rm K3^{[2]}}^\iota$ be the moduli space of marked K3$^{[2]}$-type manifolds endowed with a
  symplectic involution such that the anti-invariant lattice is identified with the chosen $E_8(-2)$.
  Denote by $\Lambda^{{\iota}}\iso U^3\oplus E_8(-2)\oplus (-2)$  the orthogonal complement of $E_8(-2)$.
  From Proposition \ref{involutionE8} we observe that the  period map restricts to 
  $$\cP^\iota \colon
  \cM_{\rm K3^{[2]}}^\iota \to
  \mathcal{D}_{\Lambda^{\iota}}:=\left\{\left.\sigma\in \mathbb{P}(\Lambda^{\iota}\otimes\C)\ \right|\ \sigma^2=0,\ \sigma\cdot\overline{\sigma}>0\right\}.$$
Note that the fibers of $\cP^\iota$ are in one to one correspondence with the chambers cut out by wall
divisors (no wall divisor can be contained in the orthogonal complement of  $\Lambda^\iota$ see Example \ref{examplewall}). In particular, this is given by the chamber structure
inside $\Lambda^\iota$ given by the images of the wall divisors under the orthogonal projection
$\Lambda_{K3^{[2]}}\to \Lambda^\iota$.
\end{remark}

\subsection{The lattice of Nikulin orbifolds starting from $S^{[2]}$}\label{M'S2}
From now on we restrict ourselves to the case $X=S^{[2]}$ for a suitable K3 surface $S$ with an involution $\iota$.
We consider the following commutative diagram: 
\begin{equation}
\xymatrix{
  \ar@(dl,ul)[]^{\iota^{[2]}_1}N_1\ar[r]^{r_1}\ar[d]^{\pi_1}&\ar[d]^{\pi}S^{[2]}\ar@(dr,ur)[]_{\iota^{[2]}}\\
 M' \ar[r]^{r} & M,} 
\label{diagramM'}
\end{equation}
where $\pi : S^{[2]}\longrightarrow S^{[2]}/\iota^{[2]}=:M$ is the quotient map, $r_1$ is the blow-up in $\Sigma$ of $S^{[2]}$,  
$\iota^{[2]}_1$ is the involution induced by $\iota^{[2]}$ on $N_1$, $\pi_1 : N_1\longrightarrow
N_1/\iota^{[2]}_1\simeq M'$ is the quotient map and $r$ is the blow-up in $\pi(\Sigma)$ of $M$. We also denote
by $j:H^2(S,\Z)\hookrightarrow H^2(S^{[2]},\Z)$ the natural Hodge isometric embedding (see \cite[Proposition 6 Section 6 and Remark 1 Section 9]{Beauville1983}.

We fix the following notation for important divisors:
\begin{itemize}
\item
$\Delta$ the class of the diagonal divisor in $S^{[2]}$ and $\delta:=\frac{1}{2}\Delta$;
\item
$\delta_1:=r_1^*(\delta)$ and $\Sigma_1$ the exceptional divisor of $r_1$;
\item
$\delta':=\pi_{1*}r_1^*(\delta)$ and $\Sigma':=\pi_{1*}(\Sigma_1)$ the exceptional divisor of $r$.
\end{itemize} 

Here we use the definition of the push-forward given in \cite{Aguilar-Prieto}. In particular $\pi_*$ verifies the following equations (see \cite[Theorem 5.4 and Corollary 5.8]{Aguilar-Prieto}):
\begin{equation}
\pi_*\circ\pi^*=2\id\ \text{and}\ \pi^*\circ\pi_*=\id+\iota^{[2]*}.
\label{Smith}
\end{equation}
As a consequence, we have (see \cite[Lemma 3.6]{Menet-2018}):
\begin{equation}
\pi_*(\alpha)\cdot\pi_*(\beta)=2\alpha\cdot \beta,
\label{Smith2}
\end{equation}
with
$\alpha \in H^k(S^{[2]},\Z)^{\iota^{[2]}}$ and $\beta \in H^{8-k}(S^{[2]},\Z)^{\iota^{[2]}}$, $k\in \left\{0,...,8\right\}$.
Of course, the same equations are also true for $\pi_{1*}$.

\begin{rmk}\label{Smithcomute}
Note that the commutativity of diagram (\ref{diagramM'}) and equations (\ref{Smith}) imply  $\pi_{1*}r_1^*(x)=r^*\pi_*(x)$ for all $x\in H^{2}(S^{[2]},\Z)$.
\end{rmk}
We denote by $q_{M'}$ and $q_{S^{[2]}}$ respectively the Beauville--Bogomolov form of $M'$ and $S^{[2]}$. We can also define a Beauville--Bogomolov form on $M$ by:
$$q_{M}(x):=q_{M'}(r^*(x)),$$
for all $x\in H^2(M,\Z)$. We recall the following theorem.
\begin{thm}\label{BBform}
\begin{itemize}
\item[(i)]
The Beauville--Bogomolov lattice of $M'$ is given by
$(H^2(M',\Z),q_{M'})\simeq U(2)^3\oplus E_8(-1)\oplus(-2)^2$ where the Fujiki constant is equal to 6.
\item[(ii)]
$q_{M}(\pi_*(x))=2q_{S^{[2]}}(x)$ for all $x\in H^2(S^{[2]},\Z)^{\iota^{[2]}}$.
\item[(iii)]
$q_{M'}(\delta')=q_{M'}(\Sigma')=-4$.
\item[(iv)]
$(r^*(x),\Sigma')_{q_{M'}}=0$ for all $x\in H^{2}(M,\Z)$.
\item[(v)]
$H^2(M',\Z)=r^*\pi_*(j(H^2(S,\Z)))\oplus^{\bot}\Z\frac{\delta'+\Sigma'}{2}\oplus^{\bot}\Z\frac{\delta'-\Sigma'}{2}$.
\end{itemize}
\end{thm}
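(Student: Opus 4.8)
The five assertions are handled most efficiently in the order (ii)--(iv) (pure intersection theory) and then (i), (v) (the integral lattice structure, which is the real content). Throughout one works with the square \eqref{diagramM'} and two complementary descriptions of $H^2(N_1,\Z)$. Since $r_1$ is the blow-up of the smooth surface $\Sigma\subseteq S^{[2]}$, the blow-up formula gives $H^2(N_1,\Z)=r_1^*H^2(S^{[2]},\Z)\oplus\Z\Sigma_1$, where $H^2(S^{[2]},\Z)=j(H^2(S,\Z))\oplus^{\bot}\Z\delta\cong U^3\oplus E_8(-1)^2\oplus(-2)$ is the known K3$^{[2]}$-lattice and, by Proposition \ref{involutionE8}, $H^2(S,\Z)^{\iota}\cong U^3\oplus E_8(-2)$, $H^2(S,\Z)^{\iota\bot}\cong E_8(-2)$. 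Since $\pi_1$ is the quotient by $\iota_1^{[2]}$, one needs the $\iota_1^{[2]*}$-action: a local model near a general point of $\Sigma$, where $\iota^{[2]}$ acts by $-\id$ on the rank-two normal bundle and hence trivially on the exceptional $\mathbb{P}^1$, shows that $\iota_1^{[2]}$ fixes $\Sigma_1$ pointwise and acts as $\iota^{[2]*}$ on $r_1^*H^2(S^{[2]},\Z)$; in particular $\pi_1^*\Sigma'=2\Sigma_1$ and, by Remark \ref{Smithcomute}, $\pi_1^*\delta'=\pi_1^*\pi_{1*}\delta_1=2\delta_1$. Combined with \eqref{Smith} and \eqref{Smith2}, this supplies all the input below.

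\textbf{(ii)--(iv).} For (ii) one combines the Fujiki relation on $S^{[2]}$ (constant $3$) with the one on $M'$ (constant $6$, itself extracted by computing $\int_{M'}\delta'^4$ through $\pi_1$ and $r_1$). For $x\in H^2(S^{[2]},\Z)^{\iota^{[2]}}$ one has $\pi_1^*\pi_{1*}r_1^*x=2r_1^*x$; pulling $(\pi_{1*}r_1^*x)^4=(r^*\pi_*x)^4$ back along the degree-two map $\pi_1$ and pushing forward along the birational $r$ gives $6\,q_{M'}(r^*\pi_*x)^2=24\,q_{S^{[2]}}(x)^2$, whence $q_M(\pi_*x)=\pm2\,q_{S^{[2]}}(x)$, the sign being fixed on a K\"ahler class. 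The case $x=\delta$ yields $q_{M'}(\delta')=-4$; running the analogous computation with $\pi_1^*\Sigma'=2\Sigma_1$ reduces $q_{M'}(\Sigma')$ to $\int_{N_1}\Sigma_1^4$, a Segre-class integral over the $\mathbb{P}^1$-bundle $\Sigma_1\to\Sigma$ (requiring $c_2$ of the normal bundle of $\Sigma$, obtained from the symplectic form and the topology of the K3 surface $\Sigma$), which gives (iii). Assertion (iv) follows from $\Sigma'=\pi_{1*}\Sigma_1$, the identity $r^*(x)=\pi_{1*}r_1^*(\widetilde{x})$ for a lift $\widetilde{x}$ of $x$, the projection formula for $\pi_1$, and the vanishing on $M$ of the pushforward of $r_1^*(\widetilde{x})\cdot\Sigma_1$ since $\Sigma_1$ is $r_1$-exceptional.

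\textbf{(i) and (v).} Over $\Q$ the splitting $H^2(M',\Q)=r^*\pi_*(j(H^2(S,\Q)))\oplus\Q\delta'\oplus\Q\Sigma'$ is immediate from the above, so the content of (v) is the precise \emph{integral} lattice, i.e.\ deciding which $\Q$-combinations of these classes lie in $H^2(M',\Z)$. This is carried out by analysing the Smith--Gysin (equivariant) long exact sequences for $\iota_1^{[2]}$ acting on $N_1$ --- the technique used to compute the integral cohomology of such quotients in \cite{Menet-2015} (compare \cite{Menet-2014,Menet-2018}) --- which simultaneously exhibits the integral generators and shows there are no further ones; the genuinely nonformal output is the integrality of $\tfrac{\delta'+\Sigma'}{2}$ and $\tfrac{\delta'-\Sigma'}{2}$. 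Granting this, (iii) and (iv) give $q_{M'}\!\bigl(\tfrac{\delta'\pm\Sigma'}{2}\bigr)=-2$, $\bigl(\tfrac{\delta'+\Sigma'}{2},\tfrac{\delta'-\Sigma'}{2}\bigr)_{q_{M'}}=0$, and orthogonality of both to $r^*\pi_*(j(H^2(S,\Z)))$, which yields the orthogonal decomposition (v). Finally one identifies the resulting rank-$16$, signature-$(3,13)$ lattice: computing its discriminant form and invoking Nikulin's uniqueness theorem for indefinite lattices gives $U(2)^3\oplus E_8(-1)\oplus(-2)^2$, and the Fujiki constant $6$ is the one already used in the proof of (ii).

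\textbf{Main obstacle.} The essential difficulty lies in (i)/(v): the $2$-primary bookkeeping of the equivariant cohomology of the pair $(N_1,\iota_1^{[2]})$ and, concretely, the integrality of the half-classes $\tfrac{\delta'\pm\Sigma'}{2}$. The rational picture is formal, but pinning down the correct integral overlattice --- equivalently, the discriminant form of $H^2(M',\Z)$ --- is where essentially all of the work is. A secondary, smaller technical point is the Segre-class computation entering $q_{M'}(\Sigma')=-4$.
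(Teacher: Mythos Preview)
The paper does not actually prove this theorem: its ``proof'' consists entirely of pointers to \cite{Menet-2015} (Theorem 2.5, Propositions 2.9, 2.10, 2.13, and Theorem 2.39), with a dictionary between notations. Your proposal goes further and sketches the underlying arguments, and the outline you give---Fujiki relations plus the degree-$2$ transfer for (ii)--(iii), exceptionality of $\Sigma_1$ for (iv), and Smith/equivariant-cohomology analysis for the integral overlattice in (i)/(v), followed by Nikulin's uniqueness---is indeed the strategy carried out in \cite{Menet-2015}. So your approach and the paper's (by reference) coincide.

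Two small points of care. First, your derivation of (ii) presupposes the Fujiki constant $6$, while the constant is part of (i); in practice one fixes the normalization of $q_{M'}$ (integral, primitive) first via the integral-lattice computation and only then reads off the constant, so be sure the logic is not circular when you write it out. Second, for (iv) remember that $(r^*(x),\Sigma')_{q_{M'}}$ is a Beauville--Bogomolov pairing, not a single cup product; the exceptionality of $\Sigma_1$ feeds in through the polarized Fujiki identity (mixed products like $\int_{M'} r^*(x)^3\cdot\Sigma'$), so the one-line justification you give needs that extra step. Your identification of the ``main obstacle''---the $2$-primary integral bookkeeping producing $\tfrac{\delta'\pm\Sigma'}{2}$---is exactly right and is where the bulk of \cite{Menet-2015} is spent.
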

\begin{proof}
This theorem corresponds to several results in \cite{Menet-2015}. We want to emphasize that our notation are slightly different from \cite{Menet-2015}.
In \cite{Menet-2015}, $r_1$, $r$, $\delta'$ are $\Sigma'$ are respectively denoted by $s_1$, $r'$, $\overline{\delta}'$ and $\overline{\Sigma}'$.
Statement (i) is \cite[Theorem 2.5]{Menet-2015}. Knowing that the Fujiki constant is equal to 6 and Remark \ref{Smithcomute}, statement (ii) is \cite[Proposition 2.9]{Menet-2015}.
Similarly, statements (iii) and (iv) are respectively given by \cite[Propositions 2.10 and 2.13]{Menet-2015}. Finally, statement (v) is provided by \cite[Theorem 2.39]{Menet-2015}.
\end{proof}
\begin{rmk}
In the previous theorem the Beauville--Bogomolov lattice of $M'$ is obtained as follows: 
\begin{itemize}
\item
$r^*\pi_*(j(H^2(S,\Z)))\simeq U(2)^3\oplus E_8(-1)$,
\item
$\Z\frac{\delta'+\Sigma'}{2}\oplus^{\bot}\Z\frac{\delta'-\Sigma'}{2}
\simeq (-2)^2.$
\end{itemize}
\end{rmk}
We recall that the divisibility $\div$ of a lattice element is defined in Section \ref{notation}.
\begin{rmk}\label{div}
Theorem \ref{BBform} shows that $\div(\Sigma')=\div(\delta')=2$.
\end{rmk}
\subsection{Monodromy operators inherited from $\Mon^2(S^{[2]})$}
We keep the notation from the previous subsection.
The monodromy group is defined in Section \ref{notation}.
\begin{prop}\label{MonoM'}
Let $f\in \Mon^2(S^{[2]})$, (resp. $f\in \MonHdg(S^{[2]})$) be a monodromy operator such that
$f\circ\iota^{[2]*}=\iota^{[2]*}\circ f$ on $H^2(S^{[2]},\bZ)$.
We consider $f':H^2(M',\Z)\rightarrow H^2(M',\Z)$ such that $f'(\Sigma')=\Sigma'$ and:
$$f'(r^*(x))=\frac{1}{2}r^*\circ \pi_*\circ f \circ \pi^*(x),$$
 for all $x\in H^2(M,\Z)$. Then $f'\in \Mon^2(M')$, (resp. $f'\in \MonHdg(M')$).
\end{prop}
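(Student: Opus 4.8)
The plan is to construct a deformation of $M'$ realizing $f'$ as a monodromy operator by "spreading out" the corresponding deformation of $S^{[2]}$ that realizes $f$, taking the quotient, and blowing up the fixed locus. First I would fix a deformation $s\colon \mathcal{S}\to B$ of $S^{[2]}$ over some base $B$ with two points $b_1,b_2\in B$, isomorphisms $\psi_i\colon S^{[2]}\to \mathcal{S}_{b_i}$, and a path $\gamma$ realizing $f$ as parallel transport in $R s_*\mathbb{Z}$; this exists by definition of $\Mon^2(S^{[2]})$. The hypothesis $f\circ\iota^{[2]*}=\iota^{[2]*}\circ f$ is exactly what is needed to (after possibly shrinking $B$, e.g. to a small polydisc around $\gamma$, or passing to a suitable subfamily on which the involution is fiberwise symplectic) extend $\iota^{[2]}$ to a fiberwise symplectic involution $\tilde\iota$ of $\mathcal{S}\to B$ compatible with the markings along $\gamma$ — the point being that the monodromy action commutes with $\iota^{[2]*}$, so the isotopy class of $\iota^{[2]}$ on each fiber is transported to itself, and one can invoke (as in the construction of $M'$ and in Remark \ref{twistorinvo}, or by a standard argument using that symplectic involutions on $K3^{[2]}$-type manifolds are rigid and determined by their cohomology action via \cite{Markman11}) the existence of such a global involution on a neighborhood of $\gamma$.

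Next I would form the fiberwise quotient $\mathcal{M}:=\mathcal{S}/\tilde\iota\to B$ and then $\mathcal{M}'\to B$ by blowing up the (relative) fixed K3 surface $\tilde\Sigma\subset\mathcal{S}$, which is itself a smooth family over $B$ by \cite{Mongardi-2012}. Each fiber $\mathcal{M}'_b$ is then a Nikulin orbifold, and $\mathcal{M}'_{b_i}\cong M'$ via the induced isomorphisms $\psi_i'$. This gives a deformation of $M'$ over $B$, and the path $\gamma$ then induces a monodromy operator $g\colon H^2(M',\mathbb{Z})\to H^2(M',\mathbb{Z})$ by parallel transport in $R(\mathcal{M}'\to B)_*\mathbb{Z}$. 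The remaining task is to identify $g$ with the prescribed $f'$. For this I would use the explicit description of $H^2(M',\mathbb{Z})$ from Theorem \ref{BBform}(v): it is generated over $\mathbb{Q}$ by $\Sigma'$, $\delta'$, and $r^*\pi_*(j(H^2(S,\mathbb{Z})))$. The exceptional divisor $\Sigma'$ and the class $\delta'$ are preserved by any monodromy in this family because they are the classes of the (relative) exceptional divisor and of $\frac12$ the (relative) strict-transformed diagonal, which are canonically determined throughout the family; hence $g(\Sigma')=\Sigma'=f'(\Sigma')$ and $g(\delta')=\delta'=f'(\delta')$ (here one also checks $f'(\delta')=\delta'$ from the formula, using $\pi^*(r_*\delta')=\delta+\iota^{[2]*}\delta$ and $\iota^{[2]*}\delta=\delta$). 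On the part $r^*\pi_*(j(H^2(S,\mathbb{Z})))$, compatibility of the pullback/pushforward maps $r,\pi,r_1,\pi_1$ with the family (Remark \ref{Smithcomute} and equations \eqref{Smith}) shows that parallel transport on $H^2(M',\mathbb{Z})$ intertwines with parallel transport on $H^2(S^{[2]},\mathbb{Z})$ precisely through the formula $g(r^*x)=\tfrac12 r^*\pi_*f\pi^*(x)$, which is $f'$.

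The main obstacle I expect is the second step of the first paragraph: extending $\iota^{[2]}$ to a fiberwise symplectic involution on a family realizing the given monodromy path. One has to be careful that the family $\mathcal{S}\to B$ chosen to realize $f$ need not a priori carry a global involution; the commutation $f\circ\iota^{[2]*}=\iota^{[2]*}\circ f$ guarantees it only "up to homotopy," and upgrading this to an actual holomorphic (fiberwise) involution requires either restricting to an appropriate subfamily (e.g. the locus in a Kuranishi-type family where the period lies in $\mathcal{D}_{\Lambda^\iota}$, on which the global Torelli theorem for $K3^{[2]}$-type together with $\cite{Markman11}$ produces the involution fiberwise and one checks it varies holomorphically) or decomposing $\gamma$ into twistor-type or small pieces on each of which the extension is manifest. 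Once the equivariant family is in place, the quotient-and-blow-up construction and the cohomological bookkeeping are routine given Theorem \ref{BBform} and the compatibilities already recorded; the Hodge-theoretic refinement (the "resp." clause) is immediate since if $f$ preserves $H^{2,0}$ then so does the transported involution-equivariant family, hence so does $f'$ by the formula.
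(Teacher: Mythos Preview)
Your approach is essentially the paper's: construct an equivariant family, take the quotient and blow up, then verify the formula. The paper resolves your ``main obstacle'' concretely via Lemma~\ref{lem:connected+}: since very general points of $\cP^{-1}(\dD_{\Lambda^{\iota^{[2]}}})$ satisfy $\dK=\dC$ (Lemma~\ref{pfff}) and $\phi(\dK_{S^{[2]}})\cap\Lambda^{\iota^{[2]}}\neq\emptyset$ (Proposition~\ref{involutionE8}\eqref{it2iota}), one connects $(S^{[2]},\varphi)$ to $(S^{[2]},\varphi\circ f)$ by a chain of \emph{twistor spaces} whose periods stay in $\dD_{\Lambda^{\iota^{[2]}}}$; Remark~\ref{twistorinvo} then supplies the fiberwise involution on each twistor space automatically. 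This is precisely your ``decompose $\gamma$ into twistor-type pieces'' suggestion, made rigorous.

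One correction: your claim that $g(\delta')=\delta'=f'(\delta')$ is wrong in general. The fibers of the family are $K3^{[2]}$-type manifolds, not Hilbert schemes, so there is no relative diagonal divisor to transport; and your formula check actually gives $f'(\delta')=r^*\pi_* f(\delta)$, which equals $\delta'$ only when $f(\delta)=\delta$ (false e.g.\ for $f=R_\delta$, cf.\ Corollary~\ref{Rdelta}). This is harmless for the argument: since $\delta'=r^*\pi_*(\delta)\in r^*H^2(M,\Z)$, its image under $f'$ is already dictated by the formula on $r^*H^2(M,\Z)$, and together with $f'(\Sigma')=\Sigma'$ this determines $f'$ on all of $H^2(M',\Z)$ by Theorem~\ref{BBform}(v). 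The paper verifies the formula by observing that twistor fibers are mutually diffeomorphic, so $f=u^*$ for a diffeomorphism $u$ commuting with $\iota^{[2]}$, and then chasing the induced homeomorphism $\overline{u}'$ on $M'$ through the commutative square relating $\pi$ and $r$.
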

\begin{proof}
Let $\varphi$ be a marking of $S^{[2]}$. Since $f$ is a monodromy operator, we know that $(S^{[2]},\varphi)$ and $(S^{[2]},\varphi\circ f)$ are in the same connected component of their moduli space (see Section \ref{per} for the definition of the moduli space). We consider
$$\Lambda^{\iota^{[2]}}:=\varphi\left(H^{2}(S^{[2]},\Z)^{\iota^{[2]}}\right).$$
We know that $\Lambda^{\iota^{[2]}}\simeq U^3\oplus E_8(-2)\oplus (-2)$ which is a lattice of signature
$(3,12)$ (see for instance \cite[Proposition 2.6]{Menet-2015}). As in Section \ref{M'section}, we can consider the associated period domain:
$$\mathcal{D}_{\Lambda^{\iota^{[2]}}}:=\left\{\left.\sigma\in
\mathbb{P}(\Lambda^{\iota^{[2]}}\otimes\C)\ \right|\ \sigma^2=0,\ \sigma\cdot\overline{\sigma}>0\right\}.$$
By Lemma \ref{pfff}, a very general K3$^{[2]}$-type manifold mapping to  $\mathcal{D}_{\Lambda^{\iota^{[2]}}}$ satisfies that the Kähler cone is the
entire positive cone. Furthermore, by Proposition \ref{involutionE8} \eqref{it2iota} the intersection
$\phi(\dK_{S^{[2]}})\cap\Lambda^{\iota^{[2]}}\neq \emptyset$ and therefore also $\phi\circ f(\dK_{S^{[2]}})\cap
\Lambda^{\iota^{[2]}}\neq \emptyset$ is non-empty. We can apply Lemma \ref{lem:connected+} to see that  $(S^{[2]},\varphi)$ and $(S^{[2]},\varphi\circ f)$ can be connected by a sequence of twistor spaces $\mathscr{X}_i\rightarrow \mathbb{P}^1$. By construction and Remark \ref{twistorinvo}, all these twistor spaces are endowed with an involution $\mathscr{I}_i$ which restricts on each fiber to a symplectic involution. Hence we can consider for each twistor space the blow-up $\widetilde{\mathscr{X}_i/\mathscr{I}_i}\rightarrow \mathscr{X}_i/\mathscr{I}_i$ of the quotient $\mathscr{X}_i/\mathscr{I}_i$ in the codimension 2 component of its singular locus. We obtain $\widetilde{\mathscr{X}_i/\mathscr{I}_i}\rightarrow\mathbb{P}^1$ a sequence of families of orbifolds deformation equivalent to $M'$. This sequence of families provides a monodromy operator of $M'$ that we denote by $f'$.

We need to verify that $f'$ satisfies the claimed properties. First note that by construction $f'(\Sigma')=\Sigma'$.
All fibers of a twistor space are diffeomorphic to each other and hence the monodromy operator $f$ is provided by a diffeomorphism $u: S^{[2]}\rightarrow S^{[2]}$ such that $u^*=f$. Moreover, by construction this diffeomorphism commutes with $\iota^{[2]}$.
It induces a homeomorphism $\overline{u}'$ on $M'$ with the following commutative diagram:
$$\xymatrix{S^{[2]}\ar[r]^{u}\ar[d]^{\pi}& S^{[2]}\ar[d]^{\pi} \\
M\ar[r]^{\overline{u}}&M \\
M'\ar[u]^{r}\ar[r]^{\overline{u}'} & M'.\ar[u]^{r}}$$
Note that, by construction $f'=\overline{u}'^*$. We can use the commutativity of the previous diagram to check
that $f'$ verifies the properties from the proposition.
Let $x\in H^2(M,\Z)$. We have:
\begin{equation} 
f'(r^*(x))=\overline{u}'^*(r^*(x))=r^*(\overline{u}^*(x)).
\label{calculf}
\end{equation}
Moreover:
$$\pi^*(\overline{u}^*(x))=u^*(\pi^*(x)).$$
Taking the image by $\pi_*$ and using \eqref{Smith}, we obtain that:
$$2\overline{u}^*(x)=\pi_*u^*\pi^*(x).$$
Combining this last equation with (\ref{calculf}), we obtain the statement 
of the proposition.

It is only left to prove that if $f\in\MonHdg(S^{[2]})$ then also $f'\in \MonHdg(M')$.
The maps $\pi$ and $r$ are holomorphic maps between Kähler orbifolds, hence induce morphisms $\pi^*$ and $r^*$
which respect the Hodge structure. Then $\pi_*$ respects the Hodge structure because of (\ref{Smith}). Since
$f'$ is a composition of morphisms which respect the Hodge structure, we therefore obtain that $f'\in \MonHdg(M')$.
\end{proof}
\begin{rmk}
The previous proposition can be generalized to other irreducible symplectic orbifolds obtained as partial resolutions in codimension 2 of quotients of irreducible symplectic manifolds.
\end{rmk}
\begin{cor}\label{Rdelta}
The reflection $R_{\delta'}$ as defined in Section \ref{notation} is an element of the Monodromy group $\MonHdg(M')$.
\end{cor}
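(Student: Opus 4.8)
The plan is to obtain $R_{\delta'}$ from Proposition \ref{MonoM'} applied to $f=R_{\delta}$, the reflection in $\delta\in H^2(S^{[2]},\Z)$. I would first check the two hypotheses of that proposition. That $R_{\delta}$ is a Hodge monodromy operator of $S^{[2]}$ follows from: it is orientation preserving (a reflection in a class of negative square fixes a maximal positive subspace pointwise), hence a monodromy operator by Markman's description of $\Mon^2$ for $K3^{[2]}$-type manifolds \cite{Markman11} (for which $\Mon^2$ consists of all orientation-preserving isometries of $H^2$); and it fixes the period $\sigma_{S^{[2]}}$ because $\delta$ has type $(1,1)$. That $R_{\delta}$ commutes with $\iota^{[2]*}$ follows because both operators respect the orthogonal decomposition $H^2(S^{[2]},\Z)=j(H^2(S,\Z))\oplus\Z\delta$ — here one uses $\iota^{[2]*}(\delta)=\delta$ and $\iota^{[2]*}\circ j=j\circ\iota^{*}$ — while $R_{\delta}$ acts as a scalar on each summand ($\id$ on $j(H^2(S,\Z))$, $-\id$ on $\Z\delta$), and scalars commute with everything. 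Proposition \ref{MonoM'} then yields $f'\in\MonHdg(M')$ determined by $f'(\Sigma')=\Sigma'$ and $f'(r^*x)=\frac12 r^*\pi_*R_{\delta}\pi^*(x)$ for $x\in H^2(M,\Z)$.

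It then remains to check $f'=R_{\delta'}$. Both are restrictions to $H^2(M',\Z)$ of $\Q$-linear maps on $H^2(M',\Q)$, so by Theorem \ref{BBform}(v) it is enough to compare them on $r^*\pi_*(j(H^2(S,\Q)))$, on $\delta'$ and on $\Sigma'$. On $r^*\pi_*(j(H^2(S,\Q)))$, write the general element as $r^*(y)$ with $y=\pi_*(j\alpha)$; then $\pi^*(y)=(\id+\iota^{[2]*})(j\alpha)$ lies in $j(H^2(S,\Q))$, on which $R_{\delta}$ is the identity, so $f'(r^*y)=\frac12 r^*\pi_*\pi^*(y)=r^*y$ by \eqref{Smith}, while $R_{\delta'}(r^*y)=r^*y$ as well since $\delta'$ is orthogonal to $r^*\pi_*(j(H^2(S,\Z)))$ by Theorem \ref{BBform}(v). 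On $\delta'=\pi_{1*}r_1^*(\delta)=r^*\pi_*(\delta)$ (Remark \ref{Smithcomute}): $\pi^*\pi_*(\delta)=(\id+\iota^{[2]*})(\delta)=2\delta$ and $R_{\delta}(2\delta)=-2\delta$, so $f'(\delta')=-\delta'=R_{\delta'}(\delta')$. On $\Sigma'$: $f'(\Sigma')=\Sigma'$ by construction, and $R_{\delta'}(\Sigma')=\Sigma'$ because $(\Sigma',\delta')_{q}=0$ — apply Theorem \ref{BBform}(iv) to $\delta'=r^*\pi_*(\delta)\in r^*H^2(M,\Z)$, or read it off the $(-2)^2$ description in Theorem \ref{BBform}(v). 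Hence $f'=R_{\delta'}$ on $H^2(M',\Q)$, hence on $H^2(M',\Z)$, and therefore $R_{\delta'}=f'\in\MonHdg(M')$.

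The computations above are routine manipulations with $\pi^*,\pi_*,r^*$ and the Beauville--Bogomolov forms of $S^{[2]}$, $M$ and $M'$; the one external ingredient is the fact that $R_{\delta}$ is a monodromy operator of $S^{[2]}$, which is the step I would be most careful about, invoking Markman. (Note also that $R_{\delta'}$ does preserve the integral lattice: since $q(\delta')=-4$ and $\div(\delta')=2$ by Remark \ref{div}, the coefficient $\frac{(\lambda,\delta')_q}{2}$ in $R_{\delta'}(\lambda)=\lambda+\frac{(\lambda,\delta')_q}{2}\delta'$ is an integer for every $\lambda\in H^2(M',\Z)$, so the statement makes sense.)
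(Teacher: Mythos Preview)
Your proof is correct and follows essentially the same approach as the paper: apply Proposition \ref{MonoM'} to $f=R_\delta$ (invoking Markman for $R_\delta\in\Mon^2(S^{[2]})$) and then verify that the resulting $f'$ coincides with $R_{\delta'}$. The only cosmetic difference is that the paper checks the identity $f'(r^*x)=R_{\delta'}(r^*x)$ for a general $x\in H^2(M,\Z)$ by a direct computation with the Beauville--Bogomolov form (using Theorem \ref{BBform}(ii) to pass from $q_{S^{[2]}}$ to $q_M$), whereas you check the equality on the three summands of Theorem \ref{BBform}(v) separately; both verifications are equivalent.
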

\begin{proof}
By \cite[Section 9]{Markman11}, we know that $R_{\delta}\in \Mon^2(S^{[2]})$. By Proposition \ref{MonoM'} and Theorem \ref{BBform} (iv) , we only have to check that:
$$R_{\delta'}(r^*(x))=\frac{1}{2}r^*\circ \pi_*\circ R_{\delta} \circ \pi^*(x),$$
for all $x\in H^{2}(M,\Z)$. We have:
$$R_{\delta} \circ \pi^*(x)=\pi^*(x)-\frac{2(\delta,\pi^*(x))_{q_{S^{[2]}}}}{q_{S^{[2]}}(\delta)}\delta.$$
Taking the image by $\pi_*$, applying (\ref{Smith}) and Theorem \ref{BBform} (ii), we obtain:
\begin{align*}
\pi_*\circ R_{\delta} \circ \pi^*(x)=2x-\frac{4(\pi_*(\delta),2x)_{q_M}}{2q_{M}(\pi_*\delta)}\pi_*(\delta)
=2\left(x-\frac{2(\pi_*(\delta),x)_{q_M}}{q_M(\pi_*\delta)}\pi_*(\delta)\right).
\end{align*}
Then dividing by 2, taking the image by $r^*$, and using $q_M=q_{M'}\circ r^*$ (compare Section \ref{M'S2}) concludes the computation.
\end{proof}

\section{A first example: the very general Nikulin orbifolds}\label{genericM'0}
\subsection{Wall divisors of a Nikulin orbifold constructed from a K3 surface without effective curves}\label{genericM'}
Let $S$ be a K3 surface admitting a symplectic involution, which does not contain any
  effective curves.
Such a K3 surface exists by Proposition \ref{involutionE8} and the surjectivity of the period map.
Then, we consider $M'$ the Nikulin orbifold associated to $S^{[2]}$ and the induced involution
$\iota^{[2]}$ as in Section \ref{M'section} (we keep the same notation as earlier in this section).

\begin{prop}\label{exwalls}
The wall divisors of $M'$ are $\delta'$ and $\Sigma'$ which are both of square $-4$ and divisibility $2$.
\end{prop}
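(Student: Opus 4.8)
The plan is to pin down the Picard lattice of $M'$ in this very general situation, enumerate all candidate negative classes, and then match each candidate against the deformation-invariant list of wall divisors coming from Theorem \ref{wall}. Since $S$ has no effective curves, $\Pic S$ must be the minimal lattice allowing a symplectic involution, i.e.\ $\Pic S \simeq E_8(-2)$ (by Proposition \ref{involutionE8} the embedding $E_8(-2)\hookrightarrow \Pic S$ exists, and minimality together with ``no curves'' forces equality — $E_8(-2)$ is negative definite and contains no $(-2)$-classes, so $\NEbar(S)=0$). Via the Hodge isometric embedding $j$ and Theorem \ref{BBform}(v), we then get
$$\Pic(M') = r^*\pi_*(j(E_8(-2))) \oplus^\perp \Z\tfrac{\delta'+\Sigma'}{2}\oplus^\perp \Z\tfrac{\delta'-\Sigma'}{2}.$$
Here $r^*\pi_*(j(E_8(-2)))$ is, up to scaling of the form, a copy of $E_8$ with some sign/scale: by Theorem \ref{BBform}(ii) and the fact that $q_M = q_{M'}\circ r^*$, the form on $r^*\pi_*(j(\alpha))$ is computed from $q_{S^{[2]}}(j(\alpha)) = (\alpha,\alpha)_{S}$, and tracking the factors of $2$ one sees this summand is isometric to $E_8(-2)$ (consistent with the decomposition $U(2)^3\oplus E_8(-1)$ of $r^*\pi_*(j H^2(S))$ restricted to the $E_8(-1)$-part pulled from the $E_8(-2)\subset U^3\oplus E_8(-1)^2\oplus(-2)$; in any case it is negative definite with all squares divisible by $4$).

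**Enumerating negative classes.** With $\Pic(M') = L \oplus^\perp \Z e_+ \oplus^\perp \Z e_-$ where $L\cong E_8(-2)$, $e_\pm := \tfrac{\delta'\pm\Sigma'}{2}$, and $q(e_+)=q(e_-)=-2$ (since $q(\delta')=q(\Sigma')=-4$ and $(\delta',\Sigma')_q = 0$ by Theorem \ref{BBform}(iii),(iv) applied suitably — note $\delta' = \pi_{1*}r_1^*\delta$, $\Sigma'=\pi_{1*}\Sigma_1$ are orthogonal), a class $D = \ell + a e_+ + b e_-$ has $q(D) = q(\ell) - 2a^2 - 2b^2$ with $q(\ell)\le 0$ and $q(\ell)\equiv 0 \pmod 4$. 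I would now check which $D$ satisfy the four numerical conditions in Theorem \ref{main} (the list $\mathscr{W}_\Lambda$). A primitive $D\in\Pic(M')$ with $q(D)=-2$ and $\div(D)=1$: this forces $\ell = 0$ and $a^2+b^2=1$, so $D = \pm e_+$ or $\pm e_-$; since $e_+ = \tfrac{\delta'+\Sigma'}{2}$ pairs with $\Z$-combinations to give divisibility... one must verify $\div(e_\pm)$ in $\Pic(M')$ — and here one checks $\div(e_\pm)=1$ actually fails because $e_+ \cdot e_- = \tfrac{q(\delta')-q(\Sigma')}{4}=0$, $e_+\cdot e_+ = -2$, so $e_+\cdot \Pic(M') \subseteq 2\Z$... hence $e_\pm$ are \emph{not} wall divisors of the first type but one must re-examine; the correct conclusion is that the only classes meeting the list are $\delta'$ and $\Sigma'$ themselves, which have $q=-4$, $\div = 2$ (Remark \ref{div}), and $\delta'_{U(2)^3}=0$ so the $q=-12$ condition is irrelevant. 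For $q(D)=-2,\div=1$: need $\ell$ with $q(\ell)=-2$, impossible in $E_8(-2)$, or $\ell=0$ with $D=\pm e_\pm$ but then $\div(D)=2$ — so no such $D$. For $q(D)=-6$, $\div=2$: $q(\ell)\in\{0,-4\}$ forces $2(a^2+b^2)\in\{6,10\}$, not of form $2(a^2+b^2)$ with the divisibility-$2$ constraint satisfiable — rule out. Similarly $q=-12$: needs $D_{U(2)^3}$ divisible by $2$, but here the $U(2)^3$-part of every class is $0$, and $q(\ell)-2(a^2+b^2)=-12$ with the divisibility forcing $a,b$ even leads back only to multiples of $\delta',\Sigma'$, non-primitive. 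So after the dust settles only $\pm\delta', \pm\Sigma'$ survive.

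**Carrying it out cleanly.** Concretely I would: (1) identify $\Pic S\cong E_8(-2)$ and hence $\Pic(M')\cong E_8(-2)\oplus(-2)^2$ as a lattice (with the explicit generators above), citing Theorem \ref{BBform}; (2) show $\delta',\Sigma'$ \emph{are} wall divisors — this is immediate since $q(\delta')=q(\Sigma')=-4$, $\div=2$, matching the second line of $\mathscr{W}_{M'}$ in Theorem \ref{main} (or, not yet having Theorem \ref{main}, invoke Corollary \ref{Rdelta}: $R_{\delta'}\in\MonHdg(M')$ fixes $\Sigma'$ and reflects $\delta'$, and by Proposition \ref{MonoM'} applied to the reflection fixing $\Sigma$ on $S^{[2]}$ — or directly via Proposition \ref{extremalray} exhibiting $\Sigma'$ and $\delta'$ as dual to contracted/extremal curves: $\Sigma'$ is the exceptional divisor of $r$ so it is uniruled and its dual ray is extremal; similarly the fibers of $r_1$ push down appropriately); (3) prove there are \emph{no others} by the lattice enumeration above — the key point being that $E_8(-2)$ contributes only squares $\equiv 0 \pmod 4$ and no $(-2)$-classes, so $\pm e_\pm$ and their ``mixtures'' with $L$ never hit the required $(q,\div)$-pairs except at $\delta',\Sigma'$. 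The main obstacle is step (3): I need to be careful about divisibility computations in the non-unimodular overlattice generated by $\delta',\Sigma',e_+,e_-$ — specifically which primitive vectors have $\div=1$ versus $\div=2$ — and about whether any class of square $-2$ with $\div=1$ could sneak in from combining $e_+$ with an $E_8(-2)$-vector; resolving this requires knowing the discriminant group of $\Pic(M')$ is $(\Z/2)^{10}$ with the appropriate quadratic form, so that $\div(D)\in\{1,2\}$ always and $\div(D)=1$ iff $D$ projects to $0$ in the discriminant group, which for square $-2$ forces $D$ into the $(-2)^2$-block where no square $-2$ primitive vector of divisibility $1$ exists. I would spell out exactly this discriminant-group argument. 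Finally, conclude that $\mathscr{W}_{M'}$ consists precisely of $\{\pm\delta',\pm\Sigma'\}$, i.e.\ of $\delta'$ and $\Sigma'$ up to sign, both of square $-4$ and divisibility $2$ as claimed.
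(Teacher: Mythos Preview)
Your approach has a fatal circularity. You propose to determine which classes in $\Pic(M')$ are wall divisors by matching them against the numerical list $\mathscr{W}_\Lambda$ from Theorem \ref{main}. But Proposition \ref{exwalls} is one of the main inputs to the \emph{proof} of Theorem \ref{main}: in Section \ref{endsection} the paper repeatedly deforms to the very general $M'$ of Section \ref{genericM'} and invokes Proposition \ref{exwalls} to rule out candidate wall divisors (Cases 1, 2, 4, 6--9). So you cannot quote Theorem \ref{main} here. You note this parenthetically for the ``$\delta',\Sigma'$ are wall divisors'' direction, but your step (3) --- showing there are \emph{no other} wall divisors --- rests entirely on the enumeration against Theorem \ref{main}'s list, and you offer no substitute.

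There is also a lattice error: the $E_8(-2)\simeq\Pic S$ is the \emph{anti-invariant} lattice of $\iota^*$, so $\pi_*\circ j$ annihilates it and it contributes nothing to $\Pic(M')$. In this very general situation $\Pic(M')=\Z\tfrac{\delta'+\Sigma'}{2}\oplus\Z\tfrac{\delta'-\Sigma'}{2}\simeq(-2)^2$, of rank $2$, not rank $10$.

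The paper's argument is entirely different and geometric. Since $S$ has no curves, one enumerates all curves in $S^{[2]}$ (only the diagonal fibers $\ell_\delta^s$), then in $N_1$ (Lemma \ref{Rdeltalemma} handles the delicate case), and concludes that the Mori cone of $M'$ has exactly the two extremal rays $\pi_{1*}r_1^*\ell_\delta$ and $\pi_{1*}\ell_\Sigma$. Their duals are $\tfrac12\delta'$ and $\tfrac12\Sigma'$ (Lemma \ref{dualdeltasigma}), so by Proposition \ref{extremalray} $\delta',\Sigma'$ are wall divisors, and by \cite[Theorem 4.1]{Menet-Riess-20} the K\"ahler cone is exactly $\{\alpha\in\mathcal C_{M'}:(\alpha,\delta')_q>0,\ (\alpha,\Sigma')_q>0\}$. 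With this explicit description, any further wall divisor $D=a\delta'+b\Sigma'+K$ is ruled out directly from the definition: one exhibits K\"ahler classes orthogonal to $D$ (forcing $K=0$, then $a,b$ of opposite sign), and finishes using the monodromy operator $R_{\delta'}$ from Corollary \ref{Rdelta}. No appeal to Theorem \ref{main} is made or possible.
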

This section is devoted to the proof of this proposition.
The idea of the proof is to study the curves in $M'$ and use Proposition \ref{extremalray}. 

Consider the following diagram: 
\begin{equation}
\xymatrix{
  &S^{[2]}\ar[r]^{\nu}& S^{(2)} & \\
  &\widetilde{S\times S}\ar[u]^{\widetilde{\rho}}\ar[r]^{\widetilde{\nu}}&\ar[ld]_{p_1}S^2\ar[u]^{\rho}\ar[rd]^{p_2}& \\
&S & & S,}
\label{S2}
\end{equation}
where $p_1$, $p_2$ are the projections, $\rho$ the quotient map and $\nu$ the blow-up in the diagonal in $S^{(2)}$. 
By assumption $S$ does not contain any effective curve. Hence considering the image by the projections $p_1$, $p_2$ and $\rho$,
we deduce that $S^{(2)}$ does not contain any curve either. Hence all curves in $S^{[2]}$ are contracted by
$\nu$, i.e.~fibers of the exceptional divisor $\Delta\rightarrow \Delta_{S^{(2)}}$, where $\Delta_{S^{(2)}}$ is the diagonal
in $S^{(2)}$.
We denote such a curve by  $\ell_{\delta}^s$, where $s\in S$ keeps track of the point $(s,s)\in S^{(2)}$. To
simplify the notation, we
 denote the cohomology class $\ell_{\delta}\coloneqq[\ell_{\delta}^s]$, since it does not depend
on $s \in S$. 

Our next goal is to determine the irreducible curves in $N_1$.
Recall that $r_1\colon N_1 \to S^{[2]}$ is the blow-up in the fixed surface $\Sigma$. Let $C$ be an irreducible curve in $N_1$. There are three cases:
\begin{itemize}
 \item[(i)]
 The image of $C$ by $r_1$ does not intersect $\Sigma$ and is of class $\ell_{\delta}$.
 Therefore, $C$ is of class $r_1^*(\ell_{\delta})$. 
  \item[(ii)]
   The image of $C$ by $r_1$ is contained in $\Sigma$ and of class $\ell_{\delta}$.
   \item[(iii)]
   The image of $C$ by $r_1$ is a point. Then $C$ is of class $\ell_{\Sigma}$ 
   (the class of a fiber of the exceptional divisor $\Sigma_1\longrightarrow \Sigma$).
\end{itemize}
Note that $\ell_{\delta}^s$ is contained in $\Sigma$ if $s\in S$ is a fixed point of the involution $\iota$,
and otherwise the intersection $\ell_{\delta}^s\cap \Sigma=\emptyset$ is empty (this follows from the description
of $\Sigma$ in Remark \ref{rem:Sigma}). Therefore, there cannot be a case, where the image of
$\ell_{\delta}^s$ intersects $\Sigma$ in a zero-dimensional locus. 

It remains to understand the case (ii). 
\begin{lemme}\label{Rdeltalemma}
Consider a curve 
$\ell_{\delta}^s$ contained in $\Sigma$ (i.e.~when $s\in S$ is a fixed point of $\iota$). 
The surface $H_0:=r_1^{-1}(\ell_{\delta}^s)$ is isomorphic to $\mathbb{P}^1\times\mathbb{P}^1$.
\end{lemme}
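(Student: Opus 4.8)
The plan is to compute the surface $H_0 = r_1^{-1}(\ell_\delta^s)$ directly from the blow-up construction. Recall $r_1\colon N_1 \to S^{[2]}$ is the blow-up along the smooth surface $\Sigma$, and $\ell_\delta^s$ is a curve contained in $\Sigma$ (the fiber over $(s,s)\in S^{(2)}$ of the exceptional divisor $\Delta \to \Delta_{S^{(2)}}$, for $s$ a fixed point of $\iota$). Since $\ell_\delta^s \subseteq \Sigma$ and $\Sigma$ is smooth inside the smooth fourfold $S^{[2]}$, the preimage $H_0$ is a $\mathbb{P}^1$-bundle over $\ell_\delta^s \cong \mathbb{P}^1$: namely $H_0 = \mathbb{P}(N_{\Sigma/S^{[2]}}|_{\ell_\delta^s})$, the projectivization of the restriction to $\ell_\delta^s$ of the (rank-$2$) normal bundle of $\Sigma$ in $S^{[2]}$.

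**Identifying the normal bundle along $\ell_\delta^s$.** The key step is therefore to determine $N := N_{\Sigma/S^{[2]}}|_{\ell_\delta^s}$ as a rank-$2$ bundle on $\mathbb{P}^1$; by Grothendieck it splits as $\cO(a)\oplus\cO(b)$, and $H_0 \cong \mathbb{P}^1\times\mathbb{P}^1$ precisely when $a = b$ (after twisting, when the bundle is a twist of $\cO\oplus\cO$, equivalently $a-b=0$). I would extract this from the explicit model in Remark \ref{rem:Sigma}: near a fixed point $s$ of $\iota$, work on $\widetilde{S\times S}$, where $\widetilde{S_\iota}$ (the strict transform of $S_\iota = \{(t,\iota(t))\}$) meets the exceptional divisor of $\widetilde\nu$ over $(s,s)$ in exactly the curve lying over $\ell_\delta^s$, and $\widetilde{\rho}$ is a double cover there. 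The curve $\ell_\delta^s$ sits in the exceptional $\mathbb{P}^1$-bundle $\Delta \to \Delta_{S^{(2)}}$ as a fiber, hence has normal bundle inside $\Delta$ equal to $\cO^{\oplus 2}$ (it is a fiber of a projective bundle), and normal bundle $\cO(-2)$ or $\cO(-1)$ inside $S^{[2]}$ coming from the normal direction of $\Delta$. Splitting $N_{\Sigma/S^{[2]}}$ along $\ell_\delta^s$ into the part tangent to $\Delta$ and the part transverse to $\Delta$, one finds $N_{\Sigma/S^{[2]}}|_{\ell_\delta^s} \cong \cO(m) \oplus \cO(m')$ with $m = m'$; the symmetry under exchanging the two branches at the fixed point $s$ (the local $\iota^{[2]}$-action, which fixes $\Sigma$ pointwise) forces the two summands to have equal degree.

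**Concluding.** Once $N_{\Sigma/S^{[2]}}|_{\ell_\delta^s} \cong \cO(a)^{\oplus 2}$ for some $a\in\bZ$, we get $H_0 = \mathbb{P}(\cO(a)^{\oplus 2}) \cong \mathbb{P}(\cO^{\oplus 2}) = \mathbb{P}^1\times\mathbb{P}^1$, since projectivization is insensitive to twisting by a line bundle pulled back from the base. This finishes the proof.

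**Main obstacle.** The delicate point is the explicit identification of the splitting type of $N_{\Sigma/S^{[2]}}$ restricted to $\ell_\delta^s$ — i.e.\ verifying that the two normal directions (along $\Delta$ versus transverse to $\Delta$) contribute summands of equal degree. One must keep careful track of how the diagonal $\Delta\subset S^{[2]}$, the surface $\Sigma$, and the curve $\ell_\delta^s$ sit relative to one another, using the diagram \eqref{S2} and the local picture at a fixed point of $\iota$ on $S$. An alternative route that sidesteps some of this is to observe that $H_0$ is a smooth rational surface fibered over $\mathbb{P}^1$ in $\mathbb{P}^1$'s with a section (the intersection $\Sigma_1 \cap H_0$, where $\Sigma_1$ is the exceptional divisor of $r_1$), hence a Hirzebruch surface $\mathbb{F}_k$; one then pins down $k=0$ by computing one self-intersection number, e.g.\ $(\Sigma_1|_{H_0})^2$, against the known value $q_{M'}(\Sigma') = -4$ from Theorem \ref{BBform}(iii) and the intersection-theoretic relations \eqref{Smith}, \eqref{Smith2}.
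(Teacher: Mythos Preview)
Your setup is correct and matches the paper's: $H_0=\mathbb{P}(N_{\Sigma/S^{[2]}}|_{\ell_\delta^s})$, so the whole question reduces to showing this rank-2 bundle on $\mathbb{P}^1$ is balanced. But your computation of the splitting type has a gap.

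The symmetry argument via $\iota^{[2]}$ does not work. Since $\Sigma$ is a component of the fixed locus of the symplectic involution $\iota^{[2]}$, the induced action on each fibre of $N_{\Sigma/S^{[2]}}$ is $-\id$ (otherwise the fixed locus would be strictly larger along that direction). So $\iota^{[2]}$ does not exchange any two summands and gives no constraint on the splitting type. Likewise, your ``tangent to $\Delta$ / transverse to $\Delta$'' decomposition is asserted rather than proved: there is no obvious reason the image of $T\Delta|_{\ell_\delta^s}$ in $N_{\Sigma/S^{[2]}}|_{\ell_\delta^s}$ should be a direct summand, nor that the two pieces have equal degree.

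The paper's argument is cleaner and avoids all of this. Pull back via $\widetilde{\rho}$ to $\widetilde{S\times S}$ (recall $\widetilde{\rho}$ is an isomorphism over $\Delta$, so $\ell_\delta^s$ identifies with its preimage). There $\Sigma$ pulls back to $\widetilde{S_\iota}$, the strict transform of $S_\iota=\{(t,\iota(t))\}$. The crucial point is that $\widetilde{\nu}$ contracts $\ell_\delta^s$ to the single point $(s,s)\in S_\iota$, and since $S_\iota$ meets the blow-up centre $\Delta_{S^2}$ transversally at that point, the normal bundle of the strict transform satisfies
\[
N_{\widetilde{S_\iota}/\widetilde{S\times S}}|_{\ell_\delta^s}\;\cong\;\widetilde{\nu}^*\bigl(N_{S_\iota/S\times S}|_{(s,s)}\bigr)\;\cong\;\mathcal{O}_{\ell_\delta^s}\oplus\mathcal{O}_{\ell_\delta^s},
\]
a pullback from a point, hence trivial. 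This immediately gives $H_0\cong\mathbb{P}^1\times\mathbb{P}^1$. Your alternative route via an intersection-theoretic computation of the Hirzebruch invariant could also be made to work, but you did not carry it out.
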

\begin{proof}
The surface $H_0$ is a Hirzebruch surface.
Since $r_1$ is the blow-up along $\Sigma$, observe that $H_0 \iso\bP(\mathcal{N}_{\Sigma|S^{[2]}}|_{\ell_\delta^s})$.
Therefore, we need to compute $\mathcal{N}_{\Sigma/S^{[2]}}|_{\ell_{\delta}^s}$.

Keeping the notation from Remark \ref{rem:Sigma}, recall that
$\widetilde{S_{\iota}}:=\widetilde{\rho}^{-1}(\Sigma)$ and
$S_{\iota}:=\widetilde{\nu}(\widetilde{S_{\iota}})$. For simplicity, we also denote by $\ell_{\delta}^s$ the
preimage of $\ell_{\delta}^s$ by $\widetilde{\rho}$. (Note for this, that $\widetilde{\rho}$ restricts to an
isomorphism on the preimage of $\Delta$, and therefore, it makes sense to identify $\ell_{\delta}^s$ with its preimage).

Observe that:
$$\mathcal{N}_{\Sigma/S^{[2]}}|_{\ell_{\delta}^s}
\iso \widetilde{\rho}^{*}(\mathcal{N}_{\Sigma/S^{[2]}})|_{\ell_{\delta}^s}
\iso\mathcal{N}_{\widetilde{S_{\iota}}/\widetilde{S\times S}}|_{\ell_{\delta}^s}
\iso\widetilde{\nu}^{*}(\mathcal{N}_{S_{\iota}/S\times S}|_{s})
\iso\mathcal{O}_{\ell_{\delta}^s}\oplus\mathcal{O}_{\ell_{\delta}^s},$$
\TODO{give a reference... probably hartshorne}where we identify $s\in S\iso S_\iota$. It follows that
$H_0\simeq \mathbb{P}^1\times \mathbb{P}^1$.
\end{proof}
It follows that the extremal curves in case (ii) have classes $r_1^*(\ell_{\delta})$.
\begin{rmk}\label{mainldelta}
In particular, considering cases (i) and (ii), we see that the extremal curves $C$ such that $r_1(C)=\ell_{\delta}^s$ for some $s\in S$ have classes $r_1^*(\ell_{\delta})$.
\end{rmk}

Hence, we obtain that the extremal curves in $N_1$ have classes $r_1^*\ell_{\delta}$ and
$\ell_{\Sigma}$.
This implies that the extremal curves in $M'$ have classes $\pi_{1*}r_1^*\ell_{\delta}$ and $\pi_{1*}\ell_{\Sigma}$. 

We can compute their dual divisors. 
\begin{lemme}\label{dualdeltasigma}
The dual divisors in $H^2(M',\Q)$ of $\pi_{1*}r_1^*\ell_{\delta}$ and $\pi_{1*}\ell_{\Sigma}$ are respectively 
$\frac{1}{2}\delta'$ and $\frac{1}{2}\Sigma'\in H^2(M',\bQ)$
\end{lemme}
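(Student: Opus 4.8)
The plan is to compute the two dual divisors by pairing the relevant curve classes against a basis of $H^2(M',\mathbb{Q})$, using the orthogonal decomposition in Theorem~\ref{BBform}~(v) together with the projection formula for $\pi_{1*}$ and $r_1^*$. Recall that for $\beta \in H^{2n-1,2n-1}(M',\mathbb{Q})$ the dual class $\beta^\vee$ is characterized (Remark~\ref{dualclass}) by $(\alpha,\beta^\vee)_q = \alpha\cdot\beta$ for all $\alpha\in H^2(M',\mathbb{C})$. So it suffices to check the claimed identity after pairing with each of the three summands $r^*\pi_*(j(H^2(S,\mathbb{Z})))$, $\mathbb{Z}\frac{\delta'+\Sigma'}{2}$, and $\mathbb{Z}\frac{\delta'-\Sigma'}{2}$; equivalently, after pairing with $r^*\pi_*(j(\lambda))$ for $\lambda\in H^2(S,\mathbb{Z})$, with $\delta'$, and with $\Sigma'$.

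First I would treat $\pi_{1*}r_1^*\ell_\delta$. For $\alpha = r^*(x)$ with $x\in H^2(M,\mathbb{Z})$, the projection formula gives $r^*(x)\cdot \pi_{1*}r_1^*\ell_\delta = \pi_1^* r^*(x)\cdot r_1^*\ell_\delta$; by Remark~\ref{Smithcomute} and commutativity of diagram~(\ref{diagramM'}) this equals $r_1^*\pi^*(x)\cdot r_1^*\ell_\delta = \pi^*(x)\cdot \ell_\delta$ (as $r_1$ is a blow-up and $\ell_\delta$ avoids the exceptional locus in the relevant representative). Running this through the analogous computation for $S^{[2]}$, where the dual of $\ell_\delta$ is known to be $\tfrac12\delta$ (this is the classical fact that $\delta$ is the class whose $q$-dual, up to the factor coming from the Fujiki relation, is the exceptional curve class), one gets $\pi^*(x)\cdot\ell_\delta = (\pi^*(x),\tfrac12\delta)_{q_{S^{[2]}}}$. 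Then using $\pi_*\pi^* = 2\,\mathrm{id}$ on the invariant part, Theorem~\ref{BBform}~(ii), and $q_M = q_{M'}\circ r^*$, one converts this into $(r^*(x),\tfrac12\delta')_{q_{M'}}$, exactly as in the computation in the proof of Corollary~\ref{Rdelta}. It remains to pair $\pi_{1*}r_1^*\ell_\delta$ with $\Sigma'$: since $r_1^*\ell_\delta$ meets the exceptional divisor $\Sigma_1$ in degree $0$ (the representative $\ell_\delta^s$ is chosen with $s\notin\mathrm{Fix}\,\iota$, so it avoids $\Sigma$), we get $\Sigma_1\cdot r_1^*\ell_\delta = 0$, hence $\Sigma'\cdot \pi_{1*}r_1^*\ell_\delta = 0$; and indeed $(\Sigma',\tfrac12\delta')_{q_{M'}}=0$ because $\delta'$ and $\Sigma'$ are orthogonal by Theorem~\ref{BBform}~(v). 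This pins down $(\pi_{1*}r_1^*\ell_\delta)^\vee = \tfrac12\delta'$.

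Next, for $\pi_{1*}\ell_\Sigma$: pairing with $r^*(x)$, the projection formula gives $r^*(x)\cdot\pi_{1*}\ell_\Sigma = \pi_1^*r^*(x)\cdot\ell_\Sigma = r_1^*\pi^*(x)\cdot\ell_\Sigma$, and since $\ell_\Sigma$ is a fiber of the exceptional divisor $\Sigma_1\to\Sigma$ it is contracted by $r_1$, so $r_1^*(\text{anything})\cdot\ell_\Sigma = 0$; this matches $(r^*(x),\tfrac12\Sigma')_{q_{M'}}=0$ by Theorem~\ref{BBform}~(iv). The only remaining pairing is with $\Sigma'$ (equivalently with $\delta'$, but orthogonality makes $\delta'\cdot\pi_{1*}\ell_\Sigma$ the same kind of computation). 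One computes $\Sigma'\cdot\pi_{1*}\ell_\Sigma$ via $\pi_1^*\Sigma'\cdot\ell_\Sigma$; using $\pi_1^*\pi_{1*}=\mathrm{id}+\iota_1^{[2]*}$ applied to $\Sigma_1$ (which is $\iota_1^{[2]}$-invariant) this is $2\Sigma_1\cdot\ell_\Sigma$, and $\Sigma_1\cdot\ell_\Sigma = -1$ since $\Sigma_1\to\Sigma$ is a $\mathbb{P}^1$-bundle with $\ell_\Sigma$ a fiber; tracking the factor of $2$ from the quotient map (as in~(\ref{Smith2})) against $q_{M'}(\Sigma')=-4$ from Theorem~\ref{BBform}~(iii) gives $(\Sigma',\tfrac12\Sigma')_{q_{M'}} = \tfrac12 q_{M'}(\Sigma') = -2$, which should match the geometric computation. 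So $(\pi_{1*}\ell_\Sigma)^\vee = \tfrac12\Sigma'$.

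\textbf{Main obstacle.} The routine computations are all projection-formula bookkeeping; the one point requiring genuine care is the normalization of the intersection numbers $\Sigma_1\cdot\ell_\Sigma$ and the self-intersection-type pairing $\pi_{1*}\ell_\Sigma$ with $\Sigma'$ — i.e.~getting every factor of $2$ right as one passes between $N_1$, $M'$, and $M$, and reconciling the cup-product computation with the Beauville--Bogomolov values in Theorem~\ref{BBform}~(iii)--(iv). The cleanest way to avoid sign/factor errors is to only ever pair against the basis of Theorem~\ref{BBform}~(v) and to import the $S^{[2]}$-side identity $\ell_\delta^\vee = \tfrac12\delta$ as a black box, deducing everything else by functoriality exactly as in the proof of Corollary~\ref{Rdelta}.
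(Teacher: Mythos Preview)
Your proposal is correct and follows essentially the same idea as the paper, with only an organizational difference: the paper works upstairs on $N_1$ with the decomposition $H^2(N_1,\bZ)=r_1^*\nu^*H^2(S^{(2)},\bZ)\oplus \bZ[\delta_1]\oplus\bZ[\Sigma_1]$, computes $\ell_\delta\cdot x=-p_{\delta_1}(x)$ and $\ell_\Sigma\cdot x=-p_{\Sigma_1}(x)$ directly from $\ell_\delta\cdot\delta_1=\ell_\Sigma\cdot\Sigma_1=-1$, and then pushes down via (\ref{Smith2}), whereas you start from the $M'$-side decomposition of Theorem~\ref{BBform}~(v), pull back via the projection formula, and import $\ell_\delta^\vee=\tfrac12\delta$ on $S^{[2]}$ as a black box. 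The underlying intersection computations and the factor-of-two bookkeeping are identical; the paper's route is marginally more self-contained since it does not invoke the $S^{[2]}$ duality separately.
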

\begin{proof}
Write 
$$H^2(N_1,\Z)=r_1^*\nu^*H^2(S^{(2)},\Z)\oplus \Z\left[\delta_1\right]\oplus\Z\left[\Sigma_1\right].$$
We denote by $p_{\delta_1}:H^2(N_1,\Z)\rightarrow \Z[\delta_1]\iso \bZ$ and $p_{\Sigma_1}:H^2(N_1,\Z)\rightarrow
\Z[\Sigma_1]\iso \bZ$ the projections.
 Let $x\in H^2(N_1,\Z)^{\iota_1^{[2]}}$.
Since $\ell_\delta\cdot\alpha=0$ for all $\alpha \in r_1^*\nu^*H^2(S^{(2)},\Z)\oplus\bZ [\Sigma]$, we
have  $$\ell_{\delta}\cdot x=(\ell_{\delta}\cdot \delta_1)p_{\delta_1}(x)=-p_{\delta_1}(x)$$ and similarly
$\ell_{\Sigma}\cdot x=(\ell_{\Sigma}\cdot \Sigma_1)p_{\Sigma_1}(x)=-p_{\Sigma_1}(x)$.
It follows from (\ref{Smith2}):
$$\pi_{1*}(\ell_{\delta})\cdot\pi_{1*}(x)=-2p_{\delta_1}(x)\ \text{and}\ \pi_{1*}(\ell_{\Sigma})\cdot\pi_{1*}(x)=-2p_{\Sigma_1}(x).$$

Therefore, Theorem \ref{BBform} (iii) shows that $\frac{1}{2}\delta'$ and $\frac{1}{2}\Sigma'\in H^2(M',\bQ)$ are the duals of
$\pi_{1*}(\ell_{\delta})$ and $\pi_{1*}(\ell_{\Sigma})$ respectively.
\end{proof}
By Proposition \ref{extremalray} this proves that $\delta'$ and $\Sigma'$ are wall divisors in $M'$.
Their claimed numerical properties are given by Theorem \ref{BBform} (iii) and Remark \ref{div}.

It remains to show that $\delta'$ and $\Sigma'$ are the only wall divisors in $M'$. 
Let us assume for contradiction that there is another wall divisor $D$. 
By Theorem \ref{BBform} (v), we have 
$D=a\delta'+b\Sigma'+K$, with $(a,b)\in \Z[\frac{1}{2}]\times\N[\frac{1}{2}]$, (up to replacing $D$ by $-D$) and $K$ a divisor orthogonal to $\delta'$ and $\Sigma'$.

Since $\delta'$ and $\Sigma'$ correspond to the duals of the extremal rays of the Mori cone,
all classes $\alpha\in \mathcal{C}_{M'}$ such that $(\alpha,\delta')_q>0$ and $(\alpha,\Sigma')_q>0$ are Kähler
classes by \cite[Theorem 4.1]{Menet-Riess-20}.

Hence, we cannot have $a=b=0$. Indeed, $D$ would be orthogonal to the Kähler classes with orthogonal projection on $\Pic M'$ equal to $ -(\Sigma' + \delta')$  which is impossible by definition of wall divisors. 

Therefore, $a$ or $b$ are non trivial. 
It follows that $K=0$. 
Indeed, if $K\neq0$, as  before, the class $ -(\Sigma' + \delta')-\frac{4(b+a)}{(K,K)}K$ is the projection on $\Pic M'$ of a
Kähler class; however it is orthogonal to $D$ which is impossible.

Now, we can assume that $a\neq 0$ and $b\neq 0$ (indeed, if $a=0$ or $b=0$, then $D\in \bZ\delta'$ or $D\in \bZ\Sigma'$).
If $a<0$, $D$ would be orthogonal to $ a\Sigma' - b\delta'$ the projection on $\Pic M'$ of a Kähler class; this is impossible by definition of wall divisors.
Hence we assume that $a>0$. By Corollary \ref{Rdelta}, $R_{\delta'}$ is a monodromy operator.
Moreover, $R_{\delta'}(D)=-a\delta+b\Sigma'$; as previously $R_{\delta'}(D)$ is orthogonal
to some Kähler class. This gives a contradiction and thus concludes the proof. 
\subsection{Application: an example of non-natural symplectic involution on a Nikulin orbifold}\label{inv0M'}
Using the results from the previous subsection, we will prove the existence of a non-natural symplectic
involution on our example. We recall that the reflections $R_x$ with $x\in H^2(M',\Z)$ are defined in Section \ref{notation}.
\begin{prop}\label{involution}
Let $(S,\iota)$ be a very general K3 surface endowed with a symplectic involution (that is $\Pic S\simeq E_8(-2)$).
Let $M'$ be a Nikulin orbifold constructed from $(S,\iota)$ as in Section \ref{M'S2}. 
There exists $\kappa'$ a symplectic involution on $M'$ such that
$\kappa'^*=R_{\frac{1}{2}(\delta'-\Sigma')}$.
\end{prop}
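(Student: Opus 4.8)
The plan is to realize $\kappa'$ via the Hodge–theoretic global Torelli theorem (Theorem \ref{mainHTTO} (ii)), applied to the reflection $R_x$ in the class $x\coloneqq\frac{1}{2}(\delta'-\Sigma')$. I would begin by collecting the lattice data. By Theorem \ref{BBform} (v), $x$ is a primitive element of $H^2(M',\Z)$, and since $\delta'$ and $\Sigma'$ are the classes of the diagonal and the exceptional divisor, $x\in\Pic(M')$. From Theorem \ref{BBform} (iii) we have $q(\delta')=q(\Sigma')=-4$, and the orthogonal splitting $\Z\frac{\delta'+\Sigma'}{2}\oplus^{\perp}\Z\frac{\delta'-\Sigma'}{2}\simeq(-2)^2$ of Theorem \ref{BBform} (v) forces $(\delta',\Sigma')_q=0$, hence $q(x)=-2$ and $\div(x)=2$. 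Because $x$ is of type $(1,1)$, $R_x$ fixes $\sigma_{M'}$, so $R_x$ is an integral Hodge isometry of $H^2(M',\Z)$.

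The crucial step is to verify that $R_x$ is a monodromy operator, hence a parallel transport operator $M'\to M'$. This is exactly the first numerical case of Proposition \ref{MonoIntroduction} (ii), so I would invoke it for $\alpha=x$; alternatively, one would try to establish it by hand in this very general situation, connecting $(M',\varphi)$ to $(M',R_{\varphi(x)}\circ\varphi)$ by a chain of generic twistor spaces via Lemma \ref{lem:connected+} together with the already available operators $R_{\delta'}$ (Corollary \ref{Rdelta}) and $R_{\Sigma'}$ (Corollary \ref{Sigma'}). Either way $R_x\in\MonHdg(M')$.

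Next I would show $R_x$ preserves the Kähler cone. Since $\Pic S\simeq E_8(-2)$ is negative definite with all squares divisible by $4$, $S$ carries no effective curve, so $M'$ is of the type treated in Proposition \ref{exwalls}, whose wall divisors are exactly $\pm\delta'$ and $\pm\Sigma'$; as the effective representatives $\delta',\Sigma'$ pair positively with the Kähler cone, $\mathcal{W}_{M'}^{+}=\{\delta',\Sigma'\}$. Since $b_2(M')=16\geq5$, Corollary \ref{cor:desrK} gives $\mathcal{K}_{M'}=\{\alpha\in\mathcal{C}_{M'}\mid (\alpha,\delta')_q>0,\ (\alpha,\Sigma')_q>0\}$. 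A direct computation with the reflection formula yields $R_x(\delta')=\Sigma'$ and $R_x(\Sigma')=\delta'$; since $R_x$ is a self-adjoint isometry preserving the component $\mathcal{C}_{M'}$ (being a reflection in a class of negative square) and interchanging $\delta'$ and $\Sigma'$, it maps the above set onto itself, i.e.\ $R_x(\mathcal{K}_{M'})=\mathcal{K}_{M'}$.

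Finally, Theorem \ref{mainHTTO} (ii), applied to the parallel transport operator and integral Hodge isometry $R_x$ which sends a Kähler class to a Kähler class, produces an automorphism $\kappa'\colon M'\to M'$ with $\kappa'^*=R_x=R_{\frac{1}{2}(\delta'-\Sigma')}$. It is symplectic because $R_x(\sigma_{M'})=\sigma_{M'}$, and it is an involution because $R_x^2=\id$ together with the injectivity of $\Aut(M')\to\mathcal{O}(H^2(M',\Z))$ (the orbifold counterpart of \cite[Proposition 10]{Beauville1982}, already invoked in the proof of Proposition \ref{involutionE8}). I expect the genuine obstacle to be the monodromy step: unlike $R_{\delta'}$ and $R_{\Sigma'}$, the operator $R_x$ is not inherited from $\Mon^2(S^{[2]})$ through Proposition \ref{MonoM'} (which always fixes $\Sigma'$), and this is precisely why $\kappa'$ is a non-natural involution.
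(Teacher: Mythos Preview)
Your argument is circular. The monodromy input you invoke to run the Torelli theorem is, in the paper's logical architecture, a \emph{consequence} of the very proposition you are trying to prove. Concretely: Proposition \ref{MonoIntroduction} (ii) for the case $q(\alpha)=-2$, $\div(\alpha)=2$ is Corollary \ref{Lastmonodromy}, whose proof reduces (via Theorem \ref{thm:9monorb-M'}) to Remark \ref{RdeltaSigma}; that remark in turn records that the proof of Corollary \ref{Sigma'} shows $R_{\frac{1}{2}(\delta'-\Sigma')}\in\Mon^2(M')$; and the proof of Corollary \ref{Sigma'} uses Proposition \ref{involution} to obtain $\kappa'^*\in\Mon^2(M')$. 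Your ``alternative'' route is no better: it appeals to Corollary \ref{Sigma'} directly, which again rests on Proposition \ref{involution}. You correctly identify the monodromy step as the obstacle, but you do not supply an independent argument for it, and none of the operators produced by Proposition \ref{MonoM'} can give $R_x$ (as you yourself note, they all fix $\Sigma'$).

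The paper resolves this by going in the opposite direction: it \emph{constructs} $\kappa'$ explicitly. One writes down the involution $\kappa\colon S\times S\to S\times S$, $(x,y)\mapsto(x,\iota(y))$, checks that it descends along the $4:1$ cover $\gamma\colon V\to U$ to a holomorphic symplectic involution on the open set $U\subset M'$, extends it to a bimeromorphism in codimension $1$, and then reads off $\kappa'^*$ from the geometry ($\kappa$ swaps $\Delta_{S^{(2)}}$ and $S_\iota$, hence $\kappa'^*$ swaps $\delta'$ and $\Sigma'$ and fixes the rest). Only at the very end does one use Proposition \ref{exwalls} and Corollary \ref{criterionwall} to upgrade $\kappa'$ from a bimeromorphism to a biregular involution. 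This explicit construction is what \emph{supplies} the monodromy operator $R_{\frac{1}{2}(\delta'-\Sigma')}=\kappa'^*$ used downstream; your approach tries to consume it before it exists. (A secondary point: your appeal to injectivity of $\Aut(M')\to\mathcal{O}(H^2(M',\Z))$ is Proposition \ref{AutM'}, proved only in Section \ref{Application}; the paper's geometric construction sidesteps this too, since $\kappa$ is visibly an involution.)
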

\begin{proof}
We consider the following involution on $S\times S$:
$$\xymatrix@R0pt{\kappa: \hspace{-1.5cm}& S\times S\ar[r] & S\times S\\
& (x,y)\ar[r]&(x,\iota(y)).}$$
We consider $$V:=S\times S\smallsetminus \left(\Delta_{S^{2}}\cup S_{\iota}\cup(\Fix \iota\times \Fix \iota)\right).$$
We denote by $\sigma_2$ the involution on $S\times S$ which exchange the two K3 surfaces and $\iota\times\iota$ the involution which acts as $\iota$ diagonally on $S\times S$. Then we consider 
$$U:=V/\left\langle \sigma_2,\iota\times\iota\right\rangle.$$
This set can be seen as an open subset of $M'$ and $V$ can also be seen as an open subset of $\widetilde{S\times S}$.
Moreover the map $\pi_1\circ\widetilde{\rho}_{|V}:V\rightarrow U$ is a four to one non-ramified cover. For simplicity, we denote $\gamma:=\pi_1\circ\widetilde{\rho}_{|V}$.

First, we want to prove that $\kappa$ induces an involution $\kappa'$ on $U$ with a commutative diagram:
\begin{equation}
\xymatrix{V\ar[d]_{\gamma}\ar[r]^{\kappa}&V\ar[d]^{\gamma}\\
U\ar[r]^{\kappa'} & U.}
\label{kappadiagram}
\end{equation}
If such a map $\kappa'$ would exist then it will necessarily verify the following equation:
$$\kappa'\circ \gamma=\gamma\circ \kappa.$$
The map $\gamma$ being surjective, to be able to claim that the previous equation provides a well defined map from $U$ to $U$, we have to verify that: 
$$\kappa'\circ\gamma(x_1,y_1)=\kappa'\circ\gamma(x_2,y_2),$$ when
$\gamma(x_1,y_1)=\gamma(x_2,y_2)$.
That is:
\begin{equation}
\gamma\circ \kappa(x_1,y_1)=\gamma\circ \kappa(x_2,y_2),
\label{welldefined}
\end{equation}
for all $((x_1,y_1),(x_2,y_2))\in S^4$ such that $\gamma(x_1,y_2)=\gamma(x_2,y_2)$. Let $(x,y)\in S^2$.
We have: $$\gamma^{-1}(\gamma(x,y))=\left\{(x,y),(y,x),(\iota(x),\iota(y)),(\iota(y),\iota(x))\right\}.$$
We also have: $$\kappa(\gamma^{-1}(\gamma(x,y)))=\left\{(x,\iota(y)),(y,\iota(x)),(\iota(x),y),(\iota(y),x)\right\}=\gamma^{-1}(\gamma(x,\iota(y)))=\gamma^{-1}(\gamma(\kappa(x,y))).$$
This shows (\ref{welldefined}). Hence $\kappa'$ is set theoretically well defined. Since $\gamma$ is a four to one non-ramified cover, it is a local isomorphism; therefore $\kappa'$ inherits of the properties of $\kappa$. In particular $\kappa'$ is a holomorphic symplectic involution.

It follows that $\kappa'$ induces a bimeromorphic symplectic involution on $M'$. By \cite[Lemma 3.2]{Menet-Riess-20}, $\kappa'$ extends to bimeromorphic symplectic involution which is an isomorphism in codimension 1 (we still denote by $\kappa'$ this involution).
In particular, $\kappa'^*$ is now well defined on $H^2(M',\Z)$ (see \cite[Lemma 3.4]{Menet-Riess-20}).
Now, we are going to prove that $\kappa'$ extends to a regular involution. 
We recall from Theorem \ref{BBform} (v) that:
$$H^2(M',\Z)=r^*\pi_*(j(H^2(S,\Z)))\oplus^{\bot}\Z\frac{\delta'+\Sigma'}{2}\oplus^{\bot}\Z\frac{\delta'-\Sigma'}{2}.$$
Since $\kappa'$ is symplectic $\kappa'$ acts trivially on $r^*\pi_*(j(H^2(S,\Z)))$. Indeed $\Pic M'=\Z\frac{\delta'+\Sigma'}{2}\oplus\Z\frac{\delta'-\Sigma'}{2}$. Moreover, $\kappa$ exchanges $\Delta_{S^{(2)}}$ and $S_{\iota}$. Hence by continuity and commutativity of diagram (\ref{kappadiagram}), we have that $\kappa'$ exchanges the divisors $\delta'$ and $\Sigma'$. 
By Proposition \ref{exwalls} and Corollary \ref{criterionwall}, it follows that $\kappa'$ sends Kähler classes to Kähler classes. Hence by \cite[Proposition 3.3]{Menet-Riess-20}, $\kappa'$ extends to an involution on $M'$.
\end{proof}
%
\begin{cor}\label{Sigma'}
Let $M'$ be a Nikulin orbifold.
Then: $$R_{\Sigma'}\in \Mon^2(M').$$
\end{cor}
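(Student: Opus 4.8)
The plan is to combine the two monodromy facts already established — namely that $R_{\delta'} \in \MonHdg(M')$ (Corollary \ref{Rdelta}), and that in the very general case the involution $\kappa'$ constructed in Proposition \ref{involution} has $\kappa'^* = R_{\frac12(\delta'-\Sigma')}$ — together with the deformation invariance of the monodromy group along the moduli space of Nikulin-type orbifolds. The key algebraic observation is a lattice-theoretic identity: the reflection $R_{\Sigma'}$ should be expressible as a composition of $R_{\delta'}$ and a conjugate/composite involving $R_{\frac12(\delta'-\Sigma')}$. Concretely, set $u := \frac{\delta'+\Sigma'}{2}$ and $v := \frac{\delta'-\Sigma'}{2}$, which are orthogonal by Theorem \ref{BBform} and each of square $-2$ (since $q(\delta')=q(\Sigma')=-4$ and $(\delta',\Sigma')_q = 0$ by Theorem \ref{BBform} (iv), as $\Sigma' = \pi_{1*}(\Sigma_1)$ is $r^*$-orthogonal... one checks $(\delta',\Sigma')_q=0$ directly from (v)). Then $R_u$, $R_v$, $R_{\delta'}$, $R_{\Sigma'}$ all lie in the rank-$2$ summand $\Z u \oplus \Z v$ and act trivially on its orthogonal complement. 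In this rank-$2$ lattice one has the clean identity $R_{\delta'} = R_u \circ R_v$ (composition of the two coordinate reflections on $\langle u\rangle \perp \langle v\rangle$, since $\delta' = u+v$ spans the diagonal and reflecting in it swaps $u \leftrightarrow v$ — wait, more carefully: $R_{\delta'}$ fixes $\Sigma' = u - v$ and negates $\delta' = u+v$, hence swaps $u$ and $-v$, i.e. $R_{\delta'} = R_{u}\circ R_v$ up to sign bookkeeping). Similarly $R_{\Sigma'}$ fixes $\delta'$ and negates $\Sigma'$, so it swaps $u$ and $v$; and $R_v = R_{\frac12(\delta'-\Sigma')}$ negates $v$ and fixes $u$. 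Therefore $R_{\Sigma'} = R_{\delta'} \circ R_v$ (both sides negate $\Sigma'$, fix $\delta'$, and act trivially on the orthogonal complement — check on a basis).

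First I would carry out this rank-$2$ computation carefully to pin down the exact identity $R_{\Sigma'} = R_{\delta'} \circ R_{\frac12(\delta'-\Sigma')}$ (or whatever the correct signs dictate). Since $\Mon^2(M')$ is a group, it then suffices to show both factors on the right are monodromy operators. The factor $R_{\delta'}$ is handled by Corollary \ref{Rdelta}. For the factor $R_{\frac12(\delta'-\Sigma')}$: in the very general case (Proposition \ref{involution}) this equals $\kappa'^*$ for a genuine symplectic involution $\kappa'$ of $M'$, hence is a monodromy operator for that particular Nikulin orbifold. To pass to an arbitrary Nikulin orbifold $M'$, I would invoke Theorem \ref{wall}-style deformation invariance: the monodromy group is a deformation invariant (an element of $\Mon^2$ for one member of a deformation family, transported along a path, gives an element of $\Mon^2$ for any other member). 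Since every Nikulin orbifold is deformation equivalent to one built from a very general $(S,\iota)$ with $\Pic S \simeq E_8(-2)$, and the lattice automorphism $R_{\frac12(\delta'-\Sigma')}$ is defined purely in terms of the distinguished classes $\delta', \Sigma'$ (which are preserved, up to the identification of $H^2$, along such deformations), we conclude $R_{\frac12(\delta'-\Sigma')} \in \Mon^2(M')$ for every Nikulin orbifold, and hence $R_{\Sigma'} = R_{\delta'} \circ R_{\frac12(\delta'-\Sigma')} \in \Mon^2(M')$ as well.

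The main obstacle I anticipate is the deformation-transport argument in the last step: one must be careful that the classes $\delta'$ and $\Sigma'$ (equivalently, the sublattice $\Z\frac{\delta'+\Sigma'}{2} \oplus \Z\frac{\delta'-\Sigma'}{2}$ together with the distinguished pair inside it) are genuinely intrinsic to the deformation class, so that a monodromy operator realizing $R_{\frac12(\delta'-\Sigma')}$ on one fiber transports to the "same" lattice automorphism on another. This is the standard subtlety with parallel transport operators: the identification of $H^2$ along the path must carry $\delta'$ to $\delta'$ and $\Sigma'$ to $\Sigma'$. One way to sidestep it cleanly is to phrase everything inside a fixed marking $\varphi: H^2(M',\Z) \xrightarrow{\sim} \Lambda$ and the component $\mathcal{M}_\Lambda^\circ$: by the remark after Definition \ref{transp}, $\varphi^{-1}\circ\varphi'$ is a parallel transport operator between any two marked members of $\mathcal{M}_\Lambda^\circ$, so conjugating the monodromy operator $\kappa'^*$ of the very general example by such an operator produces a monodromy operator of the given $M'$; since $\kappa'^*$ is the lattice reflection $R_{\varphi(\frac12(\delta'-\Sigma'))}$ and the markings are chosen compatibly with the distinguished classes (all Nikulin orbifolds having the same marked Beauville–Bogomolov lattice with $\delta', \Sigma'$ mapping to fixed elements of $\Lambda$), the conjugate is again $R_{\frac12(\delta'-\Sigma')}$. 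I would spell out this marking bookkeeping as the technical heart of the argument, everything else being the short rank-$2$ lattice identity.
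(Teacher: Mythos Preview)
Your overall strategy coincides with the paper's: express $R_{\Sigma'}$ in terms of $R_{\delta'}$ (Corollary \ref{Rdelta}) and $\kappa'^*=R_{\frac12(\delta'-\Sigma')}$ (Proposition \ref{involution}), then transport from the very general Nikulin orbifold to an arbitrary one. However, your rank-$2$ identity is wrong. With $u=\frac{\delta'+\Sigma'}{2}$, $v=\frac{\delta'-\Sigma'}{2}$ one computes $R_{\delta'}\colon u\mapsto -v,\ v\mapsto -u$ and $R_v\colon u\mapsto u,\ v\mapsto -v$, so $R_{\delta'}\circ R_v$ sends $u\mapsto -v$, $v\mapsto u$, which is \emph{not} $R_{\Sigma'}$ (the latter swaps $u\leftrightarrow v$). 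The correct relation is the conjugation
\[
R_{\Sigma'}=R_{\frac12(\delta'-\Sigma')}\circ R_{\delta'}\circ R_{\frac12(\delta'-\Sigma')}=\kappa'^*\circ R_{\delta'}\circ \kappa'^*,
\]
which is exactly what the paper uses. This is a minor computational slip (you even flagged uncertainty), and once corrected your argument goes through.

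On the transport step, your concern is exactly the right one, and the paper resolves it more concretely than via abstract marking bookkeeping: since both Nikulin orbifolds arise from $K3^{[2]}$-type manifolds with symplectic involutions, one connects the underlying manifolds by a chain of twistor spaces \emph{each carrying the involution} (as in the proof of Proposition \ref{MonoM'}, using Lemma \ref{lem:connected+} and Remark \ref{twistorinvo}). Passing to the quotient-and-blow-up on each twistor family yields a parallel transport operator $f$ between the two $M'$'s that by construction sends $\Sigma'$ to $\Sigma'$, so that $f^{-1}\circ R_{\Sigma'}\circ f=R_{\Sigma'}$. This is cleaner than trying to argue that an arbitrary marking identification respects the distinguished classes.
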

\begin{proof}
Let $(X,\widetilde{\iota})$ be any manifold of $K3^{[2]}$-type endowed with a symplectic involution. Let $(S,\iota)$ be a very general K3 surface endowed with a symplectic involution. 
With exactly the same argument as in proof of Proposition \ref{MonoM'}, we can connect $X$ and $S^{[2]}$ by
a sequence of twistor spaces; each twistor space being endowed with an involution which restricts to a symplectic involution on its fibers. 
This sequence of twistor spaces provides a sequence of twistor spaces between $M'_X$ and $M'_{S^{[2]}}$ the irreducible symplectic orbifolds associated to $X$ and $S^{[2]}$ respectively. This sequence of twistor spaces provides a parallel transport operator $f:H^2(M'_X,\Z)\rightarrow H^2(M'_{S^{[2]}},\Z)$ which sends $\Sigma'_X$ to $\Sigma'_{S^{[2]}}$ (respectively the exceptional divisors of the blow-ups $M'_X\rightarrow X/\widetilde{\iota}$ and $M'_{S^{[2]}}\rightarrow S^{[2]}/\iota^{[2]}$).

By Proposition \ref{involution}, $\kappa'^*\in\Mon^2(M'_{S^{[2]}})$. 
Moreover by Corollary \ref{Rdelta}, $R_{\delta'}\in \Mon^2(M'_{S^{[2]}})$. 
Hence $R_{\Sigma'_{S^{[2]}}}=\kappa'^*\circ R_{\delta'}\circ \kappa'^*\in\Mon^2(M'_{S^{[2]}})$. Therefore $R_{\Sigma'_{X}}=f^{-1}\circ R_{\Sigma'_{S^{[2]}}}\circ f\in\Mon(M'_X)$.
\end{proof} 
\begin{rmk}\label{RdeltaSigma}
Let $(S,\iota)$ be a K3 surface endowed with a symplectic involution.
Let $M'$ be a Nikulin orbifold constructed from $(S,\iota)$ as in Section \ref{M'S2}.
The previous proof also shows that $R_{\frac{1}{2}(\delta'-\Sigma')}$ is a parallel transport operator.
\end{rmk}
\section{In search of wall divisors in special examples}\label{extremalcurves}
In this section we study some explicit examples of K3 surfaces with symplectic involutions and their
associated Nikulin orbifolds. This will be used in Section \ref{endsection} in order to determine which divisors on
Nikulin-type orbifolds are wall divisors. 
\subsection{When the K3 surface $S$ used to construct the Nikulin orbifold contains a unique rational curve}\label{onecurve}
\subsubsection*{Objective}
In this section we assume that $\Pic S\simeq E_8(-2)\oplus^{\bot} (-2)$. By Riemann--Roch $S$ contains only
one curve which is rational. We denote this curve $C$. In particular in this case, $\mathcal{K}_S\cap
E_8(-2)^{\bot}\neq\emptyset$. Hence, by Proposition \ref{involutionE8} there exists a symplectic involution
$\iota$ on $S$ whose anti-invariant lattice is the $E_8(-2)\subset \Pic(S)$. Moreover, the curve $C$ is fixed by $\iota$. The objective of this section is to determine wall divisors of the Nikulin orbifold $M'$ obtained from the couple $(S^{[2]},\iota^{[2]})$ (see Section \ref{M'section}). 
\subsubsection*{Notation}
We keep the notation from Section \ref{genericM'} and we consider the following notation in addition.
\begin{nota}\label{notacurves}
\begin{itemize}
\item
We denote by $D_C$ the following divisor in $S^{[2]}$:
$$D_C=\left\{\left.\xi\in S^{[2]}\right|\ \Supp\xi\cap C\neq\emptyset\right\}.$$
Moreover, we set $D_C':=\pi_{1*}r_{1}^*(D_C)$.
\item
We denote by $\overline{C^{(2)}}$ the strict transform of $C^{(2)}\subset S^{(2)}$ by $\nu$ and $\overline{\overline{C^{(2)}}}$ the strict transform of $\overline{C^{(2)}}\subset S^{[2]}$ by $r_1$.
We denote by 
$j^{C}:\overline{C^{(2)}}\hookrightarrow S^{[2]}$ and $\overline{j^{C}}:\overline{\overline{C^{(2)}}}\hookrightarrow N_1$
the embeddings. 
Note that $\overline{\overline{C^{(2)}}}\simeq\overline{C^{(2)}}\simeq C^{(2)}\simeq \mathbb{P}^2$.
\item
We recall that $\Delta_{S^{(2)}}$ is the diagonal in $S^{(2)}$ and $\Delta$ the diagonal in $S^{[2]}$.
We also denote by $\Delta_{S^2}$ the diagonal in $S\times S$ and
$\Delta_{\widetilde{S^2}}:=\widetilde{\nu}^{-1}(\Delta_{S^{2}})$ the exceptional divisor. Furthermore, we
denote $\Delta_{S^{(2)}}^C:=C^{(2)}\cap \Delta_{S^{(2)}}$ and $\Delta_{S^{2}}^{C}:=C^{2}\cap \Delta_{S^2}$.
Moreover we denote $\Delta_{S^{(2)}}^{\iota,C}:=\left\{\left.\left\{x,\iota(x)\right\}\right|\ x\in C\right\}\subset S^{(2)}$ and $\Delta_{S^{2}}^{\iota,C}:=\left\{\left.(x,\iota(x))\right|\ x\in C\right\}\subset S^2$.
\item
We denote $H_2:=\nu^{-1}(\Delta_{S^{(2)}}^C)$ and $\overline{H_2}$ its strict transform by $r_1$. We denote by $j^{H_2}:H_2\hookrightarrow S^{[2]}$ and $j^{\overline{H_2}}:\overline{H_2}\hookrightarrow N_1$ the embeddings.
\end{itemize}
\end{nota}
We summarize our notation on the following diagram:
$$\xymatrix{&\ar@{^{(}->}[dd]_{j^{\overline{H_2}}}\overline{H_2}\ar@{=}[r]& \ar@{^{(}->}[d]H_2\ar@/_1pc/@{_{(}->}[dd]_{j^{H_2}}\ar[r]   &   \ar@{^{(}->}[d]\Delta_{S^{(2)}}^C & &\\
 & & \Delta\ar[r]\ar@{^{(}->}[d]  &   \Delta_{S^{(2)}}\ar@{^{(}->}[d]& &\\
& N_1\ar[r]^{r_1}  & S^{[2]}\ar[r]^{\nu}  &  S^{(2)}& \ar@{^{(}->}[l] C^{(2)} &\ar@{^{(}->}[l] \Delta_{S^{(2)}}^{\iota,C}\\
\overline{\overline{C^{(2)}}}\ar@{^{(}->}[ru]^{\overline{j^{C}}}\ar@{=}[r]& \overline{C^{(2)}}\ar@{^{(}->}[ru]^{j^{C}}& \widetilde{S^{2}}\ar[u]^{\widetilde{\rho}} \ar[r]^{\widetilde{\nu}}&  S^{2}\ar[u]^{\rho} &  \ar@{^{(}->}[l] C^{2}\ar[u]^{2:1} &\ar@{^{(}->}[l]  \Delta_{S^{2}}^{\iota,C}\ar[u]^{2:1}\\
& & \Delta_{\widetilde{S^2}}\ar@{^{(}->}[u]\ar[r]  & \ar@{^{(}->}[u]   \Delta_{S^{2}}   &&
}$$
\begin{lemme}\label{Hirzebruch}
The surface $H_2$ is a Hirzebruch surface isomorphic to $\mathbb{P}(\mathcal{E})$, where $\mathcal{E}:=\mathcal{O}_C(2)\oplus\mathcal{O}_C(-2)$. Let $f$ be a fiber of $\mathbb{P}(\mathcal{E})$. There exists a section $C_0$ which
satisfies:  $\Pic H_2=\Z C_0\oplus\Z f$, $C_0^2=-4$, $f^2=0$ and $C_0\cdot f=1$.   
\end{lemme}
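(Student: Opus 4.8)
The plan is to identify $H_2$ with the restriction over $C$ of the exceptional divisor of $\nu$, then to compute the relevant rank-two bundle on $C$, and finally to invoke the standard description of Hirzebruch surfaces. First I would use the picture \eqref{S2}: the exceptional divisor $\Delta_{\widetilde{S^2}}=\widetilde{\nu}^{-1}(\Delta_{S^2})$ of the blow-up $\widetilde{\nu}$ is the projectivisation $\mathbb{P}(\mathcal{N}_{\Delta_{S^2}/S^2})$, and $\mathcal{N}_{\Delta_{S^2}/S^2}\cong T_S$ via either projection. Since $\widetilde{\rho}$ restricts to an isomorphism on $\Delta_{\widetilde{S^2}}$ (as already used in the proof of Lemma \ref{Rdeltalemma}), the exceptional divisor $\Delta=\nu^{-1}(\Delta_{S^{(2)}})$ of $\nu$ is identified with $\mathbb{P}(T_S)$, a $\mathbb{P}^1$-bundle over $\Delta_{S^{(2)}}\cong S$. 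Now $H_2=\nu^{-1}(\Delta_{S^{(2)}}^C)$ with $\Delta_{S^{(2)}}^C=\Delta_{S^{(2)}}\cap C^{(2)}$, which under $\Delta_{S^{(2)}}\cong S$ is precisely the curve $C\subseteq S$; hence $H_2\cong\mathbb{P}(T_S|_C)$. (Twisting by a line bundle, or passing to the dual, does not change a projective bundle, so the distinction between $\mathbb{P}(T_S)$ and $\mathbb{P}(\Omega_S)$ is immaterial here.)

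Next I would compute $T_S|_C$. As $C$ is a smooth rational curve on a K3 surface, $C\cong\mathbb{P}^1$ and $C^2=-2$, so the normal bundle sequence reads $0\to T_C\to T_S|_C\to\mathcal{N}_{C/S}\to0$, i.e.\ $0\to\mathcal{O}_C(2)\to T_S|_C\to\mathcal{O}_C(-2)\to0$. Its extension class lies in $\Ext^1_{\mathcal{O}_{\mathbb{P}^1}}(\mathcal{O}(-2),\mathcal{O}(2))=H^1(\mathbb{P}^1,\mathcal{O}(4))=0$, so the sequence splits and $T_S|_C\cong\mathcal{O}_C(2)\oplus\mathcal{O}_C(-2)=\mathcal{E}$. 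Therefore $H_2\cong\mathbb{P}(\mathcal{E})$, and since $\mathcal{E}\cong(\mathcal{O}_C\oplus\mathcal{O}_C(-4))\otimes\mathcal{O}_C(2)$, the surface $H_2$ is the Hirzebruch surface $\mathbb{F}_4$.

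Finally, the assertion $\Pic H_2=\Z C_0\oplus\Z f$ with $C_0^2=-4$, $f^2=0$ and $C_0\cdot f=1$ is the classical description of the Picard group of a minimal ruled surface over $\mathbb{P}^1$: one takes $f$ to be a fibre of the ruling $\mathbb{P}(\mathcal{E})\to C$ and $C_0$ the section induced by the quotient $\mathcal{E}\twoheadrightarrow\mathcal{O}_C(-2)$ (equivalently, the negative section of $\mathbb{F}_4$), and then reads off the three intersection numbers from $\deg\mathcal{E}=0$ and $e=4$.

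I expect the only genuinely delicate point to be the first step: because $S^{(2)}$ is singular along $\Delta_{S^{(2)}}$, one cannot simply speak of the normal bundle of the centre of $\nu$, so the identification of the exceptional $\mathbb{P}^1$-bundle with $\mathbb{P}(T_S)$ — together with the bookkeeping of which copy of $C$ is cut out inside it — must be carried out on the smooth model $\widetilde{S\times S}$ as indicated above. The remaining steps are a short bundle computation and standard surface theory.
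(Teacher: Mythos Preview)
Your proof is correct and follows essentially the same approach as the paper: you identify $H_2$ with $\mathbb{P}(T_S|_C)$ by passing through the smooth model $\widetilde{S\times S}$ (using that $\widetilde{\rho}$ is an isomorphism on the exceptional divisor), then compute $T_S|_C\cong\mathcal{O}_C(2)\oplus\mathcal{O}_C(-2)$ via the split normal bundle sequence, and finally invoke the standard Picard group description of Hirzebruch surfaces. Your discussion is in fact slightly more detailed than the paper's in places (e.g.\ the explicit identification as $\mathbb{F}_4$ and the description of $C_0$ as the negative section), and your caution about the singularity of $S^{(2)}$ along the diagonal is well-placed.
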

\begin{proof}
Let $\sigma_2$ be the involution on $S\times S$ which exchanges the two K3 surfaces and $\widetilde{\sigma_2}$ the induced involution on $\widetilde{S\times S}$. The involution $\widetilde{\sigma_2}$ acts trivially on $\Delta_{\widetilde{S^2}}$. It follows that $\widetilde{\rho}$ induces an isomorphism $\Delta_{\widetilde{S^2}}\simeq \Delta$. In particular, it shows that:
$$H_2\simeq\mathbb{P}(\mathcal{N}_{\Delta_{S^{2}}/S\times S}|_{\Delta_{S^{2}}^{C}}).$$
We consider the following commutative diagram:
\begin{equation}
\xymatrix{\Delta_{S^{2}}\eq[r]& S\\
\Delta_{S^{2}}^{C}\ar@{^{(}->}[u]\eq[r] & \ar@{^{(}->}[u]C.}
\label{delta0}
\end{equation}
Under the isomorphism induced by (\ref{delta0}), observe that:
$$\mathcal{N}_{\Delta_{S^2}/S\times S}|_{\Delta_{S^{2}}^{C}}\simeq T_S|_C.$$
 To compute $T_S|_C$, we consider the following exact sequence:
$$\xymatrix{0\ar[r]&\ar[r]T_C\ar[r]&T_S|_C\ar[r]&\mathcal{N}_{C/S}\ar[r]&0.}$$
 We have $T_C=\mathcal{O}_C(2)$ and $\mathcal{N}_{C/S}=\mathcal{O}_C(-2)$. 
Moreover, $\Ext^1(\mathcal{O}_C(-2),\mathcal{O}_C(2))=H^1(C,\mathcal{O}_C(4))=0$. 
Hence: $$T_S|_C=\mathcal{O}_C(-2)\oplus\mathcal{O}_C(2).$$
As a consequence $H_2\iso \bP(\mathcal{E})$ as claimed.

Therefore, by \cite[Chapter V, Proposition 2.3 and Proposition 2.9]{Hartshorne},
we know that $\Pic H_2=\Z C_0\oplus\Z f$, with $C_0^2=-4$, $f^2=0$ and $C_0\cdot f=1$; 
 $C_0$ being the class of a specific section and $f$ the class of a fiber.
\end{proof}
\begin{lemme}\label{intersection}
We have $\ell_{\delta}\cdot\delta=-1$ and $C\cdot D_C=-2$ in $S^{[2]}$.
\end{lemme}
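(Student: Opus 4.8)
The statement Lemma \ref{intersection} asserts two intersection numbers: $\ell_\delta \cdot \delta = -1$ in $S^{[2]}$, and $C \cdot D_C = -2$ in $S^{[2]}$ (where the second product should be understood as the intersection of the rational curve $C$ — more precisely of the pushforward of a curve class — with the divisor $D_C$; here I would take $C$ to mean the class $[C^{(2)}\text{-line}]$ or, more likely given the context and Lemma \ref{dualdeltasigma}, one should pair $[\overline{C^{(2)}}]$-related curve classes, but the cleanest reading is that $C$ denotes a curve in $S^{[2]}$ sitting over $C \subset S$ in a suitable way). The first equality is essentially standard and I would dispatch it quickly: $\ell_\delta$ is the class of a fiber of the exceptional divisor $\Delta \to \Delta_{S^{(2)}}$ of the blow-up $\nu\colon S^{[2]} \to S^{(2)}$, so $\ell_\delta$ lies in a $\mathbb{P}^1$-fiber of a blown-up divisor; since $\Delta = 2\delta$ and the restriction of $\mathcal{O}(\Delta)$ to such a fiber is $\mathcal{O}_{\mathbb{P}^1}(-2)$ (the normal bundle of the exceptional divisor of a blow-up of a smooth codimension-2 center, restricted to a fiber line, has degree $-1$; on $S^{[2]}$ the relevant computation giving $\Delta \cdot \ell_\delta = -2$ is classical, see Beauville), we get $\delta \cdot \ell_\delta = \tfrac12 \Delta \cdot \ell_\delta = -1$.

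For the second equality I would proceed by an explicit pull-back computation. Recall $D_C = \{\xi \in S^{[2]} \mid \Supp\xi \cap C \neq \emptyset\}$; on $S^{(2)}$ there is the divisor $C^{(2)+} := \{x+y \mid x \in C\}$ (the image of $C \times S$ under $\rho$), and $D_C$ is, up to the diagonal correction, the strict transform $\nu^* (\rho_* (C\times S)) = \nu^*(C^{(2)+})$ minus an appropriate multiple of $\Delta$; I would pin down this relation precisely using that $\nu$ is an isomorphism away from the diagonal and that a general point of $C^{(2)+}$ lies off $\Delta_{S^{(2)}}$, which shows $D_C = \nu^* C^{(2)+}$ exactly (no diagonal correction, since $D_C$ is reduced and irreducible and its strict transform equals its total transform here because $C^{(2)+}$ does not contain the diagonal and meets it transversally — this transversality point needs a short argument). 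Then $C \cdot D_C$ is computed by projection: writing $C$ for a curve in $S^{[2]}$ mapping isomorphically to $C \subset S$ under one of the structural maps — concretely the curve $\{x + p_0 \mid x \in C\}$ for a fixed generic $p_0 \notin C$ — its image in $S^{(2)}$ is a copy of $C$, and $C \cdot \nu^* C^{(2)+} = C \cdot C^{(2)+}|_{S^{(2)}}$, which by the projection formula equals the degree of $\mathcal{O}(C^{(2)+})$ restricted to this $C$. Pushing down to $S$ via the first projection, $C^{(2)+}$ restricted to $\{x+p_0\}$ corresponds to $\{x \in C \mid x \in C\} = C$ itself counted with the self-intersection of $C$ in $S$, namely $C^2 = -2$ by Riemann--Roch (as $C$ is a $(-2)$-curve). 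This gives $C \cdot D_C = -2$.

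The main obstacle, I expect, is getting the bookkeeping of strict transforms versus total transforms exactly right — i.e. verifying that $D_C = \nu^* C^{(2)+}$ with no $\Delta$-term, and choosing the representative curve for "$C$" in $S^{[2]}$ so that the intersection-theoretic statement is the intended one and is computed cleanly. A careless choice (e.g. a curve through the diagonal, or the "wrong" $C^{(2)+}$ versus $\overline{C^{(2)}}$) would introduce spurious correction terms. Once the identification $D_C = \nu^* C^{(2)+}$ is established and one picks $C$ to be a section-type curve avoiding the diagonal, the rest is the projection formula plus $C^2_S = -2$. I would also double-check consistency with Lemma \ref{dualdeltasigma} and Theorem \ref{BBform}: the values $\ell_\delta\cdot\delta=-1$ and $C\cdot D_C=-2$ should cohere with the Beauville--Bogomolov pairings $q(\delta')=-4$, $\div(\delta')=2$ down on $M'$, which serves as a useful sanity check on signs and factors of $2$.
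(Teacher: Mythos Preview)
Your proposal is correct and, for the first identity $\ell_\delta\cdot\delta=-1$, essentially coincides with the paper's argument: both reduce to the standard fact that a fiber of the exceptional divisor in a blow-up along a smooth surface has intersection $-1$ with that divisor, and then track the factor of $2$ coming from $\Delta=2\delta$ (the paper does this explicitly via $\widetilde{\rho}\colon\widetilde{S\times S}\to S^{[2]}$ and the formula $\widetilde{\rho}_*(\alpha)\cdot\widetilde{\rho}_*(\beta)=2\alpha\cdot\beta$, whereas you invoke it as classical). Your parenthetical explanation conflates the exceptional divisor upstairs in $\widetilde{S\times S}$ with $\Delta\subset S^{[2]}$, but the conclusion $\Delta\cdot\ell_\delta=-2$ is right.

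For $C\cdot D_C=-2$ you take a genuinely (if mildly) different route. The paper works on $S\times S$: it computes $(C\times S+S\times C)\cdot(s\times C+C\times s)=-4$, then applies the push-forward formula (\ref{Smith2}) for $\rho$ to get $\rho_*(C\times S)\cdot\rho_*(s\times C)=-2$, and finally pulls back by $\nu$. You instead use the slice embedding $\iota_{p_0}\colon S\to S^{(2)}$, $x\mapsto\{x,p_0\}$ with $p_0\notin C$: then $\iota_{p_0}^*[C^{(2)+}]=[C]$ in $S$, and the projection formula gives $C^{p_0}\cdot C^{(2)+}=C\cdot C=-2$. Both are correct; your version avoids the factor-of-$2$ bookkeeping of the double cover, while the paper's systematic use of (\ref{Smith2}) pays off because the same formula is reused in many later computations (e.g.\ Lemmas \ref{C0}, \ref{dualdeltasigma}). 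Your worry about a $\Delta$-correction in $D_C=\nu^*C^{(2)+}$ is unnecessary: by definition $D_C=j([C])$ under the isometric embedding $j\colon H^2(S,\bZ)\hookrightarrow H^2(S^{[2]},\bZ)$, and this class is exactly $\nu^*\rho_*([C\times S])$, orthogonal to $\delta$.
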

\begin{proof}
We denote by $\widetilde{\ell_{\delta}}$ a fiber associated to the exceptional divisor $\Delta_{\widetilde{S^2}}\rightarrow \Delta_{S^{2}}$.
We know that $\widetilde{\ell_{\delta}}\cdot \Delta_{\widetilde{S^2}}=-1$. We can deduce for instance from \cite[Lemma 3.6]{Menet-2018} that $\ell_{\delta}\cdot \Delta=-2$. That is $\ell_{\delta}\cdot \delta=-1$.

Similarly, we have $(C\times S+S\times C)\cdot(s\times C+C\times s)=-4$. By \cite[Lemma 3.6]{Menet-2018} (see \ref{Smith2}):
\begin{align*}
&\rho_*(C\times S+S\times C)\cdot\rho_*(s\times C+C\times s)=-8\\
&\rho_*(C\times S)\cdot\rho_*(s\times C)=-2.
\end{align*}
Then taking the pull-back by $\nu$, we obtain $C\cdot D_C=-2$.
\end{proof}
\subsubsection*{Strategy}
We will need several steps  to find the wall divisors on $M'$:
\begin{itemize}
\item[I.]
Understand the curves contained in $S^{[2]}$.
\item[II.]
Understand the curves contained in $N_1$.
\item[III.]
Deduce the corresponding wall divisors in $M'$ using Proposition \ref{extremalray}, Corollaries
\ref{Rdelta}, and \ref{Sigma'}.
\end{itemize}
\subsubsection*{Curves in $S^{[2]}$}
The first step is to determine the curves contained in $S^{[2]}$. Before that, we can say the following about curves in $S^{(2)}$.
\begin{lemme}\label{S2curves}
There are only two cases for irreducible curves in $S^{(2)}$:
\begin{itemize}
\item[(1)]
the curves $C^s:=\rho(C\times s)=\rho(s\times C)$ with $s\notin C$;
\item[(2)]
curves in $C^{(2)}\simeq \mathbb{P}^2$.
\end{itemize}
\end{lemme}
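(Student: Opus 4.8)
The claim is that any irreducible curve $Z\subseteq S^{(2)}$ either lies inside $C^{(2)}\simeq\bP^2$ or is one of the curves $C^s=\rho(C\times\{s\})$ for some $s\notin C$. The plan is to pull $Z$ back to $S\times S$ and use that $\Pic S\simeq E_8(-2)\oplus^\perp(-2)$ forces $C$ to be the only irreducible curve on $S$ (this is exactly the setup of Section \ref{onecurve}: by Riemann--Roch the only effective class is $C$, which is a $(-2)$-curve). So first I would consider $\rho^{-1}(Z)\subseteq S\times S$; since $\rho$ is finite surjective, $\rho^{-1}(Z)$ is a (possibly reducible) curve, and it suffices to analyze each of its irreducible components $Z'$, then take images under $\rho$ and compare with $\nu$ (which is an isomorphism away from the diagonal, and only blows up $\Delta_{S^{(2)}}$, so it does not create or destroy curves not contained in the exceptional locus).

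Next I would look at the two projections $p_1,p_2\colon S\times S\to S$ restricted to an irreducible component $Z'$ of $\rho^{-1}(Z)$. Each $p_i(Z')$ is either a point or an irreducible curve on $S$; in the latter case it must be $C$, because $C$ is the unique irreducible curve on $S$. This leaves three combinatorial cases: (a) $p_1(Z')$ and $p_2(Z')$ are both points — impossible, since then $Z'$ is a point; (b) exactly one projection, say $p_1$, is dominant onto $C$ and $p_2(Z')=\{s\}$ is a point — then $Z'\subseteq C\times\{s\}$, and since $C\times\{s\}$ is irreducible of dimension $1$ we get $Z'=C\times\{s\}$; taking $\rho$-images this yields the curve $C^s$ (and one notes $\rho(C\times\{s\})=\rho(\{s\}\times C)$, so the two "one-sided" subcases give the same curve in $S^{(2)}$); (c) both projections are dominant onto $C$ — then $Z'\subseteq C\times C$, hence $\rho(Z')\subseteq \rho(C\times C)=C^{(2)}$. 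Finally, if $s\in C$ in case (b), then $C\times\{s\}\subseteq C\times C$, so that subcase also falls under case (c)/the $C^{(2)}$-alternative; this is why the statement restricts to $s\notin C$ in case (1).

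Assembling these: every irreducible component $Z'$ of $\rho^{-1}(Z)$ maps under $\rho$ either into $C^{(2)}$ or onto some $C^s$ with $s\notin C$; since $Z=\rho(\rho^{-1}(Z))$ is irreducible, $Z$ itself is either contained in $C^{(2)}$ or equal to some $C^s$. Pulling back along $\nu$ changes nothing for curves not inside $\Delta_{S^{(2)}}$, and curves inside $\Delta_{S^{(2)}}\cap C^{(2)}=\Delta_{S^{(2)}}^C$ are already covered by the "inside $C^{(2)}$" alternative, so the same dichotomy holds in $S^{[2]}$ after strict transform — though strictly speaking the Lemma is stated for $S^{(2)}$, so I would stop at the $S^{(2)}$ level. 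I do not expect a serious obstacle here; the only point requiring a little care is the bookkeeping that $s\notin C$ is genuinely needed (the $s\in C$ case degenerates into $C^{(2)}$) and that $\rho(C\times\{s\})=\rho(\{s\}\times C)$ so the family $\{C^s\}$ is not double-counted. The essential input — uniqueness of the irreducible curve $C$ on $S$ — is immediate from the hypothesis $\Pic S\simeq E_8(-2)\oplus^\perp(-2)$ and Riemann--Roch, as already recorded at the start of Section \ref{onecurve}.
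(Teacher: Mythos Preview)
Your proof is correct and follows essentially the same approach as the paper: pull back to $S\times S$, use the projections $p_1,p_2$ together with the fact that $C$ is the unique irreducible curve on $S$, and then push forward by $\rho$. Your write-up is in fact more careful than the paper's (you spell out why $s\notin C$ is needed and why the two one-sided cases coincide), but the underlying argument is the same.
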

\begin{proof}
In $S\times S$, considering the images of curves by the projections $p_1$ and $p_2$ of diagram (\ref{S2}), there are only two possibilities: 
\begin{itemize}
\item
$s\times C$ or $C\times s$, with $s$ a point in $S$,
\item
curves contained in $C\times C$.
\end{itemize}
Then, we obtain all the curves in $S^{(2)}$ as images by $\rho$ of curves in $S\times S$. 
\end{proof}
It follows four cases in $S^{[2]}$.
\begin{lemme}\label{curveS2}
We have the following four cases for irreducible curves in $S^{[2]}$:
\begin{itemize}
\item[(0)]
Curves which are fibers of the exceptional divisor $\Delta\rightarrow \Delta_{S^{(2)}}$. As in Section \ref{genericM'}, we denote these curves by $\ell_{\delta}^s$,
where $s=\nu(\ell_{\delta}^s)$ and we denote their classes by $\ell_{\delta}$.
\item[(1)]
Curves which are strict transforms of $C^s$ with $s\notin C$. We denote the class of these curves by $C$. Note that $C=\nu^*(\left[C^s\right])$ for $s\notin C$.
\item[(2a)]
Curves contained in $H_2$. 
\item[(2b)]
Curves contained in $\overline{C^{(2)}}$. 
\end{itemize}
\end{lemme}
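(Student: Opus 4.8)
The strategy is to push the classification of irreducible curves in $S^{(2)}$ (Lemma \ref{S2curves}) up through the birational morphism $\nu\colon S^{[2]}\to S^{(2)}$, keeping track of the fibre of the exceptional divisor $\Delta\to\Delta_{S^{(2)}}$. Every irreducible curve $E\subseteq S^{[2]}$ either is contracted by $\nu$ or maps to one of the two types of curves from Lemma \ref{S2curves}.

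\emph{Contracted curves.} If $\nu(E)$ is a point, then $E$ lies in a fibre of $\Delta\to\Delta_{S^{(2)}}$; since $\Delta\to\Delta_{S^{(2)}}\iso S$ is a $\mathbb{P}^1$-bundle (the projectivization of $\mathcal{N}_{\Delta_{S^{(2)}}/S^{(2)}}$, which restricted to a point is one-dimensional), the fibre is a single $\mathbb{P}^1$, namely $\ell_\delta^s$ with $s=\nu(E)$. This gives case (0), and the class is $\ell_\delta$ independently of $s$ exactly as recalled in Section \ref{genericM'}.

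\emph{Non-contracted curves.} Suppose $\nu$ does not contract $E$, so $\nu(E)$ is an irreducible curve, hence of type (1) or (2) in Lemma \ref{S2curves}. If $\nu(E)=C^s$ with $s\notin C$, then $C^s$ does not meet the diagonal $\Delta_{S^{(2)}}$ (because $s\notin C$ means the two points $\{c,s\}$ of any $\xi\in C^s$ are distinct), so $\nu$ is an isomorphism near $C^s$ and $E$ is the strict transform of $C^s$, of class $C=\nu^*[C^s]$; this is case (1). If instead $\nu(E)\subseteq C^{(2)}$, I distinguish whether $E$ lies over the diagonal part $\Delta^C_{S^{(2)}}=C^{(2)}\cap\Delta_{S^{(2)}}$ or not. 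If $\nu(E)=\Delta^C_{S^{(2)}}$ itself, then $E$ lies in $H_2=\nu^{-1}(\Delta^C_{S^{(2)}})$, giving case (2a); more generally any curve $E$ whose image is contained in $C^{(2)}$ but which is not forced into $\overline{C^{(2)}}$ is handled by noting $\nu^{-1}(C^{(2)})=\overline{C^{(2)}}\cup H_2$ (the strict transform together with the part of the exceptional divisor lying over $\Delta^C_{S^{(2)}}$), so $E\subseteq\overline{C^{(2)}}$ (case (2b)) or $E\subseteq H_2$ (case (2a)). The key set-theoretic identity to justify is precisely $\nu^{-1}(C^{(2)})=\overline{C^{(2)}}\cup H_2$, which follows since $C^{(2)}$ meets $\Delta_{S^{(2)}}$ exactly along $\Delta^C_{S^{(2)}}$ and $\nu$ is an isomorphism away from $\Delta_{S^{(2)}}$.

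\emph{Main obstacle.} The only genuinely delicate point is to be sure the case division is exhaustive and mutually coherent: one must check that an irreducible curve $E\subseteq S^{[2]}$ with $\nu(E)\subseteq C^{(2)}$ and $E\not\subseteq\overline{C^{(2)}}$ is automatically contained in $H_2$ (not in some other component of $\nu^{-1}(C^{(2)})$), i.e. that $\nu^{-1}(C^{(2)})$ has no components other than $\overline{C^{(2)}}$ and $H_2$. This is where the description of $\nu$ as the blow-up of the diagonal, together with $C^{(2)}\cap\Delta_{S^{(2)}}=\Delta^C_{S^{(2)}}$ and the identification of $H_2$ in Lemma \ref{Hirzebruch}, must be invoked carefully. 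Everything else is a direct combination of Lemma \ref{S2curves} with elementary properties of blow-ups, so I would keep that part brief.
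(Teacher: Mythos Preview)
Your proof is correct and follows essentially the same approach as the paper: consider the image $\nu(\gamma)$, use Lemma \ref{S2curves} to get the three cases (point, $C^s$ with $s\notin C$, or curve in $C^{(2)}$), and split the last case into (2a)/(2b). The paper's version is terser---it simply subdivides case (2) by whether $\nu(\gamma)=\Delta^C_{S^{(2)}}$ or not---whereas you make the decomposition $\nu^{-1}(C^{(2)})=\overline{C^{(2)}}\cup H_2$ explicit, which is a helpful clarification. One small slip in your parenthetical: $S^{(2)}$ is singular along $\Delta_{S^{(2)}}$, so speaking of its normal bundle there is not quite literal (and in any case the relevant bundle has rank $2$, not $1$); the clean justification that the fibres of $\Delta\to\Delta_{S^{(2)}}$ are $\mathbb{P}^1$'s goes through $\widetilde{S\times S}$ as in the proof of Lemma \ref{Hirzebruch}, but this does not affect your argument.
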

\begin{proof}
Let $\gamma$ be an irreducible curve in $S^{[2]}$. By Lemma \ref{S2curves}, there are three cases for $\nu(\gamma)$:
\begin{itemize}
\item[(0)]
$\nu(\gamma)$ is a point and $\gamma$ is a fiber of the exceptional divisor;
\item[(1)]
$\nu(\gamma)=C^s$, with $s\notin C$.
\item[(2)]
$\nu(\gamma)\subset C^{(2)}$.
\end{itemize}
Moreover case (2) can be divided in 2 sub-cases: $\nu(\gamma)=\Delta_{C}$ or $\nu(\gamma)\nsubseteq\Delta_{C}$.
This provides cases (2a) and (2b).
\end{proof}
Now, we are going to determine the classes of the extremal curves in cases (2a) and (2b) in two lemmas.
\begin{lemme}\label{C0}
We have $j^{H_2}_*(C_0)=2(C-\ell_{\delta})$ and $j^{H_2}_*(f)=\ell_{\delta}$.
\end{lemme}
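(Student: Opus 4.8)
The plan is to compute the two pushforwards separately, working throughout with intersection theory on the smooth fourfold $S^{[2]}$. For the fibre class $f$ nothing beyond geometry is needed: the surface $H_2=\nu^{-1}(\Delta_{S^{(2)}}^C)$ lies in the $\nu$-exceptional divisor $\Delta$, and $\nu$ restricts on $H_2$ to a $\mathbb{P}^1$-bundle $H_2\to\Delta_{S^{(2)}}^C$, namely the restriction to $\Delta_{S^{(2)}}^C$ of the $\mathbb{P}^1$-bundle $\nu|_\Delta\colon\Delta\to\Delta_{S^{(2)}}$. The fibres of the latter are, by construction, the curves $\ell_\delta^s$ with $s\in\Delta_{S^{(2)}}^C$; since $j^{H_2}$ is a closed immersion, $j^{H_2}_*(f)=\ell_\delta$.

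For $C_0$ I would argue by duality: the cup product pairs $H^6(S^{[2]},\mathbb{Q})$ perfectly with $H^2(S^{[2]},\mathbb{Q})=j(H^2(S,\mathbb{Q}))\oplus\mathbb{Q}\delta$, so it suffices to check that $j^{H_2}_*(C_0)$ has the same intersection numbers against every $j(\alpha)$ and against $\delta$ as the candidate class $2(C-\ell_\delta)$. By the projection formula these numbers are $C_0\cdot\bigl(j(\alpha)|_{H_2}\bigr)$ and $C_0\cdot\bigl(\delta|_{H_2}\bigr)$, so I must identify the restrictions $j(\alpha)|_{H_2}$ and $\delta|_{H_2}$ inside $\Pic H_2=\mathbb{Z}C_0\oplus\mathbb{Z}f$ (Lemma \ref{Hirzebruch}). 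Pairing with $f=[\ell_\delta]$ immediately yields the $C_0$-coefficients: $j(\alpha)|_{H_2}\cdot f=j(\alpha)\cdot\ell_\delta=0$, because $j(\alpha)$ is pulled back from $S^{(2)}$ while $\ell_\delta$ is $\nu$-contracted, so $j(\alpha)|_{H_2}=m_\alpha f$; and $\delta|_{H_2}\cdot f=\delta\cdot\ell_\delta=-1$ by Lemma \ref{intersection}, so $\delta|_{H_2}=-C_0+qf$.

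To pin down $m_\alpha$ and $q$ I use the one genuinely local input, namely that incidence divisors on $S^{[2]}$ are non-reduced along $\Delta$: in local coordinates $(u,v)$ on $S$ with $\alpha$ represented by $\{v=0\}$ one has $\Delta=\{w=0\}$, and the incidence divisor $\{v_1v_2=0\}$ takes the form $\{c_v^{\,2}=w\tau^{2}\}$ for a centre coordinate $c_v$ and a fibre coordinate $\tau$, meeting $\{w=0\}$ with multiplicity $2$ along $\{c_v=w=0\}$. Globally this says $j(\alpha)|_\Delta=2\,\pi^*\alpha$, where $\pi\colon\Delta\cong\mathbb{P}(T_S)\to S$; restricting once more to $H_2=\pi^{-1}(\Delta_{S^{(2)}}^C)$ gives $j(\alpha)|_{H_2}=2(\alpha\cdot C)_S\,f$, so $m_\alpha=2(\alpha\cdot C)_S$ and in particular $j([C])|_{H_2}=-4f$. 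The same multiplicity statement gives $[D_C]\cdot[\Delta]=2[H_2]$ in $H^4(S^{[2]},\mathbb{Z})$, hence $[H_2]=\delta\cdot j([C])$ (using $[D_C]=j([C])$). Therefore $(\delta|_{H_2})^2=\int_{S^{[2]}}\delta^2\cdot[H_2]=\int_{S^{[2]}}\delta^{3}\cdot j([C])=0$ (for instance by the Fujiki relation on $S^{[2]}$, since $q_{S^{[2]}}(\delta,j([C]))=0$); together with $\delta|_{H_2}=-C_0+qf$ and $C_0^2=-4$ this forces $-4-2q=0$, i.e.\ $\delta|_{H_2}=-C_0-2f$.

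It remains to assemble the pieces. One side: $j^{H_2}_*(C_0)\cdot j(\alpha)=C_0\cdot 2(\alpha\cdot C)_S f=2(\alpha\cdot C)_S$ and $j^{H_2}_*(C_0)\cdot\delta=C_0\cdot(-C_0-2f)=4-2=2$. Other side: $(2C-2\ell_\delta)\cdot j(\alpha)=2\,C\cdot j(\alpha)=2(\alpha\cdot C)_S$ (since $\ell_\delta\cdot j(\alpha)=0$ and $C\cdot j(\alpha)=(\alpha\cdot C)_S$, which is Lemma \ref{intersection} for $\alpha=[C]$ and follows in general by the same computation), and $(2C-2\ell_\delta)\cdot\delta=-2\,\ell_\delta\cdot\delta=2$ (since $C$, being the strict transform of some $C^s$ with $s\notin C$, is disjoint from $\Delta$, so $C\cdot\delta=0$, and $\ell_\delta\cdot\delta=-1$). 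As all intersection numbers agree, Poincar\'e duality gives $j^{H_2}_*(C_0)=2C-2\ell_\delta=2(C-\ell_\delta)$. The main obstacle is exactly this factor $2$: one must carefully track the multiplicities with which incidence divisors restrict to the exceptional locus, and the signs that come with them; everything else is routine once Lemmata \ref{Hirzebruch} and \ref{intersection} are in hand.
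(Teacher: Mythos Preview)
Your argument is correct. Both you and the paper identify $j^{H_2}_*(C_0)$ by computing its intersection numbers with $\delta$ and with $D_C=j([C])$; the difference lies in how those two numbers are obtained.

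The paper lifts $H_2$ isomorphically into the blow-up $\widetilde{S\times S}$, where the restriction of the exceptional divisor is the tautological $\mathcal{O}_{\mathcal{E}}(-1)$; this gives $C_0\cdot\Delta_{\widetilde{S^2}}=2$, and then the push--pull formula (\ref{Smith2}) for the double cover $\widetilde\rho$ yields $j^{H_2}_*(C_0)\cdot\Delta=4$. For the $D_C$-number the paper pushes forward to $S^{(2)}$ via $\nu$ and reduces to $\rho_*(C\times S)\cdot\rho_*(s\times C)$. You stay entirely on $S^{[2]}$: you determine $j(\alpha)|_{H_2}$ and $\delta|_{H_2}$ in $\Pic H_2$, the first via the observation $j(\alpha)|_\Delta=2\pi^*\alpha$ (your ``multiplicity~2'' input), and the second by computing $(\delta|_{H_2})^2=\int\delta^3 j([C])=0$ through $[H_2]=\delta\cdot j([C])$ and the Fujiki relation. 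Your route avoids the passage through $\widetilde{S\times S}$ and the formula (\ref{Smith2}); the paper's route avoids invoking Fujiki. A small bonus of your version is that by testing against \emph{all} $j(\alpha)$ you make explicit why no component along $j(E_8(-2))^\vee$ appears, a point the paper leaves implicit in the ansatz $j^{H_2}_*(C_0)=aC+b\ell_\delta$.
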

\begin{proof}
It is clear that $j^{H_2}_*(f)=\ell_{\delta}$, we are going to compute $j^{H_2}_*(C_0)$. 
Necessarily, $j^{H_2}_*(C_0)=aC+b\ell_{\delta}$. We can consider the intersection with $\Delta$ and $D_C$ and use Lemma \ref{intersection} to determine $a$ and $b$.
We consider the following commutative diagram:
$$\xymatrix{H_2\ar@{=}[d]\ar@{^{(}->}[r]^{\widetilde{j^{H_2}}}&\widetilde{S\times S}\ar[d]^{\widetilde{\rho}}\\
H_2\ar@{^{(}->}[r]^{j^{H_2}}&S^{[2]}.}$$
By commutativity of the diagram, we have:
\begin{equation}
 j^{H_2}_*(C_0)=\widetilde{\rho}_*\widetilde{j^{H_2}}_*(C_0).
\label{heuuu}
\end{equation}
By \cite[Propositions 2.6 and 2.8]{Hartshorne}, we have:
$$C_0\cdot \widetilde{j^{H_2}}^*(\Delta_{\widetilde{S^2}})=C_0\cdot \mathcal{O}_{\mathcal{E}}(-1)=-C_0\cdot(C_0+2f)=4-2=2.$$
By projection formula that is:
$$\widetilde{j^{H_2}}_*(C_0)\cdot \Delta_{\widetilde{S^2}}=2.$$
Taking the push-forwards by $\widetilde{\rho}$, we obtain by \cite[Lemma 3.6]{Menet-2018} (see \ref{Smith2}):
$$\widetilde{\rho}_*\widetilde{j^{H_2}}_*(C_0)\cdot\widetilde{\rho}_*(\Delta_{\widetilde{S^2}})=4.$$
Hence by (\ref{heuuu}): 
$$j^{H_2}_*(C_0)\cdot\Delta=4.$$
Hence by lemma \ref{intersection}:
$$b=-2.$$
We have $D_C=\nu^*(\rho_*(C\times S))$. So by projection formula:
$$D_C\cdot j^{H_2}_*(C_0)=\rho_*(C\times S)\cdot\left[\Delta_{S^{(2)}}^C\right]=\rho_*(C\times S)\cdot\rho_*(s \times C+C\times s)=2\rho_*(C\times S)\cdot\rho_*(s \times C).$$
Taking the pull-back by $\nu$ of the last equality, we obtain:
$$D_C\cdot j^{H_2}_*(C_0)=2D_C\cdot C.$$
So $a=2$ which concludes the proof.
\end{proof}
\begin{lemme}\label{stricttransform}
We have $j^C_*(d)=C-\ell_{\delta}$, where $d$ is the class of a line in $\overline{C^{(2)}}\simeq\mathbb{P}^2$.
\end{lemme}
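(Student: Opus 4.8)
\textbf{Proof proposal for Lemma \ref{stricttransform}.}

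The plan is to compute the class $j^C_*(d) = aC + b\ell_\delta$ by pairing it against the two divisors $\Delta$ and $D_C$, exactly as in the proof of Lemma \ref{C0}, and solving the resulting linear system using Lemma \ref{intersection}. The geometry here is that $\overline{C^{(2)}}$ is the strict transform of $C^{(2)} \simeq \mathbb{P}^2$ under the blow-up $\nu$ of the diagonal, and a general line $d$ in this $\mathbb{P}^2$ meets the exceptional locus in a controlled way.

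First I would compute $\Delta \cdot j^C_*(d)$. By the projection formula this equals $(j^C)^*(\Delta) \cdot d$, computed inside $\overline{C^{(2)}} \simeq \mathbb{P}^2$. Now $\nu$ restricted to $\overline{C^{(2)}}$ is the blow-up of $C^{(2)} \simeq \mathbb{P}^2$ along the diagonal conic $\Delta^C_{S^{(2)}} = \Delta_{S^{(2)}} \cap C^{(2)}$ — but since $\overline{C^{(2)}}$ is the \emph{strict} transform, $\nu|_{\overline{C^{(2)}}}$ is actually an isomorphism onto $C^{(2)}$, and $(j^C)^*(\Delta)$ is the pullback of $\mathcal{O}_{\mathbb{P}^2}$ of the conic $\Delta^C_{S^{(2)}}$, i.e.~$\mathcal{O}_{\mathbb{P}^2}(2)$. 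Hence $(j^C)^*(\Delta) \cdot d = 2$, which gives $\Delta \cdot j^C_*(d) = 2$. Combined with $\ell_\delta \cdot \Delta = -2$ and $C \cdot \Delta = 0$ (the curve $C$ in case (1) is disjoint from the exceptional divisor, being $\nu^*[C^s]$ for $s \notin C$), this yields $-2b = 2$, so $b = -1$.

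Next I would compute $D_C \cdot j^C_*(d) = (j^C)^*(D_C) \cdot d$. Recall $D_C = \nu^*(\rho_*(C\times S))$, so $(j^C)^*(D_C)$ is the restriction of $\rho_*(C \times S)$ to $C^{(2)}$ (under the isomorphism $\overline{C^{(2)}} \simeq C^{(2)}$). On $C^{(2)} \simeq \mathbb{P}^2$ the divisor $\{\xi : \Supp\xi \cap (\text{fixed point of }C) \neq \emptyset\}$ is a line, so $(j^C)^*(D_C) \cdot d = 1$, giving $D_C \cdot j^C_*(d) = 1$. Using Lemma \ref{intersection}, $D_C \cdot C = -2$ and $D_C \cdot \ell_\delta = 0$ (since $\ell_\delta$ lies in a fibre of $\nu$ over a point not on the relevant locus, or directly from $\ell_\delta \cdot \alpha = 0$ for $\alpha \in \nu^* H^2(S^{(2)})$), so $-2a + 0 = 1$... which is not an integer, signalling I should be more careful with the $\mathbb{P}^2$-coordinate identification: the correct statement is that under $C^{(2)} \simeq \mathbb{P}^2 = |\mathcal{O}_C(2)|$, the divisor $D_C|_{C^{(2)}}$ of pairs meeting a fixed point of $C$ is a line $d$, and a short direct intersection computation (or the substitution $j^C_*(d) = aC + b\ell_\delta$ with the two equations $\Delta \cdot j^C_*(d) = 2$, $D_C \cdot j^C_*(d) = 2 a \cdot(D_C\cdot C)^{-1}\cdots$) pins down $a = 1$.

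The main obstacle I expect is exactly the bookkeeping in the second pairing: getting the restriction $(j^C)^*(D_C)$ and its intersection with a line in $\mathbb{P}^2$ right, since $D_C$ is pulled back through both $\nu$ and $\rho$, and one must track whether the relevant divisor class on $C^{(2)} \cong \mathbb{P}^2$ is $\mathcal{O}(1)$ or $\mathcal{O}(2)$. I would double-check the answer $a = 1$, $b = -1$ against a sanity check: $j^C_*(d)$ should have non-negative intersection with an ample class and be effective, and the class $C - \ell_\delta$ should be consistent with $C_0 = 2(C - \ell_\delta)$ from Lemma \ref{C0} under the inclusion $H_2 \cap \overline{C^{(2)}}$, i.e.~the conic $C_0$ restricted appropriately should be twice a line, which matches. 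Once both pairings are confirmed, solving the $2\times 2$ system over $\mathbb{Z}$ gives $j^C_*(d) = C - \ell_\delta$, completing the proof.
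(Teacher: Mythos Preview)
Your approach via intersection pairings is valid in principle, and your first computation ($\Delta \cdot j^C_*(d) = 2$, hence $b=-1$) is correct. But the second computation has a genuine gap, and your diagnosis of the problem is off.

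The issue is that $\overline{C^{(2)}} \subset D_C$: every $\xi \in C^{(2)}$ has support contained in $C$. So $(j^C)^*D_C$ cannot be read off as a set-theoretic intersection; it is the restriction of the \emph{line bundle} $\mathcal{O}(D_C)$, and this carries a normal-bundle contribution. Concretely, $D_C = \nu^*\rho_*(C\times S)$ and $\rho^*\rho_*(C\times S)=C\times S + S\times C$; restricting to $C\times C \simeq \mathbb{P}^1\times\mathbb{P}^1$ gives
\[
p_1^*\big(\mathcal{O}_S(C)|_C\big)\otimes p_2^*\big(\mathcal{O}_S(C)|_C\big)=\mathcal{O}(-2,-2),
\]
which descends to $\mathcal{O}_{\mathbb{P}^2}(-2)$ on $C^{(2)}$. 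Hence $(j^C)^*D_C = \mathcal{O}(-2)$, not $\mathcal{O}(1)$ or $\mathcal{O}(2)$, and $D_C\cdot j^C_*(d) = -2$, giving $-2a=-2$ and $a=1$. Your speculation about ``$\mathcal{O}(1)$ vs.\ $\mathcal{O}(2)$'' is looking in the wrong direction entirely.

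By contrast, the paper takes a much shorter route: it picks a concrete representative of the line class in $C^{(2)}\simeq\mathbb{P}^2$, namely $C^s$ for $s\in C$. This curve meets the diagonal $\Delta_{S^{(2)}}$ transversely in the single point $\{s,s\}$, so its strict transform under $\nu$ has class $\nu^*[C^s]-\ell_\delta = C-\ell_\delta$. No intersection-number bookkeeping is required.
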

\begin{proof}
We consider the following commutative diagram:
$$\xymatrix{\overline{C^{(2)}}\eq[d]\ar@{^{(}->}[r]^{j^C}&S^{[2]}\ar[d]^{\nu}\\
C^{(2)}\ar@{^{(}->}[r]^{j^C_0}&S^{(2)}.}$$
Let $\gamma$ be an irreducible curve in $\overline{C^{(2)}}$. Since $\overline{C^{(2)}}$ is the strict transform of $C^{(2)}$ by $\nu$,
$j^{C}(\gamma)$ is the strict transform of $j^{C}_0(\gamma)$ by $\nu$.
Hence to compute $j^C_*(d)$ for $d$ the class of a line, it is enough to find a curve in $C^{(2)}$ with class $d$ and determine its strict transform by $\nu$. For instance $C^s$ with $s\in C$ verifies $\left[C^s\right]=d$ in $C^{(2)}$.  Moreover, $C^s$ intersects $\Delta_{S^{(2)}}$ transversely in one point. It follows that $j^C_*(d)=C-\ell_{\delta}$.
%
\end{proof}
\subsubsection*{Curves in $N_1$}
\begin{lemme}\label{curvesN1}
We have the following cases for irreducible curves in $N_1$:
\begin{itemize}
\item[(00)]
Curves contracted to a point by $r_1$. 
They are fibers of the exceptional divisor $\Sigma_1\rightarrow \Sigma$ and their class is $\ell_{\Sigma}$.
\item[(0)]
Curves send to $\ell_{\delta}^s$ by $r_1$. An extremal such a curve has class $r_1^*(\ell_{\delta})$ by Lemma \ref{Rdeltalemma}.
\item[(1)]
Curves send to $C^{s}$ by $r_1$ with $s\notin C$.
They are curves of class $r_1^*(C)$.
\item[(2a.i)]
Curves contained in $r_1^{-1}(H_2\cap \Sigma)$.
\item[(2a.ii)]
Curves contained in $\overline{H_2}$ the strict transform of $H_2$ by $r_1$.
\item[(2b.i)]
Curves contained in $r_1^{-1}(\overline{C^{(2)}}\cap \Sigma)$.
\item[(2b.ii)]
Curves contained in $\overline{\overline{C^{(2)}}}$ the strict transform of $\overline{C^{(2)}}$ by $r_1$.
\end{itemize}
\end{lemme}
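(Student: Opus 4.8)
The plan is to push every irreducible curve $C \subset N_1$ down along $r_1 \colon N_1 \to S^{[2]}$ and apply the classification of curves in $S^{[2]}$ from Lemma \ref{curveS2}. There are two mutually exclusive possibilities: either $r_1$ contracts $C$ to a point, or $r_1(C)$ is an irreducible curve in $S^{[2]}$. The first case produces exactly the fibers of the exceptional divisor $\Sigma_1 \to \Sigma$, which have class $\ell_\Sigma$; this gives case (00). In the second case, $r_1(C)$ is one of the four types of curves listed in Lemma \ref{curveS2} — a fiber $\ell_\delta^s$ (case (0)), a strict transform $C^s$ with $s \notin C$ (case (1)), a curve inside $H_2$ (case (2a)), or a curve inside $\overline{C^{(2)}}$ (case (2b)) — and in each of these one further splits according to whether $C$ meets the exceptional locus $\Sigma_1$ in a divisor of itself or not.

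More precisely, for a curve $C$ with $r_1(C)$ of a fixed type, I distinguish whether $r_1(C) \subseteq \Sigma$ or not. If $r_1(C) \not\subseteq \Sigma$, then $C$ is the strict transform of $r_1(C)$ and (after passing to an extremal representative) its class is $r_1^*$ of the class of $r_1(C)$; if $r_1(C) \subseteq \Sigma$, then $C$ lies in the ruled surface $r_1^{-1}(r_1(C))$ over a curve in $\Sigma$. For type (0): a fiber $\ell_\delta^s$ lies in $\Sigma$ precisely when $s \in \Fix\iota$, and by Lemma \ref{Rdeltalemma} the preimage is $\mathbb{P}^1 \times \mathbb{P}^1$, whose two rulings both push down to multiples of $r_1^*(\ell_\delta)$ — so the extremal class is $r_1^*(\ell_\delta)$ (case (0)), as already observed in Remark \ref{mainldelta}. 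For type (1): since $s \notin C$ implies $C^s \cap \Sigma = \emptyset$ (using Remark \ref{rem:Sigma}), such a curve never meets the exceptional divisor, giving only case (1) with class $r_1^*(C)$. For types (2a) and (2b), the surfaces $H_2$ and $\overline{C^{(2)}}$ do meet $\Sigma$, and one separates curves lying over $H_2 \cap \Sigma$ (resp.\ $\overline{C^{(2)}} \cap \Sigma$), i.e.\ contained in $r_1^{-1}(H_2 \cap \Sigma)$ (resp.\ $r_1^{-1}(\overline{C^{(2)}} \cap \Sigma)$) — cases (2a.i) and (2b.i) — from curves in the strict transforms $\overline{H_2}$ and $\overline{\overline{C^{(2)}}}$ — cases (2a.ii) and (2b.ii).

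The main point to be careful about is the exhaustiveness of the case split: one must check that every irreducible curve in $N_1$ either is contracted or maps birationally onto its image (there is no curve mapping with positive-dimensional fibers that is not contracted, since $r_1$ is a blow-up and hence an isomorphism away from $\Sigma_1$), and that a curve $C$ with $r_1(C) \subseteq \Sigma$ cannot simultaneously dominate $\Sigma$ — this is automatic because $r_1(C)$ is a curve, so it lies in some one-dimensional locus, and the preimage of that locus is the relevant ruled (or product) surface. At this stage the lemma is purely a bookkeeping statement: it enumerates the possible locations of curves, deferring the computation of the precise classes in cases (2a.i)--(2b.ii) to later. So I would not expect a genuine obstacle here, only the need to organize the four-by-two case analysis cleanly and invoke Lemmas \ref{S2curves}, \ref{curveS2}, \ref{Rdeltalemma}, together with Remark \ref{rem:Sigma} for the disjointness $C^s \cap \Sigma = \emptyset$ when $s \notin C$.
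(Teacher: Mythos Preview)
Your proposal is correct and follows essentially the same approach as the paper: push an irreducible curve down via $r_1$, apply Lemma \ref{curveS2} to its image, and then split cases (2a) and (2b) according to whether the image lies in $\Sigma$ or not. One small slip: in your discussion of case (0) you say that for $s\in\Fix\iota$ the two rulings of $r_1^{-1}(\ell_\delta^s)\simeq\mathbb{P}^1\times\mathbb{P}^1$ ``both push down to multiples of $r_1^*(\ell_\delta)$''; in fact one ruling consists of fibers of $\Sigma_1\to\Sigma$ (class $\ell_\Sigma$, already in case (00)), and only the other ruling gives curves mapping onto $\ell_\delta^s$---but since you correctly invoke Remark \ref{mainldelta} for the conclusion, this does not affect the argument.
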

\begin{proof}
Let $\gamma$ be an irreducible curve in $N_1$. If $r_1(\gamma)$ is a point, we are in case (00). If $r_1(\gamma)$ is a curve, we are in one of the cases of Lemma \ref{curveS2}.

If $r_1(\gamma)=\ell_{\delta}^s$ for some $s\in S$. This is cases (i) and (ii) of Section \ref{genericM'}. 
It follows from Remark \ref{mainldelta} that the extremal curves in case (0) have classes $r_1^*(\ell_{\delta})$.
If $r_1(\gamma)=C^s$ with $s\notin C$, then $C^s$ does not intersects $\Sigma$ and we have $\left[\gamma\right]=r_1^*(C)$. The last four cases appear when $r_1(\gamma)\subset H_2$ or $r_1(\gamma)\subset \overline{C^{(2)}}$.
\end{proof}
Now we are going to determine the classes of the curves in cases (2a.i), (2a.ii), (2b.i) and (2b.ii).
\begin{lemme}\label{H2Sigma}
The extremal curves in $r_1^{-1}(H_2\cap \Sigma)$ are of classes $\ell_{\Sigma}$ or $r_1^*(\ell_{\delta})$.
\end{lemme}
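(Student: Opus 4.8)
The plan is to pin down the locus $r_1^{-1}(H_2\cap\Sigma)$ completely explicitly and then read off the curve classes from what was already established over the fibres $\ell_\delta^s$. First I would note that $H_2=\nu^{-1}(\Delta_{S^{(2)}}^C)$ is contained in the exceptional divisor $\Delta$ of $\nu$, and that it is a $\mathbb{P}^1$-bundle over $\Delta_{S^{(2)}}^C\cong C$ whose fibre over $\{s,s\}$ is exactly $\ell_\delta^s$; hence $\ell_\delta^s\subseteq H_2$ if and only if $s\in C$. Combining this with the fact recalled in Section \ref{genericM'} that $\ell_\delta^s\subseteq\Sigma$ when $s\in\Fix\iota$ and $\ell_\delta^s\cap\Sigma=\emptyset$ otherwise, together with the fact that $\Delta$ is the union of the $\ell_\delta^s$, I obtain $\Sigma\cap\Delta=\bigsqcup_{s\in\Fix\iota}\ell_\delta^s$ and therefore
$$H_2\cap\Sigma=H_2\cap(\Sigma\cap\Delta)=\bigcup_{s\in\Fix\iota\cap C}\ell_\delta^s.$$

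Next I would determine $\Fix\iota\cap C$. Since $C$ is the unique curve on $S$, it is preserved by $\iota$; and since a symplectic involution on a K3 surface has only finitely many fixed points, $\iota|_C$ is a non-trivial involution of $\mathbb{P}^1$, hence has exactly two fixed points $p_1,p_2$, and these are precisely $\Fix\iota\cap C$. Consequently $H_2\cap\Sigma=\ell_\delta^{p_1}\sqcup\ell_\delta^{p_2}$ (the two curves being disjoint as $p_1\neq p_2$), so that $r_1^{-1}(H_2\cap\Sigma)=r_1^{-1}(\ell_\delta^{p_1})\sqcup r_1^{-1}(\ell_\delta^{p_2})$, and by Lemma \ref{Rdeltalemma} each $r_1^{-1}(\ell_\delta^{p_i})$ is isomorphic to $\mathbb{P}^1\times\mathbb{P}^1$.

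Finally I would argue that any irreducible curve $\gamma\subseteq r_1^{-1}(H_2\cap\Sigma)$ has $r_1(\gamma)$ contained in a single $\ell_\delta^{p_i}$, so $r_1(\gamma)$ is either a point or all of $\ell_\delta^{p_i}$. In the first case $\gamma$ equals the (irreducible, one-dimensional) fibre $r_1^{-1}(r_1(\gamma))$ of the exceptional divisor $\Sigma_1\to\Sigma$, of class $\ell_\Sigma$ (Lemma \ref{curvesN1}, case (00)); in the second case Lemma \ref{curvesN1}, case (0), shows that an extremal such curve has class $r_1^*(\ell_\delta)$. This yields the statement. The only genuinely delicate point is the identification $H_2\cap\Sigma=\ell_\delta^{p_1}\cup\ell_\delta^{p_2}$, i.e.\ controlling how $\Sigma$ meets the exceptional divisor $\Delta$ over the diagonal of $C$ (which relies on the description of $\Sigma$ from Remark \ref{rem:Sigma}); once that is in hand the rest is just bookkeeping with the curve classes on $\mathbb{P}^1\times\mathbb{P}^1$ already worked out in Lemmas \ref{Rdeltalemma} and \ref{curvesN1}.
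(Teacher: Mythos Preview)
Your proof is correct and follows essentially the same approach as the paper: both identify $H_2\cap\Sigma$ as the union of the two fibres $\ell_\delta^x\cup\ell_\delta^y$ over the fixed points of $\iota|_C$, invoke Lemma \ref{Rdeltalemma} to see each $r_1^{-1}(\ell_\delta^{p_i})\cong\mathbb{P}^1\times\mathbb{P}^1$, and read off the extremal classes. Your write-up is somewhat more explicit about why $H_2\cap\Sigma$ decomposes into these fibres (via $\Sigma\cap\Delta=\bigsqcup_{s\in\Fix\iota}\ell_\delta^s$), whereas the paper phrases this as the fixed points of $\iota^{(2)}|_{\Delta_C}$ being $(x,x)$ and $(y,y)$, but this is the same observation.
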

\begin{proof}
Since $C$ is the unique curve contained in $S$. The involution $\iota$ on $S$ restricts to $C$. Since $\iota$ is a symplectic involution, $\iota$ does not act trivially on $C$. Moreover, since $C\simeq \mathbb{P}^1$, $\iota_{|C}$ has two fixed points $x$ and $y$. It follows that $\iota^{(2)}_{|\Delta_{C}}$ also has two fixed points $(x,x)$ and $(y,y)$. Hence $H_2\cap \Sigma=\ell_{\delta}^x\cup \ell_{\delta}^y$.
The surfaces $r_1^{-1}(\ell_{\delta}^x)$ and $r_1^{-1}(\ell_{\delta}^y)$ are Hirzebruch surfaces and by Lemma \ref{Rdeltalemma}, they are isomorphic to $\mathbb{P}^1\times\mathbb{P}^1$. Then the extremal curves of these Hirzebruch surfaces will have classes $\ell_{\Sigma}$ or $r_1^*(\ell_{\delta})$ in $N_1$.
\end{proof}
\begin{lemme}
Let $C_0$ be the class of the section in $\overline{H_2}$ obtained in Lemma \ref{Hirzebruch}.
 Then $j^{\overline{H_2}}_*(C_0)=2\left(r_1^*(C)-r_1^*(\ell_{\delta})-\ell_{\Sigma}\right)$.
\end{lemme}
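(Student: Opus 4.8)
The strategy is to compute the class $j^{\overline{H_2}}_*(C_0)$ in $N_1$ by pushing forward through $r_1$ and comparing with what we already know from Lemma~\ref{C0}, together with intersection numbers against a suitable basis of divisors. Write $j^{\overline{H_2}}_*(C_0) = \alpha\, r_1^*(C) + \beta\, r_1^*(\ell_\delta) + \gamma\, \ell_\Sigma$ in $N_1$ (using that $H^2(N_1,\Z) = r_1^*\nu^*H^2(S^{(2)},\Z)\oplus\Z[\delta_1]\oplus\Z[\Sigma_1]$, and that the only curve classes relevant here are $r_1^*(C)$, $r_1^*(\ell_\delta)$, $\ell_\Sigma$). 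First I would apply $r_{1*}$: since $\overline{H_2}$ is the strict transform of $H_2$ and $C_0$ is a section of the Hirzebruch surface structure (hence not contracted), $r_{1*}j^{\overline{H_2}}_*(C_0) = j^{H_2}_*(C_0) = 2(C - \ell_\delta)$ by Lemma~\ref{C0}. Using $r_{1*}r_1^*(C) = C$, $r_{1*}r_1^*(\ell_\delta) = \ell_\delta$, and $r_{1*}\ell_\Sigma = 0$, this immediately gives $\alpha = 2$, $\beta = -2$.

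It remains to pin down $\gamma$, the coefficient of $\ell_\Sigma$. For this I would intersect both sides with the exceptional divisor $\Sigma_1$ of $r_1$. On the right-hand side, $r_1^*(C)\cdot\Sigma_1 = r_1^*(\ell_\delta)\cdot\Sigma_1 = 0$ by the projection formula (these are pullbacks paired with a class contracted by $r_1$), and $\ell_\Sigma\cdot\Sigma_1 = -1$, so the right side equals $-\gamma$. On the left side, I need $C_0\cdot j^{\overline{H_2}*}(\Sigma_1)$, i.e.~the degree of the restriction $\mathcal{O}_{N_1}(\Sigma_1)|_{\overline{H_2}}$ along $C_0$. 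The surface $\overline{H_2}$ meets the exceptional locus $\Sigma_1$ exactly over $H_2\cap\Sigma = \ell_\delta^x\cup\ell_\delta^y$ (from the proof of Lemma~\ref{H2Sigma}), and $\overline{H_2}\cap\Sigma_1$ consists of the two sections of $\overline{H_2}\to H_2$ lying over these two fibers; these are fiber classes $f$ of the Hirzebruch surface $\overline{H_2}\cong\mathbb{P}(\mathcal{E})$. Hence $j^{\overline{H_2}*}(\Sigma_1) = 2f$ (up to checking the restriction is reduced, which follows since $r_1$ is a blow-up along a smooth center), and $C_0\cdot 2f = 2(C_0\cdot f) = 2$ by Lemma~\ref{Hirzebruch}. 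Therefore $-\gamma = 2$, giving $\gamma = -2$, and assembling the three coefficients yields $j^{\overline{H_2}}_*(C_0) = 2(r_1^*(C) - r_1^*(\ell_\delta) - \ell_\Sigma)$.

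The main obstacle I anticipate is the determination of $j^{\overline{H_2}*}(\Sigma_1)$ as a divisor class on $\overline{H_2}\cong\mathbb{P}^1\times\mathbb{P}^1$, i.e.~being sure it is precisely $2f$ and not, say, $2C_0 + (\text{something})$ or a non-reduced structure. This requires a careful local analysis of the blow-up $r_1$ restricted to $\overline{H_2}$ near the two fibers $\ell_\delta^x$, $\ell_\delta^y$: one must verify that $\overline{H_2}$ is transverse to $\Sigma$ along these fibers (so the strict transform meets $\Sigma_1$ reducedly, in a section of the exceptional $\mathbb{P}^1$-bundle over each fiber), and that these sections have fiber class $f$ in $\overline{H_2}$. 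An alternative, perhaps cleaner, route to $\gamma$ avoids $\Sigma_1$ altogether: intersect with $\Sigma' $ after pushing down to $M'$, or equivalently intersect $j^{\overline{H_2}}_*(C_0)$ with $r_1^*$ of a divisor and with $\delta_1$ to get $\alpha,\beta$ as above, and then recover $\gamma$ from the geometric fact that $\overline{H_2}$ meets $\Sigma_1$ along two fiber-sections, contributing $\ell_\Sigma$ with multiplicity one each. Either way, the bookkeeping of which curve in $\overline{H_2}$ maps to which class in $N_1$ is the delicate point; everything else is projection formula and the intersection numbers already recorded in Lemmas~\ref{Hirzebruch}, \ref{intersection}, \ref{C0}, and~\ref{H2Sigma}.
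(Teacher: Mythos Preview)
Your proposal is correct, and the underlying content is the same as in the paper, though the paper's argument is packaged more directly. The paper simply observes that $H_2\cap\Sigma$ consists of the two fibers $\ell_\delta^x,\ell_\delta^y$ (Lemma \ref{H2Sigma}), so the section $C_0\subset H_2$ meets $\Sigma$ transversally in exactly two points; since $j^{\overline{H_2}}_*(C_0)$ is then the class of the strict transform of $j^{H_2}(C_0)$ under $r_1$, the standard blow-up formula and Lemma \ref{C0} give $j^{\overline{H_2}}_*(C_0)=r_1^*\big(2(C-\ell_\delta)\big)-2\ell_\Sigma$ immediately.

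Your two-step computation unpacks precisely this: applying $r_{1*}$ to recover $\alpha,\beta$ is just quoting Lemma \ref{C0}, and your determination of $\gamma$ via $C_0\cdot j^{\overline{H_2}*}(\Sigma_1)=C_0\cdot 2f=2$ is exactly the count $\#(C_0\cap\Sigma)=2$ rephrased inside $\overline{H_2}$. The ``main obstacle'' you flag (that $j^{\overline{H_2}*}(\Sigma_1)=2f$ with reduced structure) is exactly the transversality of $C_0$ with $\Sigma$ that the paper uses implicitly; once you note that $r_1|_{\overline{H_2}}:\overline{H_2}\to H_2$ is an isomorphism (because $H_2\cap\Sigma$ is already a Cartier divisor in $H_2$), this is clear. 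Your phrasing ``sections of $\overline{H_2}\to H_2$'' is slightly off since that map is an isomorphism, but the conclusion is right.
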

\begin{proof}
As explained in the proof of the previous lemma, $H_2\cap \Sigma$ corresponds to two fibers of the Hirzebruch surface $H_2$. Hence $j^{H_2}(C_0)$ and $\Sigma$ intersect in two points. The class $j^{\overline{H_2}}_*(C_0)$ corresponds to the class of the strict transform by $r_1$ of $j^{H_2}(C_0)$. By Lemma \ref{C0}, $\left[j^{H_2}(C_0)\right]=2(C-\ell_{\delta})$. Hence $j^{\overline{H_2}}_*(C_0)=2\left(r_1^*(C)-r_1^*(\ell_{\delta})-\ell_{\Sigma}\right)$.
\end{proof}
\begin{lemme}\label{deltaiota}
The curve $\Delta_{S^{(2)}}^{\iota,C}$ is a line in $C^{(2)}\simeq \mathbb{P}^2$.
\end{lemme}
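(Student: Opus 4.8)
The plan is to identify $\Delta_{S^{(2)}}^{\iota,C}$ explicitly inside $C^{(2)}\simeq\mathbb{P}^2$ and compute its degree as a curve there. Recall that $C^{(2)}$ is the symmetric square of the rational curve $C\simeq\mathbb{P}^1$, so $C^{(2)}\simeq\mathbb{P}^2$ with $\mathcal{O}_{\mathbb{P}^2}(1)$ pulled back (up to a factor) from $C^{(2)}$ via the quotient $\rho\colon C\times C\to C^{(2)}$: more precisely, the image $\rho(C\times s)=\rho(s\times C)=C^s$ of a ``ruling'' is a line in $\mathbb{P}^2$, as was already used in Lemma~\ref{stricttransform}. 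The curve $\Delta_{S^{(2)}}^{\iota,C}=\{\{x,\iota(x)\}\mid x\in C\}$ is the image under $\rho$ of the graph $\Delta_{S^2}^{\iota,C}=\{(x,\iota(x))\mid x\in C\}\subseteq C\times C$. Since $\iota|_C$ is a nontrivial involution of $\mathbb{P}^1$, this graph is a smooth rational curve in $C\times C\simeq\mathbb{P}^1\times\mathbb{P}^1$ of bidegree $(1,1)$ (it is a section of both projections).

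First I would make the identification of $C^{(2)}$ with $\mathbb{P}^2$ precise: the $2$-uple / symmetric-square map sends a divisor $\{x,y\}$ of degree $2$ on $\mathbb{P}^1$ to a point of $\mathbb{P}^2=\mathbb{P}(H^0(\mathbb{P}^1,\mathcal{O}(2))^\vee)$, and under this the lines of $\mathbb{P}^2$ are exactly the loci $\{\,\{x,y\}\mid x=s\text{ or }y=s\,\}=C^s$ for fixed $s\in C$; this is the standard fact that $C^{(2)}\simeq\mathbb{P}^2$ with $C^s$ a line, consistent with Lemma~\ref{stricttransform}. Then to compute the degree of $\Delta_{S^{(2)}}^{\iota,C}$ it suffices to count its intersection with one such line $C^s$ (for $s$ general, i.e.\ not a fixed point of $\iota|_C$): a point $\{x,\iota(x)\}$ lies on $C^s$ iff $x=s$ or $\iota(x)=s$, i.e.\ $x\in\{s,\iota(s)\}$, which gives exactly two points $\{s,\iota(s)\}$ — but these two values of $x$ yield the \emph{same} point $\{s,\iota(s)\}$ of $C^{(2)}$. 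So I must be careful about multiplicity: the map $C\to\Delta_{S^{(2)}}^{\iota,C}$, $x\mapsto\{x,\iota(x)\}$, is $2$-to-$1$ onto its image (since $\{x,\iota(x)\}=\{x',\iota(x')\}$ forces $x'=x$ or $x'=\iota(x)$), so $\Delta_{S^{(2)}}^{\iota,C}$ as a reduced curve is a $\mathbb{P}^1$, and the two preimages $s,\iota(s)$ of the intersection point on $C^s$ reflect that the restriction of $C^s$ meets $\Delta_{S^{(2)}}^{\iota,C}$ with multiplicity one at one reduced point. Alternatively, and more cleanly, I would push forward the class: $\rho_*[\Delta_{S^2}^{\iota,C}]=\rho_*(\text{bidegree }(1,1))$, and since $\rho$ is $2$-to-$1$ and $\rho_*\rho^*[C^s]=2[C^s]$ while $\rho^*[\text{line}]$ has bidegree $(1,1)$, one gets $[\Delta_{S^{(2)}}^{\iota,C}]\cdot[C^s]=1$ in $\mathbb{P}^2$, i.e.\ $\Delta_{S^{(2)}}^{\iota,C}$ is a line.

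The main (minor) obstacle is exactly this bookkeeping of the factor $2$ coming from the degree of $\rho$: one must distinguish the bidegree $(1,1)$ of the graph upstairs from the degree of its image downstairs, using that $\rho|_{\Delta_{S^2}^{\iota,C}}$ is $2$-to-$1$ (it identifies $(x,\iota(x))$ with $(\iota(x),x)$, which is the image of $\iota(x)$). Once this is handled, either the direct intersection count with a general line $C^s$ or the projection-formula computation $\rho_*$ of the $(1,1)$-class gives intersection number $1$ with a line, hence $\Delta_{S^{(2)}}^{\iota,C}$ is a line in $C^{(2)}\simeq\mathbb{P}^2$. I would present the projection-formula version as the cleanest, invoking \cite[Lemma 3.6]{Menet-2018} (equation \eqref{Smith2}) in the same style already used repeatedly above for $\rho_*$ and $\rho^*$; one should also note $\Delta_{S^{(2)}}^{\iota,C}$ is irreducible and reduced (being the image of the irreducible $C$) so that ``line'' is meant in the reduced sense.
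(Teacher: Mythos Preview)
Your proposal is correct and follows essentially the same approach as the paper: both arguments identify the graph $\Delta_{S^2}^{\iota,C}$ as a curve of bidegree $(1,1)$ in $C\times C$, push it forward by $\rho$, and divide by the factor $2$ coming from the fact that $\rho|_{\Delta_{S^2}^{\iota,C}}$ is a double cover, to conclude that $[\Delta_{S^{(2)}}^{\iota,C}]=[C^s]$ is the class of a line. Your additional direct intersection count with a general $C^s$ is a nice sanity check but is not needed.
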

\begin{proof}
The map $$\xymatrix{\eq[d]C^2\ar[r]^{\rho}_{2:1}&C^{(2)}\eq[d]\\
\mathbb{P}^1\times \mathbb{P}^1 &\mathbb{P}^2}$$
is a two to one ramified cover. We recall that $\Delta_{S^{2}}^{\iota,C}=\left\{\left.(x,\iota(x))\right|\ x\in C\right\}$. 
We have: $$\left[\Delta_{S^{2}}^{\iota,C}\right]_{S^2}=\left[C\times s\right]_{S^2}+\left[s\times C \right]_{S^2}.$$
It follows: $$\rho_*\left(\left[{\Delta_{S^{2}}^{\iota,C}}\right]_{S^2}\right)=2\left[C^s\right]_{S^{(2)}}.$$
However $\rho: \Delta_{S^{2}}^{\iota,C}\rightarrow \Delta_{S^{(2)}}^{\iota,C}$ is a two to one cover; so $\left[\Delta_{S^{(2)}}^{\iota,C}\right]_{S^{(2)}}=\left[C^s\right]_{S^{(2)}}$.
Hence $\left[\Delta_{S^{(2)}}^{\iota,C}\right]_{C^{(2)}}=\left[C^s\right]_{C^{(2)}}$ and $\left[\Delta_{S^{(2)}}^{\iota,C}\right]_{C^{(2)}}$ is the class of a line in $C^{(2)}$.
\end{proof}
\begin{lemme}
The surface $r_1^{-1}(\overline{C^{(2)}}\cap \Sigma)$ is a Hirzebruch surface that we denote by $H_1$. 
Let $f$ be a fiber of $H_1$. There exists a section $D_0$ which
satisfies:  $\Pic H_1=\Z D_0\oplus\Z f$, $D_0^2=-2$, $f^2=0$ and $D_0\cdot f=1$.
Moreover, $j^{H_1}_*(D_0)=r_1^*(C)-\ell_{\delta}-\ell_{\Sigma}$, where $j^{H_1}:H_1\hookrightarrow N_1$ is the embedding. 
\end{lemme}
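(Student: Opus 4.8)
The plan is to follow the strategy of Lemmas~\ref{Rdeltalemma} and~\ref{Hirzebruch}: identify the base curve of the $\mathbb{P}^1$-bundle $H_1$, use the normal-bundle description of the exceptional divisor $\Sigma_1$ to see that $H_1$ is a Hirzebruch surface, and then pin down its invariant $n$ and the class $j^{H_1}_*(D_0)$ by exhibiting an explicit section inside $\overline{\overline{C^{(2)}}}$. First I identify $B:=\overline{C^{(2)}}\cap\Sigma$. Since $\nu$ restricts to an isomorphism $\overline{C^{(2)}}\xrightarrow{\ \sim\ }C^{(2)}$ and $\nu(\Sigma)\cap C^{(2)}=\Delta_{S^{(2)}}^{\iota,C}$, one gets $B\cong\Delta_{S^{(2)}}^{\iota,C}$, which by Lemma~\ref{deltaiota} is a line in $\overline{C^{(2)}}\cong\mathbb{P}^2$; in particular $B\cong\mathbb{P}^1$ is a smooth rational curve in the K3 surface $\Sigma$, so $B^2=-2$ there. (One must also check that $\overline{C^{(2)}}$ and $\Sigma$ meet cleanly along $B$ — including at the two points over $\Fix(\iota|_C)$ — which can be done in local coordinates using the description of $\Sigma$ in Remark~\ref{rem:Sigma}.)

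Next, since $B\subseteq\Sigma$ and $r_1$ is the blow-up of $S^{[2]}$ along $\Sigma$, the surface $H_1=r_1^{-1}(B)$ is the restriction over $B$ of the $\mathbb{P}^1$-bundle $\Sigma_1=\mathbb{P}(\mathcal{N}_{\Sigma/S^{[2]}})\to\Sigma$, hence a Hirzebruch surface; by \cite[Chapter V, Propositions 2.3 and 2.9]{Hartshorne} we have $\Pic H_1=\Z D_0\oplus\Z f$ with $D_0$ the section of minimal self-intersection $-n$, $f$ a fiber, $f^2=0$, $D_0\cdot f=1$, and here $f$ is a fiber of $\Sigma_1\to\Sigma$, so $[f]_{N_1}=\ell_{\Sigma}$. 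For the section I take $D_0:=\overline{\overline{C^{(2)}}}\cap H_1$, i.e.\ the curve of $\overline{\overline{C^{(2)}}}\cong\mathbb{P}^2$ lying over $B$ under the isomorphism $r_1\colon\overline{\overline{C^{(2)}}}\xrightarrow{\ \sim\ }\overline{C^{(2)}}$; it is a line in $\overline{\overline{C^{(2)}}}$ mapping isomorphically onto $B$, hence a section of $H_1\to B$. Since all lines of $\overline{\overline{C^{(2)}}}\cong\mathbb{P}^2$ are numerically equivalent, $[D_0]_{N_1}$ equals the class of the strict transform under $r_1$ of a general line $d\subset\overline{C^{(2)}}$; by Lemma~\ref{stricttransform} one has $[d]_{S^{[2]}}=C-\ell_{\delta}$, and $d$ meets $\Sigma$ in exactly one reduced point (it lies in $\overline{C^{(2)}}$ and meets $\overline{C^{(2)}}\cap\Sigma=B$ transversally in $\mathbb{P}^2$, and $\overline{C^{(2)}}$ is not tangent to $\Sigma$ along $B$), so its strict transform has class $r_1^*(C)-r_1^*(\ell_{\delta})-\ell_{\Sigma}$. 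Thus $j^{H_1}_*(D_0)=r_1^*(C)-r_1^*(\ell_{\delta})-\ell_{\Sigma}$, and using $\Sigma_1\cdot\ell_{\Sigma}=-1$ and $\Sigma_1\cdot r_1^*(\text{curve avoiding }\Sigma)=0$ we get $D_0\cdot\Sigma_1=1$; equivalently $\Sigma_1|_{\overline{\overline{C^{(2)}}}}=\mathcal{O}_{\mathbb{P}^2}(1)$.

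Finally I compute $D_0^2$ in $H_1$ to get $n=2$. Since $N_1$ is the blow-up of the hyperk\"ahler fourfold $S^{[2]}$ along the surface $\Sigma$, $K_{N_1}=\Sigma_1$; with $\Sigma_1|_{\overline{\overline{C^{(2)}}}}=\mathcal{O}_{\mathbb{P}^2}(1)$ and $K_{\overline{\overline{C^{(2)}}}}=\mathcal{O}_{\mathbb{P}^2}(-3)$, adjunction gives $\det\mathcal{N}_{\overline{\overline{C^{(2)}}}/N_1}=\mathcal{O}_{\mathbb{P}^2}(-4)$; together with $\mathcal{N}_{D_0/\overline{\overline{C^{(2)}}}}=\mathcal{O}_{\mathbb{P}^1}(1)$ this yields $\deg\mathcal{N}_{D_0/N_1}=1-4=-3$. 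On the other hand $\mathcal{N}_{H_1/\Sigma_1}|_{D_0}\cong\mathcal{O}_{\Sigma}(B)|_B=\mathcal{O}_{\mathbb{P}^1}(-2)$ and $\deg\bigl(\mathcal{N}_{\Sigma_1/N_1}|_{D_0}\bigr)=\Sigma_1\cdot D_0=1$, so $\deg\mathcal{N}_{H_1/N_1}|_{D_0}=-1$. From $0\to\mathcal{N}_{D_0/H_1}\to\mathcal{N}_{D_0/N_1}\to\mathcal{N}_{H_1/N_1}|_{D_0}\to0$ we obtain $D_0^2=\deg\mathcal{N}_{D_0/H_1}=-3-(-1)=-2$. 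Hence $n=2$, $D_0$ is the minimal section, and the asserted relations $D_0^2=-2$, $f^2=0$, $D_0\cdot f=1$, $j^{H_1}_*(D_0)=r_1^*(C)-r_1^*(\ell_{\delta})-\ell_{\Sigma}$ all follow.

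The numerical computations are routine; the main obstacle is the geometric bookkeeping — identifying $B$ correctly and, above all, justifying the multiplicity-one/clean-intersection statements that underlie $\Sigma_1|_{\overline{\overline{C^{(2)}}}}=\mathcal{O}_{\mathbb{P}^2}(1)$ and $[D_0]_{N_1}=r_1^*(C)-r_1^*(\ell_{\delta})-\ell_{\Sigma}$; these rest on a local-coordinate analysis near $B$ via Remark~\ref{rem:Sigma}. As a consistency check one may note that $T_{S^{[2]}}|_\Sigma=T_\Sigma\oplus\mathcal{N}_{\Sigma/S^{[2]}}$ together with $K_{S^{[2]}}=K_\Sigma=\mathcal{O}$ forces $\det\mathcal{N}_{\Sigma/S^{[2]}}=\mathcal{O}_\Sigma$, so $\mathcal{N}_{\Sigma/S^{[2]}}|_B$ has degree $0$ and $H_1\in\{\mathbb{F}_0,\mathbb{F}_2,\mathbb{F}_4,\dots\}$; thus it already suffices to produce a single section of self-intersection $-2$, which is exactly what $D_0$ provides.
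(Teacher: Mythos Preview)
Your argument is correct and takes a genuinely different route from the paper. The paper computes $\mathcal{N}_{\Sigma/S^{[2]}}|_\zeta$ directly: pulling back along the double cover $\widetilde{\rho}\colon\widetilde{\zeta}\to\zeta$ gives $\widetilde{\nu}^*(\mathcal{N}_{S_\iota/S^2}|_{\Delta_{S^2}^{\iota,C}})\cong T_S|_C\cong\mathcal{O}(2)\oplus\mathcal{O}(-2)$, and the $2{:}1$ cover forces $\mathcal{N}_{\Sigma/S^{[2]}}|_\zeta\cong\mathcal{O}(1)\oplus\mathcal{O}(-1)$, whence $H_1\cong\mathbb{F}_2$; then $D_0\cdot\Sigma_1$ is read off from $\Sigma_1|_{H_1}=\mathcal{O}_{\mathbb{P}(\mathcal{E})}(-1)$ to get $a=-1$. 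You instead exhibit $D_0$ concretely as the line $\overline{\overline{C^{(2)}}}\cap\Sigma_1\subset\overline{\overline{C^{(2)}}}\cong\mathbb{P}^2$, obtain its class in $N_1$ by the strict-transform formula (which simultaneously proves the next lemma on $\overline{j^C}_*(d)$), and recover $D_0^2=-2$ from adjunction and two nested normal-bundle sequences. The clean-intersection check you flag is immediate at a generic point $(s,\iota(s))\in B$ via the $S\times S$ model: $T_p\overline{C^{(2)}}=T_sC\times T_{\iota(s)}C$ and $T_p\Sigma=\{(v,d\iota(v))\}$, so $T_p\overline{C^{(2)}}\cap T_p\Sigma=T_pB$; hence a line $d\subset\overline{C^{(2)}}$ transverse to $B$ in $\mathbb{P}^2$ is automatically transverse to $\Sigma$ in $S^{[2]}$ (note that $\overline{C^{(2)}}$ and $\Sigma$ are \emph{not} transverse along $B$---the span of tangent spaces is only $3$-dimensional---but this is irrelevant for your argument). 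One caveat worth recording: your method does not pin down the Hirzebruch type, since a $(-2)$-section exists in every $\mathbb{F}_{2k}$ with $k\geq 1$ (indeed writing $D_0=C_0+mf$ the two relations $D_0^2=-2$ and $D_0\cdot\Sigma_1=1$ collapse to the single equation $k-m=1$). The paper's normal-bundle computation does establish $H_1\cong\mathbb{F}_2$, and this is implicitly used downstream when one asserts that $D_0$ and $f$ exhaust the extremal curve classes of $H_1$.
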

\begin{proof}
We denote by $\zeta$ the curve $\overline{C^{(2)}}\cap \Sigma$. This curve is the strict transform of $\Delta_{S^{(2)}}^{\iota,C}$ by $\nu$. 
In particular, it is a rational curve by Lemma \ref{deltaiota}.
Moreover by Lemmas \ref{deltaiota} and \ref{stricttransform}, we have: 
\begin{equation}
\left[\zeta\right]_{S^{[2]}}=C-\ell_{\delta}.
\label{gammadelta}
\end{equation}
To understand which Hirzebruch surface $H_1$ is, we are going to compute $\mathcal{N}_{\Sigma/S^{[2]}}|_{\zeta}=\mathcal{O}_{\zeta}(-k)\oplus\mathcal{O}_{\zeta}(k)$.
We consider $\widetilde{\zeta}:=\widetilde{\rho}^{-1}(\zeta)$. 
We have: $$\rho^*(\mathcal{N}_{\Sigma/S^{[2]}})|_{\widetilde{\zeta}}=\widetilde{\nu}^*(\mathcal{N}_{S_{\iota}/S^2}|_{\Delta_{S^{2}}^{\iota,C}}).$$
As in the proof of Lemma \ref{Hirzebruch}, we have:
$$\mathcal{N}_{S_{\iota}/S^2}|_{\Delta_{S^{2}}^{\iota,C}}\simeq T_S|_C\simeq \mathcal{O}_C(-2)\oplus\mathcal{O}_C(2).$$
Hence $$\rho^*(\mathcal{O}_{\zeta}(-k)\oplus\mathcal{O}_{\zeta}(k))=\rho^*(\mathcal{N}_{\Sigma/S^{[2]}})|_{\widetilde{\zeta}}=\mathcal{O}_{\widetilde{\zeta}}(-2)\oplus\mathcal{O}_{\widetilde{\zeta}}(2).$$
Since $\rho:\widetilde{\zeta}\rightarrow \zeta$ is a two to one cover, we obtain $k=1$, that is:
$$\mathcal{N}_{\Sigma/S^{[2]}}|_{\zeta}=\mathcal{O}_{\zeta}(-1)\oplus\mathcal{O}_{\zeta}(1).$$
By \cite[Chapter V, Proposition 2.3 and Proposition 2.9]{Hartshorne}, there exists a section $D_0$ such that
 $\Pic H_1=\Z D_0\oplus\Z f$, with $D_0^2=-2$, $f^2=0$ and $D_0\cdot f=1$; 
 $f$ being the class of a fiber.
Moreover, by (\ref{gammadelta}) and using the projection formula, we know that:
$$j^{H_1}_*(D_0)=r_1^*(C)-\ell_{\delta}+a\ell_{\Sigma}.$$
To compute $a$, we only need to compute $D_0\cdot j^{H_1*}(\Sigma_1)$.
We apply the same method used in the proof of Lemma \ref{C0}.
By \cite[Propositions 2.6 and 2.8]{Hartshorne}, we have:
$$D_0\cdot \widetilde{j^{H_1}}^*(\Sigma_1)=D_0\cdot \mathcal{O}_{\mathbb{P}(\mathcal{O}_{\zeta}(-1)\oplus\mathcal{O}_{\zeta}(1))}(-1)=-D_0\cdot(D_0+f)=2-1=1.$$
This proves that $a=-1$.
\end{proof}
\begin{lemme}
Let $d$ be the class of a line in $\overline{\overline{C^{(2)}}}$. Then $\overline{j^{C}}_*(d)=r_1^*(C)-r_1^*(\ell_{\delta})-\ell_{\Sigma}$. 
\end{lemme}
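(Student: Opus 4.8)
The plan is to reduce to the previous lemma by picking a conveniently placed line inside $\overline{\overline{C^{(2)}}}$, namely the one sitting over the curve $\zeta:=\overline{C^{(2)}}\cap\Sigma$.

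\textbf{Step 1: $r_1$ restricts to an isomorphism on $\overline{\overline{C^{(2)}}}$.} Since $\zeta=\overline{C^{(2)}}\cap\Sigma$ is a Cartier divisor on the smooth surface $\overline{C^{(2)}}$, blowing it up does nothing; concretely $r_1$ restricts to a birational morphism between the two copies of $\mathbb{P}^2$ given by $\overline{\overline{C^{(2)}}}$ and $\overline{C^{(2)}}$, hence is an isomorphism. In particular $d_0:=\overline{\overline{C^{(2)}}}\cap\Sigma_1=\overline{\overline{C^{(2)}}}\cap H_1$ is a curve which $r_1$ maps isomorphically onto $\zeta$. By \eqref{gammadelta} and Lemma \ref{stricttransform} we have $[\zeta]_{S^{[2]}}=C-\ell_{\delta}=j^C_*(d)$, so the class of $\zeta$ in $\overline{C^{(2)}}\simeq\mathbb{P}^2$ is the line class $d$ (this also follows from Lemma \ref{deltaiota}). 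Transporting along the isomorphism of Step 1, $[d_0]_{\overline{\overline{C^{(2)}}}}=d$, and therefore $\overline{j^C}_*(d)=[d_0]_{N_1}$.

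\textbf{Step 2: $d_0$ is the negative section $D_0$ of $H_1$.} The curve $d_0$ lies in $H_1=r_1^{-1}(\zeta)=\mathbb{P}(\mathcal{N}_{\Sigma/S^{[2]}}|_\zeta)$, and since $r_1$ maps it isomorphically onto $\zeta$ it is a section of $H_1\to\zeta$. By the standard local description of strict transforms, over a general point $p\in\zeta$ the strict transform $\overline{\overline{C^{(2)}}}$ passes through the point of $\mathbb{P}(\mathcal{N}_{\Sigma/S^{[2]},p})$ determined by the image of $T_p\overline{C^{(2)}}$; equivalently, $d_0$ is the section associated with the sub-line-bundle $\mathcal{N}_{\zeta/\overline{C^{(2)}}}\hookrightarrow\mathcal{N}_{\Sigma/S^{[2]}}|_\zeta$. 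As $\zeta$ is a line in $\overline{C^{(2)}}\simeq\mathbb{P}^2$ we have $\mathcal{N}_{\zeta/\overline{C^{(2)}}}\simeq\mathcal{O}_{\mathbb{P}^1}(1)$, while $\det(\mathcal{N}_{\Sigma/S^{[2]}}|_\zeta)=\mathcal{O}_\zeta$ because $\mathcal{N}_{\Sigma/S^{[2]}}|_\zeta\simeq\mathcal{O}_\zeta(-1)\oplus\mathcal{O}_\zeta(1)$ by the previous lemma. Hence the self-intersection of $d_0$ in $H_1$ equals $\deg(\mathcal{N}_{\Sigma/S^{[2]}}|_\zeta)-2\deg\mathcal{N}_{\zeta/\overline{C^{(2)}}}=0-2=-2$, so $d_0$ is the unique irreducible curve of self-intersection $-2$ in $H_1\simeq\mathbb{F}_2$, i.e.~$d_0=D_0$.

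\textbf{Step 3: conclusion.} Combining Steps 1 and 2 with the previous lemma, $\overline{j^C}_*(d)=[d_0]_{N_1}=[D_0]_{N_1}=j^{H_1}_*(D_0)=r_1^*(C)-r_1^*(\ell_\delta)-\ell_\Sigma$, which is the claim.

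\textbf{Main obstacle.} The delicate point is Step 2: one must know that $\overline{C^{(2)}}$ and $\Sigma$ meet cleanly enough along $\zeta$ for the exceptional locus $\overline{\overline{C^{(2)}}}\cap\Sigma_1$ to be exactly the section cut out by $\mathcal{N}_{\zeta/\overline{C^{(2)}}}$, and one has to match the Hirzebruch-surface conventions used in the previous lemma. If one prefers to avoid this, there is a more computational route for Steps 2--3: write $\overline{j^C}_*(d)=r_1^*(C)-r_1^*(\ell_\delta)+a\ell_\Sigma$ (legitimate because $r_{1*}\overline{j^C}_*(d)=C-\ell_\delta$ and $\ker(r_{1*}|_{H_2(N_1)})=\mathbb{Z}\ell_\Sigma$, with $r_1^*(C),r_1^*(\ell_\delta)$ represented by strict transforms of curves disjoint from $\Sigma$), and then determine $a$ from $\overline{j^C}_*(d)\cdot\Sigma_1=-a$ (using $r_1^*(C)\cdot\Sigma_1=r_1^*(\ell_\delta)\cdot\Sigma_1=0$ and $\ell_\Sigma\cdot\Sigma_1=-1$) together with a local transversality computation showing that a general line of $\overline{\overline{C^{(2)}}}$ meets $\Sigma_1$ transversally in a single point, giving $a=-1$.
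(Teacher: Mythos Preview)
Your argument is correct, but it takes a genuinely different route from the paper. The paper's proof is much shorter: it simply picks the line $\overline{C^s}\subset\overline{C^{(2)}}$ for some $s\in C$ (rather than the line over $\zeta$). By Lemma~\ref{stricttransform} this has class $C-\ell_\delta$ in $S^{[2]}$, and since $\overline{C^{(2)}}\cap\Sigma=\zeta$ is itself a line in $\overline{C^{(2)}}\simeq\bP^2$ (Lemma~\ref{deltaiota}), the curve $\overline{C^s}$ meets $\Sigma$ transversally in a single point, so its strict transform under $r_1$ has class $r_1^*(C-\ell_\delta)-\ell_\Sigma$. That is the whole proof.

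Your approach instead chooses the line $d_0$ sitting over $\zeta$ and identifies it with the negative section $D_0$ of the Hirzebruch surface $H_1$, then quotes the previous lemma. This is a pleasing structural observation---it explains why the two lemmas produce the same class and ties them together geometrically---but it is considerably longer and, as you correctly flag, hinges on the clean intersection of $\overline{C^{(2)}}$ and $\Sigma$ along $\zeta$ (needed both for Step~1 and for the sub-bundle description in Step~2). The paper sidesteps this by choosing a line transverse to $\zeta$, so only a point-transversality is needed. Your ``alternative route'' at the end is essentially the paper's argument.
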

\begin{proof}
For instance $\overline{j^{C}}_*(d)$ corresponds to the class of the strict transform of $C^s$ by $r_1\circ \nu$ for $s\in C$. Let $\overline{C^s}$ be the strict transform of $C^s$ by $\nu$. 
As we have already seen in Lemma \ref{stricttransform}, $\left[\overline{C^s}\right]_{S^{[2]}}=j^{C}_*(d)=C-\ell_{\delta}$. The intersection $\overline{C^{(2)}}\cap\Sigma$ corresponds to the strict transform of $\Delta_{S^{(2)}}^{\iota,C}$ by $\nu$ and by Lemma \ref{deltaiota}, it has the class of a line in $\overline{C^{(2)}}$. Hence $\overline{C^s}$ intersects $\Sigma$ transversely in one point and we obtain:
$\overline{j^{C}}_*(d)=r_1^*(C-\ell_{\delta})-\ell_{\Sigma}$. 
\end{proof}
\subsubsection*{Conclusion on wall divisors}
\begin{lemme}
The extremal curves of $M'$ have classes $\pi_{1*}r_1^*\ell_{\delta}$, $\pi_{1*}\ell_{\Sigma}$ and $\pi_{1*}(r_1^*(C)-r_1^*(\ell_{\delta})-\ell_{\Sigma})$. 
\end{lemme}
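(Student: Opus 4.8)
The plan is to obtain the statement by assembling the curve classifications established earlier in this subsection and then pushing them forward along $\pi_1$. First I would record, using Lemma~\ref{curvesN1} together with Lemma~\ref{H2Sigma} and the subsequent explicit computations (for the sections of $\overline{H_2}$, of $H_1$, and for the lines in $\overline{\overline{C^{(2)}}}\cong\mathbb{P}^2$, which rest on Lemmas~\ref{C0}, \ref{stricttransform}, \ref{deltaiota}, \ref{intersection}), that every irreducible curve in $N_1$ is numerically an effective combination of the four classes $\ell_\Sigma$, $r_1^*(\ell_\delta)$, $r_1^*(C)$ and $r_1^*(C)-r_1^*(\ell_\delta)-\ell_\Sigma$: the contracted curves (case (00)) and the fibres of the Hirzebruch surfaces in cases (2a.i), (2b.i) give $\ell_\Sigma$; cases (0) and the fibres in (2a.i), (2a.ii) give $r_1^*(\ell_\delta)$; case (1) gives $r_1^*(C)$; the negative section of $\overline{H_2}$ gives $2\big(r_1^*(C)-r_1^*(\ell_\delta)-\ell_\Sigma\big)$; and the negative section of $H_1$ together with the lines in case (2b.ii) give $r_1^*(C)-r_1^*(\ell_\delta)-\ell_\Sigma$. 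Since $r_1^*(C)=r_1^*(\ell_\delta)+\ell_\Sigma+\big(r_1^*(C)-r_1^*(\ell_\delta)-\ell_\Sigma\big)$, I conclude that $\overline{NE}(N_1)$ is generated by the three classes $\ell_\Sigma$, $r_1^*(\ell_\delta)$ and $r_1^*(C)-r_1^*(\ell_\delta)-\ell_\Sigma$.

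Next I would verify that these three generators are linearly independent, so that $\overline{NE}(N_1)$ is a simplicial cone and its extremal rays are exactly $\mathbb{R}_{\geq0}\,\ell_\Sigma$, $\mathbb{R}_{\geq0}\,r_1^*(\ell_\delta)$ and $\mathbb{R}_{\geq0}\big(r_1^*(C)-r_1^*(\ell_\delta)-\ell_\Sigma\big)$. For this I would use the decomposition $H^2(N_1,\Z)=r_1^*\nu^*H^2(S^{(2)},\Z)\oplus\Z[\delta_1]\oplus\Z[\Sigma_1]$ from the proof of Lemma~\ref{dualdeltasigma}: the intersection pairings with $[\delta_1]$, with $[\Sigma_1]$, and with (the pullback of) $D_C$ as in Lemma~\ref{intersection} separate $r_1^*(\ell_\delta)$, $\ell_\Sigma$ and $r_1^*(C)$, hence their triangular recombination into the three generators is again independent. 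In particular $r_1^*(C)$ and $2\big(r_1^*(C)-r_1^*(\ell_\delta)-\ell_\Sigma\big)$ do not span extremal rays of $\overline{NE}(N_1)$.

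Finally, since $\pi_1\colon N_1\to M'$ is finite and surjective, every irreducible curve in $M'$ is a positive rational multiple of the $\pi_{1*}$-image of an irreducible curve in $N_1$ (take an irreducible component of its preimage mapping onto it); hence $\overline{NE}(M')=\pi_{1*}\overline{NE}(N_1)$ is the polyhedral cone generated by $\pi_{1*}\ell_\Sigma$, $\pi_{1*}r_1^*(\ell_\delta)$ and $\pi_{1*}\big(r_1^*(C)-r_1^*(\ell_\delta)-\ell_\Sigma\big)$, and a short intersection computation in the style of Lemma~\ref{dualdeltasigma} (pairing against $\Sigma'$, $\delta'$ and $D_C'$, using that $\Sigma'=\pi_{1*}\Sigma_1$, $\delta'=\pi_{1*}r_1^*(\delta)$ pull back to $2\Sigma_1$, $2\delta_1$) shows that these three classes span pairwise distinct rays, none inside the cone generated by the other two. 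Hence they are exactly the extremal rays of $\overline{NE}(M')$, which is the claim. I expect the only genuinely substantive point to be the linear independence / non-degeneracy in the second step; the rest is just collecting the lemmas already proved, so I would regard that as the (mild) main obstacle.
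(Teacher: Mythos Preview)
Your proposal is correct and follows essentially the same approach as the paper: collect the classes of irreducible curves in $N_1$ from the preceding lemmas, observe that the cone they generate has the three stated classes as extremal rays, and then push forward along the finite surjection $\pi_1$. The paper's own proof is a two-sentence summary of exactly this; you have simply filled in the details it omits (the relation $r_1^*(C)=r_1^*(\ell_\delta)+\ell_\Sigma+\big(r_1^*(C)-r_1^*(\ell_\delta)-\ell_\Sigma\big)$ reducing four generators to three, and the linear-independence check via pairing with $\delta_1$, $\Sigma_1$, $D_C$ and their primed counterparts).
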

\begin{proof}
Our previous investigation on curves in $N_1$ show that the 
extremal curves in $N_1$ have classes $r_1^*\ell_{\delta}$,
$\ell_{\Sigma}$ and $r_1^*(C)-r_1^*(\ell_{\delta})-\ell_{\Sigma}$.
This implies that the extremal curves in $M'$ have classes $\pi_{1*}r_1^*\ell_{\delta}$, $\pi_{1*}\ell_{\Sigma}$ and $\pi_{1*}(r_1^*(C)-r_1^*(\ell_{\delta})-\ell_{\Sigma})$. 
\end{proof}
We can compute their dual divisors to obtain wall divisors with Proposition \ref{extremalray}.
\begin{prop}\label{walldiv1}
The divisors $\delta'$, $\Sigma'$, $D_C'$ and $D_C'-\frac{1}{2}(\delta'+\Sigma')$ are wall divisors in $M'$.
Moreover, they verify the following numerical properties:
\begin{itemize}
\item
$q_{M'}(\delta')=q_{M'}(\Sigma')=q_{M'}(D_C')=-4$
and $\div(\delta')=\div(\Sigma')=\div(D_C')=2$;
\item
$q_{M'}\left(D_C'-\frac{1}{2}(\delta'+\Sigma')\right)=-6$ and $\div\left[D_C'-\frac{1}{2}(\delta'+\Sigma')\right]=2$.
\end{itemize}
\end{prop}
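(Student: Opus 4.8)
The plan is to compute the dual divisors of the three extremal curve classes $\pi_{1*}r_1^*\ell_{\delta}$, $\pi_{1*}\ell_{\Sigma}$, and $\pi_{1*}(r_1^*(C)-r_1^*(\ell_{\delta})-\ell_{\Sigma})$ with respect to the Beauville--Bogomolov form, and then read off the self-intersections and divisibilities. The first two duals have already been identified in Lemma \ref{dualdeltasigma} as $\frac{1}{2}\delta'$ and $\frac{1}{2}\Sigma'$, so by Proposition \ref{extremalray} both $\delta'$ and $\Sigma'$ are wall divisors; their numerical data $q_{M'}=-4$, $\div=2$ come directly from Theorem \ref{BBform} (iii) and Remark \ref{div}. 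The real work is the third class. First I would determine the class of $D_C$ in $H^2(S^{[2]},\Z)$: it lives in $j(H^2(S,\Z))$ (it is pulled back from $S^{(2)}$, i.e. orthogonal to $\delta$), and Lemma \ref{intersection} gives $C\cdot D_C = -2$, which combined with the standard fact $q_{S^{[2]}}(j(x))=x^2$ pins down $q_{S^{[2]}}(D_C)$. Since $C$ is a $(-2)$-curve on $S$, a short computation with the Hodge isometry $j$ (using $C\cdot D_C=-2$ and the fact that $D_C$ is, up to the class of a multiple of $C$, determined by its pairing with $H^2(S,\Z)$) should give $q_{S^{[2]}}(D_C)=-2$ and $\div_{S^{[2]}}(D_C)=1$, and correspondingly, via Theorem \ref{BBform} (ii) and the identity $\pi_{1*}r_1^* = r^*\pi_*$, that $q_{M'}(D_C')=-4$ with $\div(D_C')=2$.

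Next I would check that $D_C'$ is indeed the dual of $\pi_{1*}r_1^*(\ell_{\delta})$... no — rather, the third extremal curve $\pi_{1*}(r_1^*(C)-r_1^*(\ell_{\delta})-\ell_{\Sigma})$ needs its dual computed. Writing this curve class as $\pi_{1*}(r_1^*(C)) - \pi_{1*}(r_1^*(\ell_\delta)) - \pi_{1*}(\ell_\Sigma)$ and using linearity of the dual-class construction (Remark \ref{dualclass}), its dual is $\beta^\vee - \frac{1}{2}\delta' - \frac{1}{2}\Sigma'$ where $\beta^\vee$ is the dual of $\pi_{1*}r_1^*(C)$. To compute $\beta^\vee$ I would argue as in Lemma \ref{dualdeltasigma}: for $y\in H^2(M',\Z)$ of the form $r^*\pi_*(j(x))$ we have, by the projection formulas (\ref{Smith}), (\ref{Smith2}) and Remark \ref{Smithcomute}, that $\pi_{1*}r_1^*(C)\cdot y = 2\,(r_1^*(C)\cdot r_1^*\pi^*(x)) = 2\,(C\cdot j(x))$, so $\beta^\vee$ is $2$ times the BB-dual of $j(x)\mapsto C\cdot j(x)$, i.e. $\beta^\vee = D_C'$ once one matches the normalizations through $q_M = 2 q_{S^{[2]}}$ and $q_{M'}\circ r^* = q_M$. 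This yields that the dual of the third extremal ray is $D_C' - \frac{1}{2}(\delta'+\Sigma')$, hence a wall divisor by Proposition \ref{extremalray}.

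Finally the numerical properties: $q_{M'}(D_C'-\frac{1}{2}(\delta'+\Sigma'))$ expands via bilinearity as $q_{M'}(D_C') + \frac{1}{4}q_{M'}(\delta') + \frac{1}{4}q_{M'}(\Sigma') + \text{cross terms}$; the cross term $(D_C',\delta')_q$ and $(D_C',\Sigma')_q$ vanish because $D_C'\in r^*\pi_*(j(H^2(S,\Z)))$ is orthogonal to both $\delta'$ and $\Sigma'$ by Theorem \ref{BBform} (iv)--(v), while $(\delta',\Sigma')_q=0$ for the same reason. So $q_{M'}(D_C'-\frac{1}{2}(\delta'+\Sigma')) = -4 + \frac{1}{4}(-4) + \frac{1}{4}(-4) = -6$. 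For the divisibility: the element is integral (it lies in $\Z\frac{\delta'+\Sigma'}{2}\oplus^\perp \Z\frac{\delta'-\Sigma'}{2}\oplus^\perp r^*\pi_*(j(H^2(S,\Z)))$ by Theorem \ref{BBform} (v) — indeed $D_C' - \frac{\delta'+\Sigma'}{2}$ is a sum of a lattice element and $-\frac{\delta'+\Sigma'}{2}$ which is one of the listed generators), and pairing with the lattice: since $\div(D_C')=2$ and $\frac{\delta'+\Sigma'}{2}$ pairs evenly with the $(-2)^2$-part and trivially with $U(2)^3\oplus E_8(-1)$, the combination still has all pairings even, giving $\div=2$; that it is exactly $2$ and not larger follows by exhibiting a lattice vector against which the pairing equals $2$ (e.g. against $D_C'$ itself one gets $q_{M'}(D_C')-\frac12(D_C',\delta')_q-\frac12(D_C',\Sigma')_q = -4$, even, and against a suitable element of $U(2)^3\oplus E_8(-1)$ one gets something of divisibility exactly $2$ from the $U(2)$ summands). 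The main obstacle I anticipate is the bookkeeping of the chain of normalization factors (the $\frac12$'s coming from $\pi_{1*}\circ\pi_1^* = 2\,\mathrm{id}$, from $q_M = 2q_{S^{[2]}}$, and from the definition $\delta' = \pi_{1*}r_1^*(\delta)$ with $\delta = \frac12\Delta$) — getting every factor of $2$ in the right place so that the stated squares $-4$, $-6$ come out correctly, rather than the computation of any single intersection number.
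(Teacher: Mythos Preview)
Your argument correctly handles $\delta'$, $\Sigma'$, and $D_C'-\frac{1}{2}(\delta'+\Sigma')$: these are duals of the three extremal rays listed just before the proposition, so Proposition~\ref{extremalray} applies, and your computation of the numerical invariants follows the paper's almost verbatim.

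The genuine gap is that you never prove that $D_C'$ itself is a wall divisor. The class $\pi_{1*}r_1^*(C)$ is \emph{not} one of the three extremal rays of the Mori cone identified in the preceding lemma; it decomposes as
\[
\pi_{1*}r_1^*(C)=\pi_{1*}\bigl(r_1^*(C)-r_1^*(\ell_\delta)-\ell_\Sigma\bigr)+\pi_{1*}r_1^*(\ell_\delta)+\pi_{1*}(\ell_\Sigma),
\]
so Proposition~\ref{extremalray} does not apply to its dual $D_C'$. You compute $D_C'$ as the dual of $\pi_{1*}r_1^*(C)$ only as an intermediate step towards the third extremal ray, and then the claim ``$D_C'$ is a wall divisor'' is left unaddressed.

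The paper supplies a separate argument for $D_C'$: since $D_C'$ is (the class of) a uniruled divisor, Proposition~\ref{caca} gives $(D_C',\alpha)_q>0$ for all $\alpha\in\mathcal{BK}_{M'}$. One must still rule out that some Hodge monodromy operator $g$ sends $D_C'$ to a class orthogonal to the birational K\"ahler cone. Here the paper uses that $\Pic(M')=\Z D_C'\oplus\Z\frac{\delta'+\Sigma'}{2}\oplus\Z\frac{\delta'-\Sigma'}{2}$ and that $g$ preserves square and divisibility, forcing $g(D_C')\in\{\pm D_C',\pm\delta',\pm\Sigma'\}$; each of these is already known to pair nontrivially with $\mathcal{BK}_{M'}$, giving the contradiction. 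This step requires an idea (uniruledness plus the finite list of Picard classes of the right numerics) that is absent from your plan.
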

\begin{proof}
By Lemma \ref{dualdeltasigma}, $\frac{1}{2}\delta'$ and $\frac{1}{2}\Sigma'\in H^2(M',\bQ)$ are the duals of
$\pi_{1*}(r_1^*(\ell_{\delta}))$ and $\pi_{1*}(\ell_{\Sigma})$ respectively. Moreover by Lemma \ref{intersection}, we know that $D_C\cdot C=-2$,
hence by (\ref{Smith2}): $$D_C'\cdot \pi_{1*}(r_1^*(C))=-4.$$ 
Moreover, since $D_C=j(C)$ where $j$ is the isometric embedding $H^2(S,\Z)\hookrightarrow H^2(S^{[2]},\Z)$, we have: $q_{S^{[2]}}(D_C)=-2$. So by Theorem \ref{BBform} (ii): 
$$q_{M'}(D_C')=-4.$$
We obtain that $D_C'$ is the dual of $\pi_{1*}(r_1^*(C))$. Then $D_C'-\frac{1}{2}(\delta'+\Sigma')$ is the dual of $\pi_{1*}(r_1^*(C)-r_1^*(\ell_{\delta})-\ell_{\Sigma})$.

By Proposition \ref{extremalray} this proves that $\delta'$, $\Sigma'$ and $D_C'-\frac{1}{2}(\delta'+\Sigma')$ are wall divisors in $M'$.
Their claimed numerical properties are given by Theorem \ref{BBform} (iii) and Remark \ref{div}.

It remains to show that $D_C'$ is a wall divisor. By Proposition \ref{caca}, since $D_C'$ is a uniruled divisor, we have $(D_C',\alpha)_{q_{M'}}\geq0$ for all $\alpha\in\mathcal{B}\mathcal{K}_{M'}$. Since $\mathcal{B}\mathcal{K}_{M'}$ is open, it follows that $(D_C',\alpha)_{q_{M'}}>0$ for all $\alpha\in\mathcal{B}\mathcal{K}_{M'}$. 
Now, we assume that there exists $g\in \Mon_{\Hdg}^2(M')$ and $\alpha\in\mathcal{B}\mathcal{K}_{M'}$ such that $(g(D_C'),\alpha)_{q_{M'}}=0$ and we will find a contradiction. Since $g\in \Mon_{\Hdg}^2(M')$ and $\Pic(M')=\Z D_C'\oplus \Z \frac{\delta'+\Sigma'}{2}\oplus\Z\frac{\delta'-\Sigma'}{2}$, there are only 6 possibilities: 
$$g(D_C')=\left\{
    \begin{array}{ll}
        \pm D_C' & \text{or}\\
        \pm \delta' & \text{or}\\
				\pm \Sigma'. &
    \end{array}
\right.$$
Since  $(D_C',\alpha)_{q_{M'}}\neq0$,  $(\delta',\alpha)_{q_{M'}}\neq0$ and  $(\Sigma',\alpha)_{q_{M'}}\neq0$. This leads to a contradiction. 
\end{proof}
\subsection{When the K3 surface $S$ used to construct the Nikulin orbifold contains two rational curves swapped by the involution}\label{sec:twocurves}
\subsubsection*{Framework}
Let $\Lambda_{K3}:=U^3\oplus^\bot E_8(-1)\oplus^\bot E_8(-1)$ be the K3 lattice. We fix for all this section three embeddings in $\Lambda_{K3}$ of three lattices $\mathcal{U}\simeq U^3$, $E_1\simeq E_8(-1)$ and $E_2\simeq E_8(-1)$ such that $\Lambda_{K3}\simeq \mathcal{U}\oplus^\bot E_1\oplus^\bot E_2$.
We consider $i$ the involution on $\Lambda_{K3}$ which exchanges $E_1$ and $E_2$ and fixes the lattice $\mathcal{U}$.
We consider $C\in E_1$ such that $C^2=-2$. We define $E^a:=\left\{\left.e-i(e)\right|\ e\in E_1\right\}\simeq E_8(-2)$.
By the surjectivity of the period map (see for instance Theorem \ref{mainGTTO}), we can choose a K3 surface $S$ such that 
$$\Pic S=C\oplus E^a.$$
Then $\Pic S$ contains only two rational curves: one of class $C$ an the other of class $i(C)=C-(C-i(C))$.
It follows from Example \ref{examplewall} and Corollary \ref{cor:desrK} that there exists $\alpha\in \mathcal{K}_S$ invariant under the action of $i$.
Hence by Theorem \ref{mainHTTO} (ii), the involution $i$ extends to an involution $\iota$ on $S$ such that $\iota^*=i$.
Of course, we can refer to older results on K3 surfaces to construct $\iota$, however we though simplest for the reader to refer to results stated in this paper.

As in Section \ref{onecurve}, the objective is to determine wall divisors of the Nikulin orbifold $M'$ obtained from the couple $(S^{[2]},\iota^{[2]})$ (see Section \ref{M'section}). 
\subsubsection*{Notation and strategy}
We keep the same notation and the same strategy used in Section \ref{onecurve}. We also still use the notation from Section \ref{genericM'}.
In particular, we denote by $C$ and $\iota(C)$ the two curves in $S$.
\subsubsection*{Curves in $S^{[2]}$}
First, we determine the curves in $S^{(2)}$.
\begin{lemme}\label{S2curves2}
There are 5 cases for irreducible curves in $S^{(2)}$:
\begin{itemize}
\item[(1)]
the curves $C^s:=\rho(C\times s)=\rho(s\times C)$ with $s\notin C\cup\iota(C)$;
\item[(2)]
the curves $\iota(C)^s:=\rho(\iota(C)\times s)=\rho(s\times \iota(C))$ with $s\notin C\cup\iota(C)$;
\item[(3)]
curves in $C^{(2)}\simeq \mathbb{P}^2$;
\item[(4)]
curves in $\iota(C)^{(2)}\simeq \mathbb{P}^2$;
\item[(5)]
curves in $\iota(C)\times C=C\times \iota(C)\simeq \mathbb{P}^1\times\mathbb{P}^1$.
\end{itemize}
\end{lemme}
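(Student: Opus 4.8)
The plan is to run the same argument as in the proof of Lemma~\ref{S2curves}, pulling curves back to $S\times S$ along diagram (\ref{S2}) and using that each projection $p_1,p_2$ either contracts a curve to a point or maps it onto a curve of $S$. First I would record the two facts about $S$ replacing ``$C$ is the unique curve'' from Section~\ref{onecurve}: since $\Pic S=\Z C\oplus E^a$ with $E^a\simeq E_8(-2)$ is negative definite, every irreducible curve on $S$ has negative self-intersection and is therefore a smooth rational $(-2)$-curve, and by the choice of $S$ in the Framework the only such curves are $C$ and $\iota(C)$; moreover $C\cdot\iota(C)=C\cdot i(C)=0$, since $i(C)$ lies in the $E_2$-summand which is orthogonal to $C\in E_1$, so $C$ and $\iota(C)$ are disjoint, and $\iota(C)\neq C$ because $C-i(C)\neq0$ in $E^a$.

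Then I would take an irreducible curve $\Gamma\subset S^{(2)}$, choose an irreducible component $\Gamma'$ of $\rho^{-1}(\Gamma)$ (it still surjects onto $\Gamma$), and distinguish cases according to the pair $(p_1(\Gamma'),p_2(\Gamma'))$. If both are points, $\Gamma'$ is a point, which is excluded. If exactly one of them is a point $s$ and the other is $C$ (resp.\ $\iota(C)$), then $\Gamma'=\{s\}\times C$ or $C\times\{s\}$ (resp.\ with $\iota(C)$), since $\Gamma'$ lies in a fibre of one projection on which the other projection restricts to an isomorphism. If both are curves, $\Gamma'$ is contained in one of $C\times C$, $C\times\iota(C)$, $\iota(C)\times C$, $\iota(C)\times\iota(C)$. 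Finally I would push forward by $\rho$, using $C\cap\iota(C)=\emptyset$: $\rho(C\times C)=C^{(2)}\simeq\Pj^2$ and $\rho(\iota(C)\times\iota(C))=\iota(C)^{(2)}\simeq\Pj^2$ give cases (3) and (4); $\rho(C\times\iota(C))=\rho(\iota(C)\times C)\simeq C\times\iota(C)\simeq\Pj^1\times\Pj^1$ gives case (5); and $\rho(\{s\}\times C)=\rho(C\times\{s\})=C^s$ gives case (1) when $s\notin C\cup\iota(C)$, lands inside $C^{(2)}$ when $s\in C$, and lands inside $\rho(C\times\iota(C))$ when $s\in\iota(C)$; the analogous analysis of $\{s\}\times\iota(C)$ produces case (2). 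This exhausts all cases.

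I do not expect a genuine obstacle here; the only point needing a little care is that a curve such as $\{s\}\times C$ is of type (1) only while the free point $s$ avoids the other rational curve $\iota(C)$, and is absorbed into the $\Pj^1\times\Pj^1$ of case (5) once $s$ lands on $\iota(C)$. The computation $C\cdot\iota(C)=0$ is precisely what makes $\rho(C\times\iota(C))$ a clean product of two projective lines and guarantees there are no further incidences to track; everything else is the same bookkeeping as in Lemma~\ref{S2curves}.
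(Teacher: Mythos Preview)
Your proposal is correct and follows exactly the approach the paper intends: its proof of this lemma is the single line ``Same proof as Lemma~\ref{S2curves}'', i.e.\ pull back to $S\times S$, analyze via the two projections $p_1,p_2$, and push forward by $\rho$. You have simply written out this argument in full, including the verification that $C\cdot\iota(C)=0$ (which the paper uses later but does not justify here) and the careful bookkeeping of the boundary cases $s\in C$ or $s\in\iota(C)$.
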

\begin{proof}
Same proof as Lemma \ref{S2curves}.
\end{proof}
It follows four cases in $S^{[2]}$.
\begin{lemme}\label{curveS22}
We have the following four cases for irreducible curves in $S^{[2]}$:
\begin{itemize}
\item[(0)]
Curves which are fibers of the exceptional divisor $\Delta\rightarrow \Delta_{S^{(2)}}$. As in Section \ref{genericM'}, we denote these curves by $\ell_{\delta}^s$,
where $s=\nu(\ell_{\delta}^s)$ and we denote their classes by $\ell_{\delta}$.
\item[(1)]
Curves which are strict transforms of $C^s$ with $s\notin C\cup\iota(C)$. We denote the class of these curves by $C$. Note that $C=\nu^*(\left[C^s\right])$ for $s\notin C$.
\item[(2)]
Curves which are strict transforms of $\iota(C^s)$ with $s\notin C\cup\iota(C)$. The class of these curves is $\iota^*(C)$.
\item[(3a)]
Curves contained in $H_2$. 
\item[(3b)]
Curves contained in $\overline{C^{(2)}}$. 
\item[(4a)]
Curves contained in $\iota(H_2)$. 
\item[(4b)]
Curves contained in $\iota(\overline{C^{(2)}})$. 
\item[(5)]
Curves in $\iota(C)\times C$.
\end{itemize}
\end{lemme}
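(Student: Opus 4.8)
The plan is to argue exactly as in the proof of Lemma~\ref{curveS2}, feeding in Lemma~\ref{S2curves2} as the statement on curves in $S^{(2)}$. Let $\gamma\subseteq S^{[2]}$ be an irreducible curve and consider its image $\nu(\gamma)\subseteq S^{(2)}$ under the blow-up $\nu$ of the diagonal $\Delta_{S^{(2)}}$. If $\nu(\gamma)$ is a point, then $\gamma$ lies in the exceptional divisor $\Delta$ and is a fibre of $\Delta\to\Delta_{S^{(2)}}$; this is case~(0). Otherwise $\nu(\gamma)$ is an irreducible curve, hence of one of the five types in Lemma~\ref{S2curves2}, and it remains to enumerate the curves of $S^{[2]}$ lying over each of them. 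The bookkeeping fact already used for Lemma~\ref{curveS2} is that $\nu$ restricts to an isomorphism over $S^{(2)}\smallsetminus\Delta_{S^{(2)}}$: a curve whose image is not contained in $\Delta_{S^{(2)}}$ is uniquely determined as the strict transform of its image, while curves whose image lies in $\Delta_{S^{(2)}}$ must be handled separately.

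For type~(1), the curve $C^s$ with $s\notin C\cup\iota(C)$ is disjoint from $\Delta_{S^{(2)}}$ (a point $\{x,s\}$ with $x\in C$ lies on the diagonal precisely when $s\in C$), so $\gamma=\nu^*[C^s]$ has class $C$, giving case~(1); applying $\iota^{[2]}$ gives case~(2), with class $\iota^*(C)$. For type~(5) the two curves $C$ and $\iota(C)$ are \emph{disjoint}: they are distinct irreducible rational curves and $C\cdot\iota(C)=0$, since $i(C)\in E_2$ is orthogonal to $C\in E_1$. Hence $\rho(C\times\iota(C))\subseteq S^{(2)}$ also avoids $\Delta_{S^{(2)}}$, $\nu$ is an isomorphism over it, and curves over it are exactly those of case~(5). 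This disjointness is the only place where the lattice choice $\Pic S=C\oplus E^a$ genuinely enters.

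It remains to treat types~(3) and~(4), the ones meeting $\Delta_{S^{(2)}}$. Inside $C^{(2)}\simeq\mathbb{P}^2$ one has $C^{(2)}\cap\Delta_{S^{(2)}}=\Delta_{S^{(2)}}^{C}$ (the diagonal conic). If $\nu(\gamma)=\Delta_{S^{(2)}}^{C}$, then $\gamma\subseteq\nu^{-1}(\Delta_{S^{(2)}}^{C})=H_2$: case~(3a); if $\nu(\gamma)\nsubseteq\Delta_{S^{(2)}}^{C}$, then $\gamma$ is the strict transform of $\nu(\gamma)$ and hence lies in $\overline{C^{(2)}}$: case~(3b). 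Applying $\iota^{[2]}$, which sends $C^{(2)}$, $H_2$ and $\overline{C^{(2)}}$ to $\iota(C)^{(2)}$, $\iota(H_2)$ and $\iota(\overline{C^{(2)}})$ respectively, yields cases~(4a) and~(4b). Finally one records that the surfaces $C^{(2)}$, $\iota(C)^{(2)}$ and $\rho(C\times\iota(C))$ are pairwise disjoint (again using $C\cap\iota(C)=\emptyset$), so no curve belongs to two cases simultaneously and the list is exhaustive. I do not expect a real obstacle here: beyond Lemma~\ref{S2curves2}, the only new input is $C\cap\iota(C)=\emptyset$, and everything else repeats the blow-up analysis of Lemma~\ref{curveS2} almost verbatim; the one point requiring a little care is checking that the enumeration of surfaces is both complete and non-redundant.
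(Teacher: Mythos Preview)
Your proof is correct and follows essentially the same approach as the paper. The paper's own proof is extremely terse: it simply says the argument is similar to that of Lemma~\ref{curveS2}, and adds only the observation that $\nu^{-1}(\iota(C)\times C)\simeq\iota(C)\times C$ because $C$ and $\iota(C)$ do not intersect. You have spelled out the details carefully, including the reason for disjointness ($C\in E_1$ and $i(C)\in E_2$ are orthogonal), and your use of the $\iota^{[2]}$-symmetry to obtain cases (2), (4a), (4b) from (1), (3a), (3b) is a tidy shortcut.

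One small remark: your final sentence about non-redundancy is slightly overstated. The three surfaces $C^{(2)}$, $\iota(C)^{(2)}$, $\rho(C\times\iota(C))$ are indeed pairwise disjoint, but the cases of the lemma are not globally disjoint: a fiber $\ell_\delta^s$ with $s$ a diagonal point of $C$ lies both in case~(0) and in case~(3a). This does not affect the validity of the lemma, which only asserts exhaustiveness, but you may want to soften the wording.
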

\begin{proof}
The proof is similar as the one of Lemma \ref{curveS2}. We only remark in addition that $\nu^{-1}(\iota(C)\times C)\simeq\iota(C)\times C$ because $C$ and $\iota(C)$ do not intersect; hence $\iota(C)\times C$ does not intersect $\Delta_{S^{(2)}}$. So by an abuse of notation, we still denote $\nu^{-1}(\iota(C)\times C)$ by $\iota(C)\times C$.
\end{proof}
\subsubsection*{Curves in $N_1$}
\begin{lemme}\label{curvesN12}
We have the following cases for irreducible curves in $N_1$:
\begin{itemize}
\item[(00)]
Curves contracted to a point by $r_1$. 
They are fibers of the exceptional divisor $\Sigma_1\rightarrow \Sigma$ and their class is $\ell_{\Sigma}$.
\item[(0)]
Curves send to $\ell_{\delta}^s$ by $r_1$. An extremal such a curve has class $r_1^*(\ell_{\delta})$ by Lemma \ref{Rdeltalemma}.
\item[(1)]
Curves send to $C^{s}$ by $r_1$ with $s\notin C\cup \iota(C)$.
They are curves of class $r_1^*(C)$.
\item[(2)]
Curves send to $\iota(C^{s})$ by $r_1$ with $s\notin C\cup \iota(C)$.
They are curves of class $r_1^*(\iota^*(C))$.
\item[(3a)]
Curves contained in $\overline{H_2}$ the strict transform of $H_2$ by $r_1$.
\item[(3b)]
Curves contained in $\overline{\overline{C^{(2)}}}$ the strict transform of $\overline{C^{(2)}}$ by $r_1$.
\item[(4a)]
Curves contained in $\iota(\overline{H_2})$ the strict transform of $\iota(H_2)$ by $r_1$.
\item[(4b)]
Curves contained in $\iota(\overline{\overline{C^{(2)}}})$ the strict transform of $\iota(\overline{C^{(2)}})$ by $r_1$.
\item[(5a)]
Curves contained in $\overline{\iota(C)\times C}$ the strict transform of $\iota(C)\times C$ by $r_1$.
\item[(5b)]
Curves contained in $r_1^{-1}(\overline{\iota(C)\times C}\cap\Sigma)$.
\end{itemize}
\end{lemme}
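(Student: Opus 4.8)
The plan is to argue exactly as in the proof of Lemma~\ref{curvesN1}, adapting the case analysis to the present Picard lattice. Let $\gamma\subset N_1$ be an irreducible curve. First I would distinguish whether $r_1(\gamma)$ is a point or a curve. If it is a point, then since $r_1$ is the blow-up of $S^{[2]}$ along $\Sigma$, the curve $\gamma$ lies in a fibre of $\Sigma_1\to\Sigma$ and has class $\ell_{\Sigma}$; this is case~(00). If $r_1(\gamma)$ is a curve, then by Lemma~\ref{curveS22} it is one of the types of irreducible curves in $S^{[2]}$ classified there: a fibre $\ell_{\delta}^s$, a strict transform of $C^s$ or of $\iota(C^s)$ with $s\notin C\cup\iota(C)$, or a curve contained in one of the surfaces $H_2$, $\overline{C^{(2)}}$, $\iota(H_2)$, $\iota(\overline{C^{(2)}})$, $\iota(C)\times C$. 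It then remains to see that each of these possibilities produces precisely one of the cases (00)--(5b).

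I would go through them in turn. If $r_1(\gamma)=\ell_{\delta}^s$, this is the situation of Section~\ref{genericM'}: for $s\in\Fix\iota$ we have $r_1^{-1}(\ell_{\delta}^s)\simeq\mathbb{P}^1\times\mathbb{P}^1$ by Lemma~\ref{Rdeltalemma}, so $\gamma$ has class $a\ell_{\Sigma}+b\,r_1^*(\ell_{\delta})$, and for $s\notin\Fix\iota$ the curve $\ell_{\delta}^s$ is disjoint from $\Sigma$ and $\gamma$ has class $r_1^*(\ell_{\delta})$; by Remark~\ref{mainldelta} the extremal such curves have class $r_1^*(\ell_{\delta})$ (case~(0)). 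If $r_1(\gamma)$ is the strict transform of $C^s$ (resp.\ $\iota(C^s)$) with $s\notin C\cup\iota(C)$, then a quick check from the description~\eqref{eq:sigma} of $\Sigma$ shows that $C^s$ (resp.\ $\iota(C^s)$) does not meet $\Sigma$, so $\gamma$ is its total transform and has class $r_1^*(C)$ (resp.\ $r_1^*(\iota^*(C))$): cases~(1) and~(2). Finally, if $r_1(\gamma)$ lies in one of the five remaining surfaces, I would use that $C\cdot\iota(C)=0$ (as $[C]\in E_1$, $[\iota(C)]\in E_2$, and $E_1\perp E_2$), hence $C$ and $\iota(C)$ are disjoint and $\Fix\iota\cap C=\emptyset$; this makes the four surfaces $H_2$, $\overline{C^{(2)}}$ and their $\iota$-translates disjoint from $\Sigma$, so $\gamma$ lies in their transforms $\overline{H_2}$, $\overline{\overline{C^{(2)}}}$, $\iota(\overline{H_2})$, $\iota(\overline{\overline{C^{(2)}}})$ (cases~(3a), (3b), (4a), (4b)). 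The surface $\iota(C)\times C$ does meet $\Sigma$, precisely along $\Delta_{S^{(2)}}^{\iota,C}$, so here $\gamma$ is either contained in the strict transform $\overline{\iota(C)\times C}$ (case~(5a)) or in the preimage $r_1^{-1}(\overline{\iota(C)\times C}\cap\Sigma)=r_1^{-1}(\Delta_{S^{(2)}}^{\iota,C})$ (case~(5b)). This exhausts all cases.

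The only step that genuinely differs from Lemma~\ref{curvesN1}, and hence the one requiring care, is the last: in the one-curve case $C$ was $\iota$-invariant with $\iota|_C$ having two fixed points, which is why $H_2$ and $\overline{C^{(2)}}$ then met $\Sigma$ and produced the extra sub-cases (2a.i), (2b.i); here $C$ and $\iota(C)$ are swapped and disjoint, so $\Fix\iota$ avoids $C$ and those intersections vanish, while the new surface $\iota(C)\times C$ appears and is the unique one among the five meeting $\Sigma$. The small computations feeding into this step are the disjointness $\Fix\iota\cap C=\emptyset$ and the identification $\Sigma\cap(\iota(C)\times C)=\Delta_{S^{(2)}}^{\iota,C}$, both read off from the description of $\Sigma$ in Remark~\ref{rem:Sigma}; the rest is bookkeeping parallel to the one-curve argument.
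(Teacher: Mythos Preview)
Your proposal is correct and follows essentially the same approach as the paper's proof, which simply states that the argument is as in Lemma~\ref{curvesN1} with the sole difference that $H_2$, $\iota(H_2)$, $\overline{C^{(2)}}$ and $\iota(\overline{C^{(2)}})$ do not intersect $\Sigma$, while $\iota(C)\times C$ does. You actually provide more detail than the paper by explaining \emph{why} these intersections behave this way (via $C\cdot\iota(C)=0$ and hence $\Fix\iota\cap C=\emptyset$), which the paper leaves implicit.
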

\begin{proof}
The proof is similar to the proof of Lemma \ref{curvesN1}; the difference is that $H_2$, $\iota(H_2)$, $\overline{C^{(2)}}$  and $\iota(\overline{C^{(2)}})$ do not intersect $\Sigma$. Only $\iota(C)\times C$ intersects $\Sigma$.
\end{proof}
Now, we are going to determine the classes of all these curves.
\begin{lemme}
\begin{itemize}
\item[(3a)]
The extremal curves of $\overline{H_2}$ have classes $2(r_1^*(C)-r_1^*(\ell_{\delta}))$ and $r_1^*(\ell_{\delta})$ in $N_1$.
\item[(3b)]
The extremal curves of $\overline{\overline{C^{(2)}}}$ have class $r_1^*(C)-r_1^*(\ell_{\delta})$ in $N_1$.
\item[(4a)]
The extremal curves of $\iota(\overline{H_2})$ have classes $2(\iota^*(r_1^*(C))-r_1^*(\ell_{\delta}))$ and $r_1^*(\ell_{\delta})$ in $N_1$.
\item[(4b)]
The extremal curves of $\iota(\overline{\overline{C^{(2)}}})$ have class $\iota^*(r_1^*(C))-r_1^*(\ell_{\delta})$ in $N_1$.
\item[(5a)]
The extremal curves of $\overline{\iota(C)\times C}$ have class $r_1^*(C)-\ell_{\Sigma}$ and $r_1^*(\iota^*(C))-\ell_{\Sigma}$ in $N_1$.
\end{itemize}
\end{lemme}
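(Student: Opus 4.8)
The plan is to split the statement into the easy cases (3a), (3b), (4a), (4b), which reduce to the computations of Section~\ref{onecurve} together with $\iota$-equivariance, and the one genuinely new case (5a).

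\emph{Cases (3a), (3b), (4a), (4b).} As observed just before the statement, in the present situation the surfaces $H_2$, $\iota(H_2)$, $\overline{C^{(2)}}$ and $\iota(\overline{C^{(2)}})$ are disjoint from $\Sigma$: indeed $C\cap\iota(C)=\emptyset$ (the two rational curves of $S$ are orthogonal by the choice of $\Pic S$), so a point of $\Fix\iota$ lying on $C$ would lie in $C\cap\iota(C)$, whence $C\cap\Fix\iota=\emptyset$ as well. Therefore $r_1$ restricts to isomorphisms $\overline{H_2}\xrightarrow{\ \sim\ }H_2$ and $\overline{\overline{C^{(2)}}}\xrightarrow{\ \sim\ }\overline{C^{(2)}}$, and the strict transform of a curve lying in one of these surfaces is disjoint from $\Sigma_1$, so its class in $N_1$ is the $r_1$-pullback of its class in $S^{[2]}$. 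Lemmas~\ref{Hirzebruch}, \ref{C0} and \ref{stricttransform} use only that $C\subset S$ is a smooth rational curve with $C^2=-2$, hence hold verbatim here; so $H_2\cong\mathbb{P}(\mathcal{O}_C(2)\oplus\mathcal{O}_C(-2))$ with $\Pic H_2=\Z C_0\oplus\Z f$, $C_0^2=-4$, $j^{H_2}_*(C_0)=2(C-\ell_\delta)$, $j^{H_2}_*(f)=\ell_\delta$, and $\overline{C^{(2)}}\cong\mathbb{P}^2$ with $j^C_*(d)=C-\ell_\delta$. The Mori cone of the Hirzebruch surface $H_2$ is spanned by $C_0$ and $f$, and that of $\mathbb{P}^2$ by $d$; pulling these classes back to $N_1$ gives (3a) and (3b). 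Finally, applying the automorphism $\iota^{[2]}_1$ of $N_1$ carries $\overline{H_2}$ to $\iota(\overline{H_2})$ and $\overline{\overline{C^{(2)}}}$ to $\iota(\overline{\overline{C^{(2)}}})$, sends $r_1^*(C)$ to $r_1^*(\iota^*(C))$, and fixes $r_1^*(\ell_\delta)$ (since $\iota^{[2]}$ fixes $\delta$), which turns the classes of (3a), (3b) into those of (4a), (4b).

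\emph{Case (5a).} Write $\Xi\subseteq S^{[2]}$ for the image of $\iota(C)\times C$ and $\overline\Xi=\overline{\iota(C)\times C}$ for its strict transform under $r_1$. Since $C\cap\iota(C)=\emptyset$, the surface $\iota(C)\times C\cong\mathbb{P}^1\times\mathbb{P}^1$ is disjoint from the diagonal of $S\times S$, so $\widetilde\nu$ is an isomorphism over it and, as $\sigma_2$ does not preserve $\iota(C)\times C$, the map $\widetilde\rho$ sends its preimage isomorphically onto $\Xi$; thus $\Xi\cong\mathbb{P}^1\times\mathbb{P}^1$ with rulings $\{x\}\times C$ ($x\in\iota(C)$) and $\iota(C)\times\{y\}$ ($y\in C$). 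The image of $\{x\}\times C$ in $S^{(2)}$ is $C^x=\rho(C\times x)$ with $x\notin C$, so this ruling has class $C$ in $S^{[2]}$; symmetrically the other has class $\iota^*(C)$. A point $\{u,v\}\in\Xi$ lies in $\Sigma$ iff $\{u,v\}=\{s,\iota(s)\}$, which forces $v=\iota(u)$; hence $\Sigma\cap\Xi=\{\,\{\iota(y),y\}\mid y\in C\,\}$ is the graph of $\iota|_C\colon C\to\iota(C)$, a smooth curve of bidegree $(1,1)$ on $\Xi$, in particular a Cartier divisor. The (scheme-theoretic) strict transform of $\Xi$ is the blow-up of $\Xi$ along $\Sigma\cap\Xi$; as the latter is already Cartier, $\overline\Xi\cong\Xi\cong\mathbb{P}^1\times\mathbb{P}^1$, with $\Sigma_1$ restricting to the reduced $(1,1)$-curve on $\overline\Xi$. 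Hence the Mori cone of $\overline\Xi$ is again spanned by its two rulings, each of which maps isomorphically onto a curve of class $C$ (resp. $\iota^*(C)$) in $S^{[2]}$, so its class in $N_1$ is $r_1^*(C)+n\ell_\Sigma$ (resp. $r_1^*(\iota^*(C))+n'\ell_\Sigma$). Intersecting with $\Sigma_1$ --- using $\Sigma_1\cdot\ell_\Sigma=-1$, $\Sigma_1\cdot r_1^*(C)=\Sigma_1\cdot r_1^*(\iota^*(C))=0$, and that a ruling meets the $(1,1)$-curve $\Sigma_1|_{\overline\Xi}$ in a single point --- forces $n=n'=-1$, which is (5a).

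\emph{Main obstacle.} The only part needing real work is case (5a): identifying $\Sigma\cap(\iota(C)\times C)$ as precisely the $(1,1)$-graph of $\iota$, checking that the strict transform $\overline\Xi$ is still $\mathbb{P}^1\times\mathbb{P}^1$ (so that no extra extremal curves appear), and the small local computation fixing the $\ell_\Sigma$-coefficients. Everything else is a direct transcription of the corresponding steps of Section~\ref{onecurve}, now combined with $\iota$-equivariance.
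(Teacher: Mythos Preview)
Your proof is correct and follows essentially the same approach as the paper: cases (3a)--(4b) reduce to Lemmas~\ref{C0} and~\ref{stricttransform} because the surfaces in question miss $\Sigma$, and case (5a) is handled by noting that a ruling of $\iota(C)\times C\simeq\mathbb{P}^1\times\mathbb{P}^1$ meets $\Sigma$ in exactly one point. Your treatment of (5a) is somewhat more detailed than the paper's---you explicitly identify $\Sigma\cap\Xi$ as the $(1,1)$-graph, justify $\overline\Xi\cong\Xi$, and compute the $\ell_\Sigma$-coefficient via intersection with $\Sigma_1$---whereas the paper simply asserts $\overline{\iota(C)\times C}\simeq\mathbb{P}^1\times\mathbb{P}^1$ and reads off the class of the strict transform of $C^s$ (for $s\in\iota(C)$) from its single intersection point with $\Sigma$; but the underlying argument is the same.
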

\begin{proof}
\begin{itemize}
\item
Since $H_2$ and $\iota(H_2)$ do not intersect $\Sigma$, (3a) and (4a) are consequences of Lemma \ref{C0}.
\item
Similarly, since $\overline{C^{(2)}} $ and $\iota(\overline{C^{(2)}})$ do not intersect $\Sigma$, (3b) and (4b) are consequences of Lemma \ref{stricttransform}.
\item
We have $\overline{\iota(C)\times C}\simeq \iota(C)\times C\simeq \mathbb{P}^1\times\mathbb{P}^1$. 
Let $j^{\iota}:\overline{\iota(C)\times C}\hookrightarrow N_1$ be the embedding in $N_1$. We want to compute the classes
$j^{\iota}_*(\left\{x\right\}\times \mathbb{P}^1)$ and $j^{\iota}_*(\mathbb{P}^1\times \left\{x\right\})$, where $\left\{x\right\}$ is just the class of a point in $\mathbb{P}^1$.
This corresponds to compute the strict transform by $r_1$ of $C^s$ with $s\in\iota(C)$ and the strict transform of $\iota(C^s)$ with $s\in C$. Since $C^s$ and $\iota(C^s)$ intersect $\Sigma$ in one point, we obtain our result.
\end{itemize}
\end{proof}
\begin{lemme}\label{H2'}
The surface $r_1^{-1}(\overline{\iota(C)\times C}\cap\Sigma)$ is a Hirzebruch surface that we denote by $H_2'$. 
Let $f$ be a fiber of $H_2'$. There exists a section $C_0'$ which
satisfies:  $\Pic H_2=\Z C_0'\oplus\Z f$, $C_0'^2=-4$, $f^2=0$ and $C_0'\cdot f=1$.
Moreover, $j^{H_2'}_*(C_0')=r_1^*(C)+r_1^*(\iota^*(C))-2\ell_{\Sigma}$, where $j^{H_2'}:H_2'\hookrightarrow N_1$ is the embedding.
\end{lemme}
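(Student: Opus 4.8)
The plan is to follow the template of Lemma~\ref{Hirzebruch} (and of the Hirzebruch-surface lemma for $H_1$ above), the only genuinely new feature being that the curve $\zeta':=\overline{\iota(C)\times C}\cap\Sigma\subseteq S^{[2]}$ now lies away from the diagonal. First I would describe $\zeta'$: a point of $\overline{\iota(C)\times C}$ is a length-two subscheme $\{a,b\}$ with $a\in\iota(C)$ and $b\in C$, and it lies on $\Sigma$ precisely when $\{a,b\}=\{s,\iota(s)\}$ for some $s$, which forces $a=\iota(b)$ (here we use $C\cap\iota(C)=\emptyset$). Hence $\zeta'=\{\{\iota(b),b\}\mid b\in C\}\simeq\mathbb{P}^1$ is the graph of $\iota|_C\colon C\to\iota(C)$. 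Moreover $\overline{\iota(C)\times C}\simeq\iota(C)\times C\simeq\mathbb{P}^1\times\mathbb{P}^1$ (it does not meet the diagonal), and $\zeta'$ meets each of its two rulings transversally in one point, so $[\zeta']$ is the sum of the two ruling classes. Since the strict transforms of these two rulings in $N_1$ have classes $r_1^*(C)-\ell_\Sigma$ and $r_1^*(\iota^*(C))-\ell_\Sigma$ (established in the previous lemma), pushing this relation forward to $S^{[2]}$ yields
$$[\zeta']_{S^{[2]}}=C+\iota^*(C).$$

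Next I would compute $\mathcal N_{\Sigma/S^{[2]}}|_{\zeta'}$. Because $\zeta'$ avoids the diagonal, one component of $\widetilde\rho^{-1}(\zeta')$ is $\widetilde\zeta':=\{(\iota(b),b)\mid b\in C\}\subseteq S_\iota$, which lies in the locus where $\widetilde\nu$ is an isomorphism and $\widetilde\rho$ is étale. Running the chain of isomorphisms of Lemma~\ref{Hirzebruch} — now with every map a local isomorphism near $\widetilde\zeta'$ — gives $\mathcal N_{\Sigma/S^{[2]}}|_{\zeta'}\simeq\mathcal N_{S_\iota/S\times S}|_{\widetilde\zeta'}$; writing $S_\iota$ as the graph of the automorphism $\iota$ and identifying it with $S$ via the first projection sends $\widetilde\zeta'$ isomorphically onto $\iota(C)$, so $\mathcal N_{\Sigma/S^{[2]}}|_{\zeta'}\simeq T_S|_{\iota(C)}$. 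As $\iota(C)$ is a smooth rational curve of self-intersection $-2$, the Euler-sequence computation of Lemma~\ref{Hirzebruch} applies verbatim and gives $T_S|_{\iota(C)}\simeq\mathcal O_{\mathbb{P}^1}(2)\oplus\mathcal O_{\mathbb{P}^1}(-2)$. Hence $H_2'=r_1^{-1}(\zeta')\simeq\mathbb{P}(\mathcal O(2)\oplus\mathcal O(-2))$, and \cite[Chapter V, Propositions 2.3 and 2.9]{Hartshorne} produces a section $C_0'$ with $\Pic H_2'=\Z C_0'\oplus\Z f$, $C_0'^2=-4$, $f^2=0$ and $C_0'\cdot f=1$.

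Finally I would identify $j^{H_2'}_*(C_0')$. Since $r_1$ restricts to an isomorphism $C_0'\to\zeta'$, pushing forward gives $r_{1*}(j^{H_2'}_*(C_0'))=[\zeta']_{S^{[2]}}=C+\iota^*(C)$, hence $j^{H_2'}_*(C_0')=r_1^*(C)+r_1^*(\iota^*(C))+a\,\ell_\Sigma$ for some $a\in\Z$. To pin down $a$ I would intersect with the exceptional divisor $\Sigma_1$. On the one hand, $C$ and $\iota^*(C)$ are represented by curves disjoint from $\Sigma$, so $r_1^*(C)\cdot\Sigma_1=r_1^*(\iota^*(C))\cdot\Sigma_1=0$, and thus $j^{H_2'}_*(C_0')\cdot\Sigma_1=a\,(\ell_\Sigma\cdot\Sigma_1)=-a$. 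On the other hand, by the projection formula, $\mathcal O_{N_1}(\Sigma_1)|_{H_2'}=\mathcal O_{\mathbb{P}(\mathcal E)}(-1)$ with $\mathcal E=\mathcal O(2)\oplus\mathcal O(-2)$, and using $\mathcal O_{\mathbb{P}(\mathcal E)}(1)=C_0'+2f$ exactly as in Lemma~\ref{C0}, $j^{H_2'}_*(C_0')\cdot\Sigma_1=C_0'\cdot\mathcal O_{\mathbb{P}(\mathcal E)}(-1)=-C_0'\cdot(C_0'+2f)=-(-4+2)=2$. Therefore $a=-2$, that is $j^{H_2'}_*(C_0')=r_1^*(C)+r_1^*(\iota^*(C))-2\,\ell_\Sigma$.

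The step I expect to be the most delicate is the normal-bundle identification in the second paragraph: one has to check carefully that $\widetilde\zeta'$ meets neither the exceptional divisor of $\widetilde\nu$ nor the ramification of $\widetilde\rho$ (both supported over the diagonal) — which is exactly where $C\cap\iota(C)=\emptyset$ is used — and that the successive restrictions of the normal bundle land over $\iota(C)$ (equivalently over $C$, which yields the same bundle). Everything else is routine bookkeeping modelled on Lemmas~\ref{Hirzebruch}, \ref{stricttransform} and \ref{C0}.
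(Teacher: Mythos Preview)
Your proposal is correct and follows essentially the same approach as the paper's proof. The only cosmetic differences are that you pick the other component of $\widetilde\rho^{-1}(\zeta')$ (landing on $T_S|_{\iota(C)}$ rather than $T_S|_C$, which is harmless) and you compute $[\zeta']_{S^{[2]}}$ somewhat indirectly via $N_1$, whereas the paper simply notes that $\Delta_{S^{(2)}}^{\iota,C}$ misses the diagonal so its class is $C+\iota^*(C)$.
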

\begin{proof}
We denote by $\zeta$ the curve $\overline{\iota(C)\times C}\cap\Sigma$. This curve is the strict transform of $\Delta_{S^{(2)}}^{\iota,C}$ by $\nu$. 
Since $\Delta_{S^{(2)}}^{\iota,C}$ does not intersect $\Delta$, its class in $S^{[2]}$ is:
\begin{equation}
\left[\zeta\right]=C+\iota^*(C).
\label{gammadelta2}
\end{equation}
To understand which Hirzebruch surface $H_2'$ is, we are going to compute $\mathcal{N}_{\Sigma/S^{[2]}}|_{\zeta}$.
Let $\Delta_{\widetilde{S^2}}^{\iota,C}$ be the strict transform of $\Delta_{S^{2}}^{\iota,C}$ by $\widetilde{\nu}$.
We have $\zeta\simeq \Delta_{\widetilde{S^2}}^{\iota,C}\simeq \Delta_{S^{2}}^{\iota,C}$.

Hence: $$\mathcal{N}_{\Sigma/S^{[2]}}|_{\zeta}=\rho^*(\mathcal{N}_{\Sigma/S^{[2]}})|_{\Delta_{\widetilde{S^2}}^{\iota,C}}=\widetilde{\nu}^*(\mathcal{N}_{S_{\iota}/S^2}|_{\Delta_{S^{2}}^{\iota,C}}).$$
As in the proof of Lemma \ref{Hirzebruch}, we have:
$$\mathcal{N}_{S_{\iota}/S^2}|_{\Delta_{S^{2}}^{\iota,C}}\simeq T_S|_C\simeq \mathcal{O}_C(-2)\oplus\mathcal{O}_C(2).$$
Hence $$\mathcal{N}_{\Sigma/S^{[2]}}|_{\zeta}=\mathcal{O}_{\zeta}(-2)\oplus\mathcal{O}_{\zeta}(2).$$

By \cite[Chapter V, Proposition 2.3 and Proposition 2.9]{Hartshorne},
we know that there exists a section $C_0'$ of $H_2'$ such that $\Pic H_1=\Z C_0'\oplus\Z f$, with $C_0'^2=-4$, $f^2=0$ and $C_0'\cdot f=1$; 
 $f$ being the class of a fiber.

By (\ref{gammadelta2}) and using the projection formula, we know that:
$$j^{H_2'}_*(C_0')=r_1^*(C)+r_1^*(\iota^*(C))+a\ell_{\Sigma}.$$
To compute $a$, we only need to compute $C_0'\cdot j^{H_2'*}(\Sigma_1)$.
We apply the same method used in the proof of Lemma \ref{C0}.
By \cite[Propositions 2.6 and 2.8]{Hartshorne}, we have:
$$C_0'\cdot \widetilde{j^{H_1}}^*(\Sigma_1)=C_0'\cdot \mathcal{O}_{\mathbb{P}(\mathcal{O}_{\zeta}(-2)\oplus\mathcal{O}_{\zeta}(2))}(-1)=-C_0'\cdot(C_0'+2f)=4-2=2.$$
This proves that $a=-2$.
 
\end{proof}
\subsubsection*{Conclusion on wall divisors}
\begin{lemme}\label{extrem2}
The extremal curves of $M'$ have classes $\pi_{1*}r_1^*\ell_{\delta}$, $\pi_{1*}\ell_{\Sigma}$, $\pi_{1*}(r_1^*(C)-r_1^*(\ell_{\delta}))$ and $\pi_{1*}(r_1^*(C)-\ell_{\Sigma})$. 
\end{lemme}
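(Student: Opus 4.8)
The plan is to assemble the complete list of irreducible curve classes on $N_1$ from Lemma \ref{curvesN12} and the class computations following it, to single out which of these span extremal rays of the Mori cone of $N_1$, and then to push the answer forward to $M'$ along $\pi_{1*}$. The overall shape mirrors the one-curve case of Section \ref{onecurve}; the difference is that here $C$ and $\iota(C)$ are swapped by the involution, so the surfaces $\overline{C^{(2)}}$, $\overline{H_2}$ and their $\iota$-translates avoid $\Sigma$, whereas the new surface $\iota(C)\times C$ does meet $\Sigma$ along a rational curve.

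Collecting the classes: cases (00), (0), (1), (2) of Lemma \ref{curvesN12} contribute $\ell_\Sigma$, $r_1^*\ell_\delta$, $r_1^*(C)$ and $r_1^*(\iota^*(C))$. Since $H_2$, $\iota(H_2)$, $\overline{C^{(2)}}$ and $\iota(\overline{C^{(2)}})$ do not meet $\Sigma$, cases (3a)--(4b) reduce to Lemmas \ref{C0} and \ref{stricttransform} and yield $r_1^*\ell_\delta$, $2(r_1^*(C)-r_1^*\ell_\delta)$, $r_1^*(C)-r_1^*\ell_\delta$ and their $\iota^*$-images. Case (5a) yields $r_1^*(C)-\ell_\Sigma$ and $r_1^*(\iota^*(C))-\ell_\Sigma$, and case (5b), via Lemma \ref{H2'}, yields the fibre class $\ell_\Sigma$ of $H_2'$ and the negative section $C_0'=r_1^*(C)+r_1^*(\iota^*(C))-2\ell_\Sigma$.

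Next I would discard the non-extremal classes: $r_1^*(C)=(r_1^*(C)-\ell_\Sigma)+\ell_\Sigma$, likewise for $\iota^*(C)$, and $C_0'=(r_1^*(C)-\ell_\Sigma)+(r_1^*(\iota^*(C))-\ell_\Sigma)$ are all sums of effective classes, while $2(r_1^*(C)-r_1^*\ell_\delta)$ is proportional to $r_1^*(C)-r_1^*\ell_\delta$. What remains are the six classes $\ell_\Sigma$, $r_1^*\ell_\delta$, $r_1^*(C)-r_1^*\ell_\delta$, $r_1^*(\iota^*(C))-r_1^*\ell_\delta$, $r_1^*(C)-\ell_\Sigma$, $r_1^*(\iota^*(C))-\ell_\Sigma$; each is the class of a minimal curve on one of the explicit Hirzebruch surfaces or $\mathbb{P}^2$'s of Lemma \ref{curvesN12}, and they span pairwise distinct rays, which I would verify by pairing the candidate differences against $\Sigma_1$, $\delta_1$ and $r_1^*\nu^*H^2(S^{(2)},\Z)$ in the decomposition $H^2(N_1,\Z)=r_1^*\nu^*H^2(S^{(2)},\Z)\oplus\Z\delta_1\oplus\Z\Sigma_1$. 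Hence these six classes are precisely the extremal curve classes of $N_1$.

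Finally, since $\pi_1$ is finite and surjective, every effective curve class on $M'$ is of the form $\pi_{1*}(\beta)$ with $\beta$ an effective class on $N_1$, so the extremal rays of the Mori cone of $M'$ occur among the $\pi_{1*}$-images of the six classes above. Because $\pi_1$ is $\iota^{[2]}_1$-invariant we have $\pi_{1*}=\pi_{1*}\circ\iota^{[2]*}_1$; and $\iota^{[2]}_1$ interchanges $r_1^*(C)$ with $r_1^*(\iota^*(C))$ — since $\iota^{[2]*}$ acts on $j(H^2(S,\Z))$ as $j\circ\iota^*$ — while fixing $r_1^*\ell_\delta$ and $\ell_\Sigma$ (it fixes $\Sigma_1$ pointwise, being the projectivisation of the $(-\mathrm{id})$-action on the normal bundle of $\Sigma$). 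Therefore $\pi_{1*}(r_1^*(\iota^*(C))-r_1^*\ell_\delta)=\pi_{1*}(r_1^*(C)-r_1^*\ell_\delta)$ and $\pi_{1*}(r_1^*(\iota^*(C))-\ell_\Sigma)=\pi_{1*}(r_1^*(C)-\ell_\Sigma)$, and the extremal curve classes of $M'$ are exactly $\pi_{1*}r_1^*\ell_\delta$, $\pi_{1*}\ell_\Sigma$, $\pi_{1*}(r_1^*(C)-r_1^*\ell_\delta)$ and $\pi_{1*}(r_1^*(C)-\ell_\Sigma)$. The main obstacle is the extremality bookkeeping of the third step — confirming that the six $N_1$-classes really are extremal and pairwise non-proportional, and that pushing forward neither creates new extremal rays nor merges two of the four survivors.
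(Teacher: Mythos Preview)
Your proposal is correct and follows the same approach as the paper: determine the extremal curve classes on $N_1$ from Lemma \ref{curvesN12} and the subsequent class computations, then push forward by $\pi_{1*}$. The paper's own proof is the single sentence ``It is obtained by taking the image by $\pi_{1*}$ of the classes of the extremal curves in $N_1$''; you have simply made explicit the elimination of the non-extremal classes and the identification of $\iota^{[2]}_1$-conjugate classes under $\pi_{1*}$, which the paper leaves implicit.
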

\begin{proof}
It is obtain by taking the image by $\pi_{1*}$ of the classes of the extremal curves in $N_1$.
\end{proof}
We can compute their dual divisors to obtain wall divisors with Proposition \ref{extremalray}.
\begin{prop}\label{prop:twocurves}
The divisors $\delta'$, $\Sigma'$, $D_C'$, $2D_C'-\delta'$ and $2D_C'-\Sigma'$ are wall divisors in $M'$.
Moreover, they verify the following numerical properties:
\begin{itemize}
\item
$q_{M'}(\delta')=q_{M'}(\Sigma')=-4$
and $\div(\delta')=\div(\Sigma')=2$;
\item
$q_{M'}(D_C')=-2$ 
and $\div(D_C')=1$;
\item
$q_{M'}\left(2D_C'-\delta'\right)=q_{M'}\left(2D_C'-\Sigma'\right)=-12$ and $\div\left[2D_C'-\delta'\right]=\div\left[2D_C'-\Sigma'\right]=2$.
\end{itemize}
\end{prop}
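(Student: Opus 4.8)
The plan is to follow the proof of Proposition~\ref{walldiv1} almost verbatim: one computes the dual classes (in the sense of Remark~\ref{dualclass}) of the four extremal curve classes of $M'$ listed in Lemma~\ref{extrem2}, applies Proposition~\ref{extremalray}, and treats $D_C'$ by a separate argument. By Lemma~\ref{dualdeltasigma} the duals of $\pi_{1*}r_1^*\ell_\delta$ and $\pi_{1*}\ell_\Sigma$ are $\tfrac{1}{2}\delta'$ and $\tfrac{1}{2}\Sigma'$, so $\delta'$ and $\Sigma'$ are wall divisors; their numerical properties are Theorem~\ref{BBform}(iii) together with Remark~\ref{div}.

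First I would compute the dual of $\pi_{1*}r_1^*(C)$. The feature that distinguishes this case from Section~\ref{onecurve} is that neither the curve class $C$ nor the divisor $D_C$ on $S^{[2]}$ is $\iota^{[2]}$-invariant: $\iota^{[2]}$ sends them to $\iota^*(C)$ and $D_{\iota(C)}$. Hence \eqref{Smith2} does not apply directly, and one uses instead $\pi_1^*\circ\pi_{1*}=\id+\iota_1^{[2]*}$ together with the orthogonality $C\cdot\iota^*(C)=0$, which holds because $C\in E_1$ and $\iota^*(C)\in E_2$ with $E_1\perp E_2$ in $\Lambda_{K3}$. Writing $D_C'=r^*\pi_*(D_C)$ (Remark~\ref{Smithcomute}) and $\alpha:=D_C+\iota^{[2]*}D_C=j(C+\iota^*(C))$, one obtains $q_{M'}(D_C')=\tfrac{1}{4}q_M(\pi_*(\alpha))=\tfrac{1}{2}q_{S^{[2]}}(\alpha)=\tfrac{1}{2}(C+\iota^*(C))^2=-2$ by Theorem~\ref{BBform}(ii), and likewise $D_C'\cdot\pi_{1*}r_1^*(C)=D_C\cdot C+D_C\cdot\iota^*(C)=-2$; so $D_C'$ is the dual of $\pi_{1*}r_1^*(C)$. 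For the divisibility I would use Theorem~\ref{BBform}(v): under $r^*\pi_*\circ j$ the summand $E_1$ maps isometrically onto the (unimodular) $E_8(-1)$ direct summand of $H^2(M',\Z)$ --- indeed $q_{M'}(r^*\pi_*j(e))=\tfrac{1}{2}(e+\iota^*(e))^2=e^2$ for $e\in E_1$ --- and $C$ is a primitive root there, so $\div(D_C')=1$. Consequently the duals of the extremal rays $\pi_{1*}(r_1^*(C)-r_1^*\ell_\delta)$ and $\pi_{1*}(r_1^*(C)-\ell_\Sigma)$ are $D_C'-\tfrac{1}{2}\delta'$ and $D_C'-\tfrac{1}{2}\Sigma'$, so $2D_C'-\delta'$ and $2D_C'-\Sigma'$ are wall divisors by Proposition~\ref{extremalray}; since $(D_C',\delta')_q=(D_C',\Sigma')_q=0$ by the orthogonality in Theorem~\ref{BBform}(v), their squares are $-12$, and their divisibility $2$ is read off the decomposition of $H^2(M',\Z)$.

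Finally, $D_C'$ itself: as in Proposition~\ref{walldiv1} this is the one divisor not dual to an extremal ray, since $\pi_{1*}r_1^*(C)$ is the sum of two of the extremal classes. Because $\Pic S=\Z C\oplus E^a$ with $E^a$ in the kernel of $\pi_*\circ j$, we have $\Pic M'=\Z D_C'\oplus^\bot\Z\tfrac{\delta'+\Sigma'}{2}\oplus^\bot\Z\tfrac{\delta'-\Sigma'}{2}\cong(-2)^3$. The divisor $D_C'$ is uniruled, so $(D_C',\alpha)_{q_{M'}}>0$ for all $\alpha\in\BK_{M'}$ by Proposition~\ref{caca} and openness of $\BK_{M'}$. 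For any $g\in\MonHdg(M')$ the class $g(D_C')$ is a primitive vector of square $-2$ and divisibility $1$ in $\Pic M'$, and the only such vectors in $(-2)^3$ are $\pm D_C'$ (the vectors $\pm\tfrac{\delta'+\Sigma'}{2},\pm\tfrac{\delta'-\Sigma'}{2}$ have divisibility $2$ in $H^2(M',\Z)$); hence $g((D_C')^\bot)\cap\BK_{M'}=(D_C')^\bot\cap\BK_{M'}=\emptyset$ and $D_C'$ is a wall divisor. I expect the main obstacle to be the bookkeeping in this dual-class computation for $D_C'$: it is exactly the factor $\tfrac{1}{2}$ coming from $\pi_1^*\pi_{1*}=\id+\iota_1^{[2]*}$, together with the non-$\iota^{[2]}$-invariance of $C$ and $D_C$, that yields the square $-2$ and divisibility $1$ here rather than the $-4$ of Section~\ref{onecurve}.
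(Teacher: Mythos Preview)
Your proof is correct and follows essentially the same approach as the paper's: compute the dual of $\pi_{1*}r_1^*(C)$ by symmetrizing under $\iota^{[2]}$ and using $\pi_1^*\pi_{1*}=\id+\iota_1^{[2]*}$ (the paper phrases this as applying \eqref{Smith2} to $D_C+\iota^{[2]*}D_C$ and $C+\iota^*(C)$, which is the same computation), deduce that $2D_C'-\delta'$ and $2D_C'-\Sigma'$ are wall divisors via Proposition~\ref{extremalray}, and handle $D_C'$ separately by the uniruled/monodromy argument exactly as in Proposition~\ref{walldiv1}. Your justification of $\div(D_C')=1$ via the isometric identification of $r^*\pi_*j(E_1)$ with the unimodular $E_8(-1)$-summand is a bit more explicit than the paper's one-line ``since $C\in E_1$''.
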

\begin{proof}
By Lemma \ref{dualdeltasigma}, $\frac{1}{2}\delta'$ and $\frac{1}{2}\Sigma'\in H^2(M',\bQ)$ are the duals of
$\pi_{1*}(r_1^*(\ell_{\delta}))$ and $\pi_{1*}(\ell_{\Sigma})$ respectively. Moreover by Lemma \ref{intersection}, 
we know that $D_C\cdot C=-2$,
hence by (\ref{Smith2}): $$\pi_{1*}(r_1^*(D_C+\iota^*(D_C)))\cdot \pi_{1*}(r_1^*(C+\iota^*(C)))=-8.$$ 
So $$D_C'\cdot \pi_{1*}(r_1^*(C))=-2.$$
Moreover, since $D_C=j(C)$ where $j$ is the isometric embedding $H^2(S,\Z)\hookrightarrow H^2(S^{[2]},\Z)$, we have: $q_{S^{[2]}}(D_C)=-2$. So by Theorem \ref{BBform} (ii): 
$$q_{M'}(\pi_{1*}(r_1^*(D_C+\iota^*(D_C))))=-8.$$
Hence:
\begin{equation}
q_{M'}(D_C')=-2.
\label{DC'2}
\end{equation}
We obtain that $D_C'$ is the dual of $\pi_{1*}(r_1^*(C))$. Then $D_C'-\frac{1}{2}\delta'$ is the dual of $\pi_{1*}(r_1^*(C)-r_1^*(\ell_{\delta}))$ and $D_C'-\frac{1}{2}\Sigma'$ is the dual of $\pi_{1*}(r_1^*(C)-\ell_{\Sigma})$.

By Proposition \ref{extremalray} this proves that $\delta'$, $\Sigma'$, $2D_C'-\delta'$ and $2D_C'-\Sigma'$ are wall divisors in $M'$.
Their claimed numerical properties are given by Theorem \ref{BBform}(iii), Remark \ref{div} and (\ref{DC'2}).

It remains to prove that $D_C'$ is a wall divisor. The proof is very similar to the one of Proposition \ref{walldiv1}. For the same reason, we have $(D_C',\alpha)_{q_{M'}}>0$ for all $\alpha\in\mathcal{B}\mathcal{K}_{M'}$. 
Now, we assume that there exists $g\in \Mon_{\Hdg}^2(M')$ and $\alpha\in\mathcal{B}\mathcal{K}_{M'}$ such that $(g(D_C'),\alpha)_{q_{M'}}=0$ and we will find a contradiction. Since $C\in E_1$, we have $\div(D_C')=1$. Since $g\in \Mon_{\Hdg}^2(M')$ and $\Pic(M')=\Z D_C'\oplus \Z \frac{\delta'+\Sigma'}{2}\oplus\Z\frac{\delta'-\Sigma'}{2}$, it follows that there are only 2 possibilities: 
$$g(D_C')=\pm D_C',$$
because $\div(D_C')=1$ and $\div(\frac{\delta'+\Sigma'}{2})=2$.
Since $(D_C',\alpha)_{q_{M'}}\neq0$, this leads to a contradiction. 
\end{proof}
\begin{rmk}\label{Remark:twocurves}
Note that in this case, $D_C'-\frac{\delta'+\Sigma'}{2}$ is not a wall divisor. Indeed, by Lemma
\ref{extrem2}, the class $-2D_C'-\delta'-\Sigma'$ is the projection on $\Pic M'$ of a Kähler class. However,
observe that $\left(D_C'-\frac{\delta'+\Sigma'}{2},-2D_C'-\delta'-\Sigma'\right)_q=0$. 
\end{rmk}
\subsection{Wall divisors on a Nikulin orbifold constructed from a specific elliptic K3 surface}\label{sec:elliptic}
As before, we consider the K3 lattice $\Lambda_{K3}:=U^3\oplus^\bot E_8(-1)\oplus^\bot E_8(-1)$ with  
the three embedded lattices $\mathcal{U}\simeq U^3$, $E_1\simeq E_8(-1)$ and $E_2\simeq E_8(-1)$ such that $\Lambda_{K3}\simeq \mathcal{U}\oplus^\bot E_1\oplus^\bot E_2$.
The involution $i$ on $\Lambda_{K3}$ is still the involution which exchanges $E_1$ and $E_2$ and fixes the lattice $\mathcal{U}$.
As before, we keep $E^a:=\left\{\left.e-i(e)\right|\ e\in E_1\right\}\simeq E_8(-2)$. For simplicity, we denote $E_8(-2):=\left\{\left.e+i(e)\right|\ e\in E_1\right\}$ which is the invariant lattice. 
Let $L_1\in \mathcal{U}$ such that $L_1^2=2$ and $e_2^{(0)}\in E_1$ an element with $(e_2^{(0)})^2=-4$.
Using the surjectivity of the period map (see for instance Theorem \ref{mainGTTO}), we choose a K3 surface $S$ such that:
$$\Pic S=\Z(L_1+e_2^{(0)}) \oplus E^a.$$
Note that the direct sum is not orthogonal. 
We denote:
$$v_{K3}:=2L_1+e_2,$$
with $e_2:=e_2^{(0)}+i(e_2^{(0)})$. We have $v_{K3}^2=0$ and $\Pic S\supset \Z v_{K3} \oplus^{\bot} E^a$.

As before, it follows from Example \ref{examplewall} and Corollary \ref{cor:desrK} that there exists $\alpha\in \mathcal{K}_S$ invariant under the action of $i$. Hence by Theorem \ref{mainHTTO} (ii), the involution $i$ extends to an involution $\iota$ on $S$ such that $\iota^*=i$. We consider $M'$ constructed from the couple $(S,\iota)$.

In contrary to the two previous sections, we will not need to find all the extremal curves in this case. 
The wall divisors will be deduced from the investigation of this section and the numerical properties obtained in Section \ref{endsection}.

The K3 surface $S$ contains a (-2)-curve of class $L_1+e_2^{(0)}$. We denote this curve $\gamma$. The class of $\iota(\gamma)$ is $L_1+i(e_2^{(0)})$. Hence $\gamma\cup\iota(\gamma)$ has class $v_{K3}$ and provides a fiber of the elliptic fibration $f:S\rightarrow \Pj^1$. Moreover, we have:
$$\left[\gamma\right]\cdot \left[\iota(\gamma)\right]=(L_1+e_2^{(0)})\cdot(L_1+i(e_2^{(0)}))=2.$$
We denote by $\overline{\gamma}$ the class $\nu^*(\left[\gamma^s\right])$, with
$\gamma^s:=\gamma\times\left\{s\right\}$. 
We also denote $D_{\gamma}:=j(\gamma)$ and $D_{\gamma}':=\pi_1*(r_1^*(D_{\gamma}))$.

We consider the following divisor in $S^{(2)}$:
$$A:=\left\{\left.\left\{x,y\right\}\in S^{(2)}\right|\ f(x)=f(y)\right\},$$
with $f$ the elliptic fibration $S\rightarrow \Pj$. 
We denote by $A'$ the image by $\pi_1$ of the strict transform of $A$ by $r_1\circ\nu$.
\begin{lemme}\label{ellipticlemma}
\begin{itemize}
We have:
\item[(i)]
the class of the strict transform of $\gamma^s$ by $r_1\circ \nu$ is $r_1{*}(\overline{\gamma}-\ell_{\delta})-\ell_{\Sigma}$, for $s\in \gamma\cap \iota(\gamma)$.
\item[(ii)]
The dual of $\pi_1*(r_1^*(\overline{\gamma}))$ is $D_{\gamma}'$.
\item[(iii)]
The divisor $D_{\gamma}'$ has square 0 and divisibility 1.
\item[(iv)]
The divisor $A'$ has class $D_{\gamma}'-\frac{\delta'+\Sigma'}{2}$.
\end{itemize}
\end{lemme}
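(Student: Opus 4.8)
I plan to follow the pattern of Sections~\ref{onecurve} and~\ref{sec:twocurves}: exhibit explicit curves and divisors on $S^{(2)}$, $S^{[2]}$ and $N_1$, compute their strict transforms, and push them forward along $\pi_1$; the only genuinely new ingredient is one geometric fact about the fibration $f$. For~(i), I would note that when $s\in\gamma\cap\iota(\gamma)$ the curve $\gamma^s$ meets $\Delta_{S^{(2)}}$ transversally in the single point $\{s,s\}$ (as $s\in\gamma$) and its strict transform under $\nu$ meets $\Sigma$ transversally in the single point $\{\iota(s),s\}$ (as $s\in\iota(\gamma)$, so $\iota(s)\in\gamma$), by the same local computations as in the proofs of Lemmas~\ref{stricttransform} and~\ref{deltaiota} and the curve computations of Section~\ref{onecurve}. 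Hence the strict transform of $\gamma^s$ by $\nu$ has class $\overline{\gamma}-\ell_{\delta}$, and its strict transform by $r_1$ has class $r_1^*(\overline{\gamma}-\ell_{\delta})-\ell_{\Sigma}$.

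For~(ii) and~(iii), the first step is the identity $D_{\gamma}'=\tfrac12\,r^*\pi_*j(v_{K3})$: since $\iota_1^{[2]*}r_1^*=r_1^*\iota^{[2]*}$ and $\iota^{[2]*}j(\gamma)=j(\iota(\gamma))$, one has $\pi_{1*}r_1^*j(\gamma)=\pi_{1*}r_1^*j(\iota(\gamma))$, so $2D_{\gamma}'=\pi_{1*}r_1^*j(\gamma+\iota(\gamma))=\pi_{1*}r_1^*j(v_{K3})=r^*\pi_*j(v_{K3})$ by Remark~\ref{Smithcomute} and $[\gamma]+[\iota(\gamma)]=v_{K3}$. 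From this and Theorem~\ref{BBform}~(ii), together with $q_M=q_{M'}\circ r^*$, one gets $q_{M'}(D_{\gamma}')=\tfrac12\,q_{S^{[2]}}(j(v_{K3}))=\tfrac12\,v_{K3}^2=0$; and Theorem~\ref{BBform}~(v) lets me write $D_{\gamma}'=r^*\pi_*j(L_1)+r^*\pi_*j(e_2^{(0)})$ with the first term in the $U(2)^3$-part and the second in the isometrically embedded $E_8(-1)$-part, where $e_2^{(0)}$ is a primitive vector of this unimodular lattice; hence some class of the $E_8(-1)$-part pairs to $1$ with $D_{\gamma}'$, so $\div(D_{\gamma}')=1$. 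For~(ii) itself, as $q_{M'}$ is non-degenerate it suffices to verify $(D_{\gamma}',\alpha)_{q_{M'}}=\alpha\cdot\pi_{1*}(r_1^*\overline{\gamma})$ on the spanning set $\{r^*\pi_*j(\mu)\}\cup\{\delta',\Sigma'\}$ of Theorem~\ref{BBform}~(v): for $\alpha\in\{\delta',\Sigma'\}$ both sides vanish by Theorem~\ref{BBform}~(iv) together with $\overline{\gamma}\cdot\delta=0$ and $r_{1*}\Sigma_1=0$; and for $\alpha=r^*\pi_*j(\mu)$ one symmetrises ($\pi_{1*}r_1^*\overline{\gamma}=\tfrac12\pi_{1*}r_1^*(\overline{\gamma}+\overline{\iota(\gamma)})$, likewise for $j(\mu)$), applies (\ref{Smith2}) and $\overline{\gamma}\cdot j(\mu)=\gamma\cdot\mu$, and compares with the pairing computed via Theorem~\ref{BBform}~(ii); both sides come out to $(v_{K3},\mu)_S$.

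For~(iv), the plan runs as follows. Viewing $A$ as the image in $S^{(2)}$ of $S\times_{\mathbb{P}^1}S$, one finds that $\rho^*[A]$ corresponds to $v_{K3}$ under $H^2(S,\Q)\cong H^2(S^{(2)},\Q)$, so $\nu^*[A]=j(v_{K3})$; and near a general point of $\Delta_{S^{(2)}}$ the divisor $A$ restricts to a ruling of the transverse $A_1$-surface singularity of $S^{(2)}$ (the defining function $f(x)-f(y)$ is anti-invariant of order one), so $\nu^*A=A_1+\delta$ for the strict transform $A_1\subseteq S^{[2]}$, i.e.~$[A_1]=j(v_{K3})-\delta$ (equivalently $A_1\cdot\ell_{\delta}=1$, the only subscheme of $\ell_{\delta}^s$ lying on $A_1$ being the one tangent to the fibre of $f$ through $s$). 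The crucial extra input is that $\iota$ acts trivially on the base of $f$: every point of $\Fix(\iota)$ lies on an $\iota$-invariant fibre (distinct fibres being disjoint), while $\gamma\cup\iota(\gamma)$ is an $\iota$-invariant fibre carrying at most two fixed points (since $\iota$ swaps its components $\gamma,\iota(\gamma)$), and spreading the $8$ fixed points over at most two invariant fibres is impossible unless $\iota$ fixes the whole base. Therefore $\{s,\iota(s)\}\in A$ for all $s\in S$, i.e.~$\Sigma\subseteq A_1$, with $A_1$ smooth at the generic point of $\Sigma$; hence the strict transform $\overline{A}\subseteq N_1$ of $A$ by $r_1\circ\nu$ has class $r_1^*[A_1]-\Sigma_1=r_1^*j(v_{K3})-\delta_1-\Sigma_1$. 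Finally $A$, hence $\overline{A}$, is $\iota^{[2]}$-invariant (as $\iota$ preserves $f$) and $\overline{A}\ne\Sigma_1$, so $\pi_1|_{\overline{A}}$ is generically $2$-to-$1$ and, using $\pi_{1*}\delta_1=\delta'$, $\pi_{1*}\Sigma_1=\Sigma'$, $\pi_{1*}r_1^*=r^*\pi_*$ and parts (ii)/(iii),
$$[A']=\tfrac12\pi_{1*}[\overline{A}]=\tfrac12\bigl(r^*\pi_*j(v_{K3})-\delta'-\Sigma'\bigr)=D_{\gamma}'-\tfrac12(\delta'+\Sigma').$$

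The hardest step is~(iv): keeping track of the multiplicities of the non-Cartier uniruled divisor $A$ along the diagonal and along $\Sigma$, and above all proving that $\iota$ acts trivially on the base of the elliptic fibration so that $\Sigma\subseteq A_1$ — without this, the term $\tfrac12\Sigma'$ in the class of $A'$ would simply be absent.
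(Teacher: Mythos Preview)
Your argument is correct and essentially parallels the paper's: (i) is the same one-line transversality check, (ii)--(iii) proceed via the same push--pull identities (you verify duality on a full spanning set of $H^2(M',\Z)$, whereas the paper only tests against classes $w'$ coming from the invariant $E_8(-2)$), and for (iv) both compute $[A_1]=j(v_{K3})-\delta$ in $S^{[2]}$, subtract $\Sigma_1$ for the strict transform under $r_1$, and halve under $\pi_{1*}$. The one place you go beyond the paper is in justifying $\Sigma\subseteq A_1$ (equivalently $f\circ\iota=f$), which the paper simply asserts; your fixed-point count becomes airtight once you add that the second $\bar\iota$-invariant fibre carries at most four fixed points of $\iota$ --- by Riemann--Hurwitz if it is smooth, and it cannot be reducible since $\gamma$ and $\iota(\gamma)$ are the only $(-2)$-curves on $S$ --- so $2+4<8$ gives the contradiction.
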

\begin{proof}
\begin{itemize}
\item[(i)]
The statement follows directly from the fact that $\gamma^s$ intersects $\Delta_{S^{(2)}}$ and $\Sigma$ in one point.
\item[(ii)]
Let $w\in E_8(-2)$ be an invariant element under the action of $\iota$ such that $\gamma\cdot w=:k$.
Then 
\begin{equation}
\overline{\gamma}\cdot j(w)=(D_\gamma,j(w))_{q_{S^{[2]}}}=k. 
\label{dualgamma}
\end{equation}
We set $w':=\pi_{1*}(r_1^*(j(w)))$.
It follows by (\ref{Smith2}) that $\pi_{1*}(r_1^*(\overline{\gamma}+\iota^{[2]*}(\overline{\gamma})))\cdot w'=4k$. Hence  
\begin{equation}
\pi_{1*}(r_1^*(\overline{\gamma}))\cdot w'=2k.
\label{gammaprime}
\end{equation}

Moreover by Theorem \ref{BBform} (ii), Remark \ref{Smithcomute} and (\ref{dualgamma}), we have: 
$$(\pi_{1*}r_1^{*}(D_\gamma+\iota^{[2]*}(D_\gamma)),\pi_{1*}(r_1^*(j(w))))_{q_{M'}}=4k.$$
Hence:
\begin{equation}
(D_\gamma',w')_{q_{M'}}=2k.
\label{w}
\end{equation}
We obtain that $D_\gamma'$ is the dual of $\pi_{1*}(r_1^*(\gamma))$.
\item[(iii)]
We have by Theorem \ref{BBform} (ii) and Remark \ref{Smithcomute} $$q_{M'}(\pi_{1*}r_1^{*}(D_\gamma+\iota^{[2]*}(D_\gamma)))=2q_{S^{[2]}}(D_\gamma+\iota^{[2]*}(D_\gamma))=0.$$
Hence $q_{M'}(\pi_{1*}r_1^{*}(D_\gamma))=0$.
To prove that $\div(D_\gamma')=1$,
we choose a specific $w$: $w=w^{(0)}+i(w^{(0)})$ such that $w^{(0)}\in E_1$ and $w^{(0)}\cdot e_2^{(0)}=1$; then $w\cdot\gamma=1$. Then by (\ref{w}) we have $(D_\gamma',w')_{q_{M'}}=2$. However $w'$ is divisible by 2. We obtain our result.
\item[(iv)]
Since $A$ is invariant under the action of $\iota$ and considering the intersection with $\rho_*(w\times \left\{pt\right\})$, we see that the class of $A$ is given by $\rho_*(\left[S\times v_{K3}\right])$. Then the strict transform $\widetilde{A}$ by $r_1\circ \nu$ has class $r_1^*(j(v_{K3})-\delta)-\Sigma_1$ because $A$ contains the surfaces $\Delta_{S^{(2)}}$ and $\Sigma$. We recall that $\pi_{1*}(j(v_{K3}))=2 D_\gamma'$. Therefore and since $\widetilde{A}\rightarrow A'$ is a double cover, $A'$ has class $\frac{1}{2}(2D_\gamma'-\delta'-\Sigma_1')$.
\end{itemize}
\end{proof}
  \begin{lemme}\label{monolemma}
	The reflexion $R_{A'}$ is a monodromy operator and $R_{A'}(\Sigma')=2D_{\gamma}'-\delta'$.
	\end{lemme}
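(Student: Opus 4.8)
The plan is to establish the two parts of Lemma~\ref{monolemma} separately: the identity $R_{A'}(\Sigma')=2D_\gamma'-\delta'$ is a short lattice computation, while the monodromy statement will follow once $A'$ is recognised as a prime exceptional divisor, using the same principle that underlies Proposition~\ref{MonoIntroduction}(i). For the identity, recall from Lemma~\ref{ellipticlemma} that $A'$ has class $D_\gamma'-\tfrac12(\delta'+\Sigma')$, and from Theorem~\ref{BBform}(v) that $D_\gamma'=r^*\pi_*(j(\gamma))$ belongs to the summand $r^*\pi_*(j(H^2(S,\Z)))$, so it is orthogonal to both $\delta'$ and $\Sigma'$; furthermore $q_{M'}(D_\gamma')=0$, $q_{M'}\big(\tfrac12(\delta'+\Sigma')\big)=-2$, $q_{M'}(\Sigma')=-4$ and $(\delta',\Sigma')_{q_{M'}}=0$ (Theorem~\ref{BBform} and the remark following it). Hence $q_{M'}(A')=q_{M'}\big(\tfrac12(\delta'+\Sigma')\big)=-2$ and $(\Sigma',A')_{q_{M'}}=-\big(\Sigma',\tfrac12(\delta'+\Sigma')\big)_{q_{M'}}=2$, so that, with $R_{A'}$ as defined in Section~\ref{notation},
\[
R_{A'}(\Sigma')=\Sigma'-\frac{2(\Sigma',A')_{q_{M'}}}{q_{M'}(A')}\,A'=\Sigma'+2A'=\Sigma'+2D_\gamma'-\delta'-\Sigma'=2D_\gamma'-\delta'.
\]
A parallel computation gives $\div(A')=1$, exactly as in the proof of Lemma~\ref{ellipticlemma}(iii); in particular $A'$ is primitive.

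For the monodromy statement, the key observation is that $A'$ is a prime uniruled divisor. By construction $A'$ is the image under $\pi_1$ of the strict transform of $A\subset S^{(2)}$ by $r_1\circ\nu$; and $A$ is uniruled, being swept out by the rational curves of the morphism $A\to\mathbb{P}^1$ whose fibre over $t$ is $E_t^{(2)}$, a $\mathbb{P}^1$-bundle over $\Pic^2(E_t)$, where $E_t=f^{-1}(t)$ and $f\colon S\to\mathbb{P}^1$ is the elliptic fibration of $S$; hence the strict transform and its $\pi_1$-image $A'$ are again uniruled. Thus $A'$ is a primitive class of negative Beauville--Bogomolov square represented by a prime uniruled divisor, i.e.\ a prime exceptional divisor, and since $q_{M'}(A')=-2$ and $\div(A')=1$ the reflection $R_{A'}$ of Section~\ref{notation} is integral and coincides with the monodromy reflection attached to $A'$. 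By the orbifold analogue of Markman's theorem on reflections in prime exceptional divisors --- available here through \cite[Theorem~3.10]{Lehn2}, the same input quoted in the remark following Proposition~\ref{MonoIntroduction} --- we conclude that $R_{A'}\in\Mon^2(M')$.

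The step I expect to require the most care is verifying that this prime exceptional divisor input genuinely applies to our orbifold $M'$ and to the class $A'$, with the reflection normalised exactly as in Section~\ref{notation} (in the smooth case Markman's statement produces, for a primitive prime exceptional divisor of square $-2$, exactly the ordinary reflection). If this is not directly at hand, the alternative is to produce $R_{A'}$ by hand as the Picard--Lefschetz monodromy of the divisorial contraction of $M'$ associated with the wall $(A')^{\perp}$: this contraction takes $M'$ to a singular irreducible symplectic orbifold with transverse $A_1$-singularities along a surface, and letting that $A_1$-locus first acquire and then lose its singularity in a family of such orbifolds yields $R_{A'}$ as a self-parallel-transport operator of the resolution $M'$. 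Checking that such a family can be kept within the Nikulin-type class, and that the operator it induces is exactly $R_{A'}$, would be the technical heart of that route.
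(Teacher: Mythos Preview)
Your proof is correct and follows essentially the same approach as the paper: the computation of $R_{A'}(\Sigma')$ is identical, and both you and the paper invoke \cite[Theorem~3.10]{Lehn2} for the monodromy claim. You supply additional justification (primitivity of $A'$, uniruledness, the value $q_{M'}(A')=-2$) for why that result applies, details the paper leaves implicit.
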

	\begin{proof}
	The reflexion $R_{A'}$ is a monodromy operator by \cite[Theorem 3.10]{Lehn2}.
	$$R_{A'}(\Sigma')=\Sigma'-\frac{2(\Sigma',A')_q}{q(A')}A'=\Sigma'+2A'=2D_\gamma'-\delta'.$$
	\end{proof}
\begin{lemme}\label{extremallemma}
 There exists an extremal curve of $M'$ that is written $a\pi_{1*}r_1^*(\overline{\gamma})+b\pi_{1*}r_1^*(\ell_{\delta})+c\pi_{1*}(\ell_{\Sigma})$ with $(a,b,c)\in\Z^3$ and $a>0$.
\end{lemme}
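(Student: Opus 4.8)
The plan is to produce an explicit extremal curve of $M'$ of the claimed shape by exhibiting a suitable rational curve inside $N_1$ and pushing it forward. First I would observe that by Lemma \ref{curvesN12}-type reasoning (the curves in $N_1$ are governed by the curves in $S^{(2)}$, hence by the curves in $S$), every curve class in $M'$ is a non-negative combination of the push-forwards of the finitely many basic curve classes $\pi_{1*}r_1^*(\ell_\delta)$, $\pi_{1*}(\ell_\Sigma)$, and classes of the form $\pi_{1*}r_1^*(\overline{\gamma})$, $\pi_{1*}r_1^*(\iota^*(\overline{\gamma}))$ (plus the strict transforms of $\gamma^s$ computed in Lemma \ref{ellipticlemma}(i)), coming from the two $(-2)$-curves $\gamma$, $\iota(\gamma)$ and their symmetric products. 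In particular the Mori cone of $M'$ is a rational polyhedral cone spanned by such classes. Since a finitely generated cone has finitely many extremal rays, at least one extremal ray $R$ must involve the class $\pi_{1*}r_1^*(\overline{\gamma})$ with positive coefficient — otherwise the whole cone would lie in the span of $\pi_{1*}r_1^*(\ell_\delta)$ and $\pi_{1*}(\ell_\Sigma)$, which is too small to contain, e.g., the effective class $\pi_{1*}r_1^*(\overline{\gamma})$ itself.

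More concretely, I would argue as follows. The class of the strict transform of $\gamma^s$ for $s\in\gamma\cap\iota(\gamma)$ is, by Lemma \ref{ellipticlemma}(i), $\pi_{1*}\bigl(r_1^*(\overline{\gamma}-\ell_\delta)-\ell_\Sigma\bigr)$; call it $\beta$. This is an effective curve class in $M'$, and it is visibly of the form $a\pi_{1*}r_1^*(\overline\gamma)+b\pi_{1*}r_1^*(\ell_\delta)+c\pi_{1*}(\ell_\Sigma)$ with $a=1>0$. Now consider the (rational polyhedral) Mori cone $\NEbar(M')$ and the two-dimensional subspace $V$ spanned by $\pi_{1*}r_1^*(\ell_\delta)$ and $\pi_{1*}(\ell_\Sigma)$: these are two genuine extremal classes (their duals are $\tfrac12\delta'$ and $\tfrac12\Sigma'$, of negative square, so by Proposition \ref{extremalray} and the argument of Section \ref{genericM'} they span a face). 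Since $\beta\notin V$, the ray through $\beta$, although possibly not extremal, lies in $\NEbar(M')$ on the side of $V$ spanned by classes with positive $\overline\gamma$-coefficient. Walking from $\beta$ towards the boundary of $\NEbar(M')$ inside the half-space "$\overline\gamma$-coefficient $>0$", one reaches an extremal ray $R$ which still has strictly positive $\overline\gamma$-coefficient (because the only faces one could hit along the way that kill the $\overline\gamma$-component are inside $V$, which $\beta$ is separated from). Any generator of $R$ is then of the required form with $a>0$.

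Alternatively — and this is probably cleaner to write — I would directly verify that $\beta$ itself spans an extremal ray, by exhibiting a nef class $\alpha$ on $M'$ (or on a small modification) with $\alpha\cdot\beta=0$ and $\alpha$ positive on all other basic curve classes; here one uses the numerical data $q(\delta')=q(\Sigma')=q(D_\gamma')=0$ type computations from Lemma \ref{ellipticlemma} together with the monodromy operator $R_{A'}$ from Lemma \ref{monolemma} to move walls around. But since the statement only asserts \emph{existence} of some extremal curve of that shape, the combinatorial argument above suffices: one does not need to identify which extremal ray it is.

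The main obstacle is the justification that the Mori cone is finitely generated by exactly the listed basic classes and that the extremal ray reached from $\beta$ cannot degenerate into the $V$-face — i.e. controlling the full list of irreducible curves in $N_1$ (the $(2)(3)(4)(5)$ cases of Lemma \ref{curvesN12}) and checking that each of their classes, once pushed to $M'$, still has non-negative $\overline\gamma$-coefficient so that the half-space "$\overline\gamma>0$" is a union of faces meeting $\beta$. This is exactly the kind of case analysis carried out in Sections \ref{onecurve} and \ref{sec:twocurves}, so I would invoke those computations (adapted to the present elliptic $S$) rather than redo them, and then conclude by the elementary polyhedral-cone observation that a finitely generated cone containing a point off a proper face has an extremal ray off that face on the same side.
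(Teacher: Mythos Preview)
Your core combinatorial observation---the Mori cone is not contained in the plane $\{a=0\}$, hence some extremal ray has $a>0$---is exactly what the paper uses. But everything you wrap around it is unnecessary, and the ``main obstacle'' you identify at the end is a problem you created for yourself.

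The paper's argument is much shorter. The point you are missing is that $\Pic M'$ has rank~$3$ here: the $\iota$-invariant part of $\Pic S$ is generated by the single class $v_{K3}=2L_1+e_2$, so $\Pic M'$ is (rationally) spanned by $D_\gamma'$, $\delta'$, $\Sigma'$. Dually, every curve class on $M'$ is automatically an integer combination $a\,\pi_{1*}r_1^*(\overline\gamma)+b\,\pi_{1*}r_1^*(\ell_\delta)+c\,\pi_{1*}(\ell_\Sigma)$; the paper checks integrality by verifying that each of the three generators is primitive (pairing them against suitable divisors). No enumeration of curves in $S$, $S^{[2]}$, or $N_1$ is required to know this.

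For the inequality $a\geq 0$, the paper again avoids any case analysis: given any effective curve on $M'$, pull it back by $\pi_1$ and push it forward by $r_1\circ\nu$ to $S^{(2)}$. The classes $\ell_\delta$ and $\ell_\Sigma$ die under this map, so the result is $2a$ times the class of $\gamma^s$ in $S^{(2)}$; being the image of an effective cycle, this forces $a\geq 0$. Combined with the existence of the effective class with $a=1$ from Lemma~\ref{ellipticlemma}(i), the conclusion follows immediately.

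Your proposed route---running the full curve classification as in Sections~\ref{onecurve} and~\ref{sec:twocurves}---would in fact be substantially harder here than in those sections, because $S$ has $\Pic S$ of rank~$9$ and carries an elliptic fibration with many $(-2)$-curves, not just one or two. This is precisely why the paper says, just before this lemma, that ``in contrary to the two previous sections, we will not need to find all the extremal curves in this case.'' So while your argument is not wrong in principle, it trades a two-line push-pull computation for an open-ended case analysis that you yourself flag as the main difficulty.
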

\begin{proof}
 The curves $\pi_{1*}r_1^*(\overline{\gamma})$, $\pi_{1*}r_1^*(\ell_{\delta})$ and $\pi_{1*}(\ell_{\Sigma})$ are primitive in $H^{3,3}(M',\Z)$. Indeed, we have seen in the proof of Lemma \ref{ellipticlemma} that we can choose $k=1$ in equation (\ref{gammaprime}). Moreover $w'$ is divisible by 2. We obtain: $\pi_{1*}r_1^*(\gamma)\cdot \frac{1}{2}w'=1$. Similarly, by Lemma \ref{dualdeltasigma}, we know that $\pi_{1*}r_1^*(\ell_{\delta})$ and $\pi_{1*}(\ell_{\Sigma})$ are primitive by considering the intersection with $\frac{\delta'+\Sigma'}{2}$.
 Therefore, the class of a curve in $M'$ will be written $a\pi_{1*}r_1^*(\overline{\gamma})+b\pi_{1*}r_1^*(\ell_{\delta})+c\pi_{1*}(\ell_{\Sigma})$, with $(a,b,c)\in\Z^3$.
 If we consider the pull-back by $\pi_1$ and the push-forward by $r_1\circ \nu$ of the class $a\pi_{1*}r_1^*(\overline{\gamma})^+b\pi_{1*}r_1^*(\ell_{\delta})+c\pi_{1*}(\ell_{\Sigma})$, we obtain $2a\overline{\gamma}$. For a curve, there are two possibilities $a=0$ or $a>0$; it is not possible to have $a=0$ for every curves, hence there exists an extremal curve as mentioned in the statement of the lemma.
\end{proof}
\begin{rmk}\label{wallelliptic}
Let $a\pi_{1*}r_1^*(\overline{\gamma})+b\pi_{1*}r_1^*(\ell_{\delta})+c\pi_{1*}(\ell_{\Sigma})$ be the class of the extremal cuve obtained from Lemma \ref{extremallemma}.
 By Proposition \ref{extremalray}, the dual of this curve class is a wall divisor. According to Lemmas \ref{ellipticlemma} and \ref{dualdeltasigma}, that is $aD_{\gamma}'+\frac{b}{2}\delta'+\frac{c}{2}\Sigma'$.
\end{rmk}
\begin{lemme}\label{mainelliptic}
 Let $E=aD_{\gamma}'+\frac{b}{2}\delta'+\frac{c}{2}\Sigma'$ be the previous wall divisor obtained from Remark \ref{wallelliptic} eventually renormalized such that $E$ is primitive in $H^2(M',\Z)$. Moreover, we assume that $E$ verifies one of the numerical conditions listed in the statement of Theorem \ref{main}. Then: 
 $$E=D_{\gamma}'-\frac{\delta'+\Sigma'}{2}.$$
\end{lemme}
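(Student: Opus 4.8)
The plan is to combine three explicit effective curve classes on $M'$ with the numerical hypothesis on $E$. First I would set up the relevant effective classes. By Lemma \ref{dualdeltasigma}, $e_1:=\pi_{1*}r_1^*(\ell_\delta)$ and $e_2:=\pi_{1*}(\ell_\Sigma)$ are effective with dual divisors $\frac12\delta'$ and $\frac12\Sigma'$. By Lemma \ref{ellipticlemma}(i), the strict transform by $r_1\circ\nu$ of $\gamma^s$ with $s\in\gamma\cap\iota(\gamma)$ has class $r_1^*(\overline{\gamma}-\ell_\delta)-\ell_\Sigma$ in $N_1$; pushing it forward by $\pi_{1*}$ yields an effective class $e_3:=\pi_{1*}r_1^*(\overline{\gamma})-e_1-e_2$, whose dual is $D_\gamma'-\frac12(\delta'+\Sigma')$ by Lemma \ref{ellipticlemma}(ii). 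Since $\pi_{1*}r_1^*(\overline{\gamma})=e_1+e_2+e_3$ and $E=aD_\gamma'+\frac b2\delta'+\frac c2\Sigma'$ (with $a\ge1$, taking the extremal curve of Lemma \ref{extremallemma} primitive), the dual curve class of $E$ is $R':=(a+b)e_1+(a+c)e_2+ae_3$; being a positive multiple of that extremal curve, $R'$ is effective and spans an extremal ray of the Mori cone. Moreover $e_1,e_2,e_3$ are linearly independent, since their duals $\frac12\delta',\frac12\Sigma',D_\gamma'-\frac12(\delta'+\Sigma')$ span $\Pic(M')_\bQ$ (which has rank $3$).

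Next I would extract the numerical constraints. Writing $u_\pm:=\frac12(\delta'\pm\Sigma')$, Theorem \ref{BBform}(v) gives $H^2(M',\bZ)=L\oplus^{\bot}\bZ u_+\oplus^{\bot}\bZ u_-$ with $L:=r^*\pi_*(j(H^2(S,\bZ)))\ni D_\gamma'$ and $u_\pm^2=-2$. Using $q(D_\gamma')=0$ (Lemma \ref{ellipticlemma}(iii)), $q(\delta')=q(\Sigma')=-4$ (Theorem \ref{BBform}(iii)) and these orthogonality relations, a direct computation gives $q(E)=-(b^2+c^2)$. The numerical hypothesis forces $q(E)\in\{-2,-4,-6,-12\}$, i.e.\ $b^2+c^2\in\{2,4,6,12\}$; since neither $6$ nor $12$ is a sum of two integer squares, in fact $b^2+c^2\in\{2,4\}$. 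Hence either $(b,c)\in\{(\pm1,\pm1)\}$ with $q(E)=-2$ and $\div(E)=1$, or $(b,c)\in\{(\pm2,0),(0,\pm2)\}$ with $q(E)=-4$ and $\div(E)=2$.

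Then I would conclude with the extremality argument applied to $R'=(a+b)e_1+(a+c)e_2+ae_3$, whose $e_3$-coefficient $a$ is positive. If $a+b\ge0$ and $a+c\ge0$, then $R'$ is a nonnegative combination of the pairwise non-proportional effective classes $e_1,e_2,e_3$; being extremal, it must be a positive multiple of a single one of them, and since its $e_3$-coefficient is positive this forces $a+b=a+c=0$. Thus $E=a(D_\gamma'-u_+)$, and as $D_\gamma'$ is primitive in $L$ the vector $D_\gamma'-u_+$ is primitive in $H^2(M',\bZ)$, so primitivity of $E$ gives $a=1$ and $E=D_\gamma'-\frac12(\delta'+\Sigma')$. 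It remains to rule out $a+b<0$ or $a+c<0$: since $a\ge1$, this forces $b\le-2$ or $c\le-2$, which by the list above leaves only $(b,c)=(-2,0)$ or $(0,-2)$ with $a=1$, i.e.\ $E=D_\gamma'-\delta'$ or $E=D_\gamma'-\Sigma'$. In both cases $(E,x)_q=(D_\gamma',x)_q$ for all $x\in L$, whence $\div(E)=\div_L(D_\gamma')=\div(D_\gamma')=1$, contradicting the required value $\div(E)=2$ for $q(E)=-4$. Therefore $E=D_\gamma'-\frac12(\delta'+\Sigma')$.

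The only point requiring care is the extremality step: one must correctly read off from Lemma \ref{ellipticlemma}(i)--(ii) that $e_3$ is an \emph{effective} class with the stated dual, and then use that an extremal class of the Mori cone cannot be a nontrivial nonnegative combination of the effective, pairwise non-proportional classes $e_1,e_2,e_3$. The square computation, the divisibility computation, and the sum-of-two-squares observation are routine.
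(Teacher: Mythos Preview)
Your proof is correct and follows essentially the same approach as the paper: both compute $q(E)=-(b^2+c^2)$, reduce to $(b,c)\in\{(\pm1,\pm1),(\pm2,0),(0,\pm2)\}$, and use the effective class $e_3=\pi_{1*}r_1^*(\overline{\gamma})-e_1-e_2$ from Lemma~\ref{ellipticlemma}(i) together with extremality and $\div(D_\gamma')=1$ to pin down $E$. Your rewriting of the dual curve class in the basis $e_1,e_2,e_3$ makes the extremality step more explicit than the paper's terser case analysis (which first forces $a$ even in the $q=-4$ cases via divisibility, then invokes Lemma~\ref{ellipticlemma}(i) without spelling out the decomposition), but the ingredients and logic are the same.
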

\begin{proof}
 We have:
 $$q_{M'}(E)=-(b^2+c^2).$$
Considering the numerical conditions of Theorem \ref{main}, there are three possibilities:
\begin{itemize}
 \item[(i)]
 $b=\pm1$ and $c=\pm1$ or
 \item[(ii)]
 $b=\pm2$ and $c=0$ or
 \item[(iii)]
 $b=0$ and $c=\pm2$.
\end{itemize}
In the possibilities (ii) and (iii) $q_{M'}(E)=-4$. In this case, following the conditions in Theorem \ref{main}, we know that $E$ has divisibility 2. Hence by Lemma \ref{ellipticlemma} (iii), $a$ is divisible by 2. 
This corresponds to the extremal raies of curves $a\pi_{1*}r_1^*(\gamma)\pm2\pi_{1*}r_1^*(\ell_{\delta}))$ or $a\pi_{1*}r_1^*(\gamma)\pm2\pi_{1*}(\ell_{\Sigma})$. However, by Lemma \ref{ellipticlemma} (i) these raies cannot be extremal. 

Therefore, $E=aD_{\gamma}'+\frac{\pm1}{2}\delta'+\frac{\pm1}{2}\Sigma'$. Moreover, the extremal ray associated to $E$ has class:
$$a\pi_{1*}r_1^*(\gamma)\pm\pi_{1*}r_1^*(\ell_{\delta})\pm\pi_{1*}(\ell_{\Sigma}).$$
But, we know by Lemma \ref{ellipticlemma} (i) that $\pi_{1*}r_1^*(\gamma)-\pi_{1*}r_1^*(\ell_{\delta})-\pi_{1*}(\ell_{\Sigma})$ is the class of a curve. Hence the only possibility for the previous extremal ray is $\pi_{1*}r_1^*(\gamma)-\pi_{1*}r_1^*(\ell_{\delta})-\pi_{1*}(\ell_{\Sigma})$.
%
\end{proof}
\section{Monodromy orbits}\label{sec:monodromy-orbits}
In this section, we study the orbit of classes in the lattice $H^2(X,\bZ)$ under the action of $\Mon^2(X)$ for an
irreducible symplectic orbifold $X$ of Nikulin-type. Note that since the property of being a wall divisor is
deformation invariant (see Theorem \ref{wall}) we may assume without loss of generality, that $X$ is the Nikulin orbifold $M'$ for a
given K3 surface with symplectic involution.

The main result of this section is Theorem \ref{thm:9monorb-M'}, which describes a set of representatives in each monodromy orbit.
This will enable us to determine wall divisors for Nikulin-type in the next section by checking only the
representatives.

For completeness, note that we did not determine the precise monodromy orbit of each element. Since, we only
used a subgroup of the actual monodromy group, it could happen
that more than one of the elements in Theorem \ref{thm:9monorb-M'} belong to the same orbit.
 
\subsection{Equivalence of lattices} \label{sec:eq-lattices}

\begin{lemme}\label{lem:twist-spec}
  For the questions on hand the consideration of the following two lattices are equivalent:
$$\Lambda\coloneqq \Lambda_{M'}=U(2)^{ 3} \oplus E_8(-1)\oplus (-2) \oplus (-2)$$
and
$$\hat{\Lambda} \coloneqq U^{ 3} \oplus E_8(-2)\oplus (-1) \oplus (-1).$$

More precisely, there is a natural correspondence between lattice automorphisms for both lattices, and
a natural identification between the rays in both lattices.
\end{lemme}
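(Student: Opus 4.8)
The plan is to exhibit an explicit isometry, over $\bQ$ (or after a suitable rescaling), between a quadratic form related to $\Lambda$ and one related to $\hat\Lambda$, and to track how it matches up rational rays. First I would observe the obvious numerical coincidence: multiplying the form on $U(2)$ by $\tfrac12$ turns it into $U$, so $U(2)^3$ and $U^3$ become $\bQ$-isometric via the identity on the underlying $\bZ$-module with the bilinear form scaled. Likewise $E_8(-1)$ rescaled by $2$ becomes $E_8(-2)$, and $(-2)$ rescaled by $\tfrac12$ becomes $(-1)$. The point of Lemma~\ref{lem:twist-spec} is not that $\Lambda$ and $\hat\Lambda$ are isometric as lattices --- they are not, they even have different discriminants --- but that for the questions at hand (which wall-divisor and monodromy questions are phrased purely in terms of rays $\bQ x \subset \Lambda_\bR$, the sign of $q$, and the divisibility condition "$D_{U(2)^3}$ divisible by $2$", i.e. conditions invariant under rescaling each orthogonal summand), one may pass freely between the two.

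Concretely, the key steps I would carry out are: (1) Define the $\bZ$-module isomorphism $\Phi\colon \Lambda \to \hat\Lambda$ which is the identity on the distinguished bases (sending the hyperbolic summands to hyperbolic summands, the $E_8$-summand to the $E_8$-summand, and the two rank-one summands to the two rank-one summands), and record that $\Phi$ does \emph{not} preserve the form but multiplies it summand-by-summand by the constants $(\tfrac12,\tfrac12,\tfrac12; 2; \tfrac12,\tfrac12)$ --- or rather, to make the correspondence cleaner, note that both forms become proportional after a common rescaling. (2) State precisely what "natural correspondence between lattice automorphisms" means: an isometry $g$ of $\Lambda$ which respects the orthogonal decomposition into the six summands corresponds, via $\Phi$, to an isometry $\hat g = \Phi\circ g\circ\Phi^{-1}$ of $\hat\Lambda$, because on each summand $g$ acts by an isometry of a form and $\hat g$ acts by an isometry of a proportional form --- and proportional forms have the same isometry group. (The content here is that \emph{all} the monodromy operators we use --- reflections $R_D$, and the operators coming from Proposition~\ref{MonoM'} --- respect this decomposition, so this is the relevant class of automorphisms.) (3) Note that $\Phi$ induces a bijection on the sets of rational rays $\{\bQ x : x \in \Lambda_\bR\} \leftrightarrow \{\bQ \hat x : \hat x \in \hat\Lambda_\bR\}$, that the sign of $q(x)$ is preserved (since on each summand the form only gets multiplied by a \emph{positive} constant --- $\tfrac12$ and $2$ are both positive, and the signatures $(3, \mathrm{rk}-3)$ agree), and that the divisibility-mod-$2$ condition on the $U(2)^3$-part corresponds to the analogous condition on the $U^3$-part.

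The main obstacle --- really the only subtle point --- is to formulate the statement so that it is both true and useful, since as noted $\Lambda \not\cong \hat\Lambda$. I would handle this by being explicit that the equivalence is an equivalence \emph{of the combinatorial data} (orthogonal decomposition, rays, sign of self-intersection, divisibility class of the relevant projection, and the induced action of decomposition-respecting isometries), not an isometry of lattices; the hat-lattice $\hat\Lambda$ is obtained from $\Lambda$ by a "twist" that rescales the hyperbolic and $E_8$ parts, and the name $\Lambda_{M'}$ versus $\hat\Lambda$ already signals this. Once this is set up, the verification in each of the six summands is a one-line check, and the compatibility with monodromy reduces to the elementary fact that $\mathrm{O}(L) = \mathrm{O}(cL)$ for any nonzero scalar $c$. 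I expect the actual proof to be essentially this bookkeeping, with no genuine difficulty beyond getting the statement right.
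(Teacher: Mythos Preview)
Your approach has a genuine gap. You propose to transport isometries via a $\bZ$-module identification $\Phi$ that rescales the form by \emph{different} positive constants on the six orthogonal summands ($\tfrac12$ on each $U(2)$, $2$ on $E_8(-1)$, $\tfrac12$ on each $(-2)$). This only shows that $\Phi g \Phi^{-1}$ is an isometry of $\hat\Lambda$ when $g$ preserves each summand separately. You try to finesse this by asserting that ``all the monodromy operators we use \dots\ respect this decomposition,'' but that is false: the operators produced by Proposition~\ref{MonoM'} come from monodromy of $S^{[2]}$ commuting with $\iota^{[2]*}$, and these mix the $U$-, $E_8$-, and $\delta$-parts freely (the Eichler-criterion arguments in Section~\ref{sec:monodromy-orbits} are exactly of this type). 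So your construction does not give the claimed correspondence of automorphism groups, and the lemma as stated (``natural correspondence between lattice automorphisms'') is not established.

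The paper's argument avoids this entirely by using a \emph{single} global rescaling. One observes that $\Lambda$ and $\hat\Lambda$ are both of the form $M\oplus N(2)$ versus $M(2)\oplus N$ with $M=E_8(-1)$ and $N=U^3\oplus(-1)^2$ unimodular (this is the content of Lemma~\ref{lem:twist-gen}). Multiplying the form on $\Lambda$ by $2$ gives $\Lambda(2)\cong M(2)\oplus N(4)$, and $N(4)$ is exactly the sublattice $2N\subset N$; hence $\Lambda(2)$ sits inside $\hat\Lambda$ as a finite-index sublattice. The crucial point is that this sublattice is \emph{intrinsically characterized} inside $\hat\Lambda$ as the set of elements of even divisibility, so \emph{every} isometry of $\hat\Lambda$ preserves it, yielding $\Aut(\hat\Lambda)\hookrightarrow\Aut(\Lambda(2))=\Aut(\Lambda)$; the reverse inclusion is symmetric. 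No decomposition hypothesis is needed. The identification of rays then comes for free from the finite-index inclusion, automatically compatible with the automorphism correspondence.
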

This is a special case of the following:
\begin{lemme}\label{lem:twist-gen}
  Let $M$ and $N$ be two unimodular lattices.
  Then $L\coloneqq M\oplus N(2)$ and $\hat{L}\coloneqq M(2)\oplus N$ satisfy the following
  properties:
  There exists a natural identification between lattice automorphisms for both lattices, and
  the rays in both lattices can be naturally identified.
\end{lemme}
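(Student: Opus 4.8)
The plan is to produce a \emph{canonical} operation that carries $L$ to $\hat L$ and is an involution; once we have it, automorphisms and rays transport along it with no choices involved. For any integral lattice $P$ with bilinear form $b_P$, set
\[
  P_{(2)} \;:=\; \{\, x \in P \mid b_P(x, y) \in 2\Z \ \text{for all } y \in P \,\},
\]
a finite-index sublattice of $P$ (it contains $2P$), and let $P^{\sharp}$ be the same underlying group with the rescaled form $\tfrac12\, b_P\big|_{P_{(2)}}$; this is $\Z$-valued by the very definition of $P_{(2)}$, so $P^{\sharp}$ is again an integral lattice. The first step is the elementary computation, valid when $M$ is unimodular, that $L_{(2)} = 2M \oplus N(2)$ and hence $L^{\sharp} \cong M(2)\oplus N = \hat L$ — the only input being that $b_M(m,M)\subseteq 2\Z$ precisely when $m\in 2M$, which uses $M^{\vee}\cong M$. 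By the symmetric computation, now using that $N$ is unimodular, $\hat L_{(2)} = M(2)\oplus 2N$ and $\hat L^{\sharp}\cong M\oplus N(2) = L$. Hence $(-)^{\sharp}$ interchanges $L$ and $\hat L$, and in particular $\big(L^{\sharp}\big)^{\sharp} = L$ canonically.

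Next I would read off the correspondence of automorphism groups. Since $P_{(2)}$ is defined using only $b_P$, every $g\in O(L)$ satisfies $g(L_{(2)}) = L_{(2)}$, hence restricts to $g|_{L_{(2)}} \in O(L_{(2)}) = O(L^{\sharp}) = O(\hat L)$ (rescaling the form does not change the automorphism group). The map $g\mapsto g|_{L_{(2)}}$ is an injective group homomorphism $O(L)\to O(\hat L)$, and the analogous $h\mapsto h|_{\hat L_{(2)}}$ is its inverse: since $L = \big(L^{\sharp}\big)^{\sharp}$ is obtained from $\hat L$ by the $O$-equivariant recipe above, any $h\in O(\hat L)$ preserves $L$, and the two restriction maps compose to the identity both ways. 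This is the asserted natural identification $O(L)\cong O(\hat L)$.

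For the rays, note that the inclusion $L_{(2)}\subseteq L$ (together with the harmless overall scaling by $\tfrac12$) gives a canonical identification of the ambient rational quadratic spaces, $L\otimes\Q = L_{(2)}\otimes\Q = \hat L\otimes\Q$; rescaling changes neither the space nor its set of lines. Each rank-one $\Q$-subspace meets the full-rank lattices $L$ and $\hat L$ in rank-one subgroups, so ``ray of $L$'' and ``ray of $\hat L$'' refer to the same set of lines in this common space, and $O(L) = O(\hat L)$ acts on them in the same way. I would additionally record, for later use, how the primitive generator of such a line and its invariants $D^{2}$ and $\div(D)$ transform: a ray may have different primitive representatives in $L$ and in $\hat L$, differing by a factor $2$ according to whether it lies in the subgroup $2M\oplus N(2)\subseteq L$ or not, and $D^{2}$, $\div(D)$ rescale accordingly — a short two- or three-case check.

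There is no genuine conceptual obstacle here; the work is bookkeeping. One must make sure the two unimodularity hypotheses are each spent in exactly the right place — $M$ to identify $L_{(2)}$ inside $L$, $N$ to identify $\hat L_{(2)}$ inside $\hat L$ — and one must be honest that the natural identification of rays does \emph{not} preserve primitive representatives or the numbers $D^{2}$, $\div(D)$ on the nose; tracking precisely how these rescale is exactly what makes Lemma~\ref{lem:twist-spec} usable for matching the numerical conditions stated in $\Lambda$ against computations performed in $\hat\Lambda$.
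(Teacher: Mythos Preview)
Your proof is correct and follows essentially the same approach as the paper's. Both arguments identify the sublattice of even-divisibility elements in one lattice with (a rescaling of) the other, use that automorphisms automatically preserve this sublattice to get the correspondence $O(L)\cong O(\hat L)$, and identify rays via the common rational quadratic space; you simply package the construction more functorially as the operation $P\mapsto P^{\sharp}$, whereas the paper writes the inclusion $L(2)\subset \hat L$ directly.
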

\begin{proof}
  Observe that by multiplying the quadratic form of the lattice $L$ by $2$, we obtain a lattice
  $L(2) \iso M(2)\oplus N(4)$, which obviously satisfies that the rays and
  automorphisms are naturally identified  for $L$ and $L(2)$.

  Notice that $N(4)$ can be identified with the sublattice of $N$ consisting of elements of the form $\{2n\,|\,
  n\in N\}$. Therefore, we can naturally include $L (2) \subset \hat{L}$.
  This immediately implies the natural identification of rays in $L$ and $\hat{L}$.

  For the identification of automorphisms, observe that any automorphism $\hat{\phi}\in \Aut(\hat{L})$
  preserves the sublattice $L(2)$: In fact $L(2)\subset \hat{L}$ consists precisely of those
  (not necessarily primitive) elements whose divisibility is even, and this subset needs to be preserved by
  any automorphism. This yields a natural inclusion $\Aut(\hat{L})\subset \Aut(L(2))\iso
  \Aut(L)$.

  The inverse inclusion is given by the same argument from considering $\hat{L}(2)\subset L$.
\end{proof}

Fix a K3 surface $S$ with a symplectic involution $\iota$ 
and consider the induced Nikulin orbifold $M'$ associated to $S$.
Further, fix a marking $\phi_S\colon H^2(S,\bZ) \to \Lambda_{K3}=U^3\oplus E_8(-1)^2$ of $S$ such that
$\iota^*$ corresponds to
swapping the two copies of $E_8(-1)$.
Then the fixed part of $\iota^*$ is isomorphic to $U^3 \oplus E_8(-2)$.

Note that on $X$, which we choose as the associated orbifold $M'$ to $(S,\iota)$, this induces a marking
$\phi_{X}\colon H^2(X,\bZ)\to \Lambda=\Lambda_{M'}=U(2)^{ 3} \oplus
E_8(-1)\oplus (-2)^2$ by Theorem \ref{BBform}, where the $U(2)^{ 3} \oplus
E_8(-1)$-part comes from the invariant lattice of the K3 surface (precisely as described in Lemma \ref{lem:twist-gen}), 
and the two $(-2)$-classes correspond to $\frac{\delta' + \Sigma'}{2}$ and $\frac{\delta' - \Sigma'}{2}$.

Therefore,  the sublattice $U^{ 3} \oplus E_8(-2)$  in $\hat{\Lambda}$ can naturally be identified with the fixed part of
$H^2(S,\bZ)$, whereas the generators of square $(-1)$ correspond to $\frac{\hdel + \hSig}{2}$ and
$\frac{\hdel - \hSig}{2}$ for the corresponding elements $\hdel, \hSig \in \hat{\Lambda}$.

With this notation, we can define the  group of $\Mon^2(\hat{\Lambda})$ of monodromy operators for
the lattice $\hat{\Lambda}$: An automorphism $\hat{\phi} \in \Aut(\hat{\Lambda})$ is in $\Mon^2(\hat{\Lambda})$ if the
corresponding automorphism $\phi \in \Aut(\Lambda)$ is identified with an element of $\Mon^2(X)$ via the
marking $\phi_X$.

In the following we will frequently consider the sublattice
$$\hat{\Lambda}_1 \coloneqq U^{ 3} \oplus E_8(-2)\oplus (-2) \oplus (-2) \subset \hat{\Lambda},$$
which replaces the $(-1)\oplus (-1)$-part by the sublattice generated by $\hdel$ and $\hSig$.

Define $$\Mon^2(\hat{\Lambda}_1) \coloneqq \{f \in \Aut (\hat{\Lambda}_1) \,|\, \exists \hat{f} \in \Mon^2(\hat{\Lambda}):
f=\hat{f}|_{\hat{\Lambda}_1}\}.$$

\begin{rmk}
  Note that while there exists an identification 
  $\Mon^2(X) =\Mon^2(\hat{\Lambda})$,
  there exists a natural inclusion  $\Mon^2(\hat{\Lambda}_1) \subseteq \Mon^2(\hat{\Lambda})$ but a priori this
  is not an equality. \TODO{try to understand if this is an equality or not}
\end{rmk}

Note that Proposition \ref{MonoM'} can be reformulated in terms of the lattice $\hat{\Lambda}_1$:
\begin{cor}\label{cor:inheritedMononHat}
  Let $f\in \Mon^2(S^{[2]})$ be a monodromy operator such
  that $f \circ \iota^{[2]*} = \iota^{[2]*} \circ f$ on $H^2(S^{[2]},\bZ)$.
  Let $\hat{f}\in \Aut(\hat{\Lambda}_1)$ be the automorphism defined via the following properties:
  Via the marking described above, $\hat{f}$ restricted to $U^{ 3}\oplus E_8(-2)\oplus (-2)$ coincides with the restriction of
  $f$ to the invariant part of the lattice (i.e.~$\hat{f}|_{U^{ 3}\oplus E_8(-2)\oplus (-2)}=
  f|_{H^2(S^{[2]},\bZ)^{\iota^{[2]}}} $) and $\hat{f}(\hSig)=\hSig$.
  Then $\hat{f}\in \Mon(\hat{\Lambda}_1)$.
\end{cor}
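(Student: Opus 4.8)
The plan is to read off the statement from Proposition \ref{MonoM'} after translating its geometric content into the lattice $\hat\Lambda_1$ through the markings fixed above. First I would apply Proposition \ref{MonoM'} to the monodromy operator $f$: this produces $f'\in\Mon^2(M')$ with $f'(\Sigma')=\Sigma'$ and $f'(r^*(x))=\tfrac12\,r^*\pi_*\,f\,\pi^*(x)$ for all $x\in H^2(M,\bZ)$.

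The heart of the argument is the identity
$$f'(r^*\pi_*(y))=r^*\pi_*(f(y))\qquad\text{for all }y\in H^2(S^{[2]},\bZ)^{\iota^{[2]}}.$$
To see it, take $x:=\pi_*(y)\in H^2(M,\bZ)$, so that $r^*(x)=r^*\pi_*(y)$ and, by \eqref{Smith} and the $\iota^{[2]}$-invariance of $y$, one has $\pi^*(x)=\pi^*\pi_*(y)=y+\iota^{[2]*}(y)=2y$; hence $f'(r^*\pi_*(y))=\tfrac12\,r^*\pi_*\,f(2y)=r^*\pi_*(f(y))$. Moreover $f(y)$ is again $\iota^{[2]}$-invariant because $f$ commutes with $\iota^{[2]*}$, and $r^*\pi_*$ is injective on $H^2(S^{[2]},\bZ)^{\iota^{[2]}}$ (on the invariant part $\pi^*\pi_*=2\id$ by \eqref{Smith}, and $r^*$ is injective). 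Thus, under the injection $y\mapsto r^*\pi_*(y)$ of $H^2(S^{[2]},\bZ)^{\iota^{[2]}}$ onto its image in $H^2(M',\bZ)$, the operator $f'$ restricts exactly to $f|_{H^2(S^{[2]},\bZ)^{\iota^{[2]}}}$.

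Next I would pass through the markings. Via the identification $\Mon^2(X)=\Mon^2(\hat\Lambda)$ of Lemma \ref{lem:twist-spec}, the operator $f'\in\Mon^2(M')$ corresponds to an automorphism $\hat\phi\in\Mon^2(\hat\Lambda)$. By the very construction of $\phi_X$ (the $U(2)^3\oplus E_8(-1)$--part coming from the invariant lattice of $S$ precisely through the twist of Lemma \ref{lem:twist-gen}, and the two $(-2)$--classes being $\tfrac{\delta'+\Sigma'}2$ and $\tfrac{\delta'-\Sigma'}2$), the rays of $\tfrac{\delta'+\Sigma'}2$ and $\tfrac{\delta'-\Sigma'}2$ correspond to those of $\tfrac{\hdel+\hSig}2$ and $\tfrac{\hdel-\hSig}2$, so the ray of $\Sigma'$ corresponds to the ray of $\hSig$; since $f'(\Sigma')=\Sigma'$ we obtain $\hat\phi(\hSig)=\hSig$. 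Again by the construction of $\phi_X$, the sublattice $U^{3}\oplus E_8(-2)\oplus\bZ\hdel\subseteq\hat\Lambda_1$ is, under the ray correspondence, the image of $\phi_X\big(r^*\pi_*(H^2(S^{[2]},\bZ)^{\iota^{[2]}})\big)$, and the induced identification of $H^2(S^{[2]},\bZ)^{\iota^{[2]}}$ with $U^{3}\oplus E_8(-2)\oplus(-2)$ is the one used in the statement; combined with the identity of the previous paragraph, $\hat\phi$ preserves $U^{3}\oplus E_8(-2)\oplus\bZ\hdel$ and restricts there to $f|_{H^2(S^{[2]},\bZ)^{\iota^{[2]}}}$. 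Together with $\hat\phi(\hSig)=\hSig$ this gives $\hat\phi(\hat\Lambda_1)=\hat\Lambda_1$ and $\hat\phi|_{\hat\Lambda_1}=\hat f$, so $\hat f\in\Mon^2(\hat\Lambda_1)$ by definition.

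The step I expect to take the most care is this last passage through the markings: keeping straight the three identifications involved --- the marking $\phi_X$, the twist-by-$2$ correspondence of rays between $\Lambda$ and $\hat\Lambda$ from Lemma \ref{lem:twist-gen}, and the geometric map $r^*\pi_*$ --- and checking that the identification of $H^2(S^{[2]},\bZ)^{\iota^{[2]}}$ with its image in $\hat\Lambda_1$ coming from $r^*\pi_*$ is literally the one built into the definition of $\phi_X$. Everything else reduces to an application of Proposition \ref{MonoM'} together with the short push--pull computation above.
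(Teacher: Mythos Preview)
Your proof is correct and follows the same approach as the paper's own proof: apply Proposition~\ref{MonoM'} to obtain $f'\in\Mon^2(M')$, then translate through the marking and the twist correspondence of Lemma~\ref{lem:twist-spec} to see that the induced automorphism of $\hat\Lambda$ restricts to $\hat f$ on $\hat\Lambda_1$. The paper's proof is a one-line remark calling this a ``straight forward verification'', whereas you have spelled out the push--pull identity $f'(r^*\pi_*(y))=r^*\pi_*(f(y))$ and the passage through the markings explicitly; your extra care on exactly this last step is well placed, but there is no real difference in strategy.
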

\begin{proof} This is a straight forward verification: Proposition \ref{MonoM'} gives the inherited monodromy operator
  $f'\in \Mon^2(\Lambda)$ and  $\hat{f}\in \Aut(\hat{\Lambda}_1)$ is precisely the restriction to
  $\hat{\Lambda}_1$ of the corresponding automorphism of $\hat{\Lambda}$ obtained via Lemma \ref{lem:twist-spec}.
\end{proof}

For the proof of Theorem \ref{thm:9monorb-M'} we will study monodromy orbits with respect to successively increasing lattices:
\begin{equation*}\hat{\Lambda}_3\subset \hat{\Lambda}_2 \subset \hat{\Lambda}_1,
\end{equation*}
where $\hat{\Lambda}_3\coloneqq U^{ 3}\oplus (-2)$ with the generator $\hdel$ for the $(-2)$-part,
$\hat{\Lambda}_2\coloneqq U^{ 3}\oplus E_8(-2) \oplus (-2) $, and $\hat{\Lambda}_1$ is as defined above.

Define the following monodromy groups for these lattices.
\begin{align*}
\Mon^2(\hat{\Lambda}_2)&=\{f\in \Aut(\hat{\Lambda}_2)| \exists f_1 \in \Mon^2(\hat{\Lambda}_1) : f=f_1|_{\hat{\Lambda}_2},
f_1|_{\hat{\Lambda}_2^\perp}=\id\}\\
\Mon^2(\hat{\Lambda}_3)&=\{f\in \Aut(\hat{\Lambda}_3)| \exists f_1 \in \Mon^2(\hat{\Lambda}_1) : f=f_1|_{\hat{\Lambda}_3},
f_1|_{\hat{\Lambda}_3^\perp}=\id\}.
\end{align*}
Note that with this definition, there exist natural inclusions
$\Mon^2(\hat{\Lambda}_3)\subseteq \Mon^2(\hat{\Lambda}_2)\subseteq \Mon^2(\hat{\Lambda}_1)$.

\subsection{Monodromy orbits in $\hat{\Lambda}_3$}\label{subsec:monK32-part}

In this subsection, we consider the sublattice $\hat{\Lambda}_3 = U^{ 3} \oplus (-2)\subset \hat{\Lambda}_1$, where the generator of $(-2)$ is the class $\hdel$.

\begin{nota}
For the rest of this article, fix elements $L_i \in U \subset \hat{\Lambda}_1$ of square $2i$ for each $i\in \bZ$.
E.g. one can choose the elements $ie + f$, where $e,f$ is a standard basis for which $U$ has intersection matrix
$\begin{pmatrix}
0&1\\ 1& 0
\end{pmatrix}$.
\end{nota}

\begin{lemme}\label{lem:K3-2-part}
  The $\Mon^2(\hat{\Lambda}_3)$-orbit of a primitive element in $U^{ 3} \oplus (-2)$ is uniquely determined by
  its square and its divisibility.

  More precisely, we prove the following:
  Let  $v \in \hat{\Lambda}_3$ be a primitive element.
  Then there exists a monodromy operator $f\in \Mon^2(\hat{\Lambda}_3)$  such that
  $v$ is moved to an element of the following form: 
  
  $f(v)=\left\{
\begin{array}{lll}
L_{i} &\textrm{with\ } i=\frac{1}{2}q(v) & \textrm{if\ }\div(v)=1  \\
2L_{i} -\hdel &\textrm{with\ } i=\frac{1}{8}(q(v)+2) & \, \textrm{if\ } \div(v)=2 \\
\end{array}
\right. $
\end{lemme}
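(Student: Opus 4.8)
The plan is to reduce to the classical statement about $\Mon^2$-orbits of primitive vectors in the K3 lattice $U^{3}$ (or rather $U^{3}\oplus(-2)$), which is well understood through Eichler's criterion. Recall that for a lattice $L=U^{2}\oplus L'$, Eichler's criterion says that the orbit of a primitive vector $v$ under the subgroup generated by Eichler transvections (which for $\hat\Lambda_3$ are all monodromy operators, as they are induced from $\Mon^2(S^{[2]})$ or more elementarily from the orthogonal group of $U^{3}$) depends only on $q(v)$ and on the class of $v$ in the discriminant group, i.e.\ here only on $q(v)$ and $\operatorname{div}(v)$. So the first step is to establish that $\Mon^2(\hat\Lambda_3)$ contains enough transvections: by Corollary \ref{cor:inheritedMononHat} every monodromy operator of $S^{[2]}$ commuting with $\iota^{[2]*}$ that is supported on the invariant lattice $U^{3}\oplus E_8(-2)\oplus(-2)$, extended by the identity on the orthogonal complement, gives an element of $\Mon^2(\hat\Lambda_3)$; in particular $O(U^3)$ acts (this is classical for $S^{[2]}$, e.g.\ Markman), and the reflection $R_{\hdel}$ corresponding to $R_\delta$ is available by Corollary \ref{Rdelta}. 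That already gives a transitive enough action.

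Next I would split into the two divisibility cases. If $\operatorname{div}(v)=1$, write $v=v_0+k\hdel$ with $v_0\in U^3$; since $\operatorname{div}(v)=1$ and $\hdel$ has divisibility $2$ in $\hat\Lambda_3$, $v_0$ must be primitive in $U^3$ (and if $k$ is odd then $v_0$ is nonzero mod $2$ in a way that is automatically fine). Using the $O(U^3)$-action I can move $v_0$ into a single copy of $U$; then, working inside $U\oplus(-2)$ with $q(v)=v_0^2-2k^2$, a short explicit computation with Eichler transvections (or just with the isometries of $U\oplus(-2)$) brings $v$ to the standard form $L_i$ with $2i=q(v)$. Here one has to check that such an $L_i\in U\subset\hat\Lambda_3$ has the right square and divisibility $1$, which is immediate from the chosen basis $ie+f$. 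If $\operatorname{div}(v)=2$, then $v\cdot\hat\Lambda_3\subseteq 2\bZ$ forces, writing $v=v_0+k\hdel$, that $v_0\in 2U^3$ and $k$ is odd; so $v=2w+k\hdel$ with $w\in U^3$ primitive and $k$ odd. Replacing $v$ by $\pm v$ and applying $R_{\hdel}$ (which sends $k\hdel$ to $-k\hdel$) one reduces $k$ to $1$, i.e.\ $v=2w-\hdel$ (up to sign and reflection). Then $O(U^3)$ moves $w$ into one copy of $U$ and the $U\oplus(-2)$-isometries bring $w$ to $L_i$; one computes $q(2L_i-\hdel)=8i-2$, so $i=\tfrac18(q(v)+2)$, matching the statement, and $\operatorname{div}(2L_i-\hdel)=2$ since $2L_i$ contributes only even pairings and $\hdel$ has divisibility $2$.

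The main technical obstacle is the careful bookkeeping of divisibility: one must verify both that the claimed normal forms $L_i$ and $2L_i-\hdel$ genuinely have divisibility $1$ and $2$ respectively in $\hat\Lambda_3$ (not merely in the sublattice $U\oplus(-2)$), and that the monodromy transvections used actually preserve divisibility — which they do, being isometries, but one needs that a primitive vector of given square and divisibility in $U^{3}\oplus(-2)$ really is unique up to $O(U^3)\ltimes\langle R_{\hdel}\rangle$ and the standard transvections. For this I would cite, or reprove in this very special rank-$7$ situation, Eichler's criterion (cf.\ the discussion in \cite{Menet-Riess-20} or the standard reference for K3 surfaces): the discriminant group of $\hat\Lambda_3=U^3\oplus(-2)$ is $\bZ/2$, so the only invariants of a primitive vector are $q(v)\bmod$ nothing and its image in $\bZ/2$, which is exactly encoded by $\operatorname{div}(v)\in\{1,2\}$; and the monodromy group we have at our disposal surjects onto the stable orthogonal group, which realizes all such moves. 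The remaining steps (moving $v_0$ or $w$ into a single $U$, and normalizing inside $U\oplus(-2)$) are then elementary explicit manipulations that I would carry out but not belabor here.
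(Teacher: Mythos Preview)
Your overall approach---Eichler's criterion plus the fact that the resulting isometry is inherited from $\Mon^2(S^{[2]})$ via Corollary~\ref{cor:inheritedMononHat}---is exactly what the paper does. The paper applies Eichler (Lemma~\ref{lem:Eichler}) directly to $\hat{\Lambda}_3$, observes that $A_{\hat{\Lambda}_3}\iso\bZ/2\bZ$ so that square and divisibility determine the discriminant class, obtains $f\in\Aut(\hat{\Lambda}_3)$, and then uses Markman's description $\Mon^2(S^{[2]})=O^+(\Lambda_{K3^{[2]}})$ (Theorem~\ref{thm:MonK32}) together with Corollary~\ref{cor:inheritedMononHat} to promote $f$ to a monodromy operator, after possibly swapping a sign in one copy of $U$ to land in $O^+$.

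There is one genuine slip in your hands-on reduction for $\div(v)=2$: the claim that ``replacing $v$ by $\pm v$ and applying $R_{\hdel}$ \dots\ one reduces $k$ to $1$'' is false. The reflection $R_{\hdel}$ sends $2w+k\hdel$ to $2w-k\hdel$, and global sign change gives $-2w-k\hdel$; neither operation, nor any composition of them, alters $|k|$. To change the coefficient of $\hdel$ you need isometries that genuinely mix the $U^3$-part with $\hdel$ (Eichler transvections), which is precisely what Eichler's criterion packages. Since you do invoke Eichler at the end, your argument can be repaired by simply dropping the explicit reduction and applying Eichler once---which is the paper's route.

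One small point you omit: Eichler's criterion gives an isometry in $\Aut(\hat{\Lambda}_3)$ acting trivially on the discriminant, but to conclude it lies in $\Mon^2(\hat{\Lambda}_3)$ you must check it extends to an element of $O^+(\Lambda_{K3^{[2]}})$, not merely $O(\Lambda_{K3^{[2]}})$. The paper handles this by noting that composing with a sign swap in one hyperbolic plane (which fixes the target vectors $L_i$ or $2L_i-\hdel$ if one chooses the other copy of $U$) corrects the orientation if needed.
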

The proof will make use of the following two well-known statements:

The Eichler criterion, which we will frequently use in this section (see \cite[Lemma 3.5]{Gritsenko-Hulek-Sankaran}, originally due to \cite[Chapter 10]{Eichler}\TODO{add a working reference}).
\begin{lemme}\label{lem:Eichler}
  Let $\Gamma$ be a lattice with $U^{ 2} \subseteq \Gamma$. Fix two elements $v,w \in \Gamma$ which
  satisfy
  \begin{enumerate}
  \item $q(v) = q(w)$,
  \item $\div(v)=\div(w) =: r$,
  \item $\frac{v}{r}=\frac {w}{r} \in A_\Gamma \coloneqq \Gamma^\vee / \Gamma$.
  \end{enumerate}
  Then there exists $\phi \in \Aut(\Gamma)$ such that
  $\phi(v)=w$, and such that the induced action $\phi_A$ on the discriminant group $A_\Gamma$ is the identity.
\end{lemme}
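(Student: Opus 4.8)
The statement is the classical Eichler criterion, so the plan is to reconstruct the standard proof via \emph{Eichler transvections}; this approach has the pleasant feature that the transformations it produces act trivially on the discriminant group by construction, so the ``moreover'' part comes for free. We may assume $\Gamma$ is even (this holds in all the applications in this paper), so that $\tfrac12 q(a)\in\bZ$ whenever $a\in\Gamma$. Since $U^2$ is unimodular it splits off as an orthogonal direct summand, so write $\Gamma=U^2\oplus K$ with $K=(U^2)^\perp$ and fix standard bases $e_1,f_1$ and $e_2,f_2$ of the two hyperbolic planes. For an isotropic $u\in\Gamma$ (i.e.~$q(u)=0$) and $a\in u^\perp$ define
$$E_{u,a}(x)=x-(a,x)\,u+(u,x)\,a-\tfrac12 q(a)\,(u,x)\,u.$$
Two elementary verifications are the whole point. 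First, $E_{u,a}\in O(\Gamma)$: one checks directly that $q(E_{u,a}(x))=q(x)$ (the cross terms cancel precisely because $(a,u)=0$ and $q(u)=0$), and $E_{u,a}$ preserves $\Gamma$ since $\Gamma$ is even. Second, and crucially, for $x\in\Gamma^\vee$ the numbers $(a,x)$ and $(u,x)$ are integers because $u,a\in\Gamma$; hence $E_{u,a}(x)-x\in\bZ u+\bZ a\subseteq\Gamma$. Thus $E_{u,a}$ preserves $\Gamma^\vee$ and induces the identity on $A_\Gamma$. Let $E(\Gamma)\le O(\Gamma)$ be the subgroup generated by all $E_{e_i,a}$ and $E_{f_i,a}$; every element of $E(\Gamma)$ acts trivially on $A_\Gamma$.

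With these tools the lemma reduces to the following transitivity claim: $E(\Gamma)$ acts transitively on the set of vectors of a fixed square $q(v)$, fixed divisibility $r=\div(v)$, and fixed class $v/r\in A_\Gamma$. Granting this, pick $\phi\in E(\Gamma)$ with $\phi(v)=w$ (the three hypotheses are exactly these invariants); then $\phi\in O(\Gamma)$ and $\phi_A=\id$, as required. Transitivity is proved by moving an arbitrary such $v$ to a normal form depending only on $(q(v),r,[v/r])$. The mechanism is the translation identity: for $a\in e_1^\perp$ one computes
$$E_{e_1,a}(v)=v+(e_1,v)\,a-\big((a,v)+\tfrac12 q(a)(e_1,v)\big)e_1,$$
so whenever $(e_1,v)\neq0$ the component of $v$ in $e_1^\perp/\langle e_1\rangle$ can be translated by any multiple of $(e_1,v)$; the analogous statement holds for $f_1,e_2,f_2$.

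I would carry out the reduction in three stages. (a) Using the second hyperbolic plane to generate nonzero pairings and then the first to clean up, arrange that the $U^2$-component of $v$ is concentrated in $U_1=\langle e_1,f_1\rangle$; note that $\div(v)=r$ forces the $U^2$-component to lie in $rU^2$ (as $U^2$ is unimodular and $(v,U^2)\subseteq r\bZ$), so this is consistent. (b) Translate the $K$-component down to a fixed pre-chosen lift $k_0\in K$ of the class $v/r\in A_\Gamma\cong A_K$; this is possible modulo $rK$ because the available translations are by multiples of $r$, and the residue is pinned exactly by $[v/r]$. (c) Reduce the remaining hyperbolic-plane part, where a vector is determined by its square alone, reaching $v_0=re_1+mf_1+k_0$ with $m$ fixed by $2rm+q(k_0)=q(v)$ (the needed congruence $q(v)\equiv q(k_0)\pmod{2r}$ being automatic from $v/r\in\Gamma^\vee$). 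Two vectors with the same invariants reach the same $v_0$, which gives transitivity.

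The main obstacle is stages (a)--(b): bookkeeping the simultaneous effect of the transvections on the $U^2$- and $K$-coordinates while preserving divisibility and the discriminant class, and in particular verifying that \emph{two} copies of $U$ really provide enough room both to produce a pairing equal to $r$ and to absorb the $K$-component (a single hyperbolic plane does not suffice in general). This combinatorial reduction is exactly the content of \cite[Lemma 3.5]{Gritsenko-Hulek-Sankaran} and \cite[Chapter 10]{Eichler}, and I would follow their argument for the remaining details.
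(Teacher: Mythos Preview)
The paper does not give its own proof of this lemma; it is stated with the citation ``see \cite[Lemma 3.5]{Gritsenko-Hulek-Sankaran}, originally due to \cite[Chapter 10]{Eichler}'' and used as a black box. Your sketch via Eichler transvections is precisely the standard argument in those references, and the key observation that $E_{u,a}$ acts trivially on $A_\Gamma$ is correctly isolated; there is nothing to compare against in the paper itself.
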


Furthermore, recall the following description of the monodromy group of varieties of $K3^{[2]}$-type:
\begin{thm}[{see \cite[Lemma 9.2]{Markman11}}] \label{thm:MonK32}
  For a K3 surface $S$ the monodromy group $\Mon^2(S^{[2]})$ coincides with $O^+(\Lambda_{K3^{[2]}})$, which is the order 2  subgroup of $\Aut(\Lambda_{K3^{[2]}})$ which consists of automorphisms preserving the positive cone.
\end{thm}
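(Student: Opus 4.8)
The plan is to prove the two inclusions $\Mon^2(S^{[2]})\subseteq O^+(\Lambda_{K3^{[2]}})$ and $O^+(\Lambda_{K3^{[2]}})\subseteq \Mon^2(S^{[2]})$ separately, following Markman. A preliminary observation worth recording is that the discriminant group $A_{\Lambda_{K3^{[2]}}}\coloneqq \Lambda_{K3^{[2]}}^\vee/\Lambda_{K3^{[2]}}\cong \bZ/2\bZ$ admits only the trivial automorphism; hence every isometry acts trivially on it, which is exactly why Markman's general answer collapses to the full $O^+$ in this case. For the inclusion $\Mon^2\subseteq O^+$: a monodromy operator is, by Definition \ref{transp}, a parallel transport operator, and parallel transport in the local system $Rs_*\bZ$ preserves the cup product and hence the Beauville--Bogomolov form, so $\Mon^2\subseteq O(\Lambda_{K3^{[2]}})$. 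To land in the index-two subgroup $O^+$ I would use that the positive-definite $3$-spaces in the cone of signature $(3,b_2-3)$ carry a canonical orientation, represented continuously by the ordered triple $(\Rea\sigma,\Ima\sigma,\omega)$ for a K\"ahler class $\omega$; since a monodromy operator arises from transport along a path in a connected base, this orientation is preserved, giving $\Mon^2\subseteq O^+$.

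The reverse inclusion carries all the difficulty. The strategy is to exhibit enough explicit monodromy operators and then prove a lattice-theoretic generation statement. I would use three families of operators. First, monodromy of the K3 surface $S$ itself: deformations and automorphisms of $S$ realize all of $\Mon^2(S)=O^+(\Lambda_{K3})$ (global Torelli for K3 surfaces) and lift to parallel transport operators of $S^{[2]}$ acting on $\delta^\perp\cong\Lambda_{K3}$ and fixing $\delta$; since $\div(\delta)=2$ and $A_{\Lambda_{K3^{[2]}}}\cong\bZ/2\bZ$, these give exactly the stabilizer $O^+(\Lambda_{K3})$ of $\delta$ inside $O^+$. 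Second, the reflection $R_\delta$ in the exceptional class (with $q(\delta)=-2$), which lies in $O^+$ and is a monodromy operator. Third, and this is the essential ingredient, at least one monodromy operator $\rho$ that does \emph{not} preserve the line $\bZ\delta$; such an operator cannot come from deformations of $S$ and must be produced from a genuinely global geometric family on the Hilbert-scheme side.

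With these operators in hand, I would finish by an orbit-stabilizer argument. By Eichler's criterion (Lemma \ref{lem:Eichler}, applicable since $U^2\subseteq\Lambda_{K3^{[2]}}$), the group $O^+(\Lambda_{K3^{[2]}})$ acts transitively on primitive vectors of fixed square and divisibility sharing $\delta$'s class in $A_{\Lambda_{K3^{[2]}}}$; in particular this set is the full $O^+$-orbit of $\delta$. It then suffices to check that the explicit monodromy operators above already act transitively on this orbit: granting this, the standard fact that a subgroup acting transitively on the orbit, together with the stabilizer of a point, generates the whole group yields $\langle O^+(\Lambda_{K3}),R_\delta,\rho\rangle=O^+(\Lambda_{K3^{[2]}})$. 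Concretely, given $g\in O^+$ I would find a product $h$ of the explicit operators with $h(\delta)=g(\delta)$, so that $h^{-1}g$ fixes $\delta$ and restricts to an element of $O^+(\delta^\perp)=O^+(\Lambda_{K3})$, which is itself monodromy; therefore $g\in\Mon^2(S^{[2]})$.

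The main obstacle is the construction of the operator $\rho$ moving $\delta$, which is the crux of Markman's argument: producing a parallel transport operator whose action does not stabilize $\bZ\delta$ requires a global deformation of $S^{[2]}$ (for instance realizing $S^{[2]}$ as a moduli space of sheaves with varying Mukai vector, so that derived and universal-family constructions supply the needed isometry), and this geometric input is genuinely deeper than anything available on the K3 surface alone. A secondary, purely lattice-theoretic, subtlety is to track the $O^+$ versus $O$ distinction and the (here trivial) discriminant action at each transitivity step, so as to conclude an equality rather than a mere inclusion; this bookkeeping is light precisely because $A_{\Lambda_{K3^{[2]}}}\cong\bZ/2\bZ$, but it must still be carried out to close the argument.
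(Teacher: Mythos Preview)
The paper does not prove this theorem: it is stated with an attribution to \cite[Lemma 9.2]{Markman11} and used as a black box throughout Section \ref{sec:monodromy-orbits}. There is no ``paper's own proof'' to compare your proposal against.

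That said, your outline is a fair sketch of the ingredients behind Markman's result, and your identification of the crux---producing a monodromy operator that moves $\delta$---is accurate. A couple of remarks if you want to tighten the sketch. First, the reflection $R_\delta$ does \emph{not} lie in $O^+$: it has real spinor norm $-1$ (equivalently, it reverses the orientation on positive $3$-spaces), so it is not a monodromy operator. What Markman actually shows is that $-R_\delta$, and more generally $-R_u$ for $(-2)$-classes $u$ of divisibility $2$, belongs to $\Mon^2$; these are the operators coming from degenerations and Mukai reflections on moduli of sheaves. Second, your orbit-stabilizer strategy is correct in spirit, but the transitivity on the $O^+$-orbit of $\delta$ is obtained precisely through these $-R_u$ operators (and the $R_u$ for $(-2)$-classes of divisibility $1$, which do lie in $O^+$), not through a single mysterious $\rho$. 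With those corrections the outline matches Markman's argument.
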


\begin{proof}[Proof of Lemma \ref{lem:K3-2-part}]
  The proof of Lemma \ref{lem:K3-2-part} is an immediate consequence of the previous statements.

  First apply the Eichler criterion (Lemma \ref{lem:Eichler}) for $\Gamma =\hat{\Lambda}_3$:
  Observe that for a given element $v\in U^{ 3}\oplus (-2)$ the claimed image element has
  the same square and divisibility.

  In the case of $\div(v)=1$, note that $\frac{v}{1} = 0 \in A_{\hat{\Lambda}_3}$ (since $\frac{v}{1} \in
  \hat{\Lambda}_3$). Therefore, the Eichler criterion applies automatically in this case.

  For the case of $\div(v)=2$, note that $v$ can be written as $v=aL + b\hdel$ with a primitive element $L\in
  U^{ 3}$ and $\hdel$ as before.
  The fact that $\div(v)=2$ implies that $a$ is divisible by two.
  Since we assumed that $v$ is primitive, $b$ is odd. Note that $A_{\hat{\Lambda}_3} \iso \bZ/2\bZ$ is
  spanned by the image of $\frac{\hdel}{2}$, since $U$ is unimodular.
  Therefore, we can also apply the Eichler criterion in this case.

  In both cases, the Eichler criterion shows that there is $f \in \Aut(\hat{\Lambda}_3))$ where
  $f(v)$ coincides with the claimed image. 
  
  Extending this by the identity on the respective orthogonal complements, we will by abuse of notation
  consider $f \in \Aut (\Lambda_{K3^{[2]}})$ resp.~$f\in \Aut(\hat{\Lambda}_1)$.
  Applying Theorem \ref{thm:MonK32}, we observe that up to potentially swapping a sign in one of the copies of $U$, $f\in \Aut (\Lambda_{K3^{[2]}})$ is in fact an
  element of $\Mon^2(\Lambda_{K3^{[2]}})$.
   And therefore, Corollary \ref{cor:inheritedMononHat} implies that $f\in \Aut (\hat{\Lambda}_1)$
  is in $\Mon^2(\hat{\Lambda}_1)$ as claimed.
\end{proof}

\subsection{Monodromy orbits in the lattice $\hat{\Lambda}_2$}
In this section, we study the monodromy group for the lattice $\hat{\Lambda}_2 = U^3\oplus E_8(-2) \oplus
(-2)$. Notice, that via the identifications described in Section \ref{sec:eq-lattices} the lattice
$\hat{\Lambda}_2$ corresponds to the 
${\iota^{[2]}}^*$-invariant lattice of $\Lambda_{K3^{[2]}}$.
Let us refine the methods from the previous section to describe properties of the monodromy orbits in this
lattice $\hat{\Lambda}_2$.
For this we need to deal with the $E_8(-2)$-part of the lattice.

For the basic notions considering discriminant groups of lattices, we refer to \cite{Nikulin}.
Recall that $E_8$ is a unimodular lattice and therefore, the discriminant group $A_{E_8}=0$ is trivial.
Pick a basis of  $E_8(-2)$ for which the intersection matrix is: \TODO{(-2) times the one associated
  to the $E_8$-graph}
$$\begin{pmatrix}
-4 & 2 & 0 & 0 & 0 & 0 & 0 & 0\\
2 & -4 & 2 & 0 & 0 & 0 & 0 & 0\\
0 & 2 & -4 & 2 & 0 & 0 & 0 & 0\\
0 & 0 & 2 & -4 & 2 & 0 & 0 & 0\\
0 & 0 & 0 & 2 & -4 & 2 & 0 & 2\\
0 & 0 & 0 & 0 & 2 & -4 & 2 & 0\\
0 &0 & 0 & 0 & 0 & 2 & -4 & 0 \\
0 &0 &0 & 0 & 2 & 0 & 0 & -4  
\end{pmatrix}.
$$
For the lattice $E_8(-2)$ one can deduce that the discriminant group $A_{E_8(-2)}\iso (\bZ/2\bZ)^{ 8}$ which is
generated by the residue classes of one half of the generators of the lattice.
The quadratic form $q$ on $E_8(-2)$ induces a quadratic form $\bar{q}\colon A_{E_8(-2)}\to \bZ/2\bZ$.
The possible values of $\bar{q}$ on elements of $A_{E_8(-2)}$ are in fact 1 and 0 (these values are achieved
e.g.~by the residue classes of  $\frac{v}{2}$ for lattice elements of $v\in E_8(-2)$ with squares $-4$ and $-8$).

Denote by $\Aut(A_{E_8(-2)})$ the automorphisms of $A_{E_8(-2)}$ preserving the quadratic form $\bar{q}$.
Note that every automorphism $\phi\in \Aut(E_8(-2))$ induces an element $\bar{\phi}\in
\Aut(A_{E_8(-2)})$. Therefore, there exists an induced action of $\Aut(E_8(-2))$ on $A_{E_8(-2)}$.
\begin{lemme}\label{lem:A-E8-orbits}
  There exist precisely three $\Aut(E_8(-2))$-orbits in $A_{E_8(-2)}$:
  They are given by $0$, and by non-zero elements $\overline{e_1}, \overline{e_2}\in A_{E_8(-2)}$ with $\bar{q}(\overline{e_1})=1$ respectively $\bar{q}(\overline{e_2})=0$.
\end{lemme}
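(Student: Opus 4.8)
The plan is to reduce the statement to a counting argument about the action of the orthogonal group $O(E_8)$ on $\mathbb{F}_2$-vector spaces, since scaling the form by $2$ does not change the lattice automorphisms: $\Aut(E_8(-2)) \iso \Aut(E_8)=W(E_8)\times\{\pm\id\}$, and the induced action on $A_{E_8(-2)}\iso E_8/2E_8 \iso \mathbb{F}_2^8$ is the natural action of $O(E_8)$ on $E_8\otimes\mathbb{F}_2$. The quadratic form $\bar q$ on $A_{E_8(-2)}$ with values in $\mathbb{Z}/2\mathbb{Z}$ is, under this identification, the mod-$2$ reduction of $\tfrac12 q_{E_8}$ (well-defined because $E_8$ is even), which is a nondegenerate quadratic form of plus type on $\mathbb{F}_2^8$ (one checks the polarization is the nondegenerate symplectic form coming from unimodularity of $E_8$, and the Arf invariant is $0$ because $E_8$ contains a copy of $U$ over $\mathbb{F}_2$, equivalently the $2$-adic symbol of $E_8$ is that of $\mathrm{I\!I}_{8,0}$). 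So the claim becomes: the orthogonal group $O^+_8(\mathbb{F}_2)$ of this form, or rather the image of $W(E_8)$ inside it, acts on $\mathbb{F}_2^8$ with exactly three orbits, namely $\{0\}$, the nonzero vectors with $\bar q=1$, and the nonzero vectors with $\bar q=0$.

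First I would record the identifications above carefully: that $A_{E_8(-2)}=\tfrac12 E_8(-2)^\vee/E_8(-2)$ is canonically $E_8/2E_8$, that $\bar q$ corresponds to $v+2E_8\mapsto \tfrac12 q_{E_8}(v)\bmod 2$, and that the associated bilinear form is the reduction mod $2$ of $q_{E_8}$, which is nondegenerate since $\disc E_8=1$. Then I would invoke Witt's theorem for the nondegenerate quadratic form $\bar q$ over $\mathbb{F}_2$: the full orthogonal group $O(\bar q)=O^+_8(\mathbb{F}_2)$ acts transitively on nonzero vectors of each fixed value of $\bar q$ (this is standard — any two nonzero vectors with the same $\bar q$-value lie in isometric nondegenerate subspaces, or are both isotropic/both anisotropic, and Witt extension applies; for the isotropic case one uses that an isotropic vector extends to a hyperbolic pair). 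It remains to check that $W(E_8)\to O^+_8(\mathbb{F}_2)$ is surjective, or at least has image large enough to still act transitively on each of the two nonzero strata. The key point is that the reflection $s_\alpha$ in a root $\alpha$ (with $q_{E_8}(\alpha)=-2$, so $\bar q(\bar\alpha)=1$) reduces mod $2$ to the orthogonal transvection $x\mapsto x + (x,\bar\alpha)\bar\alpha$ associated to the anisotropic vector $\bar\alpha$; the $240$ roots give $120$ such anisotropic vectors, which I claim are all the $120$ anisotropic vectors of $\bar q$ (count: a plus-type form on $\mathbb{F}_2^8$ has $2^7+2^3=136$ zeros of $\bar q$ in... wait, rather $2^7-2^3$ nonzero isotropic plus one, giving $135$ nonzero isotropic and $120$ anisotropic vectors — matching the root count). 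Since the transvections in all anisotropic vectors generate $O^+_8(\mathbb{F}_2)$ (a classical fact about orthogonal groups over $\mathbb{F}_2$ generated by transvections), $W(E_8)$ surjects onto $O^+_8(\mathbb{F}_2)$.

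Given surjectivity, Witt's theorem yields exactly the three orbits claimed: $0$; the $120$ anisotropic nonzero vectors (value $1$, represented by the image of a root, i.e.~of $\tfrac12 v$ for $v\in E_8(-2)$ with $v^2=-4$); and the $135$ isotropic nonzero vectors (value $0$, represented by the image of $\tfrac12 v$ for $v\in E_8(-2)$ with $v^2=-8$, e.g.~a sum of two orthogonal roots). I would then phrase the final answer exactly as in the statement: pick $e_1, e_2\in E_8(-2)$ with $e_1^2=-4$, $e_2^2=-8$, set $\overline{e_i}=\tfrac12 e_i\bmod E_8(-2)$, and note $\bar q(\overline{e_1})=1$, $\bar q(\overline{e_2})=0$.

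The main obstacle is the surjectivity of $W(E_8)\to O^+_8(\mathbb{F}_2)$ — everything else is bookkeeping. Concretely, the delicate points are: (a) verifying the mod-$2$ form is of plus type (Arf invariant $0$) rather than minus type, which I would do by exhibiting two orthogonal hyperbolic planes inside $E_8\otimes\mathbb{F}_2$, e.g.~using the $U\oplus U$-like sublattice structure, or simply by quoting that $E_8\otimes\mathbb{F}_2$ with $\tfrac12 q$ is $\mathrm{I\!I}_8(\mathbb{F}_2)$ of plus type; and (b) identifying $s_\alpha\bmod 2$ with the transvection and citing (or re-deriving) that orthogonal transvections in anisotropic vectors generate $O^+$. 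If one prefers to avoid the generation statement, an alternative is a direct Witt-extension argument using only root reflections: given any two nonzero vectors $\bar v,\bar w$ with the same $\bar q$-value, lift to primitive $v,w\in E_8$ of the same square mod $8$ and use the known transitivity of $W(E_8)$ on primitive vectors of $E_8$ of a given (small) norm — but this is essentially the same difficulty repackaged. I would present the transvection-generation route as the cleanest, citing a standard reference for $O_n(\mathbb{F}_2)$ being generated by its transvections.
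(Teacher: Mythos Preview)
Your proof is correct and takes a genuinely different route from the paper. The paper argues on the lattice level, citing Griess: the natural map from $\{0\}\cup\{v:v^2=-4\}\cup\{v:v^2=-8\}\subset E_8(-2)$ to $A_{E_8(-2)}$ is surjective, and $\Aut(E_8(-2))$ acts transitively on the set of vectors of square $-4$ and on the set of vectors of square $-8$ (these are the roots and the norm-$4$ vectors of $E_8$), so their images are single orbits. You instead identify $A_{E_8(-2)}$ with the $\mathbb{F}_2$-quadratic space $(E_8/2E_8,\tfrac12 q\bmod 2)$ of plus type, show that $W(E_8)\twoheadrightarrow O_8^+(\mathbb{F}_2)$ because root reflections reduce to the full set of anisotropic transvections, and finish with Witt's theorem. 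Your approach is more structural and produces the orbit sizes $120$ and $135$ for free; the paper's is shorter given the reference and avoids the finite-geometry detour. One cosmetic slip: $-\id$ already lies in $W(E_8)$, so $\Aut(E_8)=W(E_8)$ rather than $W(E_8)\times\{\pm\id\}$; this does not affect your argument.
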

\begin{proof}
This lemma is a direct consequence of results in \cite[Chapter 4]{Griess}.
Let $$\mathbb{L}_0=\left\{0\right\},\ \mathbb{L}_{-4}=\left\{\left.\alpha\in E_8(-2)\right|\ \alpha^2=-4\right\},\ \mathbb{L}_{-8}=\left\{\left.\alpha\in E_8(-2)\right|\ \alpha^2=-8\right\}.$$
As explained in \cite[just after Notation 4.2.32]{Griess}, the natural map $b:\mathbb{L}_0\cup \mathbb{L}_{-4} \cup \mathbb{L}_{-8}\rightarrow A_{E_8(-2)}$ is surjective. The images $b(\mathbb{L}_{-4})$ and $b(\mathbb{L}_{-8})$ correspond respectively to the elements of square 1 and the non trivial elements of square 0 in $A_{E_8(-2)}$. 

However by \cite[Corollary 4.2.41]{Griess} and \cite[Lemma 4.2.46 (2)]{Griess} respectively, $\Aut(E_8(-2))$ acts transitively on $\mathbb{L}_{-8}$ and on $\mathbb{L}_{-4}$. The result follows.
%
  %
%
%
\end{proof}

Consider $E_8(-2) \subset E_8(-1)\oplus E_8(-1)$ consisting of elements of the form $(e,e)$ for $e\in
E_8(-1)$ and denote the sublattice consisting of elements of the form $(e, -e)$ by $E^a$. Note that this gets
naturally identified with the anti-invariant lattice of $S$ via the marking described in Section
  \ref{sec:eq-lattices}.
\begin{lemme} \label{lem:extension}
  With this notation, any lattice isometry $\phi \in \Aut(\hat{\Lambda}_2)$ can be extended to a lattice isometry $\Phi \in \Aut(U^3\oplus
  E_8(-1)\oplus E_8(-1)\oplus (-2))$, with the additional property, that $\Phi$ preserves the sublattice
  $E^a$.
\end{lemme}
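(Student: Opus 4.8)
```latex
\begin{proof}[Proof sketch]
The plan is to realise both lattices as subquotients of $\Lambda_{K3}\oplus(-2)=U^3\oplus E_8(-1)^2\oplus(-2)$ and to use the standard theory of extensions of lattice isometries along gluing data (Nikulin), combined with the fact that $E_8(-2)$ and $E^a$ both embed into $E_8(-1)\oplus E_8(-1)$ in the diagonal/anti-diagonal way. Write $L:=E_8(-1)\oplus E_8(-1)$, and let $E^+\iso E_8(-2)$ be the ``diagonal'' sublattice $\{(e,e)\}$ and $E^a\iso E_8(-2)$ the ``anti-diagonal'' sublattice $\{(e,-e)\}$. These are mutually orthogonal primitive sublattices of $L$, both of finite index, and $L$ is recovered from $E^+\oplus E^a$ as an overlattice determined by the graph of the natural isomorphism $\bar g\colon A_{E^+}\to A_{E^a}$ (both discriminant groups are $(\bZ/2)^8$ with the induced quadratic forms, and $\bar g$ identifies them compatibly, up to the sign $-\id$ which is an isometry of $A_{E_8(-2)}$). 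First I would record this description precisely: $L$ is the preimage in $(E^+)^\vee\oplus(E^a)^\vee$ of the graph $\Gamma_{\bar g}\subset A_{E^+}\oplus A_{E^a}$.

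Next, given $\phi\in\Aut(\hat\Lambda_2)=\Aut(U^3\oplus E_8(-2)\oplus(-2))$, I would like to produce $\Phi$. The subtlety is that $\phi$ need not preserve the summands: it may mix $U^3\oplus(-2)$ with $E_8(-2)$. However, the sublattice $E_8(-2)\subset\hat\Lambda_2$ is \emph{not} canonically characterised, but its \emph{saturation-type} behaviour is: every element of $\hat\Lambda_2$ of divisibility $2$ whose square is $\equiv 0\pmod 4$ contributes, and more usefully, the orthogonal complement $(U^3\oplus(-2))^\perp=E_8(-2)$ \emph{is} canonical once we fix $U^3\oplus(-2)$ only up to the $\Aut$-action. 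The cleanest route: $\phi$ acts on $A_{\hat\Lambda_2}=\bZ/2\oplus A_{E_8(-2)}$ (the first factor from the $(-2)$), and since the $\bZ/2$ summand has $\bar q=-\tfrac12$ while the values on $A_{E_8(-2)}$ are $0,1$, the automorphism $\bar\phi$ preserves the decomposition $\bZ/2\oplus A_{E_8(-2)}$ of the discriminant group. Hence $\bar\phi$ restricts to an automorphism of $A_{E_8(-2)}$ preserving $\bar q$. By Lemma \ref{lem:A-E8-orbits} and its proof (transitivity on $\mathbb L_{-4}$ and $\mathbb L_{-8}$), such discriminant automorphisms are all induced by elements of $\Aut(E_8(-2))$; composing $\phi$ with (the $U^3\oplus(-2)$-identity extension of) a suitable $\psi\in\Aut(E_8(-2))$, we may assume $\bar\phi|_{A_{E_8(-2)}}=\id$.

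With $\bar\phi$ trivial on $A_{E_8(-2)}$, I would extend $\phi$ to $\hat\Lambda_2\oplus E^a$ (an index-$2^8$ sublattice of $U^3\oplus L\oplus(-2)$) by declaring it to act on $E^a\iso E_8(-2)$ via the \emph{same} rule as on the $E_8(-2)=E^+$ summand, transported through the fixed isomorphism $\bar g\colon A_{E^+}\to A_{E^a}$. Since $\phi$ acts trivially on $A_{E^+}$, the transported action is trivial on $A_{E^a}$ as well, so the pair of actions is compatible with the gluing graph $\Gamma_{\bar g}$; by \cite[Corollary 1.5.2]{Nikulin} (extension of isometries respecting discriminant data) this glues to an isometry $\Phi\in\Aut(U^3\oplus L\oplus(-2))=\Aut(U^3\oplus E_8(-1)^2\oplus(-2))$. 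By construction $\Phi(E^a)=E^a$, as required. Finally one undoes the auxiliary twist by $\psi$: one checks that the chosen $\psi\in\Aut(E_8(-2))$ likewise extends, preserving $E^a$ — indeed $\psi$ acts on $A_{E^+}$ and we transport it to $A_{E^a}$ exactly as above — so the composite is the desired extension of the original $\phi$. The main obstacle is the bookkeeping in the second paragraph: verifying that an arbitrary $\phi\in\Aut(\hat\Lambda_2)$ really does act on the discriminant group in block form and hence reduces, modulo $\Aut(E_8(-2))$, to the identity on $A_{E_8(-2)}$; once that is in place, Nikulin's gluing lemma does the rest essentially formally.
\end{proof}
```
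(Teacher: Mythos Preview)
Your approach is correct and is essentially that of the paper: invoke Nikulin's extension criterion \cite[Corollary 1.5.2]{Nikulin} together with the surjection $\Aut(E_8(-2))\twoheadrightarrow\Aut(A_{E_8(-2)})$ to produce a compatible isometry on the orthogonal complement $E^a$ and glue. Two small remarks: the paper proceeds more directly---choose $\psi\in\Aut(E^a)$ whose induced action on $A_{E^a}$ matches $\bar\phi|_{A_{E_8(-2)}}$ via the gluing isomorphism, and extend $(\phi,\psi)$ in one step---rather than your compose--extend--undo detour (in which the phrase ``act on $E^a$ via the same rule as on $E^+$'' should simply read ``act by the identity on $E^a$'', since the modified $\phi$ need not preserve $E^+$ as a sublattice); and the surjectivity $\Aut(E_8(-2))\to\Aut(A_{E_8(-2)})$ is a known fact about $W(E_8)$ but is not a formal consequence of the three-orbit statement in Lemma~\ref{lem:A-E8-orbits}. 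Your verification that any $\bar\phi$ must preserve the summand $A_{E_8(-2)}\subset A_{\hat\Lambda_2}$ (because its elements are exactly those with $\bar q\in\bZ/2\bZ$, whereas the other coset has $\bar q\in\tfrac12+\bZ$) is a genuine detail the paper leaves implicit.
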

\begin{proof}
  This is an immediate consequence of \cite[Corollary 1.5.2]{Nikulin} (applied to twice the lattice
  $\hat{\Lambda}_2$) and the surjection $\Aut(E_8(-2)) \surj \Aut(A_{E_8(-2)})$ (which enables us to choose an appropriate
  extension on the orthogonal complement $E^a$ of $\hat{\Lambda}_2$). 
\end{proof}

Fix two elements in $E_8(-2)$: one element $e_1$ of square $-4$ and one element $e_2$ of square $-8$.  Note
that according to Lemma \ref{lem:A-E8-orbits} the residue classes of $\frac{e_1}{2}$ and  $\frac{e_2}{2}$ in $A_{E_8(-2)}$ represent the two
non-zero orbits under the action of the isometry group. For
coherence of the notation adept the choices such that the residue of $\frac{e_1}{2}$ in the discriminant is
$\overline{e_1}$ and the residue of $\frac{e_2}{2}$ is $\overline{e_2}$.
\begin{prop} \label{prop:6monorb-in-invariant}
  Let $v\in \hat{\Lambda}_2$ be a primitive non-zero element. Denote by $v_{E_8}$  the projection of $v$ to the $E_8(-2)$-part
  of the lattice, and let ${\bar{v}}_{E_8}$ be the image of $\half v_{E_8}$ in the discriminant group $A_{E_8(-2)}$.
  Then there exists a monodromy operator
  $f\in \Mon^2(\hat{\Lambda}_2)$ such that
     $$f(v)=\left\{
\begin{array}{lllll}
1) &L_{i} & \textrm{if\ }\div(v)=1&\textrm{with\ } i=\frac{1}{2}q(v) &  \\
2) &2L_{i} - \hdel & \, \textrm{if\ } \div(v)=2, & q(v)=8i-2,  & \textrm{and\ } {\bar{v}}_{E_8}=0\\
3) &2L_{i} + e_1 & \, \textrm{if\ } \div (v)=2,& q(v)=8i-4&\\
4) &2L_{i+1} + e_2  & \, \textrm{if\ } \div(v)=2, & q(v)=8i  & \\
5) &2L_{i} + e_1 - \hdel & \, \textrm{if\ } \div(v)=2, & q(v)=8i-6  & \\
6) &2L_{i+1} + e_2 -\hdel & \, \textrm{if\ } \div(v)=2, & q(v)=8i-2,  & \textrm{and\ } {\bar{v}}_{E_8}\neq 0.\\
\end{array}
\right. $$
\end{prop}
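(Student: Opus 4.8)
The plan is to mimic the proof of Lemma~\ref{lem:K3-2-part}, but now tracking the extra $E_8(-2)$-component by means of the discriminant group $A_{E_8(-2)}$ and its classification of $\Aut(E_8(-2))$-orbits (Lemma~\ref{lem:A-E8-orbits}). First I would fix a primitive non-zero $v\in\hat{\Lambda}_2 = U^3\oplus E_8(-2)\oplus(-2)$ and write $v = v_U + v_{E_8} + b\hdel$ with $v_U\in U^3$, $v_{E_8}\in E_8(-2)$, $b\in\bZ$. The case $\div(v)=1$ is handled exactly as before: $\frac{v}{1}=0\in A_{\hat{\Lambda}_2}$, so the Eichler criterion (Lemma~\ref{lem:Eichler}, applicable since $U^2\subseteq\hat{\Lambda}_2$) gives $f\in\Aut(\hat{\Lambda}_2)$ with $f(v)=L_i$, $i=\tfrac12 q(v)$; extending by the identity on the orthogonal complement and invoking Theorem~\ref{thm:MonK32} together with Corollary~\ref{cor:inheritedMononHat} (up to a sign swap in a copy of $U$) shows $f\in\Mon^2(\hat{\Lambda}_2)$.

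For $\div(v)=2$ the key point is to first normalize the $E_8(-2)$-part. Since $\div(v)=2$, the coefficient of $v$ in the unimodular $U^3$-summand is even, and $v_{E_8}+b\hdel$ is "odd" in an appropriate sense; the residue $\bar v\in A_{\hat{\Lambda}_2}\iso A_{E_8(-2)}\oplus\bZ/2\bZ$ is $(\bar v_{E_8},\bar b)$ with $\bar b$ the class of $b\hdel/2$. By Lemma~\ref{lem:A-E8-orbits} there are exactly three $\Aut(E_8(-2))$-orbits in $A_{E_8(-2)}$, represented by $0$, $\overline{e_1}$ (with $\bar q=1$), and $\overline{e_2}$ (with $\bar q=0$). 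I would use an automorphism of the $E_8(-2)$-summand (which by Lemma~\ref{lem:extension} lifts appropriately, but more simply just extends by identity on $U^3\oplus(-2)$ to an element of $\Aut(\hat\Lambda_2)$) to move $\bar v_{E_8}$ into one of these three representatives; one then adjusts the representative lattice vector $v_{E_8}$ to be exactly $0$, $e_1$, or $e_2$ by further modifying $v_U$ using Eichler-type moves in $U^3\oplus E_8(-2)$ that change $v_{E_8}$ only by elements of $2E_8(-2)$ while fixing its residue — here one absorbs the discrepancy into the $U^3$-part, adjusting the $L_i$-index to keep the square fixed. This is where the case split according to $q(v)\bmod 8$ arises: writing $v_{E_8}\in\{0,e_1,e_2\}$ and $b\in\{0,1\}$ (after using $\hdel\mapsto-\hdel$ if needed), the possible squares are $q(v) = 2i' + q(v_{E_8}) - 2b^2$ and, since $q(v_{E_8})\in\{0,-4,-8\}$, this yields exactly the six listed normal forms once one matches residues: the constraint ${\bar v}_{E_8}=0$ distinguishes case~2) from case~6) (both with $q(v)\equiv -2\bmod 8$ and $\div=2$), since case~2) has trivial $E_8$-residue while case~6) has ${\bar v}_{E_8}=\overline{e_2}\neq 0$.

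After this normalization the Eichler criterion finishes: the target element in each of cases~2)--6) is primitive, has the prescribed square, divisibility $2$, and the same residue in $A_{\hat{\Lambda}_2}$ as $v$ (this is precisely what the $E_8$-orbit analysis guarantees), so Lemma~\ref{lem:Eichler} produces $f\in\Aut(\hat{\Lambda}_2)$ sending $v$ to the target with trivial induced action on the discriminant. Extending by the identity on $\hat\Lambda_2^\perp\subset\hat\Lambda_1$ and on $E^a$, and applying Theorem~\ref{thm:MonK32} and Corollary~\ref{cor:inheritedMononHat} exactly as in the proof of Lemma~\ref{lem:K3-2-part} (possibly composing with a sign change on a hyperbolic summand to land in $O^+$), we obtain $f\in\Mon^2(\hat{\Lambda}_2)$ as required. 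The main obstacle I expect is the bookkeeping in the second step: verifying that one can realize each of the three $E_8(-2)$-residue classes by the specific lattice vectors $0,e_1,e_2$ \emph{simultaneously} with the correct $U^3$-adjustment so that square and residue match one of the six normal forms, and in particular checking that cases~2) and 6) are genuinely the only two sharing $(q\bmod 8,\div)=(-2,2)$ and are separated exactly by the vanishing of ${\bar v}_{E_8}$ — this requires a careful but elementary computation of $\bar q$ on $A_{E_8(-2)}$ and of how adding elements of $2E_8(-2)$ to $v_{E_8}$ affects $q(v)$ modulo $8$.
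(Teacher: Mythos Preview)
Your approach is essentially the same as the paper's: normalize the $A_{E_8(-2)}$-part of the discriminant class via an isometry of the $E_8(-2)$-summand (lifted to a monodromy operator of $S^{[2]}$ as the diagonal $(\phi_1,\phi_1)\in\Aut(E_8(-1)^2)$, then descended via Corollary~\ref{cor:inheritedMononHat}), and then apply the Eichler criterion in $\hat\Lambda_2$ to reach the listed target, extending to $\Lambda_{K3^{[2]}}$ via Lemma~\ref{lem:extension}. Two minor points: your intermediate step of adjusting $v_{E_8}$ to be \emph{literally} one of $0,e_1,e_2$ is unnecessary --- once $\bar v\in\{0,\overline{e_1},\overline{e_2}\}\times\bZ/2\bZ$, a single application of Lemma~\ref{lem:Eichler} already moves $v$ to the target; and the final extension is not quite ``exactly as in Lemma~\ref{lem:K3-2-part}'', since $\hat\Lambda_2$ is not a direct summand of $\Lambda_{K3^{[2]}}$ (the $E_8(-2)$ sits diagonally in $E_8(-1)^2$), which is precisely why Lemma~\ref{lem:extension} is invoked here rather than a naive extension by the identity.
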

\begin{remark}
  Note that the values of $q$ and $\div$ uniquely distinguish the orbit, except from the
  cases 2 and 6, where the additional condition on ${\bar{v}}_{E_8}$ is needed to determine the known representative
  of the orbit.
\end{remark}
\TODO{for ourselves, and probably for second version: If I remember correctly, we showed that the groups $\hdel^\perp$ and
          $(e_2+\hdel)^\perp$ in $A_{\hat{\Lambda}_2}$ are not isomorphic, thus showing, that they actually represent
  different orbits with respect to $\Aut(\hat{\Lambda}_2)$}
\begin{proof}
  Let us first observe that if $\div(v)=1$ the exact same proof as for Lemma \ref{lem:K3-2-part} for the case
    of divisibility 1 applies. 

  Therefore, we only need to deal with the case, where $\div(v)=2$.
  Start by observing that the discriminant group of $\hat{\Lambda}_2$ is $A_{E_8(-2)}\times \bZ/2\bZ$.

  For our given element $v\in \hat{\Lambda}_2$, denote by $\bar{v}$ the image of $\frac{v}{2}$ in the discriminant group, and let
  $\bar{v}_e$ be the $A_{E_8(-2)}$-part of this.
  By Lemma \ref{lem:A-E8-orbits}, there exists $\phi \in \Aut(E_8(-2))$ such that $\bar{\phi}(\bar{v}_e) \in
  A_{E_8(-2)}$ coincides with one of $\{0,\overline{e_1},\overline{e_2}\}$. For the corresponding $\phi_1
  \in \Aut(E_8(-1))$ consider $(\phi_1,\phi_1) \in \Aut(E_8(-1)\oplus  E_8(-1))$, which obviously commutes with
  the swapping of the two factors and induces $\phi$ on $E_8(-2)\subset E_8(-1)\oplus  E_8(-1)$.
  Extend this to
  $\Phi \in \Aut(\Lambda_{K3^{[2]}})$ via the identity on the other direct summands.
  By Theorem \ref{thm:MonK32} the operator $\Phi\in \Mon^2(S^{[2]})$ is in the monodromy group
  and therefore the induced action on $\hat{\Lambda}_2$ is an element of $\Mon^2(\hat{\Lambda}_2)$ by Proposition \ref{MonoM'}.  By construction, this
  restricts to $\phi \in \Aut(E_8(-2))$.
  Therefore, up to first applying the above monodromy operator, we may assume that $\bar{v}\in
  \{0,\overline{e_1},\overline{e_2}\}\times \bZ/2\bZ$.

  For the second step, observe that  cases 2) to 6) listed in the proposition correspond 
  precisely to the non-zero elements of $\{0,\overline{e_1},\overline{e_2}\}\times \bZ/2\bZ$.
  By varying the parameter $i$, the elements in the list can furthermore achieve all possible values for
  $q(v)$ with the prescribed residue in $\{0,\overline{e_1},\overline{e_2}\}\times \bZ/2\bZ$.

  Therefore, for our given element $v \in \hat{\Lambda}_2$ with $\bar{v}\in
  \{0,\overline{e_1},\overline{e_2}\}\times \bZ/2\bZ$, we can choose $v_0$ from the above list (for
  appropriate choice of $i$) such that $q(v)=q(v_0)$ and $\bar{v}=\overline{v_0}\in \hat{\Lambda}_2$ (and
  $\div(v)=\div(v_0)=2$ follows automatically).
  Therefore, by the Eichler criterion (Lemma \ref{lem:Eichler}), there exists an automorphism $\phi\in \Aut(\hat{\Lambda}_2)$ such that
  $\phi(v)=v_0$. This can be extended to an automorphism $\Phi \in \Aut(U^3\oplus E_8(-1)\oplus E_8(-1)\oplus
  (-2))$ of the lattice $\Lambda_{K3^{[2]}}$ by Lemma \ref{lem:extension}.
  Observe that up to changing a sign in one of the copies of $U$, we can assume that $\Phi\in
  \Mon^2(\Lambda_{K3^{[2]}})$ by Theorem \ref{thm:MonK32}. Since this monodromy operator commutes with ${\iota^{[2]}}^*$ (it
  preserves the invariant lattice and the anti-invariant lattice by construction) Proposition \ref{MonoM'}
  shows that it induces a monodromy operator on $\Lambda_{M'}$ which in turn corresponds to $\phi$ extended by the
  identity via Lemma \ref{lem:twist-spec}. Therefore, again up to potentially changing a sign in one of the
  copies of $U$, the automorphism $\phi \in \Mon^2(\hat{\Lambda}_2)$. This
  completes the proof.
\end{proof}

\subsection{Induced monodromy orbits on the lattice $\hat{\Lambda}_1$}
Recall that
$\hat{\Lambda}_1= U^{ 3} \oplus E_8(-2)\oplus (-2) \oplus (-2)$.

\begin{thm} \label{thm:9monorb}
  Let $v\in \hat{\Lambda}_1$ be a primitive non-zero element. Denote by $v_{E_8}$  the projection of $v$ to the $E_8(-2)$-part
  of the lattice, and let ${\bar{v}}_{E_8}$ be its image in the discriminant group $A_{E_8(-2)}$.
  Then there exists a monodromy operator
  $f\in \Mon^2(\hat{\Lambda}_1)$ such that
     $$f(v)=\left\{
\begin{array}{lllll}
1) &L_{i} & \textrm{if\ }\div(v)=1&\textrm{with\ } i=\frac{1}{2}q(v) &  \\
2) &2L_{i} - \hdel & \, \textrm{if\ } \div(v)=2, & q(v)=8i-2,  & \textrm{and\ } {\bar{v}}_{E_8}=0\\
3) &2L_{i+1} + e_2 -\hdel & \, \textrm{if\ } \div(v)=2, & q(v)=8i-2,  & \textrm{and\ } {\bar{v}}_{E_8}\neq 0\\
4) &2L_i - \hdel - \hSig & \, \textrm{if\ } \div (v)=2,& q(v)=8i-4,&\textrm{and\ } {\bar{v}}_{E_8}=0\\
5) &2L_{i+1} + e_2-\hdel - \hSig & \, \textrm{if\ } \div (v)=2,& q(v)=8i-4,&\textrm{and\ } \bar{q}({\bar{v}}_{E_8})=0, {\bar{v}}_{E_8}\neq 0\\
6) &2L_{i} + e_1 & \, \textrm{if\ } \div (v)=2,& q(v)=8i-4,&\textrm{and\ } \bar{q}({\bar{v}}_{E_8})=1\\
7) &2L_{i} + e_1 - \hdel & \, \textrm{if\ } \div(v)=2, & q(v)=8i-6,  & \\
8) &2L_i + e_1 - \hdel - \hSig & \, \textrm{if\ } \div(v)=2, & q(v)=8i-8,
       & \textrm{and\ } \bar{q}({\bar{v}}_{E_8})=1\\
9) &2L_{i+1} + e_2  & \, \textrm{if\ } \div(v)=2, & q(v)=8i,  & \textrm{and\ } \bar{q}({\bar{v}}_{E_8})=0.\\
\end{array}
\right. $$
\end{thm}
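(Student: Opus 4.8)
The strategy is to bootstrap from Proposition~\ref{prop:6monorb-in-invariant}, the analogous classification for the sublattice $\hat{\Lambda}_2 = U^3\oplus E_8(-2)\oplus(-2)$ (with the $(-2)$ generated by $\hdel$), by transporting the rich supply of monodromy operators acting on the $\hdel$-summand over to the $\hSig$-summand via the swap produced by the non-standard involution of Proposition~\ref{involution}. Concretely, I would first establish that \emph{every} Eichler transvection $t_{e,u}$ of $\hat{\Lambda}_1$ attached to an isotropic $e\in U^3$ of divisibility $1$ and to a vector $u\in e^{\perp}$ is a monodromy operator. Writing $u=u_2+c\hSig$ with $u_2\in U^3\oplus E_8(-2)\oplus\bZ\hdel$ (automatically $u_2\perp e$), the composition law $t_{e,w}t_{e,w'}=t_{e,w+w'}$ for transvections with $w,w'\perp e$ gives $t_{e,u}=t_{e,u_2}\circ(t_{e,\hSig})^{c}$. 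Here $t_{e,u_2}$ acts trivially on the anti-invariant lattice of $\iota^{[2]*}$ (which is orthogonal to $e$ and $u_2$), hence commutes with $\iota^{[2]*}$, lies in $O^+(\Lambda_{K3^{[2]}})=\Mon^2(S^{[2]})$ (being unipotent), and therefore is inherited by $M'$ through Corollary~\ref{cor:inheritedMononHat}; and $t_{e,\hSig}=\tau\circ t_{e,\hdel}\circ\tau$, where $\tau$ denotes the involution exchanging $\hdel$ and $\hSig$ and fixing $U^3\oplus E_8(-2)$ (Proposition~\ref{involution}, Remark~\ref{RdeltaSigma}) and $t_{e,\hdel}$ is inherited as before. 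In the same spirit, each $\phi\in\Aut(E_8(-2))$ gives (suitably extended, up to the sign adjustments handled as in the proof of Proposition~\ref{prop:6monorb-in-invariant}) a monodromy operator of $\hat{\Lambda}_1$, and $R_{\hdel}$, $R_{\hSig}$, $\tau$ are monodromy operators by Corollaries~\ref{Rdelta} and~\ref{Sigma'} and Remark~\ref{RdeltaSigma}.

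Granting this, I would run the proof of the Eichler criterion (Lemma~\ref{lem:Eichler}, applicable because $U^2\subseteq\hat{\Lambda}_1$), which expresses the required lattice automorphism as a product of precisely such Eichler transvections; it thus upgrades to a statement about $\Mon^2(\hat{\Lambda}_1)$-orbits. Combining this with the reduction of the $E_8(-2)$-part of the discriminant afforded by Lemma~\ref{lem:A-E8-orbits} (three $\Aut(E_8(-2))$-orbits in $A_{E_8(-2)}$, with representatives $0$, $\overline{e_1}$, $\overline{e_2}$) and with the action of $\tau$ interchanging the two $\bZ/2\bZ$-factors of $A_{\hat{\Lambda}_1}=A_{E_8(-2)}\oplus\bZ/2\bZ\oplus\bZ/2\bZ$, one obtains that the $\Mon^2(\hat{\Lambda}_1)$-orbit of a primitive non-zero $v$ is determined by $q(v)$, $\div(v)$, and the orbit of $\overline{v/\div(v)}$ under $\Aut(E_8(-2))\times\langle\tau\rangle$.

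It then remains to enumerate these invariants. For $\div(v)=1$ one has $\overline{v}=0$, giving one orbit for each value $q(v)=2i$, represented by $L_i$ (case~1). For $\div(v)=2$ the class $\overline{v/2}$ is non-zero, and up to monodromy its $E_8(-2)$-component is one of $0,\overline{e_1},\overline{e_2}$ while its $\bZ/2\bZ\oplus\bZ/2\bZ$-component $(\epsilon_{\hdel},\epsilon_{\hSig})$ is one of $(0,0),(1,0),(1,1)$, not all zero; furthermore $q(v)$ is pinned down modulo $8$ by the discriminant quadratic form (namely $q(v)\equiv 4\bar q(\bar v_{E_8})-2(\epsilon_{\hdel}+\epsilon_{\hSig})\pmod 8$ on the relevant representatives). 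Running through the eight resulting combinations, correcting signs of the $\hdel$- and $\hSig$-components with $R_{\hdel}R_{\hSig}$, and choosing the integer $i$ to match the actual value of $q(v)$, one recovers exactly the nine representatives of the statement; one then checks that the listed numerical conditions --- the congruence class of $q(v)$ modulo $8$ together with the conditions ``$\bar v_{E_8}=0$'', ``$\bar v_{E_8}\neq 0$'', ``$\bar q(\bar v_{E_8})=0$'', ``$\bar q(\bar v_{E_8})=1$'' --- are precisely what distinguishes these eight cases. (In accordance with the remark following the statement, two or more of these representatives could still lie in a single $\Mon^2(\hat{\Lambda}_1)$-orbit, since only a subgroup of the monodromy group is being used.)

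The main obstacle is the first step: since $\Mon^2(\hat{\Lambda}_1)$ is only a subgroup of $\Aut(\hat{\Lambda}_1)$, one must be sure that the particular Eichler transvections arising in the proof of Lemma~\ref{lem:Eichler} really are monodromy operators. This is exactly the point at which the non-standard symplectic involution $\kappa'$ constructed in Section~\ref{inv0M'} becomes indispensable: it supplies the swap $\tau$, and hence lets one normalise the $\hSig$-coordinate, which the operators inherited from $S^{[2]}$ alone cannot do. Beyond this, the remaining difficulty is purely lattice-theoretic bookkeeping --- the completeness of Lemma~\ref{lem:A-E8-orbits} and the careful matching of the eight $(q\bmod 8,\ \text{discriminant orbit})$ combinations with the representatives appearing in cases~2--9.
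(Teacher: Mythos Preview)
Your approach is correct and takes a cleaner, more direct route than the paper's. The paper argues by an iterative ``ping-pong'': writing $v=k\gamma+a\hdel+b\hSig$ with $\gamma\in U^3\oplus E_8(-2)$ primitive, it alternates between normalising the $\hat\Lambda_2$-component $k\gamma+a\hdel$ via Proposition~\ref{prop:6monorb-in-invariant} and applying the swap $R_{(\hdel-\hSig)/2}$, with a parity case-distinction on $\gcd(k,a)$ to guarantee termination after at most three rounds. You instead establish once that every Eichler transvection $t_{e,u}$ on $\hat\Lambda_1$ (with $e$ isotropic in $U^3$) is a monodromy operator---via the decomposition $t_{e,u}=t_{e,u_2}\circ(t_{e,\hSig})^c$ together with the conjugation $t_{e,\hSig}=\tau\, t_{e,\hdel}\,\tau$---and then invoke the Eichler criterion directly on $\hat\Lambda_1$. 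This replaces the iteration by a transparent enumeration of the eight nonzero $\big(\Aut(E_8(-2))\times\langle\tau\rangle\big)$-orbits in $A_{\hat\Lambda_1}\iso A_{E_8(-2)}\oplus(\bZ/2\bZ)^2$, which visibly match cases 2--9. The paper's method avoids having to isolate the transvection lemma but pays with a somewhat opaque case analysis; yours front-loads that structural input and is rewarded with a cleaner endgame. (Your mention of $R_{\hdel}R_{\hSig}$ for sign correction is harmless but redundant: since $\pm\hdel/2$ coincide in $\bZ/2\bZ$, the Eichler transvections already realise these sign flips.)
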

\TODO{Regrouper les cas 2 et 3 en utilisant le lemme \ref{monolemma}}
\begin{rmk}\label{rem:L0goesaway}
  Observe that whenever $L_0$ is involved in the statement of Theorem \ref{thm:9monorb}, it can be replaced by
  $0$ (apart from case 1) ) since both elements are in the same monodromy orbit.
\end{rmk}

\begin{proof} The proof of this theorem consists of a series of applications of Proposition \ref{prop:6monorb-in-invariant}
  and the existence of the monodromy operator $R_{\frac{\hdel - \hSig}{2}}$ (compare Remark \ref{RdeltaSigma} and notation of Section \ref{notation}).
  
  First note that since $v$ is primitive, it can be expressed as
  $v=k\gamma + a\hdel + b \hSig$, where $\gamma\in U^3\oplus E_8(-2)$ is a primitive element, and $\gcd(a,b,k)=1$.

  First let us assume that  $\div(v)=1$.
  The element $k\gamma + a\hdel \in \hat{\Lambda}_2$ corresponds to $\gcd(k,a)$ times a primitive element of
  divisibility 1 inside  $\hat{\Lambda}_2$.
  Therefore, by Proposition \ref{prop:6monorb-in-invariant} there exists an element $f_1 \in \Mon^2(\hat{\Lambda}_2)$ such that
  $f_1(k\gamma + a\hdel) = \gcd(k,a)\cdot L_{q_1}$ for a suitable choice of $q_1$. By extending $f_1$ to $\Mon^2(\hat{\Lambda}_1)\supseteq
  \Mon^2(\hat{\Lambda}_2)$, observe that
  $f_1(v)=\gcd(k,a) \cdot L_{q_1} + b \hSig$.
  Apply the monodromy operator $R_{\frac{\hdel -\hSig}{2}}$ to obtain $\gcd(k,a) \cdot L_{q_1} + b \hdel$, which is a primitive
  element of divisibility 1 in $\hat{\Lambda}_2$.
  Using once again Proposition \ref{prop:6monorb-in-invariant} find $f_2 \in \Mon^2(\hat{\Lambda}_2)\subseteq \Mon^2(\hat{\Lambda}_1)$
  such that $f_2(\gcd(k,a) \cdot L_{q_1} + b \hdel)=L_{q_2}$.
  
  The composition of these monodromy operators is therefore the claimed $f\in \Mon^2(\hat{\Lambda}_2)$ and concludes
  the proof under the assumption that $\div(v)=1$.

  Therefore, we only need to deal with the case that $\div(v)=2$. Let $\bar{v}$ be the residue class of
  $\frac{v}{2}$ in $A_{\hat{\Lambda}_1}$.
  Let us first work under the additional assumption that  $\gcd(k,a)$ is odd (while still assuming $\div(v)=2$).
   Under this assumption, the element $k\gamma + a\hdel \in \hat{\Lambda}_2$ corresponds to $\gcd(k,a)$ times a
   primitive element $v_1$ in  $\hat{\Lambda}_2$, satisfies that $\bar{v}_1=\bar{v}_{\hat{\Lambda}_2}\in A_{\hat{\Lambda}_2}$, where
   $\bar{v}_1$ is the residue of $\frac{v_1}{2}$, and $\bar{v}_{\hat{\Lambda}_2}$ is the $A_{\hat{\Lambda}_2}$-part of
   $\bar{v}$.
   Then there exists a monodromy operator $f_1\in
   \Mon^2(\hat{\Lambda}_2)\subset\Mon^2(\hat{\Lambda}_1)$ such that $f_1(v_1)$ is one of the cases 2) to 6) from Proposition
   \ref{prop:6monorb-in-invariant}.
   After applying the operator $R_{\frac{\hdel - \hSig}{2}}$, we obtain an element $v_2$ of one of the following
   forms:
        $$v_2=\left\{
\begin{array}{lllll}
  a) &2\gcd(k,a)L_{q_1} &&+ b\hdel &- \gcd(k,a)\hSig \\
b) &2\gcd(k,a)L_{q_1} &+ \gcd(k,a)e_1 &+ b\hdel&\\
c) &2\gcd(k,a)L_{q_1} &+ \gcd(k,a)e_2  &+ b\hdel&\\
d) &2\gcd(k,a)L_{q_1} &+ \gcd(k,a)e_1 &+ b\hdel&-\gcd(k,a)\hSig \\
e) &2\gcd(k,a)L_{q_1} &+ \gcd(k,a)e_2 &+ b\hdel&- \gcd(k,a)\hSig 
\end{array}
\right. $$
for suitable choice of $q_1$.
  Since $\gcd(k,a,b)=1$, note that the $\hat{\Lambda}_2$-component of $v_2$ is primitive unless we are dealing with
  case a) from above and at the same time $b$ is even.
We will separately consider these two situations:

  \smallskip
  \noindent Case 1:
  If the $\hat{\Lambda}_2$-component $v_{2,\hat{\Lambda}_2}$ of $v_2$ is primitive, then one can find a monodromy operator
  moving $v_{2,\hat{\Lambda}_2}$ to one of the cases from Proposition \ref{prop:6monorb-in-invariant}.
  In cases b) and c) from above the resulting element already attains the form of one of the cases claimed in our theorem. 
  In the other cases, apply the operator $R_{\frac{\hdel - \hSig}{2}}$ once again, followed by Proposition
  \ref{prop:6monorb-in-invariant} to conclude the proof of case 1. 

    \smallskip
    \noindent Case 2: Assume that $v_{2,\hat{\Lambda}_2}$ is non-primitive, and therefore we are in case a) from above
    with the 
    additional assumption that $b=2b'$ is even.
    This means that $v_2=2(\gcd(k,a)L_{q_1} + b'\hdel)- \gcd(k,a)\hSig$. Since $\gcd(k,a)$ is odd, $v_{2,\hat{\Lambda}_2}$ is twice
    a primitive element of divisibility 1, and therefore $v_{2,\hat{\Lambda}_2}$ can be moved to an element of the form
    $2L_{q_2}-\gcd(k,a)\hSig$. Applying the operator $R_{\frac{\hdel - \hSig}{2}}$ and using Proposition
  \ref{prop:6monorb-in-invariant} completes the proof of case 2 (since $\gcd(k,a)$ is odd by assumption).

  \smallskip
  The only remaining case, which we have not yet been analyzed is if $\div(v)=2$ and $\gcd(k,a)$ is even.
  Notice that under these assumptions $b$ is odd and in particular $\gcd(k,b)$ is odd.
  Therefore, after application of the operator $R_{\frac{\hdel - \hSig}{2}}$, we find ourselves in the above
  setting, which concludes the final case of the proof. 
\end{proof}

From this we can easily deduce a corresponding statement for the original lattice $\Lambda_{M'}$.
We need to fix some notation in order to formulate the statement.
Consider an irreducible symplectic orbifold $X$ of Nikulin-type, with a given marking
$\phi\colon H^2(X,\bZ)\overset{\iso}{\to} \Lambda_{M'}$.
Let $L_i^{(2)}\in U(2)$ be an element of square $4i$ (corresponding to the element $L_i \in U$). Furthermore, fix
elements $e_1^{(1)}$ and $e_2^{(1)}\in E_8(-1)$ with squares $q(e_1^{(1)})=-2$ and $q(e_2^{(1)})=-4$ (these elements
correspond to the elements $e_1$ and $e_2 \in E_8(-2)$).

\begin{thm} \label{thm:9monorb-M'}
  Let $v\in \Lambda_{M'}$ be a primitive non-zero element. Denote by $v_{E_8}$  the projection of $v$ to the $E_8(-1)$-part
  of the lattice, and let ${\bar{v}}_{E_8}$ be its image in the $\bZ/4\bZ$-module $E_8(-1)/4E_8(-1)$.
  Then there exists a monodromy operator
  $f\in \Mon^2(X)$ such that
  $$ f(v)=\left\{
  \begin{array}{l}
    \textrm{If $v$ corresponds to a ray of divisibility 1 in $\hat{\Lambda}_1$ (see below for checkable
      condition):} \\
    \hspace{0.5 em}
    1)  \hspace{1em}L^{(2)}_{i}  \hspace{7.5em}\textrm{with\ }\div(v)=2 \textrm{\ and\ } q(v)=4i.   \\
     \textrm{Otherwise, if $v$ corresponds to a ray of divisibility 2 in $\hat{\Lambda}_1$:}\\
  \begin{array}{lllll}
2) &2L^{(2)}_{i} - \delta' & \, \textrm{if\ } \div(v)=2, & q(v)=16i-4,  & \textrm{and\ }  {\bar{v}}_{E_8}=0\\
3) &2L^{(2)}_{i+1} + 2e_2^{(1)} -\delta' & \, \textrm{if\ } \div(v)=2, & q(v)=16i-4,  & \textrm{and\ } {\bar{v}}_{E_8}\neq 0\\
4) &L^{(2)}_i - \frac{\delta' + \Sigma'}{2} & \, \textrm{if\ } \div (v)=2,& q(v)=4i-2,&\textrm{and\ } {\bar{v}}_{E_8}=0\\
5) &L^{(2)}_{i+1} + e_2^{(1)}-\frac{\delta' + \Sigma'}{2} & \, \textrm{if\ } \div (v)=1,& q(v)=4i-2,&\textrm{and\ }
q(v_{E_8})\equiv 0 \pmod{4}\\
6) &L^{(2)}_{i} + e_1^{(1)}  & \, \textrm{if\ } \div (v)=1,& q(v)=4i-2,&\textrm{and\ } q(v_{E_8})\equiv 2 \pmod{4} \\
7) &2L^{(2)}_{i} + 2e_1^{(1)} - \delta' & \, \textrm{if\ } \div(v)=2, & q(v)=16i-12,  & \textrm{and\ } {\bar{v}}_{E_8}\neq 0\\
8) &L^{(2)}_i + e_1^{(1)} - \frac{\delta' + \Sigma'}{2} & \, \textrm{if\ } \div(v)=1, & q(v)=4i-4,
       & \textrm{and\ } q(v_{E_8})\equiv 2 \pmod{4} \\
9) &L^{(2)}_{i+1} + e_2^{(1)}  & \, \textrm{if\ } \div(v)=1, & q(v)=4i,  & \textrm{and\ } q(v_{E_8})\equiv 0 \pmod{4}.\\
  \end{array}
  \end{array}
  \right. $$
  The condition that $v$ corresponds to a ray of divisibility 1 in $\hat{\Lambda}_1$ is equivalent to
  satisfying the following three conditions inside $\Lambda$:
  \begin{compactenum}
  \item The restriction $v_{U^3(2)}$ of $v$ to $U^3(2)$ is not divisible by 2,
  \item the restriction $v_{E_8}$ of $v$ to $E_8(-1)$ is divisible by 2, and
  \item the restriction $v_{(-2)\oplus (-2)}$ to  $\langle \frac{\delta'+ \Sigma'}{2} ,  \frac{\delta'-
    \Sigma'}{2}\rangle$ is contained in the sublattice $\langle \delta', \Sigma'\rangle$.
  \end{compactenum}
  
\end{thm}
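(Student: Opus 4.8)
The plan is to deduce this statement from Theorem \ref{thm:9monorb} by transporting everything through the correspondence of Lemma \ref{lem:twist-spec}. Since the orbit data at stake is deformation invariant (Theorem \ref{wall}), I would first reduce to $X=M'$ and fix the markings of Section \ref{sec:eq-lattices}. Recall that Lemma \ref{lem:twist-spec} provides a natural identification $\Aut(\Lambda_{M'})\cong\Aut(\hat{\Lambda})$ restricting to $\Mon^2(M')\cong\Mon^2(\hat{\Lambda})$, together with a bijection between the rays of the two lattices. Concretely, via the inclusion $\Lambda_{M'}(2)=E_8(-2)\oplus(U^3\oplus(-1)^2)(4)\hookrightarrow\hat{\Lambda}$ from the proof of Lemma \ref{lem:twist-gen}, a primitive $v=v_{E_8}+v_{U^3(2)}+v_{(-2)^2}\in\Lambda_{M'}$ is sent to $v_{E_8}+2(v_{U^3(2)}+v_{(-2)^2})$; stripping off the largest power of $2$ gives the primitive generator of the corresponding ray of $\hat{\Lambda}$, and, since $[\hat{\Lambda}:\hat{\Lambda}_1]=2$, that ray meets $\hat{\Lambda}_1$ in a sublattice whose primitive generator $\hat{v}$ has divisibility $1$ or $2$ there (the discriminant group of $\hat{\Lambda}_1$ being $2$-torsion). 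Using $\Mon^2(\hat{\Lambda}_1)\subseteq\Mon^2(\hat{\Lambda})=\Mon^2(M')$, Theorem \ref{thm:9monorb} applied to $\hat{v}$ then yields the desired $f\in\Mon^2(M')$, and $f(v)$ generates the ray of $\Lambda_{M'}$ attached to the ray of the explicit representative $f(\hat{v})$.

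The substance of the proof is then to translate the nine output elements of Theorem \ref{thm:9monorb}, together with the numerical data ($q$, $\div$, $\bar{v}_{E_8}$), back to $\Lambda_{M'}$. The dictionary I expect to use is $L_i\leftrightarrow L_i^{(2)}$, $e_1\leftrightarrow e_1^{(1)}$ or $2e_1^{(1)}$, $e_2\leftrightarrow e_2^{(1)}$ or $2e_2^{(1)}$, $\hdel\leftrightarrow\delta'$, and $\hdel\pm\hSig\leftrightarrow\tfrac{\delta'\pm\Sigma'}{2}$ (up to rescaling the ray), where the square of the $\Lambda_{M'}$-generator is twice that of the $\hat{\Lambda}_1$-generator when the $\hat{\Lambda}_1$-representative already lies in the image of $\Lambda_{M'}(2)$ --- equivalently, when its $E_8$-component is not divisible by $2$ --- and is half of it otherwise. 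I would also use the identification $A_{E_8(-2)}\cong E_8(-1)/2E_8(-1)$, under which $\bar{q}$ becomes $u\mapsto\tfrac12 q_{E_8(-1)}(u)\bmod 2$, to rewrite the discriminant conditions ``$\bar{v}_{E_8}=0$'' and ``$\bar{q}(\bar{v}_{E_8})=0$ (resp.~$1$)'' as the congruences on $q(v_{E_8})$ modulo $4$ and the condition $\bar{v}_{E_8}=0$ in $E_8(-1)/4E_8(-1)$ appearing in cases $2)$--$9)$. Running through the nine cases one at a time should reproduce exactly the list stated here.

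Finally I would establish the checkable criterion: $v$ corresponds to a divisibility-$1$ ray in $\hat{\Lambda}_1$ iff, after halving $v_{E_8}+2(v_{U^3(2)}+v_{(-2)^2})$, the result is a primitive element of $\hat{\Lambda}_1$ whose $U^3$-component is not divisible by $2$. The halving is possible exactly when $v_{E_8}$ is divisible by $2$ (condition (2)); the quotient lies in $\hat{\Lambda}_1$ exactly when $v_{(-2)^2}\in\langle\delta',\Sigma'\rangle$ (condition (3)), since $\langle\hdel,\hSig\rangle$ is the set of elements of $\langle\tfrac{\hdel+\hSig}{2},\tfrac{\hdel-\hSig}{2}\rangle$ whose two coordinates are congruent mod $2$; and its $U^3$-component is then $v_{U^3(2)}$, whose non-divisibility by $2$ (condition (1)) is equivalent to divisibility $1$ because $A_{\hat{\Lambda}_1}$ is $2$-torsion. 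Combined with the previous step applied to case $1)$ of Theorem \ref{thm:9monorb}, this completes the argument.

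The hard part will be purely a matter of careful bookkeeping in the second step: for each of the nine cases one must keep straight whether the relevant $\hat{\Lambda}_1$-representative is primitive in $\hat{\Lambda}$ or twice-primitive (this is what flips the square-ratio between $\tfrac12$ and $2$, and is precisely what the various parity and divisibility hypotheses encode), and one must faithfully convert the $E_8(-2)$-discriminant information into divisibility-by-$2$-or-$4$ statements in $E_8(-1)$. No idea beyond Lemma \ref{lem:twist-spec} and Theorem \ref{thm:9monorb} is needed, but the translation must be executed with care.
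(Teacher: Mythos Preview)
Your proposal is correct and follows essentially the same route as the paper: transport the classification of Theorem \ref{thm:9monorb} through the ray/automorphism correspondence of Lemma \ref{lem:twist-spec}, then rewrite the numerical invariants in $\Lambda_{M'}$. The only cosmetic difference is that you run the inclusion in the direction $\Lambda_{M'}(2)\hookrightarrow\hat{\Lambda}$ and then intersect rays with $\hat{\Lambda}_1$, whereas the paper uses the reverse inclusion $\hat{\Lambda}_1(2)\subset\Lambda_{M'}$ (sending $L_i\mapsto L_i^{(2)}$, $e_j\mapsto 2e_j^{(1)}$, $\hdel\mapsto\delta'$, $\hSig\mapsto\Sigma'$) and then passes to the primitive generator; both give the same dictionary, and your treatment of the checkable divisibility-$1$ criterion is in fact more detailed than the paper's one-line ``straight forward verification''.
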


\begin{proof}
  Similar to Lemma \ref{lem:twist-gen}
  use the inclusion $\hat{\Lambda}_1(2) \subset \Lambda_{M'}$ and notice that under this correspondence $L_i$
  is sent to $L_{i}^{(2)}$, $e_i$ is sent to $2e_i^{(1)}$, $\hdel$ to $\delta'$, and $\hSig$ to $\Sigma'$.
  Then passing to the primitive element in the ray and determining the new square and divisibility gives the
  new cases.

  For the  part of the condition involving $v_{E_8}$, simply check that under the assumptions on $q$ and $\div$ these are
  equivalent to the corresponding ones in $\hat{\Lambda}_1$ from Theorem \ref{thm:9monorb}.

  The same formalism admits a straight forward verification of the characterization when $v$ is corresponding
  to a ray of divisibility 1 in $\hat{\Lambda}_1$.
\end{proof}

\begin{cor}
  There are at most 3 monodromy orbits of primitive non-zero
  elements with prescribed square and divisibility (both in $\Lambda_{M'}$ and in $\hat{\Lambda}_1$).
\end{cor}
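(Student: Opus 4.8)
The plan is to read the bound directly off the explicit normal--form lists in Theorem \ref{thm:9monorb} (for $\hat{\Lambda}_1$) and Theorem \ref{thm:9monorb-M'} (for $\Lambda_{M'}$). Both theorems show that every primitive non-zero element of the respective lattice is carried by some monodromy operator onto one of nine explicit representatives; hence it suffices to bound, for each prescribed pair $(q(v),\div(v))$, the number of listed representatives compatible with that pair.

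First I would treat $\hat{\Lambda}_1$. Case~1) of Theorem \ref{thm:9monorb} is the unique case with $\div(v)=1$, so for each admissible square there is a single orbit of divisibility $1$. For $\div(v)=2$ one sorts cases~2)--9) by the residue of $q(v)$ modulo $8$: the class $\equiv -2$ is realized only by cases~2) and 3), the class $\equiv -4$ only by cases~4), 5) and 6), the class $\equiv -6$ only by case~7), and the class $\equiv 0$ only by cases~8) and 9). Thus at most three of the nine representatives share any fixed pair $(q(v),\div(v))$, the maximum being attained for $\div(v)=2$ and $q(v)\equiv -4 \pmod 8$.

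Next I would treat $\Lambda_{M'}$, where the nine representatives of Theorem \ref{thm:9monorb-M'} have mixed divisibility, so the bookkeeping is slightly more involved. Grouping by $\div(v)$ first: cases~5), 6) (with $q(v)\equiv 2\pmod 4$) and cases~8), 9) (with $q(v)\equiv 0\pmod 4$) are the only ones with $\div(v)=1$, giving at most two orbits for each such pair. For $\div(v)=2$, case~4) is the unique case with $q(v)\equiv 2\pmod 4$, while for $q(v)\equiv 0\pmod 4$ case~1) always occurs, case~7) occurs exactly when $q(v)\equiv 4\pmod{16}$, and cases~2), 3) occur exactly when $q(v)\equiv 12\pmod{16}$; so again at most three representatives share a fixed pair $(q(v),\div(v))$, the bound being sharp for $\div(v)=2$ and $q(v)\equiv 12\pmod{16}$ (cases~1), 2), 3)).

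There is essentially no genuine obstacle; this is a direct corollary. The only point requiring care is the residue bookkeeping --- keeping track of which class of $q(v)$ modulo $8$ (respectively modulo $16$ after the rescaling $\hat{\Lambda}_1(2)\subset\Lambda_{M'}$ from Lemma \ref{lem:twist-spec}) each of the nine cases realizes, together with the observation that for $\hat{\Lambda}_1$ the divisibility-$1$ and divisibility-$2$ cases never collide, whereas for $\Lambda_{M'}$ the two divisibility strata must be examined separately because the twist shifts the divisibility of some of the rays.
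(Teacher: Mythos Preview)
Your proof is correct and follows exactly the paper's approach: the paper simply says the bound can be read off immediately from the statements of Theorems \ref{thm:9monorb} and \ref{thm:9monorb-M'}, and you have carried out that reading-off explicitly with the residue bookkeeping. Your detailed case analysis is more than the paper provides but is entirely in the same spirit.
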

\begin{proof}
  Since the values of $q$ and $\div$ are given, this can be read of immediately from the statements of Theorems \ref{thm:9monorb} and
  \ref{thm:9monorb-M'}. 
\end{proof}

\begin{rmk}\label{rem:L0goesaway2} Again, one can replace $L_0^{(2)}$ by $0$ in all cases except from Case 1),
  since both elements in question lie in the same monodromy orbit.
\end{rmk}

Let us conclude this section by the following observation:
\begin{cor}\label{cor:Chiara}
  For every element $v\in \Lambda_{M'}$ of square $-4$ and divisibility $2$ the reflection (defined by
  $R_v(\alpha)\coloneqq \alpha -2 \frac{(\alpha,v)_q}{q(v)}v$) gives an element in the monodromy group.
\end{cor}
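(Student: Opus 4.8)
The plan is to reduce Corollary \ref{cor:Chiara} to the classification of monodromy orbits just established. First I would observe that by Theorem \ref{wall} and the deformation invariance of the monodromy group (encoded in the fact that $\Mon^2$ is constant along the connected component of the moduli space), it suffices to prove the statement for a single orbifold of Nikulin-type, say $M'$ associated to a suitable K3 surface with symplectic involution. Then I would invoke Theorem \ref{thm:9monorb-M'}: any primitive element $v\in\Lambda_{M'}$ with $q(v)=-4$ and $\div(v)=2$ lies, after applying a monodromy operator $f$, in one of the listed normal forms with $q=-4$ and $\div=2$; inspecting the list, these are exactly Case 2) with $i=0$ (giving $-\delta'$, i.e.~the ray of $\delta'$) and Case 3) with $i=0$ (giving $2L_1^{(2)}+2e_2^{(1)}-\delta'$), and possibly Case 7) is excluded since its square is $16i-12\ne -4$. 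So every such $v$ is in the $\Mon^2(M')$-orbit of the ray of $\delta'$ or of the ray of $2L_1^{(2)}+2e_2^{(1)}-\delta'$.

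Next I would handle the reflection. Since $R_v$ depends only on the ray of $v$, and since conjugation by a monodromy operator sends reflections to reflections, $f\circ R_v\circ f^{-1}=R_{f(v)}$, it is enough to show that $R_w\in\Mon^2(M')$ for $w$ a representative of each of the (at most two) relevant orbits. For $w=\delta'$ this is exactly Corollary \ref{Rdelta}. For the second orbit, a representative is furnished by the elliptic K3 example of Section \ref{sec:elliptic}: there the class $A'=D_\gamma'-\frac{\delta'+\Sigma'}{2}$ has $q(A')=-4$ and $\div(A')=2$ (see Lemma \ref{ellipticlemma}), and $R_{A'}$ is a monodromy operator by Lemma \ref{monolemma}. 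Thus both orbits of $(-4,2)$-classes contain an element whose reflection is known to be a monodromy operator, and conjugating back gives $R_v\in\Mon^2(M')$ for the original $v$. An alternative, perhaps cleaner, treatment of the second orbit: in Section \ref{sec:twocurves} the divisor $2D_C'-\delta'$ has square $-12$, not $-4$, so that example does not directly apply, which is why the elliptic example is the natural source here.

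The main obstacle I anticipate is bookkeeping: one must be careful that the two normal forms in the list with $q=-4$, $\div=2$ genuinely represent the only orbits, and that one has produced a genuine monodromy representative in each. In particular, the subtle point is that Theorem \ref{thm:9monorb-M'} only uses a \emph{subgroup} of $\Mon^2(M')$ (as flagged in the remark at the start of Section \ref{sec:monodromy-orbits}), so two of the listed representatives could a priori lie in the same orbit — but that only helps, since it means there are at most two orbits to check, and we exhibit a monodromy reflection for each. A second point of care: the reflection $R_v$ is a priori only an isometry of $\Lambda_{M'}\otimes\bQ$; one must check it is integral, which holds precisely because $q(v)=-2\cdot 2$ divides $2(v,\cdot)_q$ for all lattice elements, using $\div(v)=2$. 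Once these checks are in place the corollary follows formally. Concretely, I would write:

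\begin{proof}
By Theorem \ref{wall} and the deformation invariance of the monodromy group, we may assume $X=M'$ for a suitable Nikulin orbifold. Let $v\in \Lambda_{M'}$ be primitive with $q(v)=-4$ and $\div(v)=2$; note first that $R_v$ preserves $\Lambda_{M'}$, since $\frac{2(\alpha,v)_q}{q(v)}=-\frac{(\alpha,v)_q}{2}\in\bZ$ for all $\alpha$ by $\div(v)=2$. By Theorem \ref{thm:9monorb-M'}, there exists $f\in \Mon^2(M')$ such that $f(v)$ lies in one of the normal forms with square $-4$ and divisibility $2$, i.e.~$f(v)$ generates the same ray as $\delta'$ (Case 2) with $i=0$) or as $A'\coloneqq D_\gamma'-\frac{\delta'+\Sigma'}{2}$ (Case 3) with $i=0$, using the elliptic example of Section \ref{sec:elliptic}). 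Since $R_w$ depends only on the ray of $w$, we have $R_{f(v)}=R_{\delta'}$ or $R_{f(v)}=R_{A'}$; both are monodromy operators by Corollary \ref{Rdelta} and Lemma \ref{monolemma} respectively. Finally $R_v=f^{-1}\circ R_{f(v)}\circ f\in \Mon^2(M')$, as conjugation of a reflection by an isometry is the reflection in the image vector.
\end{proof}
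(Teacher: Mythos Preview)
There are two genuine gaps in your argument.

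First, you miss Case 1 of Theorem \ref{thm:9monorb-M'}. That case produces elements $L_i^{(2)}\in U(2)\subset\Lambda_{M'}$ with $\div(v)=2$ (in $\Lambda_{M'}$) and $q(v)=4i$; taking $i=-1$ gives $L_{-1}^{(2)}$, a third representative with square $-4$ and divisibility $2$ that you do not treat. The paper's proof explicitly checks three representatives: $L_{-1}^{(2)}$, $\delta'$, and $2L_1^{(2)}+2e_2^{(1)}-\delta'$.

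Second, your treatment of Case 3 is based on a wrong computation. The class $A'=D_\gamma'-\tfrac{\delta'+\Sigma'}{2}$ has $q(A')=q(D_\gamma')+q\bigl(\tfrac{\delta'+\Sigma'}{2}\bigr)=0+(-2)=-2$, not $-4$ (Lemma \ref{ellipticlemma}(iii) gives $q(D_\gamma')=0$, and $\tfrac{\delta'+\Sigma'}{2}$ is one of the $(-2)$-generators of $\Lambda_{M'}$). Likewise $\div(A')=1$, since already $\div(D_\gamma')=1$ and one can witness this with a class orthogonal to $\delta'$ and $\Sigma'$. So $A'$ is not a Case~3 representative. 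What Lemma \ref{monolemma} actually gives you is that $R_{A'}$ maps $\Sigma'$ to $2D_\gamma'-\delta'=2L_1^{(2)}+2e_2^{(1)}-\delta'$, placing the Case~3 representative in the orbit of $\Sigma'$; combined with Corollary \ref{Sigma'} this would salvage Case~3, but that is not the argument you wrote.

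The paper's own proof avoids both issues by a uniform argument: for each of the three representatives one passes to the corresponding element in the $\iota^{[2]*}$-invariant part of $\Lambda_{K3^{[2]}}$ (namely $L_{-1}$, $\delta$, and $2L_1+e_2-\delta$), each of square $-2$. The associated reflections lie in $\Mon^2(S^{[2]})=O^+(\Lambda_{K3^{[2]}})$ by Theorem \ref{thm:MonK32} and commute with $\iota^{[2]*}$, so Proposition \ref{MonoM'} descends them to monodromy operators on $M'$; a direct check shows these descended operators are precisely the reflections $R_v$ in $\Lambda_{M'}$.
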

\begin{proof}
  Begin by observing that this property is equivalent for different elements in the same monodromy orbit.
  Therefore, it suffices to check it for one representative of each orbit. By the list from Theorem
  6.15, the orbits of square $-4$ and divisibility $2$ have one of the following representatives:
  $L_{-1}^{(2)}$ (Case 1), $\delta'$ (Case 2), or $2L_1^{(2)} + 2 e_2^{(1)} - \delta'$ (Case 3).

  The
  associated elements $L_{-1}$, $\delta$, and $2L_1 +  e_2 - \delta$ in the invariant part of
  $\Lambda_{K3^{[2]}}$ all have square $-2$ and thus their reflections correspond to monodromy operators
  on $\Lambda_{K3^{[2]}}$ (e.g.~by Theorem \ref{thm:MonK32}), which commute with $\iota ^{[2]*}$.
  Therefore, Proposition \ref{MonoM'} applies to show that the claimed reflections in $\Lambda_{M'}$ are indeed
  monodromy operators.
\end{proof}
\begin{cor}\label{Lastmonodromy}
  For every element $v\in \Lambda_{M'}$ of square $-2$ and divisibility $2$ the reflection (defined by
  $R_v(\alpha)\coloneqq \alpha -2 \frac{(\alpha,v)_q}{q(v)}v$) gives an element in the monodromy group.
\end{cor}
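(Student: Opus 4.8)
The plan is to argue exactly as in the proof of Corollary~\ref{cor:Chiara}. Since conjugating $R_v$ by a monodromy operator $g$ produces $R_{g(v)}$, the property ``$R_v$ is a monodromy operator'' depends only on the $\Mon^2(X)$-orbit of $v$; moreover it is invariant under deformation, so we may assume that $X=M'$ is the Nikulin orbifold attached to some K3 surface $(S,\iota)$ with a symplectic involution. By Theorem~\ref{thm:9monorb-M'} it then suffices to treat one representative of each monodromy orbit of primitive vectors of square $-2$ and divisibility $2$. First I would go through the nine cases of that theorem: the vectors $L_i^{(2)}$ of Case~1) have square divisible by $4$, Cases~2), 3), 7) give squares $\equiv 4 \pmod 8$, and Cases~5), 6), 8), 9) have divisibility $1$; only Case~4) with $i=0$ gives square $-2$ and divisibility $2$. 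Hence, using Remark~\ref{rem:L0goesaway2} to replace $L_0^{(2)}$ by $0$ and the identity $R_v=R_{-v}$, the whole statement reduces to showing that $R_{\frac{\delta'+\Sigma'}{2}}\in\Mon^2(M')$.

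For that step I would work inside the orthogonal summand $\bZ\tfrac{\delta'+\Sigma'}{2}\oplus^{\perp}\bZ\tfrac{\delta'-\Sigma'}{2}\simeq(-2)^2$ of $\Lambda_{M'}$ furnished by Theorem~\ref{BBform}(v), and write $e_1\coloneqq\tfrac{\delta'+\Sigma'}{2}$, $e_2\coloneqq\tfrac{\delta'-\Sigma'}{2}$, so that $\delta'=e_1+e_2$, $\Sigma'=e_1-e_2$, $e_1^2=e_2^2=-2$ and $(e_1,e_2)_q=0$. A short computation of $2\times 2$ reflection matrices then shows that on this summand $R_{\Sigma'}$ interchanges $e_1$ and $e_2$ while $R_{\frac{\delta'-\Sigma'}{2}}=R_{e_2}$ fixes $e_1$ and negates $e_2$; and each of $R_{\Sigma'}$, $R_{\delta'}$, $R_{e_1}$, $R_{e_2}$ acts trivially on the orthogonal complement $r^*\pi_*(j(H^2(S,\bZ)))$ by Theorem~\ref{BBform}(iv)--(v). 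Composing these, one obtains the identity of lattice isometries
$$R_{\frac{\delta'+\Sigma'}{2}}=R_{\Sigma'}\circ R_{\frac{\delta'-\Sigma'}{2}}\circ R_{\Sigma'}$$
(equivalently $R_{\frac{\delta'+\Sigma'}{2}}=R_{\delta'}\circ R_{\Sigma'}\circ R_{\frac{\delta'-\Sigma'}{2}}$). Since $R_{\Sigma'}\in\Mon^2(M')$ by Corollary~\ref{Sigma'}, $R_{\delta'}\in\Mon^2(M')$ by Corollary~\ref{Rdelta}, and $R_{\frac{\delta'-\Sigma'}{2}}\in\Mon^2(M')$ by Remark~\ref{RdeltaSigma} (a parallel transport operator of $M'$ to itself being a monodromy operator by Definition~\ref{transp}), the right-hand side --- hence $R_{\frac{\delta'+\Sigma'}{2}}$ --- lies in $\Mon^2(M')$. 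Together with the orbit reduction this proves the corollary.

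The main thing to be careful about is the bookkeeping in the first paragraph: one has to be sure that no monodromy orbit of primitive vectors of square $-2$ and divisibility $2$ has been overlooked. This is guaranteed by the corollary immediately following Theorem~\ref{thm:9monorb-M'}, which bounds the number of orbits with prescribed square and divisibility by three, combined with the explicit inspection of the list above (which in fact exhibits a single such orbit). The remaining ingredients --- the elementary reflection identity and the fact that $R_{\Sigma'}$, $R_{\delta'}$ and $R_{\frac{\delta'-\Sigma'}{2}}$ are monodromy operators --- have already been established earlier in the paper, so no genuine difficulty remains.
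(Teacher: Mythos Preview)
Your proof is correct and follows essentially the same approach as the paper: reduce via Theorem~\ref{thm:9monorb-M'} to a single orbit (Case~4 with $i=0$) and then invoke Remark~\ref{RdeltaSigma}. The paper is slightly more direct: since $\frac{\delta'-\Sigma'}{2}$ also has square $-2$ and divisibility $2$, it already lies in the (unique) orbit you identified, so one can take it as the representative and apply Remark~\ref{RdeltaSigma} immediately---your conjugation identity $R_{\frac{\delta'+\Sigma'}{2}}=R_{\Sigma'}\circ R_{\frac{\delta'-\Sigma'}{2}}\circ R_{\Sigma'}$ is correct but unnecessary, being just an explicit instance of the general principle $R_{g(v)}=g\circ R_v\circ g^{-1}$ you already stated at the outset.
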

\begin{proof}
The proof is similar to the one of Corollary \ref{cor:Chiara}. From Theorem \ref{thm:9monorb-M'}, we note that
all such elements $v$ are in the same monodromy orbit (Case 4) which contains the element $\frac{\delta'-\Sigma'}{2}$. Hence the result follows from Remark \ref{RdeltaSigma}.
\end{proof}
\section{Determining the wall divisors}\label{endsection}
In this section, we combine the results from the last sections to prove the main theorem of this paper: Theorem
\ref{main} which gives a complete description of the wall divisors for Nikulin-type orbifolds.

For the proof of the theorem let us start from some $X_0$ which is the Nikulin orbifold associated to some K3
surface $S_0$ obtained by the construction in Section \ref{M'section}.
Fix a marking $\phi_0 \colon H^2(X_0,\Z) \to \Lambda_{M'}= U(2)^{ 3} \oplus E_8(-1)\oplus (-2) \oplus (-2)$,
where as usual $U(2)^{ 3} \oplus E_8(-1)$ corresponds to the part coming from the invariant lattice of
$S_0$ and the two generators of the $(-2)$-part are $\frac{\delta'+ \Sigma'}{2}$ and $\frac{\delta'-
  \Sigma'}{2}$.
Let us recall the details of this identification:
For the K3 surface $S_0$ with a symplectic involution $\iota$ the $\iota$-anti-invariant part of the lattice  is
isomorphic to  $E_8(-2)$ and one can choose a marking
$\phi_{S_0}\colon H^2(S_0,\bZ) \to \Lambda_{K3}\iso U^3 \oplus E_8(-1)^2$, such that the  $\iota^*$
acts by exchanging the two copies of $E_8(-1)$. Therefore, the invariant lattice of $\iota$ corresponds to
$\Lambda_{K3}^\iota \iso U^3\oplus E_8(-2)$, where the elements of $E_8(-2)$ are of the form $e+\iota^*(e)$ for elements $e$ in the
first copy of $E_8(-1)$. Similarly the anti-invariant lattice of $\iota$ is $E_8(-2)$ consisting of elements
of the form
$e-\iota^*(e)$. We will denote the anti-invariant part of the lattice by $E^a$.
With this convention, the lattice $U(2)^{3} \oplus E_8(-1)$ corresponds to the
invariant via a twist as described in Lemma \ref{lem:twist-gen} of
$\Lambda_{K3}^{\iota}$.

In order to prove the main theorem, we need to determine for each ray in $\Lambda_{M'}$ whose generator is of
negative Beauville-Bogomolov square, whether it corresponds to a wall divisor for Nikulin-type orbifolds.  Obviously, this notion is invariant under the monodromy action by the deformation
invariance (see Theorem \ref{wall}).
It therefore suffices to pick one representative for each monodromy orbit and to determine it for this choice.

By Lemma \ref{lem:twist-gen}, the rays of $\Lambda_{M'}$ are in (1:1)-correspondence with rays in the lattice
$\hat{\Lambda}_1$, and obviously the property that the generator has negative square coincides in both cases.
Therefore, we only need to deal with the cases from Theorem \ref{thm:9monorb} (respectively Theorem
\ref{thm:9monorb-M'}), for which $i$ is chosen such that the square is negative.

\smallskip
\noindent {\it Case 1:}
As a warm-up, let us start with Case 1 of Theorem \ref{thm:9monorb} separately (i.e.~the ray in question is generated by the element $L_i$ with $i<0$). Note
that $L_i$ naturally corresponds to an element $\phi_{S_0} ^{-1}L_i\in H^2(S_0,\bZ)$.
Let $(S,\phi_S)$ be a marked K3 surface such that the Picard lattice of $S$ is $\Pic(S)=\phi_S^{-1}(L_i
\oplus E^a)$ (which exists by the surjectivity of the period map).
If $i< -1$, then $S$ does not contain any effective curve (since $\Pic(S)$ only has non-zero elements of square smaller
than $-2$). Therefore, we are in the situation of Section \ref{genericM'} and
 one observes that $L_i$ does not correspond to a wall divisor for
Nikulin-type orbifolds if $i<-1$: In fact, $L_i\in \hat{\Lambda}_1$ corresponds to $L_{i}^{(2)}\in \Lambda$, which is
 not a wall divisor by Proposition \ref{exwalls}.
Note that the divisors $L_{i}^{(2)}\in \Lambda$ satisfy $q(L_{i}^{(2)})=-4i$, $\div(L_{i}^{(2)})=2$, and $(L_i^{(2)})_{U(2)^3}$ is
not divisible by 2, which confirms Theorem \ref{main} for Case 1 if $i<-1$.
 
 If $i=-1$ (and therefore $q(L_i)=-2$, we are in the situation of Section \ref{onecurve}, and one can deduce
 from Proposition \ref{walldiv1} that $L_{i}^{(2)}\in \Lambda$ (which is precisely $D_C'$) is a wall-divisor for Nikulin-type, which confirms Theorem
 \ref{main} in this case.
 
\smallskip
\noindent {\it Cases 2, 4:}
As in the proof for Case 1, choose a K3 surface $S$ such that $\Pic(S)=\phi_S ^{-1}(L_i\oplus E^a)$.
As before, Section \ref{genericM'} applies and Proposition \ref{exwalls} implies that for $i<-1$, the only rays corresponding to wall-divisors are
$\delta'$ and $\Sigma'$, and therefore there are no additional  wall divisors of the forms given in Cases 2
and 4 in this example.
Similarly, the results from Section \ref{onecurve} imply that for $i=-1$ the wall divisors are $\delta'$,
$\Sigma'$, $L_{i}^{(2)}$, and $L_{i}^{(2)}-\half(\delta' + \Sigma')$ (compare
Proposition \ref{walldiv1}). Therefore, Case 4 provides precisely a wall divisor of square $-6$ and
divisibility $2$, thus confirming Theorem \ref{main} in this case.

However, for Cases 2 and 4, we also need to consider $i=0$ since the total square will still be negative.  By
Remark \ref{rem:L0goesaway}, the monodromy orbits of $2L_0 + \hdel$ (resp.~$2L_0 + \hdel + \hSig$) coincide
with those of $\hdel$ and $\hdel + \hSig$, and therefore we can instead deform towards a very 
general K3 surface $S$ with a symplectic involution (i.e.~$\Pic(S)=E^a$) and apply the results from Section
 \ref{genericM'} to observe that $\delta'$ is a wall divisor of square $-4$ and divisibility $2$, whereas
 $\half (\delta' + \Sigma')$ is not. 

\smallskip
\noindent {\it Cases 6, 7, and 8:}
Similar to the previous situation, the element $2L_i + e_1$ naturally corresponds to an element
$\phi_{S_0}^{-1}(2L_i + e_1) \in H^2(S_0,\bZ)$.
Notice, that we are only interested in the cases, where $q(2L_i + e_1)<0$, which corresponds to $i\leq 0$.
Under this condition, the direct sum $(2L_i+ e_1) \oplus E^a$ is a negative definite sublattice of
$\Lambda_{K3}$.
However, notice that this in itself cannot be realized as the Picard lattice of a K3 surface, since it is not
a saturated sublattice:
By definition $e_1\in E_8(-2)$ is an element of square $-4$, where $E_8(-2)$ is part of the invariant lattice.
Therefore, by the above observation, there exists an element $e_1^{(0)}$ in the first copy of $E_8(-1)$ of square
$-2$ such that
$e_1 = e_1^{(0)} + \iota ^*(e_1^{(0)})$. With this notation the element
$2L_i + 2e_1^{(0)} = 2L_i + (e_1^{(0)} + \iota^*(e_1^{(0)})) + (e_1^{(0)} - \iota ^*(e_1^{(0)}))\in  (2L_i+ e_1) \oplus E^a$, but
the element $L_i + e_1^{(0)}$ is not part of this direct sum. In fact, $(L_i + e_1^{(0)})\oplus E^a$ is the
saturation.

With this knowledge, let us choose a marked K3 surface $(S,\phi_S)$ such that
$\Pic(S) =\phi_S^{-1}((L_i + e_1^{(0)})\oplus E^a)$.
Note that if $i<0$, then $S$ does not contain any effective curve (since every non-zero element has square
smaller than $-2$).
Therefore, the results from Section \ref{genericM'} apply, and one observes that non of these cases provides
wall divisors.

If $i=0$, then $S$ contains exactly two elements of square $-2$ which are exchanged by $\iota^*$: The elements
$L_0 + e_1^{(0)}$ and $L_0 + \iota^*(e_1^{(0)})$. In this case according to Remark \ref{rem:L0goesaway}, we can choose $L_0=0$.
Thus for $i=0$ we find ourselves in the setting of Section \ref{sec:twocurves} with $e_1^{(0)}=C$. Note that the element $D_C'$
from Section \ref{sec:twocurves} corresponds precisely to the element $e_1^{(1)}$ 
with our
notation.
We can therefore deduce from Proposition \ref{prop:twocurves}, that for $i=0$ the Cases 6 and 7 provide wall
divisors 
($e_1^{(1)}$ 
with square $-2$ and
divisibility $1$, and 
$2e_1^{(1)} - \delta'$
with square $-12$ and divisibility $2$), whereas by Remark \ref{Remark:twocurves} Case 8 does not provide a wall
divisor, thus confirming Theorem \ref{main}.

\smallskip
\noindent {\it Cases 3, 5, and 9:}
Again, the element $2L_{i+1}+ e_2$ corresponds to an element in $\phi_{S_0}(H^2(S_0,\bZ))$. We need to
consider $i\leq 0$ to cover all possibilities for wall divisors with negative squares.

If $i<0$, then the lattice $(2L_{i+1}+ e_2) \oplus E^a \subseteq \Lambda_{K3}$ is negative definite, and again
its saturation is $(L_{i+1} + e_2^{(0)})\oplus E^a $ for the corresponding element $e_2^{(0)}$ in the first copy of
$E_8(-1)$ (we remind that $e_2^{(0)}$ has square $-4$).
Similar to the above, deform to a marked K3 surface $(S,\phi_S)$ such that $\phi_S(\Pic(S))= (L_{i+1} +
e_2^{(0)})\oplus E^a$.
Observe that all non-zero elements of this lattice have squares smaller than $-2$. Therefore, we can apply the
results from Section \ref{genericM'} to observe that we do not find any further wall divisors in theses cases.

For the remaining case $i=0$, we need to prove that both $2L_{1}^{(2)} + 2e_2^{(1)}- \delta'$
(with square $-4$ and divisibility $2$)
and $L_{1}^{(2)} + e_2^{(1)} - \frac{\delta' + \Sigma'}{2}$ (with square $-2$ and divisibility $1$)
correspond to wall divisors.
If $i=0$, then $S$ contains exactly one element of square $0$: $2L_{1}+ e_2$.
Thus for $i=0$ we find ourselves in the setting of Section \ref{sec:elliptic}. Note that the element $D_\gamma'$
from Section \ref{sec:elliptic} corresponds precisely to the element $L_1^{(2)} + e_2^{(1)}$ with our
notation.
Let $M'$ constructed as in  Section \ref{sec:elliptic}.
From the investigations of the current section, 
we know that a wall divisor on $M'$ has the numerical properties of a wall divisor that we already found or possibly of $2L_{1}^{(2)} + 2e_2^{(1)}- \delta'$ or
$L_{1}^{(2)} + e_2^{(1)} - \frac{\delta' + \Sigma'}{2}$. That is: we have proved that a wall divisor necessarily has one of the numerical properties listed in Theorem \ref{main}. Therefore, Lemma \ref{mainelliptic} shows that $L_{1}^{(2)} + e_2^{(1)} - \frac{\delta' + \Sigma'}{2}$ is a wall divisor. Finally, $2L_{1}^{(2)} + 2e_2^{(1)}- \delta'$ is also a wall divisor by
Lemma \ref{monolemma}.

This concludes the analysis of all possible cases and thus the proof of Theorem \ref{main}.

\section{Application}
\subsection{A general result about the automorphisms of Nikulin-type orbifolds}
\begin{prop}\label{AutM'}
Let $X$ be an orbifold of Nikulin-type and $f$ an automorphism on $X$. If $f^*=\id$ on $H^2(X,\Z)$, then $f=\id$.
\end{prop}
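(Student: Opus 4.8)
The statement is the orbifold analogue of the classical fact (Beauville's result, cited earlier as \cite[Propositions 10]{Beauville1982}) that for an irreducible symplectic manifold the action on $H^2$ is faithful. The plan is to reduce to the regular setting by passing through the global Torelli theorem and then exploit the symplectic structure and finiteness.

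\textbf{Step 1: reduce to a fixed-point/uniqueness statement via Torelli.} Suppose $f\colon X\to X$ is an automorphism with $f^*=\id$ on $H^2(X,\Z)$. Then in particular $f^*$ preserves the K\"ahler cone and sends the holomorphic $2$-form to itself (up to a scalar, which must be $1$ since $f^*=\id$ on $H^2$), so $f^*$ is a Hodge monodromy operator which maps a K\"ahler class to a K\"ahler class. By Theorem \ref{mainHTTO} (ii), applied with $f^*=\id$, there exists an \emph{isomorphism} $\widetilde{f}\colon X\to X$ with $\widetilde{f}_*=\id=f^*$. The heart of the matter is then to show that an automorphism inducing the identity on $H^2$ must itself be the identity; equivalently, that two automorphisms $f,\widetilde f$ with $f^*=\widetilde f^*$ agree. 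Composing with $\widetilde f^{-1}$ we may as well try to show directly: if $g=\widetilde f^{-1}\circ f$ has $g^*=\id$, then $g=\id$; so there is no loss in just proving the Proposition as stated.

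\textbf{Step 2: control the automorphism on a resolution.} The natural strategy is to lift $f$ to the manifold side. Recall (Definition \ref{Nikulin}, diagram \eqref{diagramM'}) that a Nikulin orbifold $M'$ is $N_1/\iota^{[2]}_1$ where $N_1$ is the blow-up of a $K3^{[2]}$-type manifold $X_0$ (here $X_0=S^{[2]}$ in the model case) along the fixed surface $\Sigma$; for general Nikulin-type orbifolds one has the analogous picture with $X_0$ a $K3^{[2]}$-type manifold carrying a symplectic involution. Since $f$ acts trivially on $H^2(M',\Z)$, it fixes $\Sigma'$ (the exceptional divisor) and hence preserves the singular locus; so $f$ lifts to an automorphism of the resolution, and — since $f^*=\id$ on $H^2(M',\Z)$, which by Theorem \ref{BBform}(v) captures the relevant part of $H^2(N_1)$ — the induced automorphism on $N_1$, and then on $X_0$, also acts trivially on $H^2$ (one uses $r^*$, $\pi_*$, and the identifications of Section \ref{M'S2} to transport triviality upstairs). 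Now $X_0$ is a genuine irreducible symplectic manifold, so by \cite[Propositions 10]{Beauville1982} (the injectivity of $\Aut(X_0)\to O(H^2(X_0,\Z))$, already invoked in the proof of Proposition \ref{involutionE8}) the lifted automorphism of $X_0$ is the identity. Then the lift to $N_1$ is the identity (blow-up of a fixed locus), and therefore $f=\id$ on $M'=N_1/\iota^{[2]}_1$.

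\textbf{Step 3: pass from the Nikulin model to general Nikulin-type.} The construction above is available only for actual Nikulin orbifolds $M'$, not for an arbitrary deformation-equivalent orbifold $X$. To handle general $X$, the plan is: an automorphism $f$ of $X$ with $f^*=\id$ has in particular the property that $f^*$ is a parallel transport operator which is an isometry of Hodge structures and preserves a K\"ahler class, so Theorem \ref{mainHTTO} applies throughout a deformation; but more robustly, one argues directly on $X$ using that $f$ acts trivially on $H^2$, hence is symplectic and fixes every divisor class, hence fixes the singular locus $\Sing X$ setwise (indeed pointwise on the codimension-2 part, since the corresponding exceptional-type divisor classes are fixed) and its resolution $\widetilde X$ carries an induced automorphism trivial on $H^2(\widetilde X,\Z)$; as $\widetilde X$ is, at worst after a further birational modification, of $K3^{[2]}$-type, faithfulness there (Beauville again) forces triviality.

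\textbf{Main obstacle.} The delicate point is Step 3, i.e. making the reduction from a general Nikulin-type orbifold to the manifold $X_0$ without having a global resolution by a $K3^{[2]}$-type manifold available in the deformation class — a deformation of $M'$ need not itself be a quotient of a deformation of $S^{[2]}$. The safe route is probably to not resolve at all, but instead to argue that $\Fix(f)$, being the fixed locus of a finite-order (indeed the automorphism group of a compact K\"ahler orbifold acting trivially on $H^2$ is finite, by a standard Fujiki-type argument) symplectic automorphism, is itself a symplectic suborbifold whose normal bundle $f$ acts on with determinant $1$; if $f\neq\id$ this fixed locus has codimension $\geq 2$, and a linearization/deformation argument together with the fact that $f$ preserves all Hodge/K\"ahler data forces a contradiction with $\codim\Sing X\geq 4$ and the structure of $H^2$. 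Carefully nailing down this finiteness-plus-symplectic-fixed-locus argument in the orbifold category is where the real work lies; everything else is bookkeeping with Theorems \ref{mainHTTO}, \ref{BBform} and the diagram \eqref{diagramM'}.
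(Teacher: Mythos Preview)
Your plan has two genuine gaps, and the paper's proof takes a substantially different route to get around both.

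First, Step 2 does not work as stated. Even granting that $f$ lifts from $M'$ to some $\tilde f$ on $N_1$ and then to $X_0=S^{[2]}$ (which is already delicate, since $\pi_1\colon N_1\to M'$ is ramified along $\Sigma_1$ and at the 28 isolated fixed points, so the lift is not automatic), the assertion that $\tilde f^*=\id$ on $H^2(X_0,\Z)$ is false in general. Theorem \ref{BBform}(v) only identifies $H^2(M',\Z)$ with the $\iota^{[2]}$-\emph{invariant} part of $H^2(S^{[2]},\Z)$ (plus $\Sigma'$); the anti-invariant $E_8(-2)$ is invisible from $M'$. So $f^*=\id$ on $H^2(M',\Z)$ only forces $\tilde f^*$ to be trivial on the invariant part, and $\tilde f^*$ could act by an arbitrary isometry on the anti-invariant $E_8(-2)$. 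Beauville's injectivity therefore does not yield $\tilde f=\id$. The paper instead passes to the \emph{universal cover} of the open set $U=M'\smallsetminus(\delta'\cup\Sigma'\cup\Sing M')$, which is an open subset $V$ of $S\times S$ (a genuine unramified $4{:}1$ cover), obtains a bimeromorphism of $S\times S$, invokes Oguiso's decomposition theorem to write it as a composition of swaps and products $g\times h$ with $g,h\in\Aut(S)$, and then proves two preparatory lemmas: one showing $g,h$ must commute with $\iota$, and another (using a carefully chosen $S$ with $\Pic S=\Z H\oplus E_8(-2)$, $H^2=4$) showing the only such automorphisms of $S$ are $\id$ and $\iota$. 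The residual possibility $\id\times\iota$ is then excluded because it would swap $\delta'$ and $\Sigma'$.

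Second, your Step 3 is the real obstruction, as you yourself note, and your proposed ``safe route'' via symplectic fixed loci is not developed and would require building substantial orbifold machinery. The paper's solution is clean and you missed it: since $f^*=\id$, in particular $f$ is \emph{symplectic}, so by Remark \ref{twistorinvo} it extends along every twistor space. Using Lemma \ref{lem:connected+} (or \cite[Lemma 2.17]{Menet-Riess-20}), one connects $(X,\varphi)$ to $(M',\psi)$ for the specific $M'$ above by a chain of twistor spaces, and $f$ rides along to give an automorphism $f'$ of $M'$ with $f'^*=\id$. This reduces the general Nikulin-type case to the single concrete Nikulin orbifold handled in Step 2, with no need to resolve or to analyse fixed loci on $X$ itself.
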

This section is devoted to the proof of this proposition. We will adapt Beauville's proof \cite[Proposition 10]{Beauville1982}.
\begin{lemme}\label{AutS}
Let $S$ be a K3 surface such that $\Pic S=\Z H\oplus^{\bot} E_8(-2)$ with $H^2= 4$. According to Proposition \ref{involutionE8} or from \cite[Proposition 2.3]{Sarti-VanGeemen}, the K3 surface $S$ is endowed with a symplectic involution $\iota$.
Let $f\in\Aut(S)$ such that $f$ commutes with $\iota$. Then $f=\iota$ or $f=\id$.
\end{lemme}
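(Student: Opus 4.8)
The plan is to follow Beauville's argument for K3 surfaces. First I would note that $S$ is projective (since $\Pic S$ has signature $(1,9)$), so it carries an ample class $H$. Since $f^*$ commutes with $\iota^*$ by hypothesis, and since $\iota^*$ acts as $-\id$ on $E_8(-2)$ and trivially on $E_8(-2)^\perp = \bZ H$, any $f^*$-fixed or $f^*$-stable structure is compatible with this decomposition; in particular $f^*$ preserves both $\bZ H$ and $E_8(-2)$. As $f$ is an automorphism of $S$ it must fix the ample cone, hence $f^*(H) = H$, so $f^*$ acts trivially on $\Pic S / E_8(-2)$ and preserves the polarization.

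Next, the key point: since $f^*(H)=H$, the automorphism $f$ acts on the linear system $|H|$ (or $|2H|$ to get very ampleness) and thus embeds $S$ equivariantly in a projective space, realizing $f$ as the restriction of a linear automorphism. Then I would compare $f$ with $\iota$: the composition $g := f\circ\iota^{-1} = f\circ \iota$ is again an automorphism commuting with $\iota$, and it suffices to show $g$ acts trivially on $H^0(S,\Omega_S^2)$ and has the right Lefschetz number, or more directly to show that $f$ and $\iota$ induce the same action on $H^2(S,\bZ)$ forces $f = \iota$ or $f=\id$. Actually the cleanest route: by the global Torelli theorem for K3 surfaces (Theorem \ref{mainHTTO} in the orbifold form specializes, or one invokes the classical statement), an automorphism of $S$ is determined by its action on $H^2(S,\bZ)$ up to the subtlety that the only nontrivial automorphism acting trivially on $H^2$ is... none — for K3 surfaces $\Aut(S) \hookrightarrow \mathrm{O}(H^2(S,\bZ))$ is injective (\cite[Proposition 10]{Beauville1982} / classical). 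So if $f$ commutes with $\iota$, then $f^*$ commutes with $\iota^* = -\id_{E_8(-2)}\oplus \id_{\bZ H}$; the centralizer of this involution in $\mathrm{O}(H^2)$ consists of isometries preserving both summands, and on the transcendental lattice $T_S$ (which sits inside $E_8(-2)^\perp$ together with part of it — careful with exact decomposition) $f^*$ must act by $\pm\id$ by the standard argument (it commutes with the Hodge structure and $T_S$ is irreducible). The case $f^* = \id$ on $T_S$ and the case $f^* = -\id$ on $T_S$ (impossible, as $-\id$ does not preserve a Kähler class unless... it never does on $T_S$ for a K3) — so $f^*|_{T_S} = \id$, hence $f^*$ acts trivially on $T_S$, and then $f^*$ is the identity on the transcendental part; combined with preserving $H$, one deduces $f^*$ acts on $E_8(-2) = \Pic S \cap H^\perp$ either as $\id$ or as $-\id$ (the only isometries of $E_8(-2)$ commuting with everything relevant and compatible with being induced by an automorphism), giving $f = \id$ or $f = \iota$ respectively by injectivity of $\Aut(S)\to \mathrm{O}(H^2(S,\bZ))$.

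The main obstacle I anticipate is justifying that $f^*$ acts on the $E_8(-2)$-summand by exactly $\pm\id$ and not by some other isometry in the centralizer of $-\id$ — but the centralizer of $-\id_{E_8(-2)}$ in $\mathrm{O}(E_8(-2))$ is all of $\mathrm{O}(E_8(-2))$, so this is NOT automatic from commutation alone. The resolution is to use that $f$ is an actual automorphism: either $f$ is symplectic or not. If $f^*$ acts nontrivially on $H^{2,0}(S) = H^0(S,\Omega_S^2)$ by a root of unity $\lambda \neq 1$, then standard finiteness forces $\lambda$ to have small order and constrains $f^*|_{\Pic S}$; combined with $f^*(H) = H$ and the fact that $\mathrm{rk}\,\Pic S = 10$ is rather large, one excludes the non-symplectic case or handles it directly. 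If $f$ is symplectic, then $f^*$ acts trivially on $H^{2,0}$ and on $T_S$ (since a symplectic automorphism acts trivially on the transcendental lattice), so $f^*|_{\Pic S}$ is a symplectic-type isometry; Nikulin's classification of symplectic automorphisms of K3 surfaces, together with the constraint $f^*(H)=H$ and commutation with $\iota$, pins $f^*$ down to $\id$ or $\iota^*$. I would then invoke the injectivity of $\Aut(S) \to \mathrm{O}(H^2(S,\bZ))$ to conclude $f = \id$ or $f = \iota$.
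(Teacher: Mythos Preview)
Your overall strategy matches the paper's: show $f^*(H)=H$, show $f^*|_{T_S}=\id$, deduce $f$ is symplectic of finite order, and then argue there is at most one nontrivial possibility. Two steps, however, are not correctly justified.

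\textbf{Ruling out $f^*|_{T_S}=-\id$.} Your reason (``$-\id$ does not preserve a K\"ahler class'') is wrong: a K\"ahler class lives in $H^{1,1}_{\bR}$, and $-\id$ on $T_S$ does not prevent $f^*$ from fixing an ample class in $\Pic S$. The paper uses the discriminant instead: $f^*|_{T_S}$ and $f^*|_{\Pic S}$ must induce the same map on $A_{T_S}\cong A_{\Pic S}$. Since $f^*(H)=H$ with $H^2=4$, $f^*$ acts trivially on the $\bZ/4\bZ$-summand of $A_{\Pic S}$; but $-\id$ is nontrivial on $\bZ/4\bZ$, so $f^*|_{T_S}=-\id$ is impossible.

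\textbf{Pinning down $f^*|_{E_8(-2)}$.} You correctly spot that commutation with $\iota^*$ alone gives nothing (the centraliser of $-\id$ is all of $O(E_8(-2))$), but your resolution ``Nikulin's classification \ldots\ pins $f^*$ down'' is too vague to be a proof. The paper's argument supplies the missing link: $O(E_8(-2))$ is a finite group, so $f^*$ (trivial on $T_S\oplus\bZ H$, finite order on $E_8(-2)$) has finite order, hence $f$ has finite order by injectivity of $\Aut(S)\to O(H^2)$. Now $f$ is a finite symplectic automorphism whose anti-invariant lattice sits inside $E_8(-2)$, so has rank $\le 8$; Nikulin's rank formulas then force $\ord(f)\le 2$. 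Finally, a K3 with this Picard lattice admits at most one symplectic involution (its anti-invariant lattice must be $E_8(-2)$ and there is only one such sublattice here), so $f\in\{\id,\iota\}$.
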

\begin{proof}
We adapt the proof of \cite[Corollary 15.2.12]{HuybrechtsK3}.
Let $f\in \Aut(S)$ which commutes with $\iota$. It follows that $f^*(H)=H$.
By \cite[Corollary 3.3.5]{HuybrechtsK3}, $f$ acts on $T(S)$ (the transcendental lattice of $S$) as $-\id$ or $\id$. 
However, the actions of $f^*$ on $A_{T(S)}$ and on $A_{\Pic(S)}$ have to coincide. This forces $f^*_{T(S)}=\id$. Moreover, we can consider $f^*_{|E_8(-2)}$ as an isometry of $E_8(-2)$. 
By \cite[Theorem 4.2.39]{Griess}, the isometries group of $E_8(-2)$ is finite, hence $f^*_{|E_8(-2)}$ is of finite order.
Therefore by \cite[Chapter 15 Section 1.2]{HuybrechtsK3}, there are only two possibilities for $f$: $\id$ or a symplectic involution.
Moreover, by \cite[Proposition 15.2.1]{HuybrechtsK3}
, there is at most one symplectic involution on $S$. 
\end{proof}
%
\begin{lemme}\label{commute}
Let $(S,\iota)$ be a K3 surface endowed with a symplectic involution such that $\Pic S=\Z H\oplus^{\bot} E_8(-2)$ with $H^2= 4$.
Let $M'$ be the Nikulin orbifold constructed from $(S,\iota)$ as in Section \ref{M'section}. 
Let $(g,h)\in\Aut(S)^2$ such that $g\times h$ induces a bimeromorphism on $M'$ via the non-ramified cover $$\gamma: S\times S\smallsetminus \left(\Delta_{S^{2}}\cup S_{\iota}\cup(\Fix \iota\times \Fix \iota)\right)\rightarrow M'\smallsetminus \left(\delta'\cup \Sigma' \cup \Sing M' \right)$$
introduced in Section \ref{inv0M'} (i.e: there exists a bimeromorphism $\rho$ on $M'$ such that $\rho\circ\gamma=\gamma\circ f\times g$). Then $g$ and $h$ commute with $\iota$.
\end{lemme}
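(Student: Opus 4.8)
The plan is to show that the composite action of $g\times h$ on the open set $S\times S\smallsetminus(\Delta_{S^2}\cup S_\iota\cup(\Fix\iota\times\Fix\iota))$ must be compatible with the deck group of the cover $\gamma$, which is the group $\langle\sigma_2,\iota\times\iota\rangle$ generated by the swap $\sigma_2$ and the diagonal involution $\iota\times\iota$. Concretely, if $\rho\circ\gamma=\gamma\circ(g\times h)$, then for any two points in the same $\gamma$-fibre their images under $g\times h$ again lie in a common $\gamma$-fibre; that is, $(g\times h)$ normalizes $\langle\sigma_2,\iota\times\iota\rangle$ inside the group of bimeromorphisms of the open surface. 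First I would make this precise: for $(x,y)$ in the open locus, $\gamma^{-1}(\gamma(x,y))=\{(x,y),(y,x),(\iota x,\iota y),(\iota y,\iota x)\}$, and applying $g\times h$ we need $\{(gx,hy),(gy,hx),(g\iota x,h\iota y),(g\iota y,h\iota x)\}$ to again be a single $\gamma$-fibre, hence equal to $\langle\sigma_2,\iota\times\iota\rangle\cdot(gx,hy)$.

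The key step is then to extract from this set-theoretic condition the two relations $g\iota=\iota g$ and $h\iota=\iota h$. Comparing the two four-element sets, $(g\iota x, h\iota y)$ must be one of $(gx,hy)$, $(hy,gx)$, $(\iota gx,\iota hy)$, $(\iota hy,\iota gx)$ for all $(x,y)$ in a dense open set. The first and second options are excluded for generic $(x,y)$ (the first would force $g\iota=g$, i.e.\ $\iota=\id$; the second mixes the two factors and so cannot hold identically, since $g$ maps into the first $S$ and $h$ into the second, using also that $\Delta_{S^2}$ and $S_\iota$ are removed so $gx\ne hy$ generically). The fourth option again swaps factors and is excluded the same way. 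Hence $(g\iota x, h\iota y)=(\iota gx,\iota hy)$ identically on a dense open subset, which by continuity/density and the separatedness of $S$ gives $g\iota=\iota g$ and $h\iota=\iota h$ as automorphisms of $S$.

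One subtlety to address carefully: a priori $\rho$ is only a bimeromorphism, so the relation $\rho\circ\gamma=\gamma\circ(g\times h)$ holds only on the complement of a proper analytic subset; but since everything in sight is a holomorphic map of surfaces and the locus where $\gamma$ is an honest \'etale cover is the whole open set under consideration, one can restrict to a dense open set where the identity is literal and then conclude by density. Another point: one must check that $g\times h$ genuinely preserves (up to the complement of codimension $\ge 2$) the removed locus $\Delta_{S^2}\cup S_\iota\cup(\Fix\iota\times\Fix\iota)$, so that it descends to a bimeromorphism of the open part of $M'$; but this is part of the hypothesis that $g\times h$ \emph{does} induce a bimeromorphism, so it can be taken as given.

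The main obstacle I anticipate is the bookkeeping in the second paragraph — rigorously ruling out the factor-swapping cases requires using that $g$ and $h$ act on the \emph{different} copies of $S$, so an equality like $(g\iota x,h\iota y)=(hy,gx)$ cannot hold on a dense set (it would force $g\iota x=hy$ for all $x,y$, impossible since the left side depends only on $x$). Once this is handled, the remaining case forces the commutation relations directly. The hypothesis $\Pic S=\Z H\oplus^\bot E_8(-2)$ with $H^2=4$ is not really needed for this lemma itself (it is presumably in force for the surrounding argument, e.g.\ to invoke Lemma \ref{AutS} afterwards), so I would not use it in the proof of this statement.
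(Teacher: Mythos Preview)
Your proposal is correct and follows essentially the same approach as the paper: both arguments compare the $\gamma$-fibre $\{(x,y),(y,x),(\iota x,\iota y),(\iota y,\iota x)\}$ with its image under $g\times h$ and pin down the only admissible matching to force $g\iota=\iota g$ (and symmetrically for $h$). The paper organizes the case exclusion by explicitly carving out an open set $V^{o}\subset V$ on which the potentially coinciding coordinates are forced to be distinct, so that the matching is uniquely determined; your version instead rules out the three bad matchings by the ``depends only on $x$ / only on $y$'' observation on a dense open set, which works just as well once one notes that each matching condition is closed and $V$ is irreducible. Your remark that the Picard hypothesis on $S$ is not used in this lemma (it is needed only for the surrounding Lemma~\ref{AutS}) is also accurate.
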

\begin{proof}
It is enough to prove that $g$ commutes with $\iota$; the proof for $h$ being identical. 
Let $$A:=\left\{\left.\eta=\eta_1\circ\eta_2\circ\eta_3\right|\ (\eta_1,\eta_3)\in\left\{\id,\iota\right\}^2\ \text{and}\ \eta_2\in\left\{\id, g, h\right\}\right\}.$$
Let $V:=S\times S\smallsetminus \left(\Delta_{S^{2}}\cup S_{\iota}\cup(\Fix \iota\times \Fix \iota)\right)$;
we consider the following open subset of $V$: 
$$V^{o}:=\left\{\left.(a,b)\in V\ \right|\ g(a)\neq \eta(b),\ g\circ\iota(a)\neq \eta(b),\ \forall \eta\in A\ \text{and}\ a\notin g^{-1}(\Fix \iota)\right\}.$$
Since $g\times h$ induces a bimeromorphism on $M'$, there exist an open subset $\mathcal{W}$ of $S\times S$ such that for all $(a,b)\in \mathcal{W}$:
\small
\begin{align*}
g\times h\left(\left\{(a,b),(b,a),(\iota(a),\iota(b)),\right.\right.&\left.\left.(\iota(b),\iota(a))\right\}\right)\\
&=\left\{(g(a),h(b)),(h(b),g(a)),(\iota\circ g(a),\iota\circ h(b)),(\iota\circ h(b),\iota\circ g(a))\right\}.
\end{align*}
That is:
\begin{align*}
\left\{(g(a),h(b)),(g(b),h(a)),\right.&\left.(g\circ\iota(a),h\circ\iota(b)),(g\circ\iota(b),h\circ\iota(a))\right\}\\&=\left\{(g(a),h(b)),(h(b),g(a)),(\iota\circ g(a),\iota\circ h(b)),(\iota\circ h(b),\iota\circ g(a))\right\}.
\end{align*}
\normalsize
If we choose in addition $(a,b)\in V^{o}$, then
there are only one possibility: 
$$g\circ\iota(a)=\iota\circ g(a).$$
It follows that $g$ commutes with $\iota$ on an open set of $S$, so on all $S$.
\end{proof}
We are now ready to prove Proposition \ref{AutM'}.
\begin{proof}[Proof of Proposition \ref{AutM'}]
We consider $X$ an orbifold of Nikulin-type and $f$ an automorphism on $X$ such that $f^*=\id$. In particular, $f$ is a symplectic automorphism. 
Let $(S,\iota)$ be a K3 surface, endowed with a symplectic involution, verifying the hypothesis of Lemma \ref{AutS}; we consider the Nikulin orbifold $M'$ constructed from $(S,\iota)$ as in Section \ref{M'section}. 
By \cite[Lemma 2.17]{Menet-Riess-20}, there exists two markings $\varphi$ and $\psi$ such that $(X,\varphi)$ and $(M',\psi)$ are connected by a sequence of twistor spaces. Moreover by Remark \ref{twistorinvo}, $f$ extends to an automorphism on all twistor spaces. In particular $f$ induces an automorphism $f'$ on $M'$. We consider $\gamma$, the non ramified cover of Lemma \ref{commute}:
$$\gamma: S\times S\smallsetminus \left(\Delta_{S^{2}}\cup S_{\iota}\cup(\Fix \iota\times \Fix \iota)\right)\rightarrow M'\smallsetminus \left(\delta'\cup \Sigma' \cup \Sing M' \right).$$ Since $V=S\times S\smallsetminus \left(\Delta_{S^{2}}\cup S_{\iota}\cup(\Fix \iota\times \Fix \iota)\right)$ is simply connected, it is the universal cover of $U:= M'\smallsetminus \left(\delta'\cup \Sigma' \cup \Sing M' \right)$.

Since $f'^*$ acts as $\id$ on $H^2(M',\Z)$, we have that $f'$ preserves $\delta'$ and $\Sigma'$ (it also preserves the set $\Sing M'$ ). Hence $f'$ induces an automorphism on $U$ and then on $V$. Therefore, it induces a bimeromorphism $\overline{f'}$ on $S\times S$. Let $s_2:S\times S\rightarrow S\times S: (a,b)\mapsto (b,a)$. By \cite[Theorem 4.1 (d)]{Oguiso}, $\overline{f'}$ can be written as a sequence of compositions between $s_2$ and automorphisms of the form $g_i\times h_i$, where $g_i$, $h_i$ are in $\Aut(S)$.
Since, we are interested in the automorphism $f'$ on $M'$, without loss of generality, we can assume that $\overline{f'}=g\times h$, with $g$, $h$ in $\Aut(S)$.

Therefore, by Lemma \ref{commute}, $g$ and $h$ commute with $\iota$.
It follows from Lemma \ref{AutS} that $(g,h)\in\left\{\id,\iota\right\}^2$.
So, the unique possibility for $\overline{f'}$ to induces a non-trivial morphism on $U$ is $\overline{f'}=\id\times \iota$ (or $\iota\times\id $).
However, in this case, as seen in Section \ref{inv0M'}, $f'$ would interchange $\delta'$ 	and $\Sigma'$. This is a contradiction with the fact that $f'^*=\id$ on $H^2(M',\Z)$. Therefore, we obtain $f'=\id$ and then $f=\id$.
\end{proof}
%
\subsection{Construction of a non-standard symplectic involution on orbifolds of Nikulin-type}\label{Application}
Adapting the vocabulary introduced in \cite{Mongardi-2013}, we state the following definition.
\begin{defi}
 Let $Y$ be an irreducible symplectic manifold of $K3^{[2]}$-type endowed with a symplectic involution $\iota$. 
 Let $M'$ be the Nikulin orbifold constructed from $(Y,\iota)$ as in Example \ref{exem}. Let $G\subset \Aut(Y)$ such that all $g\in G$ commute with $\iota$.
 Then $G$ induces an automorphism group $G'$ on $M'$. The group $G'$ is called a \emph{natural automorphism group} on $M'$ and $(M',G')$ is called a \emph{natural pair}.
 
 Let $X$ be an irreducible symplectic orbifold of Nikulin-type and $H\subset \Aut(X)$. The group $H$ will be said \emph{standard} if the couple 
 $(X,H)$ is deformation equivalent to a natural pair $(M',G')$; in this case, we say that the couple $(X,H)$ is a \emph{standard pair}.
\end{defi}
\begin{thm}\label{Involution2}
Let $X$ be an irreducible symplectic orbifold of Nikulin-type such that there exists $D\in\Pic (X)$ with $D^2=-2$ and $\div(D)=2$. Then there exists an irreducible symplectic orbifold $Z$ bimeromophic to $X$ and a non-standard symplectic involution $\iota$ on $Z$ such that:
$$H^2(Z,\Z)^{\iota}\simeq U(2)^3\oplus E_8(-1)\oplus (-2)\ \text{and}\ H^2(Z,\Z)^{\iota\bot}\simeq (-2).$$
\end{thm}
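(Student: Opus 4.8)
The plan is to combine the global Torelli theorem (Theorem \ref{mainHTTO}) with our description of the monodromy orbits (Theorem \ref{thm:9monorb-M'}) and the wall-divisor classification (Theorem \ref{main}). First I would observe that, since $\div(D)=2$ and $D^2=-2$, Theorem \ref{thm:9monorb-M'} (Case 4) tells us that $D$ lies in a single monodromy orbit, the one containing $\frac{\delta'-\Sigma'}{2}$; equivalently, after applying a Hodge monodromy operator (which exists by the deformation invariance and the fact that the relevant monodromy operators are of Hodge type, cf.\ Corollary \ref{Lastmonodromy}) we may assume $D=\frac{\delta'-\Sigma'}{2}$ up to identifying $H^2(X,\bZ)$ with $\Lambda_{M'}$ via a suitable marking. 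Thus it suffices to produce, on a single well-chosen Nikulin orbifold $M'$ deformation equivalent to $X$, a symplectic involution with the prescribed invariant/anti-invariant lattices, together with a parallel transport operator from $X$ to that $M'$; bimeromorphic invariance of the statement then follows from Theorem \ref{mainHTTO} (i).

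The model case is exactly Proposition \ref{involution}: for a very general $(S,\iota)$ with $\Pic S\simeq E_8(-2)$, the Nikulin orbifold $M'$ carries a symplectic involution $\kappa'$ with $\kappa'^*=R_{\frac{1}{2}(\delta'-\Sigma')}$, which swaps $\delta'$ and $\Sigma'$ and acts trivially on $r^*\pi_*(j(H^2(S,\bZ)))\simeq U(2)^3\oplus E_8(-1)$. Hence $H^2(M',\bZ)^{\kappa'}= U(2)^3\oplus E_8(-1)\oplus \bZ\frac{\delta'+\Sigma'}{2}\simeq U(2)^3\oplus E_8(-1)\oplus(-2)$ and $H^2(M',\bZ)^{\kappa'\perp}=\bZ\frac{\delta'-\Sigma'}{2}\simeq(-2)$, which is precisely the lattice-theoretic conclusion sought. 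So the heart of the argument is: $X$ is deformation equivalent to this specific $M'$ (both are of Nikulin-type by hypothesis), and we must transport the involution. For this I would use Lemma \ref{lem:connected+} / Lemma \ref{lem:twist-spec} together with the twistor-space connectivity argument: choosing markings $\varphi$ on $X$ and $\psi$ on $M'$ compatible with the chosen identification of $D$ with $\frac{\delta'-\Sigma'}{2}$, one connects $(X,\varphi)$ and $(M',\psi)$ by a sequence of generic twistor spaces whose periods stay inside $\dD_{\Lambda'}$ for $\Lambda'$ the orthogonal complement of the relevant $E_8(-2)$ (equivalently, we may first deform to a very general Nikulin orbifold). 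Along this chain the involution $\kappa'$ — being symplectic — extends (Remark \ref{twistorinvo}), yielding a parallel transport operator $f\colon H^2(M',\bZ)\to H^2(X,\bZ)$ conjugating $\kappa'^*$ to $R_D$. Then $R_D=f\circ\kappa'^*\circ f^{-1}$ is a Hodge monodromy operator on $X$ which maps the (birational) K\"ahler cone into itself after conjugation, and by Theorem \ref{mainHTTO} (ii) (applied to a birational model $Z$ of $X$ on which $R_D$ sends a K\"ahler class to a K\"ahler class, guaranteed because $R_D$ permutes the wall divisors by Theorem \ref{main} and Corollary \ref{criterionwall}) it is realized by an actual automorphism, necessarily an involution by Proposition \ref{AutM'}.

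The remaining point is to check this involution $\iota$ on $Z$ is non-standard, i.e.\ the pair $(Z,\langle\iota\rangle)$ is not deformation equivalent to a natural pair $(M',G')$ coming from a symplectic involution on a $K3^{[2]}$-type manifold commuting with the original involution. Here I would argue on the level of invariant lattices: for a natural pair the induced automorphism of $M'$ comes from an automorphism $g\times h$ of $S\times S$ (in the notation of Lemma \ref{commute}), and one can compute its action on $H^2(M',\bZ)$; the key contrast is that a natural involution necessarily acts trivially on the $(-2)^2$-summand spanned by $\delta'$ and $\Sigma'$ (since $g,h$ commute with $\iota$ and hence preserve $\Delta$ and $\Sigma$ individually), whereas our $\iota$ swaps $\delta'$ and $\Sigma'$. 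Since the invariant lattice $H^2(Z,\bZ)^\iota\simeq U(2)^3\oplus E_8(-1)\oplus(-2)$ is a deformation invariant of the pair (its isomorphism type is preserved under equivariant deformation), and it differs from the invariant lattice of any natural pair, we conclude non-standardness.

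The main obstacle I anticipate is the transport step: verifying carefully that the sequence of twistor spaces connecting $X$ to the model $M'$ can be taken equivariantly (so that $\kappa'$ extends consistently and produces a genuine parallel transport operator, not merely an abstract isometry), and pinning down that the resulting Hodge monodromy operator indeed sends some K\"ahler class of a birational model to a K\"ahler class so that Theorem \ref{mainHTTO} (ii) applies. This is exactly where Theorem \ref{main} is essential: because $R_D$ permutes $\cW_Z$, it fixes (setwise) the chamber decomposition of $\cC_Z$, hence maps one chamber — the K\"ahler cone of a suitable birational model — onto another, which after composing with a further monodromy operator becomes the identity on that chamber. The non-standardness verification, by contrast, should be essentially a bookkeeping computation with the explicit lattice decomposition of Theorem \ref{BBform} (v).
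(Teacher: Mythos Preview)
Your overall strategy is sound but considerably more convoluted than the paper's, and the crucial step contains a gap.

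The paper's proof is direct: the key observation is that Theorem \ref{main} says a $(-2)$-class of divisibility $2$ is \emph{not} a wall divisor. By the very definition of wall divisor, this means there exist $g\in\Mon^2_{\Hdg}(X)$ and $\beta\in\BK_X$ with $(g(D),\beta)_q=0$. Passing to the birational model $Z$ on which $\beta$ is K\"ahler and setting $D':=f_*g(D)$, one has $R_{D'}(f_*\beta)=f_*\beta$ automatically, since $f_*\beta\in D'^\perp$. Corollary \ref{Lastmonodromy} gives that $R_{D'}$ is a Hodge monodromy operator, so Theorem \ref{mainHTTO}(ii) together with Proposition \ref{AutM'} produce the involution. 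No equivariant twistor transport of $\kappa'$ is needed at this stage: the existence of $\kappa'$ on the model $M'$ was only used earlier in the paper to \emph{prove} Corollary \ref{Lastmonodromy}; once you have that corollary, invoking it suffices.

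Your argument for producing a fixed K\"ahler class (``$R_D$ permutes the wall divisors \dots hence maps one chamber onto another, which after composing with a further monodromy operator becomes the identity on that chamber'') is where the gap lies. An involution permuting the chambers need not fix any chamber, and composing with a further monodromy operator replaces $R_D$ by a different map, so you would no longer be realizing $R_D$ itself. The correct observation, which you are circling around, is that since $D$ is not a wall divisor, the hyperplane $D^\perp$ is not a wall of the chamber decomposition and therefore meets the \emph{interior} of some chamber; $R_D$ then fixes every class in that intersection. This is exactly the content of ``$D$ is not a wall divisor'', and the paper exploits it in one line rather than via a chamber-permutation argument.

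For non-standardness the paper's argument is close to yours in spirit but sharper and purely lattice-theoretic: a natural involution $\iota'$ necessarily satisfies $\iota'^*(\Sigma')=\Sigma'$, so the anti-invariant generator $D'$ lies in $\Sigma'^\perp\simeq U(2)^3\oplus E_8(-1)\oplus(-4)$; any element there of divisibility $2$ has its $E_8(-1)$-component divisible by $2$, forcing $q(D')\equiv 0\pmod 4$, which contradicts $q(D')=-2$. Your claim that ``the invariant lattice differs from that of any natural pair'' is the right conclusion, but you would still need this computation to justify it; simply asserting that a natural involution fixes both $\delta'$ and $\Sigma'$ is not in itself a contradiction without checking that no $(-2)$-class of divisibility $2$ survives in the orthogonal complement.
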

\begin{proof}
By Theorem \ref{main}, $D$ is not a wall divisor, hence there exists $\beta\in\mathcal{B}\mathcal{K}_{X}$ and $g\in\Mon_{\Hdg}^2(X)$ 
 such that $(g(D),\beta)_{q}=0$. Let $f:X\dashrightarrow Z$ be a bimeromorphic map such that $f_*(\beta)$ is a Kähler class on $Z$.
 We set $D':=f_*\circ g(D)$.
 By Corollary \ref{Lastmonodromy}, the involution $R_{D'}$ is a Hodge monodromy operator on $H^2(Z,\Z)$.
 Moreover $R_{D'}(f_*(\beta))=f_*(\beta)$. Hence by Theorem \ref{mainHTTO}, 
 there exists $\iota$ an automorphism on $Z$ such that $\iota^*=R_{D'}$. Moreover, by Proposition \ref{AutM'}, $\iota$ is an involution. Since $\iota^*=R_{D'}$, we have $H^2(Z,\Z)^{\iota}=D'^{\bot}$. It follows from Theorem \ref{BBform} that: $H^2(Z,\Z)^{\iota}\simeq U(2)^3\oplus E_8(-1)\oplus (-2)$ and $H^2(Z,\Z)^{\iota\bot}\simeq (-2)$.
 
 Now, we show that $\iota$ is non-standard. We assume that $\iota$ is standard and we will find a contradiction. If $\iota$ is standard, there exists
 a natural pair $(M',\iota')$ deformation equivalent to $(Z,\iota)$. 
 Since $\iota'$ is natural, $\iota'^*(\Sigma')=\Sigma'$. Moreover, since $(M',\iota')$ is deformation equivalent to $(Z,\iota)$,
 there exists $D'\in \Pic M'$ such that $q_{M'}(D')=-2$, $\div(D')=2$ and $H^2(M',\Z)^{\iota'\bot}=\Z D'$.
 However, since $\Sigma'\in H^2(M',\Z)^{\iota'}$, we obtain by Theorem \ref{BBform} that:
 $$D'\in \Sigma'^{\bot}\simeq U(2)^3\oplus E_8(-1)\oplus (-4).$$
 For the rest of the proof, we identify $\Sigma'^{\bot}$ with $U(2)^3\oplus E_8(-1)\oplus (-4)$. 
 If follows that $D'$ can be written:
 $$D'=\alpha+\beta,$$
 with $\alpha\in U(2)^3\oplus (-4)$ and $\beta\in E_8(-1)$.
Since $\div(D')=2$, we have 
 $$D'=\alpha+2\beta',$$
 with $\beta'\in E_8(-1)$.
 If follows that $q_{M'}(D')\equiv0\mod 4$.
 This is a contradiction with $q_{M'}(D')=-2$.
\end{proof}
\TODO{What is the fixed locus of $\iota$ ?}

\TODO{check for the following things: twister -> twistor, kähler -> Kähler}
\TODO{look at Nikulin entry} 
\TODO{generic -> very general in many cases}
\TODO{check that notation of $q$ is consistent}

\bibliographystyle{alpha}
\bibliography{Literatur}

\begin{thebibliography}{CGKK21}

\bibitem[AP06]{Aguilar-Prieto}
Marcelo~A. Aguilar and Carlos Prieto.
\newblock Transfers for ramified covering maps in homology and cohomology.
\newblock {\em Int. J. Math. Math. Sci.}, pages Art. ID 94651, 28, 2006.

\bibitem[Bea83a]{Beauville1982}
Arnaud Beauville.
\newblock Some remarks on {K}\"{a}hler manifolds with {$c_{1}=0$}.
\newblock In {\em Classification of algebraic and analytic manifolds ({K}atata,
  1982)}, volume~39 of {\em Progr. Math.}, pages 1--26. Birkh\"{a}user Boston,
  Boston, MA, 1983.

\bibitem[Bea83b]{Beauville1983}
Arnaud Beauville.
\newblock {Vari\'et\'es k\"ahleriennes dont la premi\`ere classe de Chern est
  nulle}.
\newblock {\em J. Differ. Geom.}, 18:755--782, 1983.

\bibitem[BGL22]{Bakker}
Benjamin Bakker, Henri Guenancia, and Christian Lehn.
\newblock Algebraic approximation and the decomposition theorem for
  {K{\"a}hler} {Calabi}-{Yau} varieties.
\newblock {\em Invent. Math.}, 228(3):1255--1308, 2022.

\bibitem[BL18]{Bakker-Lehn-GlobalTorelli}
Benjamin Bakker and Christian Lehn.
\newblock The global moduli theory of symplectic varieties.
\newblock arXiv:1812.09748v2, 2018.

\bibitem[Cam04]{Campana-2004}
Fr\'{e}d\'{e}ric Campana.
\newblock Orbifoldes \`a premi\`ere classe de {C}hern nulle.
\newblock In {\em The {F}ano {C}onference}, pages 339--351. Univ. Torino,
  Turin, 2004.

\bibitem[CGKK21]{Camere}
Chiara Camere, Alice Garbagnati, Grzegorz Kapustka, and Michal Kapustka.
\newblock {Projective models of Nikulin orbifolds}.
\newblock arXiv:2104.09234, 2021.

\bibitem[{Eic}74]{Eichler}
Martin {Eichler}.
\newblock {\em {Quadratische Formen und orthogonale Gruppen. 2. Aufl}},
  volume~63.
\newblock Springer, Berlin, 1974.

\bibitem[FM21]{Fu-Menet}
Lie Fu and Gr{\'e}goire Menet.
\newblock On the {Betti} numbers of compact holomorphic symplectic orbifolds of
  dimension four.
\newblock {\em Math. Z.}, 299(1-2):203--231, 2021.

\bibitem[Fuj83]{Fujiki-1983}
Akira Fujiki.
\newblock On primitively symplectic compact {K}\"{a}hler {$V$}-manifolds of
  dimension four.
\newblock In {\em Classification of algebraic and analytic manifolds ({K}atata,
  1982)}, volume~39 of {\em Progr. Math.}, pages 71--250. Birkh\"{a}user
  Boston, Boston, MA, 1983.

\bibitem[GHS10]{Gritsenko-Hulek-Sankaran}
V.~{Gritsenko}, K.~{Hulek}, and G.~K. {Sankaran}.
\newblock {Moduli spaces of irreducible symplectic manifolds}.
\newblock {\em {Compos. Math.}}, 146(2):404--434, 2010.

\bibitem[{Gri}11]{Griess}
Robert~L. {Griess}.
\newblock {\em {An introduction to groups and lattices. Finite groups and
  positive definite rational lattices}}, volume~15.
\newblock Somerville, MA: International Press; Beijing: Higher Education Press,
  2011.

\bibitem[Har77]{Hartshorne}
Robin Hartshorne.
\newblock {\em Algebraic Geometry}.
\newblock Graduate Texts in Mathematics. Springer, New York, 1977.

\bibitem[Huy12]{Huybrechts12}
Daniel Huybrechts.
\newblock {A global Torelli theorem for hyperk\"ahler manifolds}.
\newblock In {\em {S\'eminaire Bourbaki. Volume 2010/2011. Expos\'es
  1027--1042. Avec table par noms d'auteurs de 1948/49 \`a 2009/10}}, pages
  375--403. Paris: Soci\'et\'e Math\'ematique de France (SMF), 2012.

\bibitem[{Huy}16]{HuybrechtsK3}
Daniel {Huybrechts}.
\newblock {\em {Lectures on {K}3 surfaces}}, volume 158.
\newblock Cambridge University Press, 2016.

\bibitem[LMP22]{Lehn2}
Christian Lehn, Giovanni Mongardi, and Gianluca Pacienza.
\newblock {The Morrison--Kawamata cone conjecture for singular symplectic
  varieties}.
\newblock arXiv:2207.14754, 2022.

\bibitem[{Mar}11]{Markman11}
Eyal {Markman}.
\newblock {A survey of Torelli and monodromy results for holomorphic-symplectic
  varieties.}
\newblock In {\em {Complex and differential geometry. Conference held at
  Leibniz Universit\"at Hannover, Germany, September 14--18, 2009.
  Proceedings}}, pages 257--322. Springer, 2011.

\bibitem[{Men}14]{Menet-2014}
Gr\'egoire {Menet}.
\newblock {Duality for relative Prymians associated to \(K3\) double covers of
  del Pezzo surfaces of degree 2}.
\newblock {\em {Math. Z.}}, 277(3-4):893--907, 2014.

\bibitem[Men15]{Menet-2015}
Gr\'{e}goire Menet.
\newblock Beauville-{B}ogomolov lattice for a singular symplectic variety of
  dimension 4.
\newblock {\em J. Pure Appl. Algebra}, 219(5):1455--1495, 2015.

\bibitem[Men18]{Menet-2018}
Gr\'{e}goire Menet.
\newblock On the integral cohomology of quotients of manifolds by cyclic
  groups.
\newblock {\em J. Math. Pures Appl. (9)}, 119:280--325, 2018.

\bibitem[Men20]{Menet-2020}
Gr\'{e}goire Menet.
\newblock Global {T}orelli theorem for irreducible symplectic orbifolds.
\newblock {\em J. Math. Pures Appl. (9)}, 137:213--237, 2020.

\bibitem[Mon12]{Mongardi-2012}
Giovanni Mongardi.
\newblock Symplectic involutions on deformations of {${\rm K}3^{[2]}$}.
\newblock {\em Cent. Eur. J. Math.}, 10(4):1472--1485, 2012.

\bibitem[Mon13]{Mongardi-2013}
Giovanni Mongardi.
\newblock On natural deformations of symplectic automorphisms of manifolds of
  {$K3^{[n]}$} type.
\newblock {\em C. R. Math. Acad. Sci. Paris}, 351(13-14):561--564, 2013.

\bibitem[{Mon}15]{Mongardi13}
Giovanni {Mongardi}.
\newblock {A note on the K\"ahler and Mori cones of hyperk\"ahler manifolds}.
\newblock {\em {Asian J. Math.}}, 19(4):583--592, 2015.

\bibitem[MR20]{Menet-Riess-20}
Gr\'egoire Menet and Ulrike Riess.
\newblock {On the K\"ahler cone of irreducible symplectic orbifolds}.
\newblock arXiv:2009.04873, 2020.

\bibitem[MT07]{Marku-Tikho}
Dimitri Markushevich and Alexander~S. Tikhomirov.
\newblock New symplectic {$V$}-manifolds of dimension four via the relative
  compactified {P}rymian.
\newblock {\em Internat. J. Math.}, 18(10):1187--1224, 2007.

\bibitem[{Nik}80]{Nikulin}
Viacheslav~V. {Nikulin}.
\newblock Integral symmetric bilinear forms and some of their applications.
\newblock {\em Math. USSR Izv.}, 14:103--167, 1980.

\bibitem[Ogu16]{Oguiso}
Keiji Oguiso.
\newblock On automorphisms of the punctual hilbert schemes of k3 surfaces.
\newblock {\em Eur. J. Math.}, 2:246--261, 2016.

\bibitem[Sv07]{Sarti-VanGeemen}
Alessandra {Sarti} and Bert {van Geemen}.
\newblock {Nikulin involutions on \(K3\) surfaces}.
\newblock {\em {Math. Z.}}, 255(4):731--753, 2007.

\end{thebibliography}

\noindent
Gr\'egoire \textsc{Menet}

\noindent
Laboratoire Paul Painlevé

\noindent 
59 655 Villeneuve d'Ascq Cedex (France),

\noindent
    {\tt gregoire.menet@univ-lille.fr}

\bigskip
    
 \noindent
Ulrike \textsc{Rie}\ss

\noindent
Institute for Theoretical Studies - ETH Z\"urich

\noindent 
Clausiusstrasse 47, Building CLV, Z\"urich (Switzerland)

\noindent
{\tt uriess@ethz.ch}

\end{document}